\font\co=lcircle10
\def\boxcross{\ \smash{\lower6.5pt\hbox{\rlap{\hskip4.5pt\vrule height13.5pt}}
                \raise0pt\hbox{\rlap{\hskip-2pt \vrule height.4pt depth0pt
                        width13.5pt}}}\hskip12.7pt}
\def\boxelbow{\ \hskip.1pt\smash{%
               \hbox{\co \hskip 5.5pt\rlap{\mathsurround=0pt\rlap{\mathsurround=0pt\char'006}\lower0.4pt\rlap{\char'004}}
                \lower6.5pt\rlap{\hskip-0.2pt\vrule height3pt}
                \raise3.5pt\rlap{\hskip-0.2pt\vrule height3.2pt}}
                \hbox{%
                  \rlap{\hskip-6.4pt \vrule height.4pt depth0pt
width2.5pt}%
                  \rlap{\hskip4.05pt \vrule height.4pt depth0pt
width3.1pt}}}
                \hskip8.7pt}
\newtheorem{theorem}{Theorem}[section]
\newtheorem{thm}[theorem]{Theorem}
\newtheorem*{thm*}{Theorem}
\newtheorem{lemma}[theorem]{Lemma}
\newtheorem{lem}[theorem]{Lemma}
\newtheorem{prop}[theorem]{Proposition}
\newtheorem{conjecture}[theorem]{Conjecture}
\newtheorem{cor}[theorem]{Corollary}
\theoremstyle{definition}
\theoremstyle{remark}
\newtheorem{example}[theorem]{Example}
\newtheorem{remark}[theorem]{Remark}
\newcommand{\Xo}{\mathring{X}}
\newcommand{\Pio}{\mathring{\Pi}}
\newcommand{\SR}{\mathrm{SR}}
\newcommand{\In}{\mathrm{In}}
\newcommand{\AFl}{\widetilde{{\mathcal F} \ell}}
\newcommand{\C}{{\mathbb C}}
\newcommand{\Gr}{\mathrm{Gr}}
\newcommand{\GS}{S_{n,k}^{\mathrm{min}}}
\newcommand{\AGS}{S_{n,k}^{\mathrm{max}}}
\newcommand{\JJ}{G}
\newcommand{\J}{{\mathcal J}}
\newcommand{\M}{{\mathcal M}}
\newcommand{\pt}{\mathrm{pt}}
\newcommand{\Q}{{\mathcal Q}}
\newcommand{\R}{{\mathbb R}}
\newcommand{\tF}{\tilde F}
\newcommand{\tS}{\tilde S}
\newcommand{\weak}{\mathrm{weak}}
\newcommand{\Z}{\mathbb Z}
\newcommand{\onto}{\twoheadrightarrow}
\newcommand{\CC}{\mathbb{C}}
\newcommand{\ZZ}{\mathbb{Z}}
\newcommand{\PP}{\mathbb{P}}
\newcommand\defn[1]{{\bf #1}}
\DeclareMathOperator{\Spec}{Spec}
\DeclareMathOperator{\av}{av} \DeclareMathOperator{\Par}{Par}
\DeclareMathOperator{\Bound}{Bound}
 \DeclareMathOperator{\GGMS}{GGMS}
\DeclareMathOperator{\Jugg}{Jugg} \DeclareMathOperator{\Span}{Span}
\newcommand{\Fl}{F \ell}
\renewcommand\O{{\mathcal O}}
\begin{document}

\title{Positroid varieties I: juggling and geometry}
\author{Allen Knutson}
\address{Department of Mathematics, Cornell University, Ithaca, NY 14853 USA}
\email{allenk@math.cornell.edu}
\thanks{AK was partially supported by NSF grant DMS-0604708.}
\author{Thomas Lam}
\address{Department of Mathematics, Harvard University, Cambridge, MA
  02138 USA}
\email{tfylam@math.harvard.edu}
\thanks{TL was partially supported by NSF grants DMS-0600677 and
  DMS-0652641.}
\author{David E Speyer}
\address{Department of Mathematics, Massachusetts Institute of Technology, 77 Massachusetts Avenue, Cambridge, MA 02138 USA}
\email{speyer@math.mit.edu}
\thanks{DES was supported by a Research Fellowship from the Clay Mathematics Institute}
\date{\today}

\begin{abstract}
  While the intersection of the Grassmannian Bruhat decompositions for
  all coordinate flags is an intractable mess, the intersection of
  only the {\em cyclic shifts} of one Bruhat decomposition turns out to have
  many of the good properties of the Bruhat and Richardson decompositions.

  This decomposition coincides with the projection of the Richardson
  stratification of the flag manifold, studied by Lusztig, Rietsch,
  and Brown-Goodearl-Yakimov.  However, its cyclic-invariance is hidden in
  this description. Postnikov gave many cyclic-invariant ways to index
  the strata, and we give a new one, by a subset of the affine Weyl
  group we call {\em bounded juggling patterns}. We adopt his
  terminology and call the strata {\em positroid varieties.}

  We show that positroid varieties are normal and Cohen-Macaulay,
  and are defined as schemes by the vanishing of Pl\"ucker coordinates.
  We compute their $T$-equivariant Hilbert series, and show that their
  associated cohomology classes are represented by affine Stanley functions.
  This latter fact lets us connect Postnikov's and Buch-Kresch-Tamvakis'
  approaches to quantum Schubert calculus.

  Our principal tools are the Frobenius splitting results for
  Richardson varieties as developed by Brion, Lakshmibai, and Littelmann,
  and the Hodge-Gr\"obner degeneration of the Grassmannian. We show that
  each positroid variety degenerates to the projective Stanley-Reisner
  scheme of a shellable ball.
\end{abstract}

\maketitle

\setcounter{tocdepth}{1}
{\footnotesize \tableofcontents}

\section{Introduction, and statement of results}

\subsection{Some decompositions of the Grassmannian}\label{ssec:decomps}

In this first paper in a series, we establish some basic geometric
results about a stratification of the Grassmannian studied in
\cite{Lus,Pos,Rie,BGY,Wil}. It fits into a family of successively
finer decompositions:
$$ \{\text{Bruhat cells}\} ,
\{\text{open Richardson varieties}\} ,
\{\text{open \bf positroid varieties}\} ,
\{\text{GGMS strata}\}. $$
We discuss the three known ones in turn, and then see how the
family of positroid varieties fits in.

The {\em Bruhat decomposition} of the Grassmannian of $k$-planes in
$n$-space dates back, despite the name, to Schubert in the 19th
century. It has many wonderful properties:
\begin{itemize}
\item the strata are easily indexed (by partitions in a $k\times (n-k)$ box)
\item it is a stratification: the closure (a {\em Schubert variety})
  of one open stratum is a union of others
\item each stratum is smooth and irreducible (in fact a cell)
\item although the closures of the strata are (usually) singular,
  they are not too bad: they are normal and Cohen-Macaulay, and
  even have rational singularities.
\end{itemize}

The Bruhat decomposition is defined relative to a choice of coordinate flag,
essentially an ordering on the basis elements of $n$-space.
The {\em Richardson decomposition} is the common refinement of the
Bruhat decomposition and the {\em opposite} Bruhat decomposition, using
the opposite order on the basis. Again, many excellent properties hold for
this finer decomposition:
\begin{itemize}
\item it is easy to describe the nonempty intersections of Bruhat and opposite Bruhat
  strata (they correspond to {\em nested} pairs of partitions)
\item it is a stratification, each open stratum is smooth and irreducible,
  and their closures are normal and Cohen-Macaulay
  with rational singularities \cite{BrionPos}.
\end{itemize}

At this point one might imagine intersecting the Bruhat decompositions
relative to {\em all} the coordinate flags, so as not to prejudice one
over another. This gives the {\em GGMS decomposition} of the Grassmannian
\cite{GGMS},
and as it turns out, these good intentions pave the road to hell:
\begin{itemize}
\item it is infeasible to index the nonempty strata~\cite{LostAxiom}
\item it is not a stratification \cite[Section~5.2]{GGMS}
\item the strata can have essentially any singularity~\cite{Mnev}.
  In particular, the nonempty ones need not be irreducible,
  or even equidimensional.
\end{itemize}

This raises the question: can one intersect more than two permuted Bruhat
decompositions, keeping the good properties of the Bruhat and
Richardson decompositions, without falling into the GGMS abyss?

The answer is yes: we will intersect the $n$ {\em cyclic
permutations} of the Bruhat decomposition. It is easy to show,
though not immediately obvious, that this refines the Richardson
decomposition. It is even less obvious, though also true, that the
nonempty strata are smooth and irreducible (as we discuss in Section
\ref{ssec:projectedRichardsons}).

There is a similar decomposition for any partial flag manifold $G/P$,
the projection of the Richardson stratification from $G/B$
(we prove the connection in Theorem \ref{thm:projectedRichardsons}).
That decomposition arises in the study
of several seemingly independent structures:
\begin{itemize}
\item total nonnegativity, in e.g. \cite{Lus,Pos,Rie},
  more about which in Section \ref{ssec:jugafftnn};
\item prime ideals in noncommutative deformations of $G/P$ (though worked
  out only for the Grassmannian, in \cite{LLR}),
  and a semiclassical version thereof in Poisson geometry \cite{BGY,GY};
\item the characteristic $p$ notion of Frobenius splitting
  (Section \ref{sec:frobsplit}).
\end{itemize}
Postnikov seems to be the first to have noticed \cite{Pos} the cyclic symmetry
that is special to the Grassmannian case. He gives many ways to index
the (nonempty) strata, in particular by {\em positroids} (which we
define below), and we will adopt this terminology for the varieties.

We can now state our principal geometric results.

\begin{thm*}[Corollaries \ref{c:Normal}--\ref{c:CohenMacaulay}
  and \ref{T:LinearGeneration}]
\ 
  \begin{enumerate}
  \item Positroid varieties are normal and Cohen-Macaulay,
    with rational singularities.
  \item Though positroid varieties are defined as the closure of
    the intersection of $n$ cyclically permuted Bruhat cells,
    they can also be defined (even as schemes) as
    the intersection of the $n$ cyclically permuted Schubert varieties.
    In particular, positroid varieties are defined as schemes by the
    vanishing of Pl\"ucker coordinates.
  \end{enumerate}
\end{thm*}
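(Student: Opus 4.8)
The plan is to deduce both parts from two inputs already in hand: the identification of positroid varieties with projected Richardson varieties (Theorem~\ref{thm:projectedRichardsons}), and the behaviour of positroid varieties under the Hodge--Gr\"obner degeneration of $\Gr(k,n)$ in its Pl\"ucker embedding. Write $\overline{\Pio}$ for the positroid variety, the closure of the open stratum $\Pio=\bigcap_{r=1}^n X_r^{\circ}$, where $X_r^{\circ}$ is the $r$-th cyclically rotated Bruhat cell, and let $X_r=\overline{X_r^{\circ}}$ be the corresponding cyclically rotated Schubert variety. Since every Schubert variety of $\Gr(k,n)$ is cut out as a \emph{scheme} by the vanishing of a set of Pl\"ucker coordinates (standard monomial theory), the scheme-theoretic intersection $Y:=\bigcap_{r=1}^n X_r$ is cut out by the union of these sets; hence $I(Y)=\sum_r I(X_r)$ is generated by Pl\"ucker coordinates, and part~(2) amounts to the single assertion $I(Y)=I(\overline{\Pio})$, i.e.\ that this sum of ideals is already radical and equals the ideal of the positroid variety. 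Part~(1) --- Cohen--Macaulayness, normality, rational singularities --- will come from the degeneration together with the projected-Richardson description.

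For the scheme-theoretic equality I would run a Hilbert-series squeeze. Fix the term order on the Pl\"ucker generators for which $\mathrm{in}$ of the Pl\"ucker ideal is the Stanley--Reisner ideal $\SR(\Delta)$ of the order complex $\Delta$ of the poset of $k$-subsets (the straightening law / Hodge algebra structure). Each cyclically rotated Schubert variety $X_r$ degenerates to the Stanley--Reisner scheme of a subcomplex $\Delta_r\subseteq\Delta$ got by deleting the vertices whose Pl\"ucker coordinate is forced to vanish on $X_r$ --- here one must check that the standard-monomial picture survives the cyclic rotation of the reference flag --- so that $\sum_r \mathrm{in}\, I(X_r)=\SR(\Delta')$ with $\Delta':=\bigcap_r \Delta_r$. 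Granting that $\Delta'$ is a \emph{shellable ball}, $\SR(\Delta')$ is reduced and Cohen--Macaulay. Flatness of the Gr\"obner degeneration then gives
$$\operatorname{Hilb} A(Y)\;=\;\operatorname{Hilb}\big(S/\mathrm{in}\, I(Y)\big)\;\le\;\operatorname{Hilb}\big(S/\SR(\Delta')\big),$$
coefficientwise, because $\mathrm{in}\, I(Y)\supseteq \sum_r \mathrm{in}\, I(X_r)=\SR(\Delta')$; while $\overline{\Pio}\subseteq Y$ gives $I(Y)\subseteq I(\overline{\Pio})$, hence $\operatorname{Hilb} A(Y)\ge \operatorname{Hilb} A(\overline{\Pio})$.

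To close the loop I would compute $\operatorname{Hilb} A(\overline{\Pio})$ independently --- by localizing at the torus-fixed points using the Richardson structure of $\overline{\Pio}$, or directly from a shelling of $\Delta'$ --- and check it equals $\operatorname{Hilb}(S/\SR(\Delta'))$, which is read off from the $h$-vector of the ball $\Delta'$. Then the two inequalities above are equalities, forcing $I(Y)=I(\overline{\Pio})$ (proving part~(2), including irreducibility, since $\overline{\Pio}$ is irreducible) and $\mathrm{in}\, I(\overline{\Pio})=\SR(\Delta')$. Thus $\overline{\Pio}$ degenerates flatly to the reduced Cohen--Macaulay scheme $\SR(\Delta')$, so $\overline{\Pio}$ is Cohen--Macaulay. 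For normality and rational singularities I would use the projected-Richardson description: $\overline{\Pio}=\pi(R)$ for a Richardson variety $R\subset\Fl_n$ and the projection $\pi\colon\Fl_n\to\Gr(k,n)$; Richardson varieties have rational singularities (Brion), and the induced proper map $R\to\overline{\Pio}$, birational onto its image, is cohomologically trivial --- its fibres are again Richardson varieties in smaller flag manifolds, and the higher-direct-image vanishing needed is exactly what the Frobenius splitting results of Brion--Lakshmibai--Littelmann provide --- so $\overline{\Pio}$ inherits rational singularities, hence normality, in characteristic $0$, with the characteristic-$p$ statements coming from the same splittings.

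I expect the main obstacle to be the combinatorial claim that the subcomplex $\Delta'$ cut out by the $n$ cyclic Schubert conditions is a shellable ball: this is exactly where the special geometry of the \emph{cyclic} family of flags (rather than an arbitrary family, which would drop us into the GGMS abyss) must be used, and it is what ultimately forces $Y$ to be reduced rather than carrying embedded or thickened components. A secondary difficulty is the honest computation of the (equivariant) Hilbert series of $\overline{\Pio}$ needed to match the two sides of the squeeze, together with the bookkeeping showing that the standard-monomial description of Schubert varieties behaves compatibly under the cyclic rotation of the reference flag.
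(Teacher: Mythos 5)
Your route to part (1) --- project from a Richardson model, use \cite[Proposition~1]{BrionLakshmibai} and Frobenius splitting to get cohomological triviality of $\pi\colon X_u^w\to\Pi_f$, then import Brion's rational singularities for Richardsons and Kov\'acs for Cohen--Macaulayness --- is essentially the paper's proof (Theorem \ref{T:crepant}, Corollaries \ref{c:Normal}--\ref{c:CohenMacaulay}). One imprecision there: the fibres of $\pi$ over $\Pi_f$ need not themselves be Richardson varieties; but that claim is not needed, since \cite{BrionLakshmibai} gives the required $H^0$-surjectivity directly.

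The Hilbert-series squeeze you run for part (2), however, has a genuine gap: the complex $\Delta'$ is not the degenerate limit of $\Pi_f$. By construction $\Delta'=\bigcap_r\Delta_r$ is the induced subcomplex of the Hodge complex $\Delta$ on the vertex set $\M$ of the positroid, i.e.\ the full order complex of $\M$ as a subposet of $\binom{[n]}{k}$; in particular $\Delta'$ is a flag complex. The actual initial scheme of $\Pi_f$ is $\SR(\sigma_k(\Delta([u,w])))$ (Theorem \ref{T:HodgePedoe}), where $\sigma_k(\Delta([u,w]))$ has the same vertex set $\M$ but is in general a \emph{strict} subcomplex of $\Delta'$, and is typically not a flag complex at all: Remark \ref{rem:nonfaces} exhibits minimal nonfaces of unbounded size. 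In the example of that remark ($k=2$, $w=[(n-1)\,n\,1\,2\cdots(n-2)]$, $u=s_1$) the positroid $\M$ is all of $\binom{[n]}{2}$, so $\Delta'=\Delta$ is the entire Hodge ball, while $\Pi_f$ is a hypersurface in $\Gr(2,n)$. Hence $\operatorname{Hilb}\,A(\overline{\Pio})<\operatorname{Hilb}(S/\SR(\Delta'))$ coefficientwise in some degree, the ``loop-closing'' equality you posit is false, and the two inequalities of the squeeze do not meet. Notice too that the obstacle you flagged --- shellability of $\Delta'$ --- is not the issue: in this example $\Delta'=\Delta$ \emph{is} a shellable ball (Bj\"orner--Wachs), yet the argument still collapses. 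The phenomenon underneath is that although $I(\Pi_f)$ is generated in degrees $1$ and $2$ (Corollary \ref{cor:QuadGeneration}), its reverse-lex initial ideal acquires new monomial generators of arbitrarily high degree from cancellations; these are precisely the long nonfaces distinguishing $\sigma_k(\Delta([u,w]))$ from $\Delta'$, so the inclusion $\In I(\Pi_f)\supseteq\sum_r\In I(X_r)$ is strict.

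The paper avoids the squeeze altogether for part (2). Positroid varieties are compatibly Frobenius split in $\Gr(k,n)$, being projections of compatibly split Richardsons (Lemma \ref{lem:splittings}); compatibility forces the set-theoretic intersection $\bigcap_r\chi^{r-1}X_{J_r}$ to be reduced, hence scheme-theoretic, and Hodge's result then gives Pl\"ucker-linear generation immediately (Theorem \ref{T:LinearGeneration}). The degeneration statement is proved afterwards (Theorem \ref{T:HodgePedoe}) by a one-sided inclusion $\In_\omega I(\Pi_u^w)\subseteq I(\SR(\sigma_k(\Delta([u,w]))))$ via a slice-and-cone induction (Lemmas \ref{L:GeomMonk}, \ref{L:revlex}), matched against an independent Hilbert-series computation obtained from Theorem \ref{T:crepant} together with the Lakshmibai--Seshadri path count of \cite[Theorem~32(ii)]{LL} --- not against $\SR(\Delta')$.
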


\newcommand\into{\hookrightarrow}

Before going on, we mention a very general construction given
two decompositions $\{Y_a\}_{a\in A}$,$\{Z_b\}_{b\in B}$ of
a scheme $X$, one refining the other. Assume that
\begin{itemize}
\item $X = \coprod_A Y_a = \coprod_B Z_b$,
\item for each $a\in A$, there exists a subset $B_a \subseteq B$ such that
  $Y_a = \coprod_{B_a} Z_b$,
\item each $Y_a$ is irreducible (hence nonempty), and
  each $Z_b$ is nonempty. (We do not assume that each $Z_b$ is irreducible.)
\end{itemize}
Then there is a natural surjection $B \onto A$ taking $b$ to the unique
$a$ such that $Z_b \subseteq Y_a$, and a natural inclusion $A\into B$
taking $a$ to the unique $b\in B_a$ such that $Z_b$ is open in $Y_a$.
(Moreover, the composite $A \into B \onto A$ is the identity.)
We will call the map $B\onto A$ the \defn{$A$-envelope}, and will generally use
the inclusion $A\into B$ to identify $A$ with its image.
Post this identification, each $a\in A$ corresponds to two strata $Y_a$, $Z_a$,
and we emphasize that these are usually {\em not} equal;
rather, one only knows that $Y_a$ contains $Z_a$ densely.

To each GGMS stratum $X$, one standardly associates the set of
coordinate $k$-planes that are elements of $\overline X$,
called the \defn{matroid of $X$}.
(While ``matroid'' has many simple definitions, this is not one of
them; only \defn{realizable} matroids arise this way, and
characterizing them is essentially out of reach \cite{LostAxiom}.)
It is a standard, and easy, fact that the matroid characterizes
the stratum, so via the $A\into B$ yoga above,
we can index the strata in the Schubert, Richardson,
and positroid decompositions by special classes of matroids.
Schubert matroids have been rediscovered many times in the matroid
literature (and renamed each time; see \cite{Bonin}).
Richardson matroids are known as {\em lattice path matroids} \cite{Bonin}.
The matroids associated to the open positroid varieties are
exactly the positroids \cite{Pos} (though Postnikov's original definition
was different, and we give it in the next section).

In our context, the observation two paragraphs above says that if a
matroid $M$ is a positroid, then the positroid stratum of $M$ is
usually {\em not} the GGMS stratum of $M$, but only contains it
densely.

\begin{remark}\label{rem:SashaParam}
  For each positroid $M$, Postnikov gives many parametrizations by
  $\R_{+}^{\ell}$ of the totally nonnegative part (whose definition we will
  recall in the next section) of the GGMS stratum of $M$.  Each
  parametrization extends to a rational map $(\C^\times)^{\ell} \to \Gr(k,n)$;
  if we use the parametrization coming (in Postnikov's terminology)
  from the Le-diagram of $M$ then this map is well defined on all of
  $(\C^\times)^{\ell}$.  The image of this map is neither the GGMS
  stratum nor the positroid stratum of $M$ (although the nonnegative parts of all three coincide). For example, if
  $(k,n)=(2,4)$ and $M$ is the ``uniform'' matroid in which any two elements of
  $[4]$ are independent, this parametrization is
  $$(a,b,c,d) \mapsto (p_{12}: p_{13}: p_{14}: p_{23}: p_{24}: p_{34})
  = (1: d : cd: bd: (a+1)bcd: abcd^2).$$
  The image of this map is the open set where $p_{12}$, $p_{13}$,
  $p_{14}$, $p_{23}$ and $p_{34}$ are nonzero. It is smaller than the
  positroid stratum, where $p_{13}$ can be zero.

  Also, this image is larger than the GGMS stratum, where $p_{24}$ is
  also nonzero. One may regard this, perhaps, as evidence that matroids
  are a philosophically incorrect way to index the strata. We shall see
  another piece of evidence in Remark \ref{rem:notPluckerDefined}.
\end{remark}

\subsection{Juggling patterns, affine Bruhat order, and total nonnegativity}\label{ssec:jugafftnn}

We give now a lowbrow description of the decomposition we are
studying, from which we will see a natural indexing of the strata.

Start with a $k\times n$ matrix $M$ of rank $k$ ($\leq n$),
and call a column \defn{pivotal} if it
is linearly independent of the columns to its left.
(If one performs Gaussian elimination on $M$, a column will
be pivotal exactly if it contains a ``pivot'' of
the resulting reduced row-echelon form.)
There will be $k$ pivotal columns,
giving a $k$-element subset of $\{1,\ldots,n\}$; they form the
lex-first basis made of columns from $M$.

Now rotate the first column of $M$ to the end.
What happens to the set of pivotal columns?
Any column after the first that was pivotal still is pivotal, but (unless the
first column was all zeroes) there is a new pivotal column, say column $t$.

\newcommand\Sym{{\rm Sym}}
\newcommand\integers{\Z}
\newcommand\naturals{{\mathbb N}}

We interpret this physically as follows. Consider a juggler who is
juggling $k$ balls, one throw every second, doing a pattern of period $n$.
Assume that the $k$ pivotal numbers in $\{1,\ldots,n\}$ describe the times
that the balls currently in the air are going to land. If there is no ball in
the hand right now (the first column being $\vec 0$), then all the juggler can
do is wait $1$ second for the balls to come lower.
Otherwise there is a ball in the hand now,
which the juggler throws so that it will come down $t$ steps in the
future; this is called a {\em $t$-throw}.
(The empty hand case is called a $0$-throw.)
This cyclic list of $n$ numbers $(t_1,\ldots,t_n)$ is a {\em juggling pattern}%
\footnote{%
  Not every juggling pattern arises this way;
  the patterns that arise from matrices can only have throws
  of height $\leq n$. This bound is very unnatural from the juggling
  point of view, as it excludes the standard $3$-ball cascade
  $(t_1 = 3)$ with period $n=1$.}
or {\em siteswap} (for which our references are \cite{Polster,Siteswap};
see also \cite{BuhlerEGW,ER,Warrington,G1,G2}).
This mathematical model of juggling was developed by several groups of
jugglers independently in 1985, and is of great practical use
in the juggling community.

If $M$ is generic, then the pivotal columns are always the first $k$,
and the pattern is the lowest-energy pattern where every throw
is a $k$-throw.\footnote{%
  These juggling patterns are called ``cascades'' for $k$ odd and
  ``(asynchronous) fountains'' for $k$ even.}
At the opposite extreme, imagine that $M$ only has entries in
some $k$ columns. Then $n-k$ of the throws are $0$-throws,
and $k$ are $n$-throws.\footnote{%
  These are not the most excited $k$-ball patterns of length $n$;
  those would each have a single $kn$-throw, all the others being $0$-throws.
  But juggling patterns associated to matrices must have each $t_i \leq n$.}

This association of a juggling pattern to each $k\times n$ matrix of rank $k$
depends only on the $k$-plane spanned by the rows, and so descends
to $\Gr(k,n)$, where it provides a combinatorial invariant of the
strata in the cyclic Bruhat decomposition. Postnikov proves that
the nearly-identical notion of ``Grassmann necklaces'' (to which these
juggling patterns biject in a simple way) is a complete invariant
of the strata, and all necklaces arise.

However, the juggling-pattern point of view was useful in making
the following connection.
One can describe the list $(t_1,\ldots,t_n)$ equivalently using
an affine permutation $f : \integers \to \integers$
where $f(i) = i + t_{i \bmod n}$.
This map identifies the set of positroids with a downward Bruhat order ideal
in the affine Weyl group of $GL_n$ (the group of affine permutations
of period $n$).

\begin{thm*}[Theorem \ref{T:pairsboundedposet}, Corollary \ref{cor:shell}]
  This map from the set of positroids to the affine Weyl group is
  order-preserving, with respect to the closure order on positroid strata
  (Postnikov's \defn{cyclic Bruhat order}) and the affine Bruhat order.

  Consequently, the cyclic Bruhat order is Eulerian and EL-shellable
  (as shown by hand already in \cite{Wil}).
\end{thm*}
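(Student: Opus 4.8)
The plan is to reduce the cyclic Bruhat order to a purely combinatorial object, match that object with the bounded affine permutations, and then transport the classical properties of the affine Bruhat order. Throughout let $\pi\colon \Fl\to\Gr(k,n)$ be the projection from the full flag manifold. By Theorem~\ref{thm:projectedRichardsons} the positroid varieties are exactly the images $\pi(\overline R)$ of Richardson varieties $R$ in $\Fl$; since $\pi$ is proper, $\overline{\pi(R)}=\pi(\overline R)$, so closures of positroid varieties are unions of positroid varieties, and the cyclic Bruhat order (the closure order on the open strata) is the image under $\pi$ of the closure order on open Richardson varieties --- a well-understood poset of intervals $[u,w]$ in $S_n$, passed to canonical coset representatives as in the work of Rietsch and of Brown--Goodearl--Yakimov used in Theorem~\ref{thm:projectedRichardsons}. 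Combined with Postnikov's results this identifies the cyclic Bruhat order with his explicit combinatorial poset \cite{Pos}, say the componentwise order on Grassmann necklaces (equivalently, the poset generated by the covering moves on decorated permutations). The map of the theorem sends a positroid to the bounded affine permutation read off its Grassmann necklace --- equivalently, from its juggling pattern $(t_1,\dots,t_n)$ via $f(i)=i+t_{i\bmod n}$ --- which is a simple relabelling of Postnikov's data. So it suffices to show this relabelling carries Postnikov's order to (the restriction of) the affine Bruhat order.

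\textbf{The combinatorial heart.} I would prove this by comparing the two orders through their ``rank matrix'' criteria. For the affine Bruhat order this is the affine version (Bj\"orner--Brenti) of the tableau criterion on $S_n$: $f\le g$ iff $\#\{a\le i: f(a)\ge j\}\le\#\{a\le i: g(a)\ge j\}$ for all $i,j$. Postnikov's componentwise order on necklaces is itself a system of inequalities among the sets $I_s$. The key point is that, under the dictionary relating $(I_1,\dots,I_n)$ to $f$ (roughly, $I_{s+1}$ is obtained from $I_s$ by deleting $s$ and inserting $f(s)\bmod n$ when $s\in I_s$), these two systems of inequalities are equivalent; since both criteria are ``if and only if'' statements, this exhibits the map as an isomorphism of the cyclic Bruhat order onto its image in the affine Weyl group. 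Alternatively one may match cover relations directly: affine Bruhat covers are the reflections $f\lessdot f t_{ab}$ with $\ell$ increasing by one, and one checks these correspond to Postnikov's covering moves; as both posets are graded, this again forces an isomorphism.

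\textbf{From order-preservation to the corollary.} For the ``Consequently'', I would additionally verify that the bounded affine permutations of type $(k,n)$ --- those with $i\le f(i)\le i+n$ and average displacement $k$ --- form a \emph{downward} order ideal in the affine Bruhat order. This follows from the same rank-matrix criterion (the constraints $i\le f(i)\le i+n$ become rank-matrix inequalities that only relax as one descends in Bruhat order), and is in any case implied by the geometric statement above that closures of positroid varieties are unions of positroid varieties. Being an isomorphism onto a downward ideal, the map identifies each interval of the cyclic Bruhat order with an interval of the affine Bruhat order; since intervals in the Bruhat order of a Coxeter group are Eulerian (Verma) and EL-shellable (Bj\"orner--Wachs; Dyer), every interval of the cyclic Bruhat order is too, and adjoining a bottom element to this poset --- which already has a top element, the big open positroid variety --- yields the Eulerian, EL-shellable poset established by hand in \cite{Wil}.

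\textbf{Main obstacle.} The effort concentrates in the combinatorial heart: aligning the three parametrizations (projected-Richardson pairs, Grassmann necklaces, bounded affine permutations) with all conventions consistent --- direction of the cyclic shift, choice of minimal coset representatives, the twist by the translation attached to $k$, and the orientation convention that decides which of the big cell and a torus-fixed point counts as ``larger'' --- and, on the Richardson side, pinning down the closure order on projected Richardson varieties precisely enough to recognize it as Postnikov's order. Once the dictionary is correctly calibrated, both the order isomorphism and the downward-ideal property are monotonicity facts about rank matrices and should go through without incident.
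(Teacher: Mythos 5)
Your plan follows essentially the same architecture as the paper: set up the chain of bijections between positroids, Grassmann necklaces / juggling patterns, and bounded affine permutations; prove the order isomorphism via the rank-matrix characterization of affine Bruhat order (this is exactly what Proposition~\ref{prop:rankBruhat} and Corollary~\ref{C:boundedjugg} do, while Theorem~\ref{T:pairsboundedposet} gives the parallel cover-relation argument on the $\Q(k,n)$ side); identify the closure order on strata with Rietsch's order via projected Richardsons; and then transport the Eulerian and EL-shellable properties of affine Bruhat intervals using Verma and Dyer.

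One claim in your proposal is wrong, though not fatal: that $\Bound(k,n)$ being a downward order ideal in $\tS_n^k$ is ``in any case implied by the geometric statement above that closures of positroid varieties are unions of positroid varieties.'' That geometric fact says the closure poset on strata is isomorphic to $(\Bound(k,n),\geq)$ and is graded, but it carries no information about elements of $\tS_n^k\setminus\Bound(k,n)$; in particular it does not rule out an unbounded $g$ sitting below a bounded $f$ in affine Bruhat order, which is exactly what the downward-ideal property forbids. That property needs its own (easy, purely combinatorial) argument --- your first route via the rank-matrix inequalities works, as does the direct check that Bruhat covers descending from a bounded affine permutation stay bounded, which is Lemma~\ref{L:orderideal} in the paper. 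With that corrected, the proposal is sound and in line with the paper's proof.
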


If one changes the cyclic action slightly, by moving the first column
to the end {\em and multiplying it by} $(-1)^{k-1}$, then one
preserves the set of real matrices for which every $k\times k$
submatrix has nonnegative determinant. This, by definition, lies over the
\defn{totally nonnegative} part of the Grassmannian. (And while the
action may be $2n$-fold periodic up on matrices, if $k$ is even,
it is $n$-fold periodic down on the Grassmannian.)
Postnikov's motivation was to describe those GGMS strata that
intersect this totally nonnegative part; it turns out that
they are exactly the positroids, and the totally nonnegative part
of each open positroid stratum is homeomorphic to a ball.
While Lusztig has defined in \cite{Lus} the totally nonnegative
part of any generalized flag manifold, this cyclic symmetry seems
to be special to Grassmannians.\footnote{Milen Yakimov has
  proven the stronger result that the standard Poisson
  structure on $\Gr(k,n)$, from which the positroid stratification
  can be derived, is itself cyclic-invariant \cite{Milen}.}


\subsection{Affine permutations, and the associated cohomology class of a positroid variety}

\newcommand\iso{\cong}
\newcommand\complexes{{\mathbb C}}
\newcommand\codim{{\rm codim\ }}

Given a subvariety $X$ of a Grassmannian, one can canonically
associate a symmetric polynomial in $k$ variables, in a couple of
equivalent ways:
\begin{enumerate}
\item
  Sum, over partitions $\lambda$ with $|\lambda| = \codim X$,
  the Schur polynomial $S_\lambda(x_1,\ldots,x_k)$
  weighted by the number of points of intersection of $X$ with a
  generic translate of $X_{\lambda^c}$ (the Schubert variety associated
  to the complementary partition inside the $k\times (n-k)$ rectangle).
\item
  Take the preimage of $X$ in the Stiefel manifold of $k\times n$ matrices
  of rank $k$, and the closure $\overline X$ inside $k\times n$ matrices.
  This has a well-defined class in the equivariant Chow ring
  $A^*_{GL(k)}(\complexes^{k\times n})$,
  which is naturally the ring of symmetric polynomials in $k$ variables.
\end{enumerate}
The most basic case is $X$ a Schubert variety $X_\lambda$, in which case these
recipes give the Schur polynomial $S_\lambda$.
More generally, the first construction shows that the symmetric polynomial
must be ``Schur-positive'', meaning a positive sum of Schur polynomials.

In reverse, one has ring homomorphisms
$$ \{\text{symmetric functions}\}
\onto \integers[x_1,\ldots,x_k]^{S_k}
\iso A^*_{GL(k)}(\complexes^{k\times n})
\onto A^*_{GL(k)}(\text{Stiefel})
\iso A^*(\Gr(k,n)) $$
and one can ask for a symmetric function $f$ whose image is the class $[X]$.

\begin{thm*}[Theorem \ref{thm:affineStanley}]
  The cohomology class associated to a positroid variety can be
  represented by the affine Stanley function of its affine permutation,
  as defined in \cite{Lam1}.
\end{thm*}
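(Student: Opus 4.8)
The plan is to compute the class $[\overline{\Pio_f}]\in H^*(\Gr(k,n))$ in two independent ways and match the answers. Recall from \S\ref{ssec:decomps} that the symmetric function attached to a subvariety $X\subseteq\Gr(k,n)$ is, modulo the ideal $I$ spanned by the Schur functions $s_\lambda$ with $\lambda\not\subseteq k\times(n-k)$, completely determined by the intersection numbers $\#\bigl(X\cap g\cdot X_{\lambda^c}\bigr)$ for generic $g$ and $\lambda$ in the box. Hence it suffices to show, for each such $\lambda$, that the coefficient of $s_\lambda$ in the affine Stanley function $\tF_f$ of \cite{Lam1} equals this intersection number for $X=\overline{\Pio_f}$; equivalently, that the images of $\tF_f$ and of $[\overline{\Pio_f}]$ in $H^*(\Gr(k,n))$ agree.

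First I would compute $[\overline{\Pio_f}]$ geometrically via the Hodge--Gr\"obner degeneration quoted in the introduction: the scheme-theoretic result (Theorem \ref{T:LinearGeneration}) together with the Frobenius-splitting machinery shows $\overline{\Pio_f}$ degenerates flatly, $T$-equivariantly, and with reduced special fibre to the projective Stanley--Reisner scheme of a shellable ball $\Delta_f$, a reduced union of coordinate subspaces compatible with the Schubert stratification of $\Gr(k,n)$. Since the class is constant in a flat $T$-equivariant family and a reduced union contributes the sum of the classes of its top-dimensional components, one obtains $[\overline{\Pio_f}]=\sum_{\lambda}c_\lambda\,s_\lambda\pmod I$ for an explicit set of partitions $\lambda$ in the box with $c_\lambda\in\{0,1\}$, the set being read off the shelling of $\Delta_f$; the combinatorics of that shelling encodes it directly in terms of the juggling pattern / Grassmann necklace of $f$. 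Alternatively, the same sum is visible as the lowest-order term of the $T$-equivariant Hilbert series of $\overline{\Pio_f}$ computed earlier in the paper, which is a less error-prone bookkeeping device.

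Second I would expand $\tF_f$ and reduce modulo $I$. Writing $\tF_f=\sum_\lambda\langle\,\cdot\,\rangle\,F_\lambda$ in the affine Schur (dual $k$-Schur) basis via the affine Fomin--Stanley subalgebra, and using that each $F_\lambda$ maps to the Schubert class $s_\lambda$ in $H^*(\Gr(k,n))$ when $\lambda$ lies in the box and to $0$ otherwise, reduces the theorem to identifying the Schur support of $\tF_f$, with multiplicities, with the facet data of $\Delta_f$. To do this I would route both sides through the realization of $\overline{\Pio_f}$ as a projected Richardson variety (Theorem \ref{thm:projectedRichardsons}): the class of a Richardson variety $X^w_v\subseteq G/B$ is $\sigma_w\sigma^v$, its pushforward along $G/B\to\Gr(k,n)$ is $[\overline{\Pio_f}]$, and the Lam--Shimozono dictionary relating $H^*(G/P)$, affine Stanley functions, and the homology of the affine Grassmannian identifies this pushforward with $\tF_f\bmod I$.

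The main obstacle is precisely this last matching: equating the Schur expansion of $\tF_f$ with the geometric intersection numbers / the facets of the shellable ball. I expect to handle it by induction on the affine Bruhat order (available thanks to Theorem \ref{T:pairsboundedposet}), anchored at the Grassmannian case, where $\overline{\Pio_f}$ is a Schubert variety and both sides equal a single Schur function $s_\lambda$, and propagated by a recursion that both sides respect — on the symmetric-function side the recursion for $\tF_w$ under multiplication by affine simple reflections in the affine nilHecke algebra, and on the geometric side the behaviour of the class of a projected Richardson variety under the corresponding fibration/Bott--Samelson move. The delicate points are checking that the ``bounded'' condition defining positroid affine permutations is compatible with these moves and that the two recursions genuinely coincide; this is where the real work lies.
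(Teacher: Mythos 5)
Your high-level outline — reduce to a recursion on both sides of the equality, anchored at extremal cases, with the real work in matching the two recursions — is the right strategy and is what the paper in fact does. But your proposed mechanism has concrete gaps at precisely the point you flag as delicate. The recursion the paper actually uses is cup product with the (equivariant) Schubert divisor class $D$: it proves an equivariant Monk's rule for positroid classes (Proposition \ref{P:monk}) by pushing the flag-manifold Monk formula through the Richardson model, and shows (Lemma \ref{L:recursive}) that this recursion, together with the \emph{point} classes $[\Pi_{t_{w\cdot\omega_k}}]$ as the anchor, uniquely determines all positroid classes — the key being that $D-D|_I$ is nonvanishing at every fixed point other than $I$. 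Your proposal instead anchors at Schubert varieties and propagates by ``multiplication by affine simple reflections in the nilHecke algebra'' / Bott–Samelson moves; but divided-difference operators do not act usefully on $H^*(\Gr(k,n))$ (only $\partial_k$ survives $S_k\times S_{n-k}$-invariance, and it leaves the Grassmannian), and you would still need to know that the recursion closes inside the family of positroid classes. The paper makes the comparison possible via a construction you do not mention at all and which is really the crux: a topological map $p:\Gr(k,n)\to LU_n/T\simeq\AFl(n)$ under which $p^*(\xi^f)=[\Pi_f]$ for $f\in\Bound(k,n)$ (Theorem \ref{T:posaffineflags}); the Chevalley formula for $\xi^{s_0}\cdot\xi^f$ on the affine side then matches the Monk rule term-by-term, and the statement about $\tF_f$ follows from the previously known identity $r^*(\xi_0^f)=\tF_f$ in $H^*(\Omega SU_n)$.

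Your first suggested route, via the Hodge–Gr\"obner degeneration, also has a gap: that degeneration lives inside $\PP^{\binom{n}{k}-1}$ and is not a family of subvarieties of a fixed $\Gr(k,n)$, so the ``class is constant in a flat family'' argument gives you the class of $\Pi_f$ in $H^*(\PP^{\binom{n}{k}-1})$ (essentially its degree, or at best its $T$-equivariant multidegree), not directly its Schur expansion in $H^*(\Gr(k,n))$. One can in principle recover the Schubert expansion from the equivariant multidegree by localization, but that is a nontrivial extra step, not a bookkeeping convenience, and the paper does not take this route.
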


This is a surprising result in that affine Stanley functions are not
Schur-positive in general, even for this restricted class of affine
permutations. Once restricted to the variables $x_1,\ldots,x_k$, they are!
In Theorem \ref{thm:positiveKTclass}
we give a much stronger abstract positivity result,
for positroid classes in $T$-equivariant $K$-theory.

Our proof of Theorem \ref{thm:affineStanley} is inductive.
In \cite{KLS2} we will give a direct geometric proof of this and
Theorem \ref{T:pairsboundedposet}, by embedding the Grassmannian
in a certain subquotient of the affine flag manifold,
and realizing the positroid decomposition as the transverse pullback
of the affine Bruhat decomposition.

\subsection{Projected Richardson varieties,  quantum cohomology,
  and toric Schur functions}

The following theorem has been widely suspected, but not proved.

\begin{thm*}[Theorem \ref{thm:projectedRichardsons}]
  If $X_u^w$ is a Richardson variety in the full flag manifold
  ($u,w \in S_n$), then its image under projection to $\Gr(k,n)$
  is a positroid variety. If $w$ is required to be a Grassmannian
  permutation, then every positroid variety arises uniquely this way.
\end{thm*}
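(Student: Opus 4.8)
The plan is to analyze the projection map $\pi: \Fl(n) \to \Gr(k,n)$ restricted to a Richardson variety $X_u^w = \overline{X_u \cap X^w}$, where $X_u$ is a Schubert cell and $X^w$ an opposite Schubert cell. First I would recall that $\pi$ is a fiber bundle with fiber $\Fl(k) \times \Fl(n-k)$, and that $\pi(X^w)$ for $w$ a Grassmannian permutation is exactly a Schubert variety $X^{w}$ in $\Gr(k,n)$, while $\pi(X_u)$ is a (cyclically shifted, or opposite) Schubert variety; the nontrivial content is that the image of the \emph{intersection} is cut out nicely. The key geometric input is that the positroid variety is, by definition (or by the first cited Theorem in the excerpt on normality and Plücker generation), the intersection of $n$ cyclically rotated Schubert varieties, so the strategy is to show $\pi(X_u^w)$ equals such an intersection.

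The key steps, in order: (1) Reduce to the open strata — show $\pi$ maps the open Richardson $\mathring{X}_u^w$ onto an open positroid stratum, by tracking what the Bruhat and opposite-Bruhat conditions on a full flag $F_\bullet$ become for the single subspace $F_k$; this is where the cyclic-shift structure emerges, since the position of $F_k$ relative to the $n$ cyclic rotations of the standard flag is controlled by the pair $(u,w)$. (2) Use the juggling/affine-permutation bijection from Section \ref{ssec:jugafftnn} to match the pair $(u,w)$ with a bounded affine permutation $f$, and hence identify $\pi(\mathring{X}_u^w)$ with the open positroid variety $\Pio_f$. (3) Take closures: argue $\overline{\pi(\mathring{X}_u^w)} = \pi(X_u^w)$ using properness of $\pi$ (so the image is closed) together with irreducibility of $X_u^w$, giving that $\pi(X_u^w)$ is the positroid variety $\Pi_f$. (4) For the uniqueness/surjectivity clause, fix $w$ to be the minimal-length Grassmannian permutation in its coset $wW_P$; then show that as $u$ ranges over $S_n$ with $u \le w$, the resulting $f$ ranges bijectively over all bounded affine permutations (equivalently all positroids), invoking the order-isomorphism of Theorem \ref{T:pairsboundedposet} between such pairs $(u,w)$ and the bounded juggling patterns.

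I would organize the bookkeeping through the explicit dictionary: a Grassmannian permutation $w$ together with $u \le w$ determines, and is determined by, a ``bounded'' pair, and the excerpt's Theorem \ref{T:pairsboundedposet} is named ``pairs, bounded, poset,'' strongly suggesting exactly this correspondence is available to cite. The main obstacle I expect is Step (1): carefully showing that the Schubert and opposite-Schubert incidence conditions defining $X_u^w$ in the full flag manifold project to precisely the union of cyclic-shift Schubert conditions on $F_k$, with no conditions lost and none spuriously added — in other words, that the image is \emph{scheme-theoretically} (or at least set-theoretically, then refined) the intersection of cyclic Schubert varieties rather than something smaller. This requires either a direct rank-condition computation comparing $\dim(F_k \cap E_j)$ for the various cyclic reference flags $E_\bullet$ against the jump sequence of $(u,w)$, or an appeal to the Frobenius-splitting compatibility results (Brion--Lakshmibai--Littelmann) cited in the abstract, which would let one transfer the scheme structure along the proper map $\pi$. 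A secondary subtlety is verifying that each positroid variety arises from a Grassmannian $w$ \emph{uniquely} — i.e.\ that the fiber of the construction over a given positroid is a single pair — which should follow once one checks that the minimal coset representative $w$ is forced by the positroid's Grassmann necklace and then $u$ is recovered from the remaining data.
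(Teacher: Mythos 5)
Your skeleton resembles the paper's in outline, but Step~(1) hides a genuine gap: when $u \le w$ in Bruhat order but $u \not\le_k w$ in $k$-Bruhat order, the open Richardson $\Xo_u^w$ does \emph{not} project onto a single open positroid stratum. The paper gives an explicit counterexample with $(k,n)=(1,3)$, $u=123$, $w=321$: there $\pi(\Xo_u^w) = \{(x_1:x_2:x_3)\in\PP^2 : x_1\ne 0,\ x_3\ne 0\}$, which is a union of \emph{two} open positroid strata. Taking closures as in your Step~(3) does not then automatically yield a positroid variety; one must argue separately that the closure of such a union is a single $\Pi_f$. The fix you are missing is the Demazure-product argument of Proposition~\ref{P:EquivalenceClassClosed}: replace $(u,w)$ by $(ux,\ w\circ x)$, where $x\in S_k\times S_{n-k}$ is chosen so that $ux$ is anti-Grassmannian and $\circ$ is the Demazure product; then $ux\le_k w\circ x$, and a fiberwise $\PP^1$ case analysis shows $\pi(X_u^w)=\pi(X_{ux}^{w\circ x})$, reducing the general closed-Richardson statement to the $\le_k$ case.

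You also underestimate what Step~(1) requires even when $u\le_k w$. The difficulty is not ``tracking rank conditions on $F_k$'' directly but controlling the fiber of $\pi$: over a fixed $V=F_k$ there is a $\Fl(k)\times\Fl(n-k)$'s worth of lifts, and one must determine which lift, if any, lies in $\Xo_u^w$. The paper's device, following Rietsch, is the ``closest flag'' $E_\bullet(V)$: when $w$ is Grassmannian, any flag in $\Xo^w$ lying over $V$ is \emph{forced} to equal $E_\bullet(V)$ (Lemma~\ref{lem:GrassSchubert}), which pins the problem down to a single canonical lift, after which the cyclic rank matrix of $V$ can be read off and matched to $f_{u,w}$. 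Reducing from an arbitrary $u\le_k w$ to the case $w\in\GS$ is itself a $\PP^1$-bundle argument (Proposition~\ref{P:EquivalenceClass}), not merely a combinatorial renaming. Your Step~(4) is sound: once $w$ is Grassmannian, $u\le w$ already forces $u\le_k w$ (Corollary~\ref{C:kBruhatGrassmann}), so the uniqueness clause does reduce to the poset isomorphism of Theorem~\ref{T:pairsboundedposet} as you anticipated.
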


\begin{remark}
One can intersect the Richardson varieties and the positroid
varieties with $\Gr(k,n)_{\geq 0}$, the totally nonnegative part of
the Grassmannian, either in the sense of Postnikov~(\cite{Pos}) or
of Lusztig~(\cite{Lus}).  That the two notions of total
nonnegativity agree is not obvious, but not difficult~\cite{Rie09}.
Theorem~3.8 of \cite{Pos} (which relies on the results of~\cite{MR}
and~\cite{RW}) states that Postnikov's and Lusztig's stratifications
of $\Gr(k,n)_{\geq 0}$ obtained from these intersections coincide.
We thank Konni Rietsch for explaining these issues to us.
Theorem~\ref{thm:projectedRichardsons} can be thought of as a
complex analogue of~\cite[Theorem~3.8]{Pos}; it implies but does not
follow from~\cite[Theorem~3.8]{Pos}.
\end{remark}

In \cite{BKT}, Buch, Kresch, and Tamvakis related quantum Schubert
calculus on Grassmannians to ordinary Schubert calculus on $2$-step
partial flag manifolds.  In \cite{PosQH}, Postnikov showed that the
structure constants of the quantum cohomology of the Grassmannian
were encoded in symmetric functions he called toric Schur
polynomials. We connect these ideas to positroid varieties:

\begin{thm*}[Theorem \ref{thrm:QuantumPositroid}]
Let $S \subset \Gr(k,n)$ be the union of all genus-zero stable
curves of degree $d$ which intersect a fixed Schubert variety $X$
and opposite Schubert variety $Y$.  Suppose there is a non-trivial
quantum problem associated to $X,Y$ and $d$.  Then $S$ is a
positroid variety: as a projected Richardson variety it is obtained
by a pull-push from the 2-step flag variety considered in
\cite{BKT}. Its cohomology class is given by the toric Schur
polynomial of \cite{PosQH}.
\end{thm*}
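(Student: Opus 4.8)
The plan is to exhibit $S$ as a pull-push of a Richardson variety through the Buch--Kresch--Tamvakis correspondence, and then invoke Theorem~\ref{thm:projectedRichardsons}. Set $Z := \Fl(k-d,\,k,\,k+d;\,n)$, the three-step flag variety, with its projections $\pi\colon Z\to\Gr(k,n)$ (remembering the $k$-plane) and $\rho\colon Z\to\Fl(k-d,\,k+d;\,n)$ (forgetting it); the target of $\rho$ is the two-step flag variety studied in \cite{BKT}. The fibre of $\rho$ over a flag $(A\subseteq B)$ is the sub-Grassmannian $\Gr_{A,B} := \{V : A\subseteq V\subseteq B\}\cong\Gr(d,2d)$, and $\pi$ carries it isomorphically onto the corresponding sub-Grassmannian of $\Gr(k,n)$; because fixing $A$ makes the Plücker coordinates of $V$ linear in those of $V/A$, this embedding $\Gr_{A,B}\hookrightarrow\Gr(k,n)$ is projectively linear, hence preserves curve degrees. (When the non-triviality hypothesis forces $k-d<0$ or $k+d>n$ one replaces these by $0$ and $n$; I suppress this.)

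The first step is to prove the set equality $S=\pi(\rho^{-1}(R))$, where $R:=\rho(\pi^{-1}(X))\cap\rho(\pi^{-1}(Y))$ inside $\Fl(k-d,k+d;n)$. For the inclusion $S\subseteq\pi(\rho^{-1}(R))$, an irreducible degree-$d$ curve $C$ through $V$ has kernel $\bigcap_{V'\in C}V'$ of dimension $\geq k-d$ and span $\sum_{V'\in C}V'$ of dimension $\leq k+d$; choosing $(A,B)$ with $\bigcap_{V'\in C}V'\supseteq A$ and $\sum_{V'\in C}V'\subseteq B$ of the exact dimensions puts $C$ inside $\Gr_{A,B}$, so if $C$ meets $X$ and $Y$ then $(A,B)\in R$ and $(A,V,B)\in\rho^{-1}(R)$; since $\overline{M}_{0,m}(\Gr(k,n),d)$ is irreducible, a general stable curve is a degeneration of irreducible ones, which reduces the general case to this one by a limiting argument. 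For the reverse inclusion, given $(A,V,B)$ with $\Gr_{A,B}$ meeting $X$ at some $x$ and $Y$ at some $y$, I must produce a genus-zero degree-$d$ stable curve through $V$, $x$, $y$; by degree-preservation it is enough to find one of degree $d$ inside $\Gr_{A,B}\cong\Gr(d,2d)$, and since the evaluation map $\overline{M}_{0,3}(\Gr(d,2d),d)\to\Gr(d,2d)^3$ is proper and (classically, as used in \cite{BKT}) dominant --- the relevant Gromov--Witten number of $\Gr(d,2d)$ being positive --- it is surjective, which yields the curve.

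With the set equality established, the geometric conclusion is formal. The variety $\pi^{-1}(X)$ is a Schubert variety in $Z$, so its image $\rho(\pi^{-1}(X))$ under the $G/P'\to G/P$ projection $\rho$ is a Schubert variety in $\Fl(k-d,k+d;n)$ (these are the Buch--Kresch--Tamvakis classes), and likewise $\rho(\pi^{-1}(Y))$ is an opposite Schubert variety; hence $R$ is a Richardson variety there. Its preimage $\rho^{-1}(R)$ is a Richardson variety in $Z$, and pulling back once more along $\Fl_n=G/B\to Z$ yields a Richardson variety $\widehat R\subseteq G/B$ whose image under $G/B\to\Gr(k,n)$ equals $\pi(\rho^{-1}(R))=S$ (the projection $G/B\to\Gr(k,n)$ factors through $Z$, and $G/B\to Z$ is surjective). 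By Theorem~\ref{thm:projectedRichardsons}, $S$ is a positroid variety, presented exactly as the pull-push of a Richardson variety from the two-step flag variety of \cite{BKT}.

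Finally, for the cohomology class one shows $\pi\colon\rho^{-1}(R)\to S$ is birational: a general point of $S$ lies on a unique degree-$d$ stable curve meeting general translates of $X$ and $Y$, which is the content of the non-triviality hypothesis together with the kernel-span analysis of \cite{BKT}. Then $[S]=\pi_*\rho^*\bigl([\rho(\pi^{-1}(X))]\cdot[\rho(\pi^{-1}(Y))]\bigr)$ by the projection formula; by the main theorem of \cite{BKT} the pairing of this class with $\sigma_\nu$ in $\Gr(k,n)$ is the three-point genus-zero degree-$d$ Gromov--Witten invariant $\langle[X],[Y],\sigma_\nu\rangle_d$ for every $\nu$ (alternatively one computes $\langle[S],\sigma_\nu\rangle$ directly as the number of degree-$d$ curves through general translates of $X$, $Y$, and the Schubert variety dual to $\sigma_\nu$), and by \cite{PosQH} the symmetric function in $x_1,\dots,x_k$ carrying these $\sigma_\nu$-coefficients is the toric Schur polynomial attached to $X,Y,d$. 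The main obstacle is the set equality $S=\pi(\rho^{-1}(R))$ --- handling reducible and special stable curves and the passage to closures --- together with the accompanying birationality of $\pi$ on $\rho^{-1}(R)$, i.e. the fact, built into ``there is a non-trivial quantum problem'', that the kernel-span correspondence of \cite{BKT} is generically one-to-one in this range.
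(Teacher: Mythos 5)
Your route is genuinely different from the paper's. You establish the set equality $S = \pi(\rho^{-1}(R))$ directly by the kernel-span geometry of degree-$d$ curves and then invoke Theorem~\ref{thm:projectedRichardsons}, whereas the paper works the other way around: it computes the fixed-point set $T(I,J,d)$ combinatorially (Proposition~\ref{P:IJd}), pins $S$ down as $\Pi_f$ by Pl\"ucker vanishing plus a dimension count (Lemma~\ref{L:boxes} and~\eqref{E:dimS}), identifies the class via its own Theorem~\ref{thm:affineStanley} together with Postnikov's quantum result, and only then proves $S = q_1(q_2^{-1}(Y(I,J,d)))$ as a separate proposition. Your version has the virtue of making the BKT pull-push description primary, and of not relying on Theorem~\ref{thm:affineStanley}. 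Two small remarks on the set equality: the kernel-span bound already applies to reducible stable curves directly (since degrees add over components, the intersection of all $V'$ on the curve still has dimension $\geq k-d$), so the limiting argument you worry about is not needed; and on the reverse inclusion, BKT's Proposition~1 only produces a degree-$d$ curve through $U$, $V$, and a \emph{generic} third point of $\Gr_{A,B}$, so you get density of $\pi(\rho^{-1}(R))$ in $\Gr_{A,B}$-slices and must finish by closedness of $S$, as the paper does in its proof of the corresponding proposition.

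The genuine gap is in the cohomology-class step. You assert that $\pi\colon\rho^{-1}(R)\to S$ is birational because ``a general point of $S$ lies on a unique degree-$d$ stable curve meeting $X$ and $Y$, which is the content of the non-triviality hypothesis.'' That is not what non-triviality gives you: non-triviality says a Gromov--Witten number is positive, i.e.\ the degree of $p_3\colon E(I,J,d)\to S$ is at least $1$, not that it equals $1$; and even granting uniqueness of the curve, you conflate uniqueness of the curve with uniqueness of the pair $(A,B)$ — distinct $(A,B)\in R$ with $A\subseteq V\subseteq B$ need not come from distinct curves. In fact, the ``$\sum c_i = 1$'' statement is exactly what the paper obtains as a \emph{consequence} of the cohomological identity $[\Pi_f]_0 = \psi(\tF_f) = \text{toric Schur}$, not as an input, so your argument as written is circular. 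To repair it without invoking Theorem~\ref{thm:affineStanley}, you could instead show that the Richardson $\widehat R\subset G/B$ pulling back $\rho^{-1}(R)$ has $\dim\widehat R = \dim S$ (a dimension count along the lines of Lemma~\ref{L:boxes}, using the validity hypothesis) and then appeal to Theorem~\ref{T:ProjectedClosedRichardson} together with Theorem~\ref{thm:projectedRichardsons}: the projection $\pi(\widehat R) = \pi(X_{u'x}^{w'\circ x})$ with $u'x\leq_k w'\circ x$, and equality of dimensions forces $x=e$, so $\pi|_{\widehat R}$ is birational. As it stands, the birationality — and hence the identification of $[S]$ — is not actually established.
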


The last statement of the theorem is consistent with the connection
between affine Stanley symmetric functions and toric Schur functions
(see \cite{Lam1}).


\subsection{The main tools}

We call the Richardson varieties $\{X_u^w : w\;$ Grassmannian$\}$
the {\em Richardson
models} of positroid varieties. They are known to be ``compatibly
Frobenius split'' inside the flag manifold; with this one can easily
show the same of positroid varieties inside the Grassmannian, which
is the key step in showing that they are defined by the vanishing of
some Pl\"ucker coordinates.

Results of \cite{BrionLakshmibai} on Richardson varieties let us exploit this
projection to compute sheaf cohomology on positroid varieties:

\begin{thm*}[Theorem \ref{T:crepant}]
  Let $\O(1)$ denote the antitautological line bundle on the Grassmannian,
  and $\pi : X_u^w \onto \Pi_f$ the projection of a Richardson model onto
  a positroid variety. Then $\Gamma(\Pi_f; \O(1))$ matches
  $\Gamma(X_u^w; \pi^* \O(1))$.
  Moreover, the higher sheaf cohomology (on either variety) vanishes.
\end{thm*}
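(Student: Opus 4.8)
The plan is to exploit the projection $\pi : X_u^w \onto \Pi_f$ together with the Richardson-model structure, rather than attempting to study $\Pi_f$ directly inside $\Gr(k,n)$. First I would recall the setup from \cite{BrionLakshmibai}: a projected Richardson variety $\Pi_f$, being the image of the Richardson variety $X_u^w$ under the Grassmannian bundle projection $p : Fl(n) \to \Gr(k,n)$, comes with a map $\pi = p|_{X_u^w}$ whose general fiber is itself a Richardson variety in the fiber $P/B$ (here $P$ is the maximal parabolic defining $\Gr(k,n)$). The key structural input I would invoke is that this $\pi$ is a \emph{rational resolution} — equivalently, that $\pi_* \O_{X_u^w} = \O_{\Pi_f}$ and $R^i\pi_* \O_{X_u^w} = 0$ for $i>0$ — which follows from the Frobenius-splitting results of Brion--Lakshmibai (the Richardson varieties are compatibly split, the fibers of $\pi$ are again normal Richardson varieties with $H^{>0}(\text{fiber};\O) = 0$, and one runs the standard cohomology-and-base-change argument). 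This is what makes $\pi$ ``crepant'' in the sense the theorem's label alludes to.

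Next I would pull back the line bundle. Since $\O(1)$ on $\Gr(k,n)$ pulls back under $p$ to a $P$-dominant (indeed $B$-dominant, restricted from a dominant weight of $G$) line bundle on $Fl(n)$, the projection formula gives
\[
R^i\pi_*\big(\pi^*\O(1)\big) \;\cong\; \O(1)\otimes R^i\pi_*\O_{X_u^w}
\;=\;
\begin{cases}\O(1)|_{\Pi_f}, & i=0,\\[2pt] 0,& i>0.\end{cases}
\]
Feeding this into the Leray spectral sequence for $\pi$ collapses it, yielding $H^i(X_u^w;\pi^*\O(1)) \cong H^i(\Pi_f;\O(1))$ for all $i$. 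This already gives the first assertion ($\Gamma$'s agree) and reduces the vanishing statement on either variety to the vanishing statement on the other. So it remains to show $H^{>0}(X_u^w;\pi^*\O(1)) = 0$, i.e. the higher cohomology of an effective, $B$-dominant line bundle restricted to a Richardson variety in the flag manifold vanishes — and this is exactly the kind of statement the compatible Frobenius splitting of Richardson varieties in $Fl(n)$ is designed to deliver (Brion--Lakshmibai; the Richardson variety $X_u^w$ is compatibly split in $Fl(n)$, and $\pi^*\O(1)$ is the restriction of a globally generated line bundle, so its higher cohomology on $X_u^w$ vanishes by the standard Mehta--Ramanathan / Ramanan--Ramanathan argument).

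The main obstacle I anticipate is not the spectral-sequence bookkeeping but getting the rational-resolution property $R^i\pi_*\O = 0$ and $\pi_*\O = \O$ cleanly, since $\pi$ is not proper with \emph{equidimensional} fibers in general and one must be careful that the generic fiber being a Richardson variety with vanishing structure-sheaf cohomology propagates to \emph{all} fibers. I would handle this by citing the precise statement in \cite{BrionLakshmibai} that the fibers of such a projection are normal, and are themselves (possibly lower-dimensional) Richardson varieties in the relevant $P/B$, each of which has connected, rational, $H^{>0}(\O)=0$ structure; then semicontinuity plus base change forces $R^i\pi_*\O_{X_u^w}$ to be locally free of the expected rank, hence $0$ for $i>0$ and $\O_{\Pi_f}$ for $i=0$ (using that $\Pi_f$ is normal, which we have from Corollary~\ref{c:Normal}, or directly from the splitting). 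Everything downstream is then formal.
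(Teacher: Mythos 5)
Your overall strategy — first establish $\pi_*\O_{X_u^w}=\O_{\Pi_f}$ and $R^{i>0}\pi_*\O_{X_u^w}=0$, then run projection formula + Leray — is the reverse of what the paper does, and the reversal creates two real problems.

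First, a circularity. You invoke normality of $\Pi_f$ (citing Corollary~\ref{c:Normal}) to pin down $\pi_*\O_{X_u^w}=\O_{\Pi_f}$. But in the paper's logical order, Corollary~\ref{c:Normal} is \emph{deduced from} Theorem~\ref{T:crepant}, precisely using $\pi_*\O_{X_u^w}=\O_{\Pi_f}$. Your fallback, ``or directly from the splitting,'' does not rescue this: Frobenius splitting gives reducedness, not normality, and there are split non-normal varieties. Second, a genuine gap in the cohomology-and-base-change step. As you yourself flag, the fibers of $\pi$ jump in dimension (Lusztig's result makes the generic fiber a point, while boundary fibers are positive-dimensional Richardson varieties), so $\pi$ is not flat; the Grauert/cohomology-and-base-change package you want to apply requires flatness, and knowing each fiber has $H^{>0}(\O)=0$ does not by itself give $R^{i}\pi_*\O=0$ without it. Finally, your last reduction — vanishing of $H^{>0}(X_u^w;\pi^*\O(1))$ — is asserted via compatible splitting, but $\pi^*\O(1)$ on $X_u^w$ is only semiample, not ample, so Lemma~\ref{lem:splittings}(5) does not apply directly; the ``standard'' argument you gesture at would need to be spelled out.

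The paper sidesteps all three issues by going the other way. It starts from the commuting square relating $X_u^w,\Pi_f,\Fl(n),\Gr(k,n)$, observes that the bottom map $H^i(\Gr(k,n);\O(N))\to H^i(\Fl(n);\pi^*\O(N))$ is an isomorphism with both sides vanishing for $i>0$ (Borel–Weil), and that the left map $H^i(\Fl(n);\pi^*\O(N))\to H^i(X_u^w;\pi^*\O(N))$ is surjective by \cite[Proposition~1]{BrionLakshmibai} (this is where Frobenius splitting enters — a compatibly-split-subvariety restriction surjection, not a vanishing theorem for semiample bundles). Chasing the square gives $H^0$ agreement and vanishing of $H^{>0}$ on $X_u^w$ for free. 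Only then does the paper derive $\pi_*\O_{X_u^w}=\O_{\Pi_f}$ (by showing the cokernel $K$ has $H^0(K\otimes\O(N))=0$ for $N\gg 0$) and $R^{i>0}\pi_*\O_{X_u^w}=0$ (via Leray with $N\gg 0$ and Serre vanishing). That is, the paper \emph{outputs} the rational-resolution properties you wanted to \emph{input}, using no information about individual fibers, no flatness, and no prior normality of $\Pi_f$. If you want to salvage your route, you would need a flatness-free argument for $R^i\pi_*\O=0$ — and the cleanest such argument is essentially the paper's.
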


This is already enough to allow us to prove that positroid varieties
are normal, Cohen-Macaulay, and have rational singularities.
Moreover, the space $\Gamma(X_u^w; \pi^* \O(1))$ can be computed
using Littelmann's ``Lakshmibai-Seshadri paths''
\cite[Theorem~32(ii)]{LL}.

Hodge defined a degeneration of the Grassmannian to a union of
projective spaces, one for each standard Young tableau in the
$k\times (n-k)$ rectangle. Today we would call this a Gr\"obner
degeneration to the projective Stanley-Reisner scheme of a certain
simplicial complex, the Bj\"orner-Wachs order complex of the poset
of partitions in the rectangle.

One property of Gr\"obner degnerations of schemes is that they define
natural degenerations of any subscheme.
(Note that even when the ambient scheme stays reduced in the degenerate
limit, this is usually not true for a subscheme.)
So one may ask what Hodge's
degeneration does to positroid varieties.

\begin{thm*}[Theorems \ref{T:HodgePedoe} and \ref{T:shelling},
  Lemma \ref{L:Thin}]
  Inside the Hodge degeneration, any positroid variety $X_f$ degenerates to
  the projective Stanley-Reisner scheme of a shellable ball $\Delta(\Pi_f)$,
  and in particular stays reduced. Indeed, the degeneration of any union
  $\bigcup_{f\in F} \Pi_f$ of positroid varieties is reduced.

  If $\Pi_f \supset \Pi_g$ is a proper containment of positroid varieties,
  then $\Delta(\Pi_g)$ lies in the boundary sphere $\partial \Delta(\Pi_f)$
  of the ball $\Delta(\Pi_f)$.
\end{thm*}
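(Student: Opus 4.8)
The plan is to run the Hodge--Gr\"obner degeneration on the Grassmannian and track its effect on $\Pi_f$, using the scheme-theoretic Pl\"ucker description (Theorem~\ref{T:LinearGeneration}) together with Frobenius splitting. Recall Hodge's straightening law: for a suitable term order the quadratic Pl\"ucker relations form a Gr\"obner basis of the ideal of $\Gr(k,n)$, and the resulting initial ideal is the Stanley--Reisner ideal of the order complex $\Delta(P_{k,n})$, where $P_{k,n}$ is the poset of $k$-element subsets of $[n]$ under the Gale order (isomorphic to the poset of partitions in the $k\times(n-k)$ rectangle); that order complex is a shellable ball of dimension $k(n-k)$, so the case $\Pi_f=\Gr(k,n)$ is classical. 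For general $f$, write $M_f$ for the positroid of $f$; since $I(\Pi_f)=I(\Gr(k,n))+(p_J:J\notin M_f)$, the natural candidate is that $\Delta(\Pi_f)$ is the full subcomplex of $\Delta(P_{k,n})$ on the vertex set $M_f$ --- equivalently, the order complex of the induced subposet of bases of the positroid --- and that $\mathrm{in}\,I(\Pi_f)$ is its Stanley--Reisner ideal.

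First I would verify this candidate, i.e. that $\{\text{Pl\"ucker relations}\}\cup\{p_J:J\notin M_f\}$ is a Gr\"obner basis of $I(\Pi_f)$, equivalently that the flat limit of $\Pi_f$ is the \emph{reduced} scheme $\SR(\Delta(\Pi_f))$. This \emph{fails} for general realizable matroids in place of positroids --- $S$-polynomials can force new generators whose leading terms lie outside $(p_J:J\notin M_f)+\mathrm{in}\,I(\Gr(k,n))$ --- so the positroid hypothesis is used essentially. I would combine two inputs. (i) Positroid varieties are compatibly Frobenius split in $\Gr(k,n)$ (via their Richardson models, by the results of Brion--Lakshmibai), and the Hodge degeneration carries the Frobenius splitting of $\Gr(k,n)$ to the canonical splitting of $\SR(\Delta(P_{k,n}))$, whose compatibly split closed subschemes are precisely the $\SR$ schemes of subcomplexes of $\Delta(P_{k,n})$; hence the flat limit of $\Pi_f$ is automatically reduced and equals $\SR(\Delta)$ for some subcomplex $\Delta\subseteq\Delta(P_{k,n})$, necessarily with vertex set $M_f$, so $\Delta\subseteq\Delta(\Pi_f)$. (ii) A Hilbert-series count forces $\Delta=\Delta(\Pi_f)$: the ($T$-equivariant) Hilbert series of $\Pi_f$, computed through Theorem~\ref{T:crepant} and Littelmann's Lakshmibai--Seshadri paths, is the multichain generating function of the poset $M_f$, which is exactly the Hilbert series of $\SR(\Delta(\Pi_f))$. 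Applying the same splitting argument to $\bigcup_{f\in F}\Pi_f$ --- a union of compatibly split subvarieties is compatibly split, hence reduced, and its flat limit is compatibly split and set-theoretically $\bigcup_F \mathrm{in}(\Pi_f)$, so it equals $\SR(\bigcup_F\Delta(\Pi_f))$ --- gives the reducedness of arbitrary unions.

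The geometric heart is to show $\Delta(\Pi_f)$ is a shellable ball. I would show: (a) $M_f$ is graded of rank $\dim\Pi_f=k(n-k)-\codim\Pi_f$, so $\Delta(\Pi_f)$ is pure of that dimension (consistent with the count in (ii)); (b) every rank-two interval of $M_f$ has at most two coatoms, so every ridge of $\Delta(\Pi_f)$ lies in at most two facets --- the thinness of Lemma~\ref{L:Thin}; (c) $\Delta(\Pi_f)$ is shellable. For (c) the natural attempt is a lexicographic/$EL$-shelling obtained by adapting a Bj\"orner--Wachs $EL$-labeling of $P_{k,n}$ to the subposet $M_f$; the restriction of a shelling of $\Delta(P_{k,n})$ is not automatically a shelling, so one must exploit the positroid structure --- that $M_f$ is an intersection of cyclic rotations of Schubert matroids --- or, equivalently, use the $EL$-shellability of Postnikov's cyclic Bruhat order (Corollary~\ref{cor:shell}) via the order-preserving map of Theorem~\ref{T:pairsboundedposet}. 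A pure, shellable complex in which every ridge lies in at most two facets is a ball or a sphere (Danaraj--Klee); and since $M_f$ has a Gale-minimum (the greedy basis of the positroid, i.e. the first element of Postnikov's Grassmann necklace), $\Delta(\Pi_f)$ is a cone, hence contractible, hence not a sphere --- therefore a ball.

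Finally, if $\Pi_f\supsetneq\Pi_g$ then, both being cut out scheme-theoretically by Pl\"ucker coordinates, $M_g\subsetneq M_f$, so $\Delta(\Pi_g)\subseteq\Delta(\Pi_f)$, and $\dim\Delta(\Pi_g)=\dim\Pi_g<\dim\Pi_f$ since $\Pi_g$ is a proper irreducible closed subvariety. To place $\Delta(\Pi_g)$ in the boundary sphere I would identify $\partial\Delta(\Pi_f)$ with the degeneration of $\Pi_f\setminus\Pio_f=\bigcup_h\Pi_h$ (the union over maximal positroid subvarieties of $\Pi_f$, from the known closure order on positroid strata), by checking that the open positroid stratum $\Pio_f$ degenerates onto the interior of the ball --- both being the locus where none of the Grassmann-necklace Pl\"ucker coordinates vanish. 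Then $\partial\Delta(\Pi_f)=\bigcup_h\Delta(\Pi_h)$, and since $\Pi_g\subseteq\Pi_f\setminus\Pio_f$ we get $\Delta(\Pi_g)\subseteq\partial\Delta(\Pi_f)$. I expect step (c) to be the main obstacle: establishing shellability of the bases-poset of a positroid is where the combinatorics of bounded juggling patterns and the affine Weyl group really enter, and where a naive reduction from the shelling of $\Gr(k,n)$ breaks down.
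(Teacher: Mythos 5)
Your plan misidentifies the central object, and that error propagates through the entire argument. You posit that $\Delta(\Pi_f)$ is ``the full subcomplex of $\Delta(P_{k,n})$ on the vertex set $M_f$ --- equivalently, the order complex of the induced subposet of bases of the positroid.'' This is false, and the paper goes out of its way to rule it out. The correct complex is $\sigma_k(\Delta([u,w]))$, the \emph{image} under the set-valued map $\sigma_k: S_n \to \binom{[n]}{k}$ of the order complex of the Bruhat interval $[u,w]$ in $S_n$ (with $X_u^w$ a Richardson model for $\Pi_f$). This is a quotient, not a subcomplex, and it is emphatically not an order complex: Remark~\ref{rem:nonfaces} exhibits, for $k=2$, $u=[2134\cdots n]$, $w=[(n-1)\,n\,1\,2\cdots(n-2)]$, a complex whose vertex set is all of $\binom{[n]}{2}$ but for which the chain $\{12\}<\{13\}<\{24\}<\cdots<\{(n-1)n\}$ is a \emph{minimal nonface} --- so minimal nonfaces have unbounded size and $\Delta(\Pi_f)$ is not the flag complex of any poset. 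Remark~\ref{rem:not balanced} further shows it need not even be balanced, as every order complex is.

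With the wrong complex, the rest of your plan fails or aims at the wrong target. Your step~(ii), the Hilbert-series match, is not the ``multichain generating function of the poset $M_f$'': the Lakshmibai--Seshadri path count of \cite[Theorem 32(ii)]{LL}, used in Proposition~\ref{P:sameHilbert}, enumerates precisely the faces of $\sigma_k(\Delta([u,w]))$, which are strictly fewer than the chains of $M_f$. Your step~(c) cannot be an $EL$-shelling of the poset $M_f$ or a restriction of Bj\"orner--Wachs' shelling of $P_{k,n}$, since the object is not an order complex; the paper's Theorem~\ref{T:shelling} constructs a shelling by a lexicographic order on \emph{chains in $k$-Bruhat order} together with a ``swap propagation'' operation adapted to the $\sigma_k$-fibers, and the paper explicitly notes that $\Delta([u,w]_k)$ itself (before applying $\sigma_k$) is \emph{not} shellable \cite[Section B.7]{BS}, so the identifications are essential. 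Your instinct that ``the positroid hypothesis is used essentially'' is right, but it enters through the Richardson-model / $\sigma_k$ structure, not through an intersection-of-cyclic-Schuberts induction on the matroid. Finally, the paper's proof of Theorem~\ref{T:HodgePedoe} does not rely on transporting the Frobenius splitting into the degenerate limit; it combines the LS-path Hilbert count, the geometric Monk formula of Lemma~\ref{L:GeomMonk} (the hyperplane $\{p_{\sigma_k(u)}=0\}$ cuts $\Pi_u^w$ into $\bigcup_{u'\gtrdot_k u}\Pi_{u'}^w$), and the revlex slice/cone Lemma~\ref{L:revlex} to run an induction on $\ell(w)-\ell(u)$. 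Your Danaraj--Klee ``pure + shellable + thin + cone $\Rightarrow$ ball'' skeleton and the cone-vertex observation (every facet contains $\sigma_k(u)$) are sound, but they must be applied to $\sigma_k(\Delta([u,w]))$, and the thinness (Lemma~\ref{L:Thin}) is a statement about ridges of that complex proved via the uniqueness in Corollary~\ref{C:chainlifting}, not about rank-two intervals of the base poset of the positroid.
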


The facets of $\Delta(\Pi_f)$ correspond to chains in the {\em
$k$-Bruhat order} of \cite{BS,LS}, a weakening of the Bruhat order
on $S_n$; in this way $\Delta(\Pi_f)$ can be constructed from an
order complex in the $k$-Bruhat order {\em with some additional
identifications} of lower-dimensional faces. From our geometric
point of view, $\Delta(\Pi_f)$ is the natural complex, and the
$k$-Bruhat order complex is wholly artificial. Indeed, without these
identifications the order complex is not shellable \cite[Section
B.7]{BS}.

To prove the first statement in the theorem, we use the sheaf
cohomology result above for the reducedness;
the rest is of the
argument is purely combinatorial.
In Section \ref{sec:combgeom}, we use these combinatorial results
to give alternate interpretations (and to some extent, independent proofs)
of the normality and Cohen-Macaulayness of positroid varieties.

 In \cite{KLS3} we plan to show
that most (but not all; see Remark \ref{rem:notPluckerDefined}) of
the results about positroid varieties generalize to arbitrary
projected Richardson varieties.

\subsection*{Acknowledgments}

Our primary debt is of course to Alex Postnikov, for getting us
excited about positroids. We also thank Michel Brion, Leo Mihalcea, Su-Ho Oh, Konni Rietsch,
Frank Sottile, Ben Webster, Lauren Williams, and Milen Yakimov for useful conversations.

\section{Some combinatorial background}

Unless otherwise specified, we shall assume that nonnegative
integers $k$ and $n$ have been fixed, satisfying $0 \leq k \leq n$.

\subsection{Conventions on partitions and permutations}
\label{ssec:conventions}
For integers $a$ and $b$, we write
$[a,b]$ to denote the interval $\{a, a+1, \ldots, b \}$, and $[n]$
to denote the initial interval $\{ 1,2, \ldots, n \}$.
If $i \in \Z$, we let $\bar i \in [n]$ be the unique integer
satisfying $i = \bar i \mod n$.  We write
$\binom{S}{k}$ for the set of $k$-element subsets of $S$.  Thus
$\binom{[n]}{k}$ denotes the set of $k$-element subsets of
$\{1,2,\ldots,n\}$.  We may, for brevity, sometimes write $\{ xy \}$
to mean $\{ x,y \}$.

As is well known, there is a bijection between $\binom{[n]}{k}$ and
the partitions of $\lambda$ contained in a $k \times (n-k)$ box.
There are many classical objects, such as Schubert varieties, which
can be indexed by either of these $\binom{n}{k}$-element sets.  We
will favor the indexing set $\binom{[n]}{k}$, and will only discuss
the indexing by partitions when it becomes essential, in
Section~\ref{sec:cohomology}.

%

We let $S_n$ denote the permutations of the set $[n]$.  A
permutation $w \in S_n$ is written in one-line notation as
$[w(1)w(2)\cdots w(n)]$.  Permutations are multiplied from right to
left so that if $u, w \in S_n$, then $(uw)(i) = u(w(i))$.  Thus
multiplication on the left acts on values, and multiplication on the
right acts on positions.  Let $w \in S_n$ be a permutation.  An
\defn{inversion} of $w$ is a pair $(i,j) \in [n] \times [n]$ such that $i <
j$ and $w(i) > w(j)$.  The length $\ell(w)$ of a permutation $w \in
S_n$ is the number of its inversions.  A factorization $w = uv$ is
called \defn{length-additive} if $\ell(w) = \ell(u) + \ell(v)$.

The longest element $[n(n-1)\cdots 1]$ of $S_n$ is denoted $w_0$.
The permutation $[234 \cdots n 1]$ is denoted $\chi$ (for Coxeter
element).  As a Coxeter
group, $S_n$ is generated by the simple transpositions $\{s_i = [12
\cdots(i-1)(i+1)i(i+2)\cdots n]\}$.

For $k \in [0,n]$, we let $S_k \times S_{n-k} \subset S_n$ denote
the parabolic subgroup of permutations which send $[k]$ to $[k]$ and
$[k+1,n]$ to $[k+1,n]$.  A permutation $w \in S_n$ is called
\defn{Grassmannian} (resp. \defn{anti-Grassmannian}) if it is
minimal (resp. maximal) length in its coset $w(S_k \times S_{n-k})$;
the set of such permutations is denoted $\GS$ (resp. $\AGS$).

If $w \in S_n$ and $k \in [0,n]$, then $\sigma_k(w)$ denotes the set
$w([k]) \in \binom{[n]}{k}$.
Often, we just write $\sigma$ for $\sigma_k$ when no confusion will arise.  The map
$\sigma_k: S_n \to \binom{[n]}{k}$ is a bijection when restricted to $\GS$.

\subsection{Bruhat order and weak order}
We define a partial order $\leq$ on $\binom{[n]}{k}$ as
follows.  For $I = \{i_1 < i_2 < \cdots < i_k\}, J = \{j_1 < j_2
\cdots < j_k\} \in \binom{[n]}{k}$, we write $I \leq J$ if $i_r \leq
j_r$ for $r \in [k]$.

We shall denote the \defn{Bruhat order}, also called the
\defn{strong order}, on $S_n$ by $\leq$ and $\geq$.  One has the
following well known criterion for comparison in Bruhat order: if
$u, w \in S_n$ then $u \leq w$ if and only if $u([k]) \leq w([k])$
for each $k \in [n]$.  Covers in Bruhat order will be denoted by
$\lessdot$ and $\gtrdot$.
The map $\sigma_k: (\GS, \leq) \to \left( \binom{[n]}{k},\leq\right)$
is a poset isomorphism.

The \defn{(left) weak order} $\leq_{\weak}$ on $S_n$ is the
transitive closure of the relations
$$
w \leq_{\weak} s_iw \ \ \ \mbox{if $\ell(s_iw) = \ell(w) + 1$}.
$$
The weak order and Bruhat order agree when restricted to $\GS$.

\subsection{$k$-Bruhat order and the poset $\Q(k,n)$.}
\label{sec:kBruhat}
The \defn{$k$-Bruhat order} \cite{BS,LS} $\leq_k$ on $S_n$ is
defined as follows. Let $u,w \in S_n$.  Then $u$ \defn{$k$-covers}
$w$, written $u \gtrdot_k w$, if and only if $u \gtrdot w$ and
$\pi(u) \neq \pi(w)$.  The $k$-Bruhat order is the partial order on
$S_n$ generated by taking the transitive closure of these cover
relations (which remain cover relations). We let $[u,w]_k \subset
S_n$ denote the interval of $S_n$ in $k$-Bruhat order.  It is shown
in \cite{BS} that every interval $[u, w]_k w$ in $(S_n, \leq_k)$ is
a graded poset with rank $\ell(w)- \ell(u)$.  We have the following
criterion for comparison in $k$-Bruhat order.

\begin{thm}[{\cite[Theorem A]{BS}}]
\label{T:BScriterion} Let $u, w \in S_n$.  Then $u \leq_k w$ if and
only if
\begin{enumerate}
\item $1 \leq a \leq k < b \leq n$ implies $u(a) \leq w(a)$ and $u(b)
\geq w(b)$.
\item If $a < b$, $u(a) < u(b)$, and $w(a) > w(b)$, then $a \leq k <
b$.
\end{enumerate}
\end{thm}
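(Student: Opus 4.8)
The plan is to prove the two implications separately: necessity by inspecting a saturated $k$-Bruhat chain, sufficiency by induction on the statistic $N(u,w):=\sum_{a=1}^{k}\bigl(w(a)-u(a)\bigr)$.

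For necessity, recall that a $k$-cover $u\lessdot_k v$ is exactly a Bruhat cover of the form $v=u\,t_{pq}$ with $p\le k<q$ (here $t_{pq}$ exchanges positions $p$ and $q$), and that being a length-one cover forces $u(p)<u(q)$ together with the condition that no position $j$ with $p<j<q$ has $u(p)<u(j)<u(q)$. Fix a saturated chain $u=u_0\lessdot_k\cdots\lessdot_k u_m=w$. Condition (1) is then immediate: at each step the unique position $\le k$ whose value changes sees it strictly increase, and the unique position $>k$ whose value changes sees it strictly decrease, so along the chain $u_i(a)$ is nondecreasing for $a\le k$ and nonincreasing for $a>k$. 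For condition (2) the point is that a single $k$-cover cannot convert a non-inversion into an inversion among two positions both in $[k]$, nor among two positions both in $[k+1,n]$: within $[k]$ only the value at $p$ moves, and it moves up, and the ``no intermediate value'' clause of the cover condition is precisely what forbids it from overtaking a value sitting to its right inside $[k]$; symmetrically inside $[k+1,n]$, where the value at $q$ moves down. Iterating, if $a<b$, $u(a)<u(b)$ and $w(a)>w(b)$, then $\{a,b\}$ meets both $[k]$ and $[k+1,n]$, i.e.\ $a\le k<b$.

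For sufficiency, suppose (1) and (2) hold. If $N(u,w)=0$ then (1) gives $u(a)=w(a)$ for $a\le k$, so $u$ and $w$ have the same value-set on $[k+1,n]$, and the second half of (1) together with equality of the corresponding sums forces $u(b)=w(b)$ for $b>k$ as well; thus $u=w$. If $N(u,w)>0$, comparing partial sums produces a position $q>k$ with $u(q)>w(q)$; choose $q$ minimal with this property. One then exhibits a position $p\le k$ with $w(q)\le u(p)<u(q)$ for which $t_{pq}$ is length-additive on $u$, sets $v:=u\,t_{pq}$, and checks that $v$ still obeys (1) and (2) relative to $w$. Granting this, $u\lessdot_k v$ (it alters $\sigma_k$, since it exchanges values across the index $k$), while $N(v,w)=N(u,w)-(u(q)-u(p))<N(u,w)$, so $v\le_k w$ by induction and hence $u\le_k w$.

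The main obstacle is this last step: producing $p$ so that $t_{pq}$ is at once a Bruhat cover — the delicate requirement, since a transposition with $u(p)<u(q)$ typically raises the length by more than one — and preserves (1) and (2). Conditions (1) for $v$ amount just to the inequalities $v(p)=u(q)\le w(p)$ and $v(q)=u(p)\ge w(q)$; condition (2) for $v$ is cleanest to check after reformulating (2) as the statement that every inversion of $wu^{-1}$ has its smaller entry in $u([k])$ and its larger entry outside $u([k])$, and then using $w v^{-1}=(wu^{-1})\,t_{u(p)\,u(q)}$ to track which inversions are created or destroyed upon composing with that transposition. I expect the correct $p$ to be essentially forced by the minimality of $q$ together with a ``largest value below $u(q)$ that still lies at or above $w(q)$'' rule, and that verifying length-additivity and the preservation of (2) for this choice is where the real combinatorial work of a complete proof lies.
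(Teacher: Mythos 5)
The paper does not actually prove this theorem: it is quoted verbatim as Theorem~A of Bergeron--Sottile \cite{BS}, so there is no in-text proof against which to compare your attempt. Judging it on its own terms, the two halves fare very differently.

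The necessity direction is correct and essentially complete. A $k$-cover $u\lessdot_k v$ is exactly a Bruhat cover $v=u\,t_{pq}$ with $p\le k<q$, $u(p)<u(q)$, and no position $j$ with $p<j<q$ and $u(p)<u(j)<u(q)$. Along a saturated $k$-chain the value at any position $\le k$ is weakly increasing and the value at any position $>k$ is weakly decreasing, giving (1); and the no-intermediate-value clause, applied to the subranges $p<j\le k$ and $k<j<q$, is precisely what blocks a new inversion from appearing wholly inside $[k]$ or wholly inside $[k+1,n]$, giving (2).

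The sufficiency direction contains a genuine gap, which you flag yourself: you never exhibit the position $p\le k$, never verify that $u\,t_{pq}$ is a length-one cover of $u$, and never verify that (1) and (2) persist for $v=u\,t_{pq}$. Those verifications \emph{are} the theorem. The induction on $N(u,w)=\sum_{a\le k}(w(a)-u(a))$ and the base case $N=0$ are routine; all the substance is in the step you defer. Even the bare existence of a $p\le k$ with $w(q)\le u(p)<u(q)$ does not come for free, and proving the cover property requires ruling out positions $j$ strictly between $p$ and $q$ carrying intermediate values, which needs the minimality of $q$ together with both (1) and (2). Moreover, the bookkeeping device you propose is wrong as stated: it is \emph{not} true that every inversion of $wu^{-1}$ has its smaller entry in $u([k])$ and its larger entry outside. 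Take $k=2$, $u=2134$, $w=u\,t_{23}=2314$, so $u\lessdot_k w$. Then $u([2])=\{1,2\}$ and $wu^{-1}=3214$ inverts the pair of values $\{1,2\}$ (the $2$ sits to the left of the $1$), but both $1$ and $2$ lie in $u([2])$. The correct reformulation of (2) is that every inversion $(i,j)$ of $wu^{-1}$ \emph{that additionally satisfies $u^{-1}(i)<u^{-1}(j)$} has $i\in u([k])$ and $j\notin u([k])$; in the example the offending pair has $u^{-1}(1)=2>1=u^{-1}(2)$ and so carries no information about (2). Dropping that side condition, as you do, means your proposed invariant does not track condition (2), and the inductive step as sketched cannot be carried out.
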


\begin{cor}\label{C:kBruhatGrassmann}
Suppose $u \leq w$ and either (i) $w \in \GS$, or (ii) $u \in \AGS$.
Then $u \leq_k w$.
\end{cor}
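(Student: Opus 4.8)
The plan is simply to verify the two conditions of the comparison criterion in Theorem~\ref{T:BScriterion}. I will use the subset form of Bruhat order recalled above, namely $u \le w$ iff $u([j]) \le w([j])$ for all $j \in [n]$, together with its complementary reformulation $u(\{j+1,\dots,n\}) \ge w(\{j+1,\dots,n\})$ for all $j$ (complementation being an order-reversing bijection $\binom{[n]}{j}\to\binom{[n]}{n-j}$), and the trivial remark that $I\le J$ in $\binom{[n]}{j}$ forces $\min I\le\min J$ and $\max I\le\max J$.

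\textbf{Case (i): $w\in\GS$.} Being Grassmannian means $w(1)<\dots<w(k)$ and $w(k+1)<\dots<w(n)$, so $w$ has at most one descent, at position $k$. Hence whenever $a<b$ and $w(a)>w(b)$ one automatically has $a\le k<b$; in particular condition (2) of Theorem~\ref{T:BScriterion} holds vacuously, its hypothesis already forcing its conclusion irrespective of $u$. For condition (1), fix $a\le k<b$. Since $w$ is increasing on $[1,k]$ we have $w(a)=\max w([a])$, so $u([a])\le w([a])$ gives $u(a)\le\max u([a])\le\max w([a])=w(a)$. Since $w$ is increasing on $[k+1,n]$ and $b>k$ we have $w(b)=\min w(\{b,\dots,n\})$, so $u(\{b,\dots,n\})\ge w(\{b,\dots,n\})$ gives $u(b)\ge\min u(\{b,\dots,n\})\ge\min w(\{b,\dots,n\})=w(b)$. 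This is exactly condition (1), so $u\le_k w$.

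\textbf{Case (ii): $u\in\AGS$.} This is parallel. Now $u(1)>\dots>u(k)$ and $u(k+1)>\dots>u(n)$, so $u$ has at most one ascent, at position $k$; hence $a<b$ and $u(a)<u(b)$ already force $a\le k<b$, and condition (2) is again vacuous. For condition (1) with $a\le k<b$: as $u$ is decreasing on $[1,k]$, $u(a)=\min u([a])\le\min w([a])\le w(a)$; and as $u$ is decreasing on $[k+1,n]$, $u(b)=\max u(\{b,\dots,n\})\ge\max w(\{b,\dots,n\})\ge w(b)$. So $u\le_k w$. (Alternatively, (ii) follows from (i) by applying the left multiplication $v\mapsto w_0 v$, which reverses both Bruhat and $k$-Bruhat order and interchanges $\GS$ with $\AGS$.)

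I expect no genuine obstacle; the only thing to get right is that condition (2) of the criterion costs nothing under either hypothesis — the single-descent (resp.\ single-ascent) shape of $w$ (resp.\ $u$) makes its premise imply its conclusion — and that condition (1), which is \emph{not} a consequence of Bruhat comparison for general $u,w$, does become one here once block-monotonicity is used to identify $u(a)$ (resp.\ $w(a)$, via the complementary form) as the extreme element of an initial or final segment. So this reduces to a short direct check against Theorem~\ref{T:BScriterion}.
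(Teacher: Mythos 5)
Your argument is correct and is essentially the paper's proof, just with the steps the paper leaves as ``similarly'' and ``follows immediately'' written out in full: both verify the two conditions of Theorem~\ref{T:BScriterion} directly, using that $w\in\GS$ (resp.\ $u\in\AGS$) has a single descent (resp.\ ascent) at $k$ to make condition~(2) automatic, and block-monotonicity together with $u([a])\le w([a])$ to extract condition~(1). The optional $w_0$-translation reduction of case~(ii) to case~(i) is also valid, though the paper simply says case~(ii) is similar.
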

\begin{proof}
Suppose (i).  Let $1 \leq a \leq k$.  Since $u \leq w$, we have
$u([a]) \leq w([a])$. But $w(1) < w(2) < \cdots < w(a)$, so that we
must have $u(a) \leq w(a)$.  Similarly, $u(b) \geq w(b)$ for $b
> k$.  This checks Condition (1) of Theorem \ref{T:BScriterion}.
Condition (2) of Theorem \ref{T:BScriterion} follows immediately
from $w \in \GS$.

Case (ii) is similar.
\end{proof}


\begin{lem}\label{L:kBruhatiso}
Suppose $u \leq w$ and $x \leq y$ are pairs of permutations in $S_n$
such that there is $z \in S_k \times S_{n-k}$ so that $uz = x$ and
$wz = y$ are both length-additive factorizations.  Then $u \leq_k w$
if and only if $x \leq_k y$.
\end{lem}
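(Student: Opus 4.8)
The plan is to reduce everything to the combinatorial criterion of Theorem \ref{T:BScriterion}, treating the two $k$-Bruhat conditions separately and exploiting the length-additive factorization $wz = y$ (and $uz = x$) with $z \in S_k \times S_{n-k}$. First I would record the basic consequence of length-additivity: since $z$ preserves $[k]$ and $[k+1,n]$ as sets, and multiplication on the right acts on positions, the factorization $w = (wz)z^{-1} = yz^{-1}$ being length-additive means that $w$ is obtained from $y$ by a sequence of moves that each transpose two positions lying on the \emph{same side} of the cut between position $k$ and position $k+1$, each move decreasing length. In particular $w$ and $y$ have the same pair of sets $(\{w(1),\dots,w(k)\}, \{w(k+1),\dots,w(n)\}) = (\sigma_k(w),\sigma_k(w)^c)$; likewise $\sigma_k(u) = \sigma_k(x)$. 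This immediately shows that Condition (1) of Theorem \ref{T:BScriterion} is insensitive to the substitution $u \leadsto x$, $w \leadsto y$: the statement ``$u(a) \le w(a)$ for all $a \le k$ and $u(b) \ge w(b)$ for all $b > k$'' should be repackaged set-theoretically. Indeed, from the remarks in Section~\ref{ssec:conventions}, for $u \le w$ one has $\sigma_j(u) \le \sigma_j(w)$ for all $j$; I would check that Condition (1) for the pair $(u,w)$ with $u \le w$ is equivalent to the single containment-type statement $\sigma_k(u) \le \sigma_k(w)$ together with $u \le w$ — and all of these are preserved when passing to $(x,y)$, since $x \le y$ is assumed and $\sigma_k(x) = \sigma_k(u) \le \sigma_k(w) = \sigma_k(y)$.

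Next I would handle Condition (2), which is the genuinely order-theoretic part: ``if $a<b$, $u(a) < u(b)$, $w(a) > w(b)$, then $a \le k < b$.'' Call a pair $(a,b)$ with $a<b$ a \emph{$w$-non-inversion that is a $u$-inversion}... more precisely, the relevant pairs are those inverted by $w$ but not by $u$ (reading the contrapositive: condition (2) says every such pair straddles the cut $a \le k < b$). I want to compare, for the pair $(u,w)$, the set of position-pairs $(a,b)$ with $w(a) > w(b)$ and $u(a) < u(b)$ against the analogous set for $(x,y)$. Using $w = yz^{-1}$ and $u = xz^{-1}$ with $z^{-1} \in S_k \times S_{n-k}$ length-additively dividing both: right multiplication by $z^{-1}$ permutes positions within each block $[k]$ and $[k+1,n]$ separately, so it sets up a bijection on position-pairs that \emph{preserves the property ``$a \le k < b$''} (a cross-pair stays a cross-pair; a within-block pair stays within that block). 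Length-additivity of $yz^{-1}$ means $z^{-1}$ does not create inversions when multiplied into $y$, equivalently $\mathrm{Inv}(w) = \mathrm{Inv}(y) \sqcup (\text{stuff from } z^{-1})$ appropriately transported; the same for $u,x$. Tracking which pairs are inversions of $w$ versus $y$, and of $u$ versus $x$, under this position-relabeling, I expect to show: Condition (2) holds for $(u,w)$ iff it holds for $(x,y)$. The cross-pairs ($a \le k < b$) never violate (2) for either pair, so only within-block pairs matter, and on within-block pairs the inversion status is governed by the relative order which $z^{-1}$ scrambles in exactly the same way for the $u/x$ pair as for the $w/y$ pair — here is where $z$ being \emph{common} to both factorizations is essential.

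The main obstacle I anticipate is the bookkeeping in the previous paragraph: length-additivity of $uz = x$ and $wz = y$ constrains how inversions transport, but one must be careful that a within-block pair $(a,b)$ can be a $w$-inversion while the corresponding relabeled pair is a $y$-non-inversion (or vice versa) — and one must rule out that such a discrepancy, combined with the matching discrepancy for $u$ versus $x$, produces a spurious violation of Condition~(2) on one side but not the other. The clean way to organize this is probably to prove a small lemma: if $v t = v'$ is length-additive with $t \in S_k \times S_{n-k}$, then for positions $a<b$ in the same block, $(a,b)$ is a $v$-inversion iff $(t^{-1}(a), t^{-1}(b))$ is a $v'$-inversion, whereas for cross-pairs the inversion status is simply unchanged; then apply it twice, once to $(w,y)$ and once to $(u,x)$ with the \emph{same} $t = z$, and Condition~(2) transports verbatim. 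Once Conditions (1) and (2) are both shown to be equivalent for $(u,w)$ and $(x,y)$, Theorem~\ref{T:BScriterion} gives $u \le_k w \iff x \le_k y$, which is the claim. I would also double-check the degenerate cases $k=0$ and $k=n$, where $S_k \times S_{n-k} = S_n$ and $k$-Bruhat order degenerates, to make sure the statement is vacuously fine.
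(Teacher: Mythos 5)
Your overall strategy --- checking directly via Theorem~\ref{T:BScriterion} that Conditions (1) and (2) transfer between $(u,w)$ and $(x,y)$ --- is the right one and essentially matches the paper's, which simply reduces first to the case $z = s_i$, $i \neq k$, making the check local and nearly trivial. But two of the specific steps you propose are wrong. The reformulation of Condition (1) fails: with $n=4$, $k=2$, $u=[1324]$, $w=[3142]$ one has $u \le w$ and $\sigma_2(u)=\{1,3\}=\sigma_2(w)$, yet $u(2)=3>1=w(2)$ violates Condition (1), so ``$u \le w$ together with $\sigma_k(u)\le\sigma_k(w)$'' does not imply Condition (1). The $\sigma_k$ detour is in any case unnecessary: $x(a) = u(z(a))$ and $y(a)=w(z(a))$, and since $z$ permutes $[k]$ and $[k+1,n]$ separately, the conjunction ``$x(a)\le y(a)$ for all $a\le k$'' is literally the conjunction ``$u(a')\le w(a')$ for all $a'\le k$'' under the relabeling $a' = z(a)$, and likewise for $b>k$.

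The ``small lemma'' you propose for Condition (2) is also false as stated. Take $t=s_1$ (with $k\ge 2$) and any $v$ with $v(1)<v(2)$, so that $vt=v'$ is length-additive. Then $(1,2)$ is not a $v$-inversion, but $v'(1)=v(2)>v(1)=v'(2)$, so the pair $\{1,2\}=\{t^{-1}(1),t^{-1}(2)\}$ \emph{is} a $v'$-inversion, contradicting your biconditional. What actually happens is a case split: on within-block pairs $(a,b)$, $a<b$, that $z$ does \emph{not} reverse, Condition~(2) transfers verbatim under $(a,b)\leftrightarrow(z(a),z(b))$; on pairs that $z$ \emph{does} reverse (the inversions of $z$), length-additivity of $uz$ forces $u(z(b))>u(z(a))$ and of $wz$ forces $w(z(b))>w(z(a))$, so the hypothesis ``$u(z(b))<u(z(a))$'' of Condition~(2) for $(u,w)$ at $(z(b),z(a))$ fails, and the hypothesis ``$y(a)>y(b)$'' for $(x,y)$ at $(a,b)$ fails --- both pairs are vacuous. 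Your lemma elides exactly this case split, which is the real content of the general-$z$ argument and precisely what the paper's reduction to $z=s_i$ makes trivial (there only the single pair $(i,i+1)$ is at issue, and length-additivity instantly renders it vacuous on both sides).
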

\begin{proof}
  It suffices to prove the statement when $z = s_i$ for $i \neq k$.
  But then the statement follows easily from Theorem
  \ref{T:BScriterion}.  The only interesting case is $a = i$ and $b =  i+1$:
  Condition (1) never has to be checked because $i \neq k$, while in
  Condition (2) the inequalities $u(a) < u(b)$ and $w(a) > w(b)$
  contradict the assumption that $uz$ and $wz$ are length-additive
  factorizations.
\end{proof}

Define an equivalence relation on the set of $k$-Bruhat intervals, generated
by the relations $[u,w]_k \sim [x,y]_k$ there is $z \in S_k \times S_{n-k}$
so that $uz = x$ and $wz = y$ are both length-additive factorizations.
If $u \leq_k w$, we let $\langle u,w \rangle$ denote the equivalence
class containing $[u,w]_k$.  Let $\Q(k,n)$ denote the equivalence
classes of $k$-Bruhat intervals.

\begin{lem}\label{L:kBruhatrepresentatives}
Every $\langle u,w \rangle \in \Q(k,n)$ has a unique representative
$[x, y]_k$ with $y \in \GS$, and also a unique representative $[x',
y']_k $ with $x' \in S_{n,k}^{\mathrm{max}}$.
\end{lem}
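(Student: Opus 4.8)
The plan is to establish existence and uniqueness of the $\GS$-representative; the $\AGS$ case then follows by a symmetric argument (or by applying $w \mapsto w_0 w w_0$, which swaps the roles of $\GS$ and $\AGS$ and reverses $\leq$, combined with Lemma \ref{L:kBruhatiso}). For existence, start with any $[u,w]_k$ in the class $\langle u,w\rangle$. Write $w = y z_0$ where $y \in \GS$ is the minimal-length coset representative of $w(S_k \times S_{n-k})$ and $z_0 \in S_k \times S_{n-k}$; this factorization is length-additive by the standard theory of parabolic cosets. Set $x = u z_0$. The one remaining point is that $ux z_0$—I mean that $u z_0 = x$—is itself \emph{length-additive}: this need not hold for an arbitrary $u \le_k w$, so the first real step is to show that when $u \le_k w$ one may always choose the representative $[u,w]_k$ of its class so that the passage to $y \in \GS$ is length-additive on both $u$ and $w$ simultaneously. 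I would prove this by induction on $\ell(z_0)$, peeling off one simple reflection $s_i$ ($i \ne k$) at a time: by Lemma \ref{L:kBruhatrepresentatives}'s companion Lemma \ref{L:kBruhatiso} the class is unchanged, and one checks using Theorem \ref{T:BScriterion} that if $w s_i$ is longer than $w$ (the condition for $ys_i \cdots$ to stay length-additive) but $u s_i$ is \emph{shorter} than $u$, then $u$ and $w$ violate condition (2) of Theorem \ref{T:BScriterion} at the pair $(i,i+1)$—exactly the computation already done in the proof of Lemma \ref{L:kBruhatiso}. Hence the "bad" simple reflections never arise, and repeatedly multiplying on the right drives $w$ to its Grassmannian representative while keeping everything length-additive.

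For uniqueness, suppose $[x,y]_k \sim [x',y']_k$ with both $y, y' \in \GS$. By definition of the equivalence relation, there is a chain of elementary moves connecting them, each multiplying on the right by some $z \in S_k \times S_{n-k}$ length-additively; composing, we get $z \in S_k\times S_{n-k}$ with $x z = x'$, $yz = y'$, and both factorizations length-additive (length-additivity composes, and a product of length-additive right multiplications by parabolic elements is again one—this needs a short check, but follows since each intermediate element has a length-additive factorization through its own Grassmannian representative, which is common to $y$ and $y'$). Now $y, y' \in \GS$ with $y' = yz$ length-additive and $z \in S_k \times S_{n-k}$ forces $z = e$: the Grassmannian element is the \emph{unique} minimal-length element of its parabolic coset, so $\ell(yz) \ge \ell(y)$ with equality iff $z = e$, and length-additivity gives $\ell(yz) = \ell(y) + \ell(z) = \ell(y)$, hence $\ell(z) = 0$. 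Therefore $x = x'$ and $y = y'$.

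The main obstacle is the existence half, specifically the claim that the reduction to a Grassmannian representative can be performed length-additively without leaving the equivalence class. The combinatorial heart of this is the observation—already extracted in the proof of Lemma \ref{L:kBruhatiso}—that a right multiplication by $s_i$ ($i \ne k$) that increases $\ell(w)$ cannot simultaneously decrease $\ell(u)$ when $u \le_k w$, because such a configuration is precisely a forbidden inversion pattern under condition (2) of the Björner–Stanley criterion. Once that is in hand, the induction is routine and the uniqueness argument is essentially formal bookkeeping with parabolic coset representatives. I would also remark that this lemma is what makes $\Q(k,n)$ a well-defined combinatorial gadget indexing positroid varieties: the $\GS$-representative is the one that will match the Grassmannian-permutation normalization in Theorem \ref{thm:projectedRichardsons}, and the $\AGS$-representative is its mirror.
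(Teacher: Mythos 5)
Your approach is the same in substance as the paper's: factor $w$ through its Grassmannian coset representative, $w = y z_0$ with $y \in \GS$, and show that the same parabolic element $z_0$ can be removed from $u$ length-additively, using condition (2) of Theorem~\ref{T:BScriterion}. The paper packages this as the claim that the parabolic part of $u$ dominates that of $w$ in weak order ($u'' \geq_{\weak} w''$); you carry out the induction explicitly, one simple reflection at a time. Same underlying idea.

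Two details are wrong and need fixing. First, you set $x = u z_0$; it should be $x = u z_0^{-1}$, since the elementary move you want is $(x z_0, y z_0) = (u, w)$, which requires $u = x z_0$ and $w = y z_0$ both length-additive. Second, and more substantively, your ``forbidden configuration'' is stated backwards. You claim that $\ell(w s_i) > \ell(w)$ together with $\ell(u s_i) < \ell(u)$ violates condition (2) of Theorem~\ref{T:BScriterion}. It does not: in that case $u(i) > u(i+1)$ and $w(i) < w(i+1)$, which does not meet the hypotheses of condition (2), and the configuration can genuinely occur (take $(k,n) = (2,4)$, $u = [2134]$, $w = [2314]$, $i = 1$). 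The configuration condition (2) actually forbids for $i \neq k$ — and the one your peeling process must rule out, since removing $s_i$ from $z_0$ gives $\ell(w s_i) < \ell(w)$ — is $\ell(w s_i) < \ell(w)$ together with $\ell(u s_i) > \ell(u)$. That is exactly the computation in the proof of Lemma~\ref{L:kBruhatiso}. With the direction corrected, the induction works.

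Your uniqueness argument is a welcome addition (the paper only establishes existence), but the ``composing'' step is shakier than it needs to be: a chain of length-additive right multiplications by parabolic elements need not compose to a single length-additive one in general. A cleaner route: right multiplication by parabolic elements preserves the coset mod $S_k \times S_{n-k}$, so every top entry in a chain of elementary moves lies in $y(S_k \times S_{n-k})$, forcing $y = y'$ by uniqueness of minimal coset representatives. Then note that $x y^{-1}$ is invariant under any elementary move $(x,y) \mapsto (xz, yz)$, so $x y^{-1} = x'(y')^{-1} = x' y^{-1}$ and hence $x = x'$.
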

\begin{proof}
We prove the first (Grassmannian) statement, as the
anti-Grassmannian statement is similar.  Let $u = u' u''$ and $w =
w' w''$ where $u', w' \in \GS$ and $u'', w'' \in S_k \times
S_{n-k}$. It follows by induction and the definition of $\lessdot_k$
that $u'' \geq_{\weak} w''$. Thus $x = u' u'' (w'')^{-1} \leq_k w' =
y$ represents $\langle u,w \rangle$.
\end{proof}

We equip $\Q(k,n)$ with a partial order $\leq$ given by $q' \leq q$
if and only if there are representatives $[u,w]_k \in q$ and
$[u',w']_k \in q'$ so that $[u', w'] \subseteq [u, w]$.

It follows from Corollary \ref{C:kBruhatGrassmann} and Lemma
\ref{L:kBruhatrepresentatives} that the equivalence classes in
$\Q(k,n)$ are in bijection with the set of triples
\begin{equation}\label{E:triples}\{(u,v,w) \in S_{n,k}^{\max} \times (S_k \times S_{n-k})
\times \GS \mid u \leq wv\}.\end{equation}

Rietsch \cite[Definition 5.1]{Rie} (see also \cite{Wil,GY})
introduced the following partial order $\leq$ on these triples:
$(u',v',w') \leq (u,v,w)$ if and only if there exist $v_1', v_2' \in
S_k \times S_{n-k}$ so that $v_1'v_2' = v'$ is length-additive and
such that
\begin{equation}\label{E:Rietsch}
  uv^{-1} \leq u' (v_2')^{-1} \leq w'v_1' \leq w.
\end{equation}

\begin{lem}\label{L:RietschQ}
Rietsch's partial order is dual to $(\Q(k,n), \leq)$.
\end{lem}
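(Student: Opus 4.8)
The plan is to make the correspondence between $\Q(k,n)$ and the triples \eqref{E:triples} completely explicit, rewrite both partial orders as statements about containments of Bruhat intervals in $S_n$, and then reconcile them by a normalization of representatives. For the dictionary: given a class $\langle u,w\rangle\in\Q(k,n)$, let $[a,b]_k$ be its Grassmannian representative (Lemma~\ref{L:kBruhatrepresentatives}), so $b\in\GS$, and $[a',b']_k$ its anti-Grassmannian representative, so $a'\in\AGS$; since these are $k$-Bruhat equivalent there is $v\in S_k\times S_{n-k}$ with $av=a'$ and $bv=b'$ both length-additive, and one sets $(u,v,w):=(a',v,b)$, so that the Grassmannian representative is $[uv^{-1},w]_k$ and the anti-Grassmannian one is $[u,wv]_k$. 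Because $u\in\AGS$ and $w\in\GS$, the standard length-additivity in a parabolic quotient shows that the full list of representatives of $\langle u,w\rangle$ is $\{\,[uv_2^{-1},wv_1]_k : v=v_1v_2\text{ length-additive}\,\}$; in particular $uv^{-1}=(uv_2^{-1})v_2$, $uv_2^{-1}=(uv^{-1})v_1$ and $wv_1=w\cdot v_1$ are all length-additive, whence $uv^{-1}\le uv_2^{-1}$ for each such factorization. I would record the analogous facts for the triple $(u',v',w')$ of a second class.

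Substituting these representatives into the definition of $\le$ on $\Q(k,n)$ and into \eqref{E:Rietsch} gives: $\langle u',w'\rangle\le\langle u,w\rangle$ in $\Q(k,n)$ exactly when some representative interval $[u'(v_2')^{-1},w'v_1']$ of $\langle u',w'\rangle$ is contained in some representative interval $[uv_2^{-1},wv_1]$ of $\langle u,w\rangle$ (running over length-additive factorizations of $v'$ and of $v$); while $(u',v',w')\le(u,v,w)$ in Rietsch's sense holds exactly when, for some length-additive factorization $v'=v_1'v_2'$, the representative $[u'(v_2')^{-1},w'v_1']$ of $\langle u',w'\rangle$ is contained in the \emph{Grassmannian} representative $[uv^{-1},w]$ of $\langle u,w\rangle$. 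Thus, up to the order-reversal recorded in the statement (a matter of the conventions relating \eqref{E:Rietsch} to Rietsch's original definition), Rietsch's order is the order obtained from $(\Q(k,n),\le)$ by insisting that the larger class always be taken in its Grassmannian representative. One inclusion is then trivial, and the content is that this insistence costs nothing.

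So suppose $[u'(v_2')^{-1},w'v_1']\subseteq[uv_2^{-1},wv_1]$ for some length-additive factorizations; the goal is to produce a factorization $v'=\bar v_1'\bar v_2'$ with $[u'(\bar v_2')^{-1},w'\bar v_1']\subseteq[uv^{-1},w]$. Two observations get us most of the way. First, by the length-additivities above, $uv^{-1}\le uv_2^{-1}\le u'(v_2')^{-1}\le u'$, so the bottom of the Grassmannian representative of $\langle u,w\rangle$ lies weakly below every representative bottom of $\langle u',w'\rangle$ — the left-hand inequality is automatic for any choice of $\bar v_1'$. Second, apply $\sigma_k$ to $w'v_1'\le wv_1$: since $v_1,v_1'\in S_k\times S_{n-k}$ fix $[k]$ setwise, $\sigma_k(w'v_1')=\sigma_k(w')$ and $\sigma_k(wv_1)=\sigma_k(w)$, and since $\sigma_k$ restricts to a poset isomorphism $\GS\to\binom{[n]}{k}$ this forces $w'\le w$. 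Now vary the factorization of $v'$: as $\bar v_1'$ ranges over the left factors of $v'$ (those with $v'=\bar v_1'\bar v_2'$ length-additive), both $w'\bar v_1'$ and $u'(\bar v_2')^{-1}$ are monotone in $\bar v_1'$, reaching their minima $w'$ and $u'(v')^{-1}$ at $\bar v_1'=e$. The set of $\bar v_1'$ with $w'\bar v_1'\le w$ is then a non-empty down-set (it contains $e$, as $w'\le w$) and the set with $u'(\bar v_2')^{-1}\ge uv^{-1}$ a non-empty up-set (it contains $v'$); a common element is exactly what is needed.

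Producing that common left factor is the main obstacle, and the endpoint inequalities alone do not suffice: one must exploit the \emph{containment of whole intervals} $[u'(v_2')^{-1},w'v_1']\subseteq[uv_2^{-1},wv_1]$, together with the \cite{BS} fact that each $k$-Bruhat interval is graded of rank $\ell(\text{top})-\ell(\text{bottom})$ and with Lemma~\ref{L:kBruhatiso}. The line I would pursue is to peel a reduced word for $v_1$ off the right of the larger interval one letter at a time; each letter is an $s_i$ with $i\ne k$, so by Lemma~\ref{L:kBruhatiso} the result is again a $k$-Bruhat representative of $\langle u,w\rangle$, and, by the lifting property of Bruhat order, the contained representative of $\langle u',w'\rangle$ is transported along (passing to a left factor of $\bar v_1'$ whenever the peeled letter is a right descent of the current top), the rank count guaranteeing nothing degenerates; reaching $v_1=e$ yields the desired factorization. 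The leftover checks — length-additivity of $(\bar v_1',\bar v_2')$ and of the auxiliary products, and matching the orientation ("dual") of the statement — are routine.
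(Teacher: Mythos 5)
Your plan coincides with the paper's: for the converse direction, reduce the top of the larger interval to its Grassmannian representative by peeling right descents $s_i$ (with $i \ne k$) one at a time, using the Bergeron--Sottile criterion (as packaged in Lemma~\ref{L:kBruhatiso}) to see that $s_i$ also descends the bottom, and the Bruhat lifting property with a case split on whether $s_i$ descends the smaller interval's top, to transport the containment. One caution: your ``first observation'' that $uv^{-1}$ lies below \emph{every} representative bottom of $\langle u', w' \rangle$ would require $uv^{-1} \le u'(v')^{-1}$, which the displayed chain $uv^{-1}\le uv_2^{-1}\le u'(v_2')^{-1}\le u'$ does not give (it only yields the single bound $uv^{-1}\le u'$); but since you rightly proceed to the peeling argument anyway, this overstatement is not load-bearing.
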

\begin{proof}
If \eqref{E:Rietsch} holds then by Lemma \ref{L:kBruhatiso},
$uv^{-1} \leq_k w$ and $u' (v_2')^{-1} \leq_k w'v_1'$, so that a
relation in Rietsch's order implies one in $(\Q(k,n),\leq)$.
Conversely, suppose we are given $u \leq_k w$ and $u' \leq_k w'$
such that $u \leq u'$ and $w' \leq w$.  If $w \in \GS$ we are
already in the form specified by \eqref{E:Rietsch}.  Otherwise, we
proceed to produce an inequality of the form \eqref{E:Rietsch} by
right multiplying by some $s_i$, for $i \neq k$, such that $ws_i
\leq w$. By part (2) of Theorem \ref{T:BScriterion} with $a = i$ and $b =
i+1$, we deduce that for $i \neq k$ we have $ws_i \leq w$ (resp.
$w's_i \leq w'$) if and only if $us_i \leq u$ (resp. $u's_i \leq
u'$).  If $w's_i \leq w'$ then we obtain $us_i \leq u's_i$ and
$w's_i \leq w'$.  However, if $w's_i \geq w'$ we still have the
inequalities $us_i \leq u'$ and $w' \leq ws_i$ (using the standard
fact that a reduced expression for $w'$ can be found as a
subexpression of any reduced expression for $w$ \cite[Theorem
5.10]{Hum}). Repeating this, we obtain a relation of the form
\eqref{E:Rietsch}.
\end{proof}

The poset $\Q(2,4)$ already has $33$ elements; its Hasse diagram
appears in \cite{Wil}.  See also Figure \ref{fig:gr24}.

\section{Affine permutations, juggling patterns and positroids}
\label{sec:posets}

Fix integers $0 \leq k \leq n$. In this section, we will define several posets of objects and prove that the posets are all isomorphic. We begin by surveying the posets we will consider. The objects in these posets will index positroid varieties, and all of these indexing sets are useful. All the isomorphisms we define will commute with each other. Detailed definitions, and the definitions of the isomorphisms, will be postponed until later in the section.

We have already met one of our posets, the poset $\Q(k,n)$ from Section~\ref{sec:kBruhat}.

The next poset will be the poset $\Bound(k,n)$ of bounded affine permutations:
these are bijections $f: \ZZ \to \ZZ$ such that $f(i+n)=f(i)+n$,
$i \leq f(i) \leq f(i)+n$ and $(1/n) \sum_{i=1}^n (f(i)-i) =k$. After
that will be the poset $\Jugg(k,n)$ of juggling patterns. The elements
of this poset are $n$-tuples $(J_1, J_2, \ldots, J_n) \in \binom{[n]}{k}^n$
such that $J_{i+1} \supseteq (J_i \setminus \{ 1 \}) - 1$, where the
subtraction of $1$ means to subtract $1$ from each element and our indices
are cyclic modulo $n$. These two posets are closely related to the posets of decorated
permutations and of Grassmann necklaces, considered in~\cite{Pos}.

We next consider the poset of cyclic rank matrices. These are infinite periodic matrices which relate to bounded affine permutations in the same way that Fulton's rank matrices relate to ordinary permutations. Finally, we will consider the poset of positroids. Introduced in~\cite{Pos}, these are matroids which obey certain positivity conditions.

The following is a combination of all the results of this section:

\begin{theorem} \label{thm:Bijections}
  The posets $\Q(k,n)$, $\Bound(k,n)$, $\Jugg(k,n)$, the poset of
  cylic rank matrices of type $(k,n)$ and the poset of positroids of
  rank $k$ on $[n]$ are all isomorphic.
\end{theorem}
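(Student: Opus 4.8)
The plan is to not prove the $\binom{5}{2}=10$ isomorphisms individually, but to arrange the five posets in a cycle, for instance
\[
\Q(k,n)\longrightarrow\Bound(k,n)\longrightarrow\{\text{cyclic rank matrices}\}\longrightarrow\Jugg(k,n)\longrightarrow\{\text{positroids}\}\longrightarrow\Q(k,n)
\]
(the precise cyclic order is immaterial), to equip each arrow with an explicit \emph{order-preserving} bijection, and then to verify that the composite once around the cycle is the identity. A bijection of posets that is order-preserving and admits an order-preserving inverse is an isomorphism, and here the inverse of every arrow is furnished by travelling the other way around the cycle; so this at once produces all the isomorphisms and shows they commute, which is exactly Theorem~\ref{thm:Bijections}. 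At the level of underlying sets the bijections are, with one exception, a matter of unwinding the definitions; the content is (a) checking each arrow respects the partial orders and (b) building the bijection relating $\Q(k,n)$ to the rest.

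First I would handle the purely combinatorial arrows among $\Bound(k,n)$, the cyclic rank matrices, and $\Jugg(k,n)$. To a bounded affine permutation $f$ attach the juggling pattern whose $i$-th component is the set $J_i=\{\,\bar a : a\ge i,\ f^{-1}(a)<i\,\}$ of landing times, reduced mod $n$, of the balls aloft just before time $i$: the constraints $i\le f(i)\le i+n$ and $(1/n)\sum_{i=1}^n(f(i)-i)=k$ are precisely what force $J_i\in\binom{[n]}{k}$ and $J_{i+1}\supseteq(J_i\setminus\{1\})-1$, and reading the throws $f(i)-i$ back off the successive $J_i$ exhibits the inverse. Also attach to $f$ the periodic rank matrix $r_f(i,j)=\#\{\,a\le j : f(a)\ge i\,\}$, the evident $n$-periodic analogue of Fulton's rank matrix; listing the axioms a cyclic rank matrix must satisfy and recovering $f$ from $r_f$ by the usual essential-set procedure again gives a bijection. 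The orders then match almost formally: the order on $\Bound(k,n)$ is the restriction of the Bruhat order of the affine symmetric group, which is entrywise comparison of rank matrices, so the passage to cyclic rank matrices (ordered entrywise) is tautologically an isomorphism; the one place needing care is to identify the induced covering relation on $\Jugg(k,n)$ -- a cover should be the re-routing of a single ball by one unit -- and to see it is the order $\Jugg(k,n)$ carries.

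The arrow to positroids I would route through Postnikov's Grassmann necklaces: a juggling pattern $(J_1,\dots,J_n)$ is, up to the trivial reindexing, a Grassmann necklace, and by \cite{Pos} Grassmann necklaces are a complete invariant of positroids with every necklace arising, the inverse sending a positroid to the cyclic list of its lexicographically minimal bases. To match the orders one must compare Postnikov's cyclic Bruhat order on positroids (equivalently, reverse containment of matroid polytopes) with the order transported from the other posets; I would either cite the relevant statement from \cite{Pos} or, for a self-contained argument, re-derive it from the cyclic rank matrix, whose entries record the matroid rank function of the positroid, so that entrywise comparison and the polytope order visibly coincide.

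The remaining link, between $\Q(k,n)$ and the others, is the main obstacle. Using Lemma~\ref{L:kBruhatrepresentatives} I would represent a class by its unique Grassmannian representative $[u,w]_k$, $w\in\GS$, and attach to $(u,w)$ a bounded affine permutation $f_{u,w}$ -- concretely, by forming the periodic rank function of the Richardson datum $(u,w)$ and cutting it down to a $(k,n)$ cyclic rank matrix, or equivalently by a direct formula in the one-line notations of $u$ and $w$. Three checks remain. First, $f_{u,w}\in\Bound(k,n)$: the bound $i\le f_{u,w}(i)\le i+n$ should come from condition~(2) of Theorem~\ref{T:BScriterion} (the constraint $a\le k<b$), and the average displacement $k$ from the rank. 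Second, well-definedness on equivalence classes and bijectivity: the former is built in by using the canonical representative, the latter by recovering $(u,w)$ from the cyclic rank matrix. Third, and most delicately, order-preservation and order-reflection: via Corollary~\ref{C:kBruhatGrassmann}, Lemma~\ref{L:kBruhatrepresentatives}, and (if convenient) Lemma~\ref{L:RietschQ}, the relation $\langle u',w'\rangle\le\langle u,w\rangle$ reduces, after passing to suitable representatives, to the two ordinary Bruhat inequalities $u\le u'$ and $w'\le w$, and each of these translates through the rank-matrix criterion for Bruhat order into entrywise inequalities among blocks of the cyclic rank matrix; the task is to see that their conjunction is exactly entrywise comparison of the full cyclic rank matrices. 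I expect this passage from two interleaved $S_n$-Bruhat inequalities to a single inequality of $n$-periodic rank matrices to be the most technical bookkeeping in the proof, chiefly because the order on $\Q(k,n)$ is defined indirectly through interval containment and must first be normalized into this two-sided form. Once all five maps are written out, tracing a $k$-Bruhat interval once around the cycle back to itself is routine, and Theorem~\ref{thm:Bijections}, together with the commutativity of the isomorphisms, follows.
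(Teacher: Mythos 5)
Your overall architecture (a cycle of explicit order-preserving bijections, with inverses furnished by going around the other way) is sound and matches the paper's decomposition of the proof across Section~3. The links $\Bound(k,n)\leftrightarrow\Jugg(k,n)\leftrightarrow\{\text{cyclic rank matrices}\}\leftrightarrow\{\text{positroids}\}$ are handled essentially as the paper does: they really are a matter of unwinding definitions, plus citing \cite{Pos,Oh} and Proposition~\ref{prop:rankBruhat} (Bj\"orner--Brenti) for the rank-matrix characterization of affine Bruhat order.

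Where your proposal diverges from the paper, and where a real gap sits, is the poset isomorphism $\Q(k,n)\to\Bound(k,n)$. First, a cosmetic point: the paper doesn't pass through a ``periodic rank function of the Richardson datum'' but gives the closed formula $f_{u,w}=u\,t_{\omega_k}w^{-1}$, from which boundedness drops out of Theorem~\ref{T:BScriterion}(1) in one line. More substantively, you propose to establish that the map respects order by reducing $\langle u',w'\rangle\le\langle u,w\rangle$ to $u\le u'$, $w'\le w$ and then ``translating each through the rank-matrix criterion into entrywise inequalities among blocks of the cyclic rank matrix.'' This translation is not actually block-by-block: the two $S_n$-Bruhat conditions only constrain ranks of projections onto \emph{initial} intervals $[1,j]$ (for $u$) and \emph{final} intervals $[j,n]$ (for $w$), whereas the cyclic rank matrix of $f_{u,w}$ records ranks over \emph{all} cyclic intervals $[i,j]$, and these wrap-around entries are not a priori determined by, nor monotone in, the two constituent Bruhat relations. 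You flag this as ``the most technical bookkeeping'' without resolving it, and that is precisely the content of Theorem~\ref{T:pairsboundedposet}. The paper avoids the global rank-matrix comparison altogether: it imports Williams' result that $(\Q(k,n),\le)$ is graded, so it suffices to check that each cover in $\Q(k,n)$ goes to a relation $f_{u',w'}>f_{u,w}$ in affine Bruhat order, and that the map is rank-preserving (codimension equals affine length); a single cover corresponds to one transposition and the verification is a short explicit computation with $f_{u,w}=u\,t_{\omega_k}w^{-1}$. If you want to salvage the direct rank-matrix route, you would need to prove that for $f=f_{u,w}$ with $w$ Grassmannian, the wrap-around entries of $r_{\bullet\bullet}(f)$ are monotone functions of the top-left and top-right rank arrays of $u$ and $w$, which is true but is itself roughly the statement you are trying to prove.
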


\subsection{Juggling states and functions}\label{ssec:juggling}

\newcommand\shift{s_{+}} 
\newcommand\st{{\rm st}}

Define a \defn{(virtual) juggling state $S\subseteq \integers$} as a subset
whose symmetric difference from $-\naturals := \{ i \leq 0\}$
is finite. (We will motivate this and other juggling terminology below.)
Let its \defn{ball number} be
$\left|  S \cap \integers_+  \right | - \left| -\naturals \setminus S \right |$,
where $ \integers_+  := \{i > 0\}$.
Ball number is the unique function on juggling states
such that for $S \supseteq S'$,
the difference in ball numbers is $|S\setminus S'|$, and
$-\naturals$ has ball number zero.

Call a bijection $f:\Z \to \Z$ a \defn{(virtual) juggling function}
if for some (or equivalently, any) $t\in\Z$, the set $f\left(\{i : i
\leq t\}\right)$ is a juggling state. It is sufficient (but not
necessary) that $\{ |f(i)-i| : i \in \Z \}$ be bounded. Let $\JJ$ be
the set of such functions: it is easy to see that $\JJ$ is a group,
and contains the element $\shift: i \mapsto i+1$. Define the
\defn{ball number of $f\in \JJ$} as the ball number of the juggling
state $f(-\naturals)$, and denote it $\av(f)$ for reasons to be
explained later.

\begin{lem}
  $\av : \JJ \to \integers$ is a group homomorphism.
\end{lem}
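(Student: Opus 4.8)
The plan is to show $\av(fg) = \av(f) + \av(g)$ for $f, g \in \JJ$, using the defining characterization of ball number recalled in the text: the ball number of a juggling state $S$ is the unique quantity such that $-\naturals$ has ball number zero and, for $S \supseteq S'$, the difference of ball numbers equals $|S \setminus S'|$. Equivalently, I will use that for \emph{any} two juggling states $S, S'$ the difference of ball numbers is $|S \setminus S'| - |S' \setminus S|$ (this extends the $S \supseteq S'$ case because ball number is, by construction, finitely additive along containments and any two states have a common sublstate, e.g. their intersection, which is again a juggling state since each differs from $-\naturals$ by a finite set). I will record this as the key computational fact before doing anything with group elements.

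First I would fix the description of $\av$ in terms of a chosen threshold: since $f$ is a juggling function, $f(-\naturals)$ is a juggling state and $\av(f)$ is its ball number; more generally $f(\{i \le t\})$ is a juggling state for every $t$, and its ball number is $\av(f) + t$ (because increasing $t$ by one adjoins one new element $f(t+1)$ to the set, raising ball number by one, and at $t=0$ we recover $\av(f)$). Then for the product $fg$, observe $(fg)(-\naturals) = f\big(g(-\naturals)\big)$. Write $S := g(-\naturals)$, a juggling state of ball number $\av(g)$. The point is to compare the ball number of $f(S)$ with that of $f(-\naturals)$, using that $f$ is a bijection: $|f(S) \setminus f(-\naturals)| = |S \setminus (-\naturals)|$ and $|f(-\naturals) \setminus f(S)| = |(-\naturals) \setminus S|$, since $f$ carries each of these finite sets bijectively to its image. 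Hence by the key fact,
\[
\av(fg) - \av(f) = \big(\text{ball no. of } f(S)\big) - \big(\text{ball no. of } f(-\naturals)\big) = |S \setminus (-\naturals)| - |(-\naturals)\setminus S| = \av(g),
\]
the last equality again by the key fact applied to $S$ and $-\naturals$. This gives $\av(fg) = \av(f)+\av(g)$.

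It remains only to note $\av$ is well-defined (which the text has already arranged) and that $\av(\mathrm{id}) = 0$, after which the homomorphism identity forces $\av(f^{-1}) = -\av(f)$ automatically, so no separate argument for inverses is needed. The main obstacle, such as it is, is purely bookkeeping: making sure that all the symmetric-difference sets appearing ($S \setminus (-\naturals)$, $(-\naturals) \setminus S$, and their $f$-images) are genuinely finite so that the cardinalities and the additivity of ball number make sense — this is exactly the content of ``$S$ is a juggling state'' together with ``$f$ is a juggling function,'' so it is immediate from the definitions, but it should be stated to justify the displayed equalities. No genuine difficulty is expected.
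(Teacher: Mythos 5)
Your proof is correct and takes essentially the same approach as the paper's: both establish that applying $f$ to any juggling state shifts its ball number by $\av(f)$ (the paper by a one-element-at-a-time reduction to $S = -\naturals$, you via a closed-form symmetric-difference formula), and then apply this to $S = g(-\naturals)$. The underlying mechanism --- bijectivity of $f$ preserving the cardinalities of the relevant finite difference sets --- is identical.
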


\begin{proof}
  We prove what will be a more general statement,
  that if $S$ is a juggling state with ball number $b$,
  and $f$ a juggling function with ball number $b'$,
  then $f(S)$ is a juggling state with ball number $b+b'$.
  Proof: if we add one element to $S$, this adds one element to $f(S)$,
  and changes the ball numbers of $S,f(S)$ by $1$.
  We can use this operation and its inverse to reduce to the case that
  $S = -\naturals$, at which point the statement is tautological.

  Now let $f,g \in \JJ$, and apply the just-proven statement
  to $S = g(-\naturals)$.
\end{proof}

For any bijection $f:\Z \to Z$, let
\begin{eqnarray*}
   \st(f,t) &:=& \{f(i) - t: i \leq t\} \\
   &=& \st(\shift^t f \shift^{-t}, 0)
\end{eqnarray*}
and if $f\in \JJ$, call it the \defn{juggling state of $f$ at time
$t$}. By the homomorphism property just proven $\av(f) =
\av\left(\shift^k f \shift^{-k}\right)$, which says that every state
of $f\in \JJ$ has the same ball number (``ball number is
conserved''). The following lemma lets one work with juggling states
rather than juggling functions:

\begin{lem}\label{lem:reconstructjug}
  Say that a juggling state $T$ \defn{can follow} a state $S$ if
  $T = \{t\} \cup \left(\shift^{-1}\cdot S\right)$,
  and $t \notin \shift^{-1}\cdot S$. In this case say that a
  \defn{$t$-throw takes state $S$ to state $T$}.

  Then a list $(S_i)_{i\in\integers}$ is the list of states of a
  juggling function iff $S_{i+1}$ can follow $S_i$ for each $i$.
  In this case the juggling function is unique.
\end{lem}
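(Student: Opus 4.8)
The plan is to prove Lemma~\ref{lem:reconstructjug} by directly unwinding the definitions of $\st(f,t)$ and the ``can follow'' relation, exploiting the conjugation identity $\st(f,t) = \st(\shift^t f \shift^{-t}, 0)$ so that it suffices to understand how $\st(f,t)$ relates to $\st(f,t+1)$ for a single step. First I would compute, for any bijection $f : \Z \to \Z$, the exact relationship between $S_t := \st(f,t)$ and $S_{t+1} := \st(f,t+1)$. Writing $S_{t+1} = \{f(i) - t - 1 : i \leq t+1\}$, I would split off the term $i = t+1$, giving the element $f(t+1) - t - 1$, and observe that $\{f(i) - t - 1 : i \leq t\} = \shift^{-1} \cdot \{f(i) - t : i \leq t\} = \shift^{-1} \cdot S_t$. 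Hence $S_{t+1} = \{f(t+1) - t - 1\} \cup (\shift^{-1}\cdot S_t)$, and because $f$ is a bijection, $f(t+1) \notin \{f(i) : i \leq t\}$, so $f(t+1) - t - 1 \notin \shift^{-1}\cdot S_t$. This shows $S_{t+1}$ can follow $S_t$ with throw height $t_{t+1} := f(t+1) - t$ (the value $t$ in the ``$t$-throw'' clause), which proves the ``only if'' direction, modulo checking that each $S_t$ is genuinely a juggling state --- but that is exactly the hypothesis $f \in \JJ$.

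For the ``if'' direction and uniqueness, I would argue in reverse: given a list $(S_i)_{i \in \Z}$ with each $S_{i+1}$ following $S_i$, I need to construct a bijection $f \in \JJ$ with $\st(f,i) = S_i$ for all $i$, and show it is forced. The relation $S_{t+1} = \{t_{t+1}\} \cup (\shift^{-1}\cdot S_t)$ with $t_{t+1} \notin \shift^{-1}\cdot S_t$ tells me exactly which single integer must appear in $S_{t+1}$ but not in $\shift^{-1}\cdot S_t$: tracing backwards, the integer $f(t+1) - t - 1$ is determined as that unique new element, so $f(t+1)$ is forced once we know $f$ restricted to $\{i \leq t\}$ only through its \emph{value set} $S_t$. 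More carefully: I would define $f(t+1)$ to be $(\text{the unique element of } S_{t+1} \setminus (\shift^{-1}\cdot S_t)) + t + 1$ for every $t \in \Z$; this defines $f$ on all of $\Z$. Then I would verify by induction (in both directions from an arbitrary basepoint) that $\{f(i) - t : i \leq t\} = S_t$, that $f$ is injective (new values are never repeated, since at step $t+1$ the chosen value lies outside the previous value set shifted appropriately) and surjective (the asymptotic agreement of $S_t$ with $-\naturals$, built into the definition of juggling state, forces every integer to be hit), and that $f \in \JJ$ because each $\st(f,t) = S_t$ is a juggling state by assumption. Uniqueness is immediate from the fact that every step of this construction was forced.

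The main obstacle, I expect, is the bookkeeping in the ``if'' direction: one has to be careful that the ``locally forced'' recipe for $f$ actually assembles into a global \emph{bijection} of $\Z$, rather than just an injection or a function with the right state sequence. The surjectivity in particular requires using the defining property of juggling states --- that $S_i \bigtriangleup (-\naturals)$ is finite --- together with the conservation of ball number (which follows from the fact that ``$T$ can follow $S$'' preserves ball number, a one-line check: passing from $S$ to $\shift^{-1}\cdot S$ preserves ball number, and then adjoining one new element $t \notin \shift^{-1}\cdot S$ adds exactly one to the ball count while the defining symmetric-difference condition guarantees $T$ is again a juggling state). So I would isolate as a preliminary sublemma the statement ``if $T$ can follow $S$ and $S$ is a juggling state then $T$ is a juggling state with the same ball number,'' prove it by the add/remove-an-element reduction already used in the proof of the previous lemma, and then feed it into both directions of the argument. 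Everything else is routine unwinding of definitions.
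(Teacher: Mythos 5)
Your proof is correct and follows the same route as the paper's (very terse) argument, which merely observes that the element added in passing from $\shift^{-1}\cdot S_{i-1}$ to $S_i$ is $f(i)-i$ and that conversely one may set $f(i)=i+t_i$; you have usefully made explicit the verifications that the so-defined $f$ is a bijection, that $\st(f,t)=S_t$, and that $f\in\JJ$, all of which the paper leaves to the reader. One small slip: having correctly computed the new element of $S_{t+1}$ to be $f(t+1)-(t+1)$, you then label the throw height as $t_{t+1}:=f(t+1)-t$; by the ``$t$-throw'' definition the throw height \emph{is} the new element, namely $f(t+1)-(t+1)$, matching the paper's $t_i=f(i)-i$.
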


\begin{proof}
  If the $(S_i)$ arise from a juggling function $f$, then the
  condition is satisfied where the element $t_i$ added
  to $\shift^{-1}\cdot S_{i-1}$ is $f(i)-i$.
  Conversely, one can construct $f$ as $f(i) = i+t_i$.
\end{proof}

In fact the finiteness conditions on juggling states and permutations
were not necessary for the lemma just proven. We now specify a further
finiteness condition, that will bring us closer to the true functions
of interest.

\begin{lem}\label{lem:finitestates}
  The following two conditions on a bijection $f:\Z\to\Z$ are equivalent:
  \begin{enumerate}
  \item there is a uniform bound on $|f(i)-i|$, or
  \item there are only finitely many different $\st(f,i)$
    visited by $f$.
  \end{enumerate}
  If they hold, $f$ is a juggling function.
\end{lem}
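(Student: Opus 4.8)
The plan is to prove the two implications separately and to read off the juggling-function assertion from the first one.

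For $(1)\Rightarrow(2)$ I would fix a uniform bound $N$ with $|f(i)-i|\le N$ for all $i$, fix $t\in\Z$, and analyze $\st(f,t)=\{f(i)-t : i\le t\}$ directly. Two elementary observations do the job. First, every integer $j\le -N$ lies in $\st(f,t)$: the unique $i$ with $f(i)=j+t$ satisfies $|j+t-i|=|f(i)-i|\le N$, so $i\le j+t+N\le t$. Second, no integer $j>N$ lies in $\st(f,t)$: such an $i$ would satisfy $i\ge j+t-N>t$, contradicting $i\le t$. Hence $\st(f,t)$ is completely determined by its intersection with the fixed finite window $\{-N+1,\dots,N\}$, so there are at most $2^{2N}$ distinct states, which is $(2)$. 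The same two observations show that the symmetric difference of $\st(f,t)$ with $-\naturals$ is contained in that window, hence finite, so each $\st(f,t)$ is a virtual juggling state and $f\in\JJ$.

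For $(2)\Rightarrow(1)$ the key point, which is essentially the content of Lemma~\ref{lem:reconstructjug}, is that a state together with its successor determines the throw between them, and this is a purely set-theoretic fact about bijections that does not presuppose $f\in\JJ$. Concretely, for every $t$ one has the identity
$$\st(f,t+1)=\{f(t+1)-(t+1)\}\cup\bigl(\shift^{-1}\cdot\st(f,t)\bigr),$$
and injectivity of $f$ forces $f(t+1)-(t+1)\notin\shift^{-1}\cdot\st(f,t)$; thus $f(t+1)-(t+1)$ is recovered as the unique element of $\st(f,t+1)$ lying outside $\shift^{-1}\cdot\st(f,t)$, so it is a function of the ordered pair $\bigl(\st(f,t),\st(f,t+1)\bigr)$ alone. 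If only finitely many states are visited, only finitely many such ordered pairs occur, so the set of throws $\{f(i)-i : i\in\Z\}$ is finite, hence bounded, which is $(1)$.

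There is no real obstacle here; the lemma is a bookkeeping exercise. The only points demanding care are pinning down exactly which integers the bound $N$ forces into or out of $\st(f,t)$ in the window estimate for $(1)\Rightarrow(2)$, and, in $(2)\Rightarrow(1)$, being careful to phrase the recovery of the throw from a consecutive pair of states as a set-theoretic statement so that one is not implicitly assuming the conclusion that $f$ is a juggling function.
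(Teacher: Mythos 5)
Your proof is correct and follows essentially the same route as the paper's: for $(1)\Rightarrow(2)$ the same ``trapping'' of each $\st(f,t)$ inside a fixed window of size $2N$, and for $(2)\Rightarrow(1)$ the same reconstruction of $f(i)-i$ from the pair of consecutive states. The one place you are more careful than the paper is in flagging that the reconstruction identity is a purely set-theoretic fact about bijections rather than an invocation of Lemma~\ref{lem:reconstructjug} with its $f\in\JJ$ hypothesis; the paper makes this point only in the remark immediately following that lemma, so your explicitness is a modest improvement rather than a different argument.
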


\begin{proof}
  Assume first that $f$ has only finitely many different states.
  By Lemma \ref{lem:reconstructjug}, we can reconstruct the value of $f(i)-i$
  from the states $S_i, S_{i+1}$. So $f(i)-i$ takes on only finitely
  many values, and hence $|f(i)-i|$ is uniformly bounded.

  For the reverse, assume that $|f(i)-i| < N$ for all $i\in\Z$.
  Then $f(-\naturals) \subseteq \{i < N\}$,
  and $f(\integers_+) \subseteq \{i >-N\}$.
  Since $f$ is bijective, we can complement the latter to learn that
  $f(-\naturals) \supseteq \{i\leq -N\}$. So $\st(f,0)$, and similarly
  each $\st(f,t)$, is trapped between $\{i\leq -N\}$ and $\{i < N\}$.
  There are then only $2^{2N}$ possibilities, all of which
  are juggling states.
\end{proof}

In the next section we will consider juggling functions which cycle
periodically through a finite set of states.

\newcommand\hght{{\rm ht}}
Define the \defn{height of the juggling state $S \subseteq \Z$} as
$$ \hght(S)
:= \sum_{i \in S \cap \integers_+} i - \sum_{i \in -\naturals \setminus S} i, $$
a sort of weighted ball number. We can now motivate the notation $\av(f)$,
computing ball number as an average:

\begin{lem}\label{lem:avheight}
  Let $a,b\in \integers, a\leq b$ and let $f\in \JJ$. Then
  $$ \sum_{i=a+1}^{b} (f(i)-i)
  = (b-a) \av(f) + \hght(\st(f,b)) - \hght(\st(f,a)).$$
  In particular, if $f$ satisfies the conditions of
  Lemma \ref{lem:finitestates}, then for any $a\in \integers$,
  $$ \lim_{b\to \infty} \frac{1}{b-a} \sum_{i=a+1}^{b} (f(i)-i)= \av(f). $$
  This equality also holds without taking the limit, if
  $\st(f,a) = \st(f,b)$.
\end{lem}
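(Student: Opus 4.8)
The plan is to prove the identity
\[
\sum_{i=a+1}^{b} (f(i)-i) = (b-a)\av(f) + \hght(\st(f,b)) - \hght(\st(f,a))
\]
by a telescoping argument: show that it suffices to treat the case $b = a+1$, and then verify that single step directly from the definition of ``$\st(f,b)$ can follow $\st(f,a)$'' in Lemma \ref{lem:reconstructjug}. So first I would observe that both sides are additive over a subdivision $a < c < b$ of the interval: the left side obviously is, and on the right side the $\hght$ terms telescope while the $\av(f)$ terms add because $(b-a) = (b-c)+(c-a)$. Hence the general statement reduces to $b = a+1$.

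For the one-step case, set $S = \st(f,a)$, $T = \st(f,a+1)$, and $t = f(a+1) - (a+1)$. By Lemma \ref{lem:reconstructjug} (reading off the throw from consecutive states), $T = \{t\} \cup (\shift^{-1}\cdot S)$ with $t \notin \shift^{-1}\cdot S$; equivalently $S = (\shift\cdot T) \setminus \{t+1\}$... — more precisely, $\shift^{-1}\cdot S = T \setminus \{t\}$. The claim to verify is then
\[
f(a+1) - (a+1) = \av(f) + \hght(T) - \hght(S),
\]
i.e. $t = \av(f) + \hght(T) - \hght(S)$. The hard part will be the bookkeeping in $\hght(T) - \hght(S)$: one must carefully compare $\hght$ of a state with $\hght$ of its ``shift-down-by-one, then insert $t$'' image, keeping track of which elements cross the boundary between $\integers_+$ and $-\naturals$ (i.e. whether $1 \in S$, and where $t$ sits). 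I would handle this by writing $\hght(S) = \sum_{i \in S,\, i>0} i - \sum_{i \le 0,\, i \notin S} i$ and computing $\hght(\shift^{-1}\cdot S) - \hght(S)$ as a sum of $(-1)$'s over the finitely many relevant indices: shifting every element of $S$ down by one decreases $\hght$ by $\av(f) = $ (ball number of $S$), up to a correction when an element passes from position $1$ to position $0$; then inserting $t$ into $\shift^{-1}\cdot S$ to form $T$ increases $\hght$ by exactly $t$ (if $t>0$) or decreases the subtracted sum appropriately (if $t\le 0$) — in both cases the net change to $\hght$ is $t - \av(f)$. Matching this against the identity gives the one-step case. I expect this sign/boundary analysis to be the only real obstacle; it is a finite, elementary check, cleanest if one first reduces to $S = -\naturals$ using the ``add one element'' move from the proof of the homomorphism lemma, since $\hght$ changes predictably (by the position of the added element) under that move as well.

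Finally, the two displayed corollaries are immediate consequences. Under the hypotheses of Lemma \ref{lem:finitestates}, there are only finitely many states, so $\hght(\st(f,b))$ is bounded uniformly in $b$; dividing the identity by $b-a$ and letting $b\to\infty$ kills the $\hght$ terms and leaves $\av(f)$. And if $\st(f,a) = \st(f,b)$ then $\hght(\st(f,b)) - \hght(\st(f,a)) = 0$ exactly, so $\frac{1}{b-a}\sum_{i=a+1}^b (f(i)-i) = \av(f)$ with no limit needed.
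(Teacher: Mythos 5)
Your proposal is correct and follows essentially the same route as the paper: reduce to $b=a+1$ by telescoping, then compare $\hght(\st(f,a))$, $\hght(\shift^{-1}\cdot\st(f,a))$, and $\hght(\st(f,a+1))$ using the fact that shifting down costs exactly the ball number and inserting $t$ adds exactly $t$. For what it's worth, the boundary ``correction'' you worried about (an element crossing from position $1$ to position $0$) does not actually arise — the identity $\hght(\shift^{-1}\cdot S)=\hght(S)-\av(f)$ holds exactly regardless of whether $1\in S$, which is why the paper states it as immediate ``by its definition.''
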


\begin{proof}
  It is enough to prove the first statement for $b=a+1$, and add the
  $b-a$ many equations together. They are of the form
  $$ f(a+1)-(a+1) = \av(f) + \hght(\st(f,a+1)) - \hght(\st(f,a)). $$
  To see this, start with $S = \st(f,a)$, and use $f(a+1)$ to calculate
  $\st(f,a+1)$. The three sets to consider are
  \begin{eqnarray*}
    S &=& f(\{i \leq a\}) \text{ shifted left by $a$} \\
    S'&=& f(\{i \leq a\}) \text{ shifted left by $a+1$} \\
    \st(f,a+1) &=& f(\{i \leq a+1\}) \text{ shifted left by $a+1$}
  \end{eqnarray*}
  By its definition, $\hght(S') = \hght(S) - \av(f)$.
  And $\hght(\st(f,a+1)) = \hght(S') + f(a+1)-(a+1)$. The equation follows.

  For the second, if $f$ only visits finitely many states then the
  difference in heights is bounded, and dividing by $b-a$ kills this
  term in the limit.
\end{proof}



We now motivate these definitions from a juggler's point of view. The canonical reference is \cite{Polster},
though our setting above is more general than considered there.
All of these concepts originated in the juggling community
in the years 1985-1990, though precise dates are difficult to determine.

Consider a idealized juggler who is juggling with one hand\footnote{%
  or as is more often assumed, rigidly alternating hands},
making one throw every second, of exactly one ball at a time,
has been doing so since the beginning of time and will continue
until its end. If our juggler is only human then there will be a limit
on how high the throws may go.

Assume at first that the hand is never found empty when a throw
is to be made.
The history of the juggler can then be recorded by a function
$$
f(t) = \text{the time that a ball thrown at time $t$ is next thrown.}
$$
The number $f(t)-t$ is usually called the \defn{throw at time $t$}.
If ever the juggler {\em does} find the hand empty i.e. all the balls
in the air, then of course the juggler must wait one second for the
balls to come down. This is easily incorporated by taking $f(t)=t$,
a \defn{$0$-throw}.

While these assumptions imply that $f$ is a juggling function,
they would also seem to force the conclusion that $f(i)\geq i$,
i.e. that {\em balls land after they are thrown}. Assuming that for a
moment, it is easy to compute the number of balls being juggled in
the permutation $f$: at any time $t$, count how many balls were thrown
at times $\{i\leq t\}$ that are still in the air, $f(i)>t$.
This is of course our formula for the ball number, in this special case.
The formula $\av(f) = \av(\shift^t f \shift^{-t})$ then says that
balls are neither created nor destroyed.

The state of $f\in \JJ$ at time $t$ is the set of times in the
future (of $t$) that balls in the air are scheduled to land. (This
was introduced to study juggling by the first author and,
independently, by Jack Boyce, in 1988.) The ``height'' of a state
does not seem to have been considered before.

%

Thus, the sub-semigroup of $\JJ$ where $f(t) \geq t$ encodes
possible juggling patterns.  Since we would like to consider $\JJ$
as a group (an approach pioneered in~\cite{ER}), we must permit
$f(t) < t$. While it may seem fanciful to view this as describing
juggling with antimatter, this point of view will be fruitful in
\cite{KLS2}, where the ``Dirac sea'' interpretation of antimatter
appears in the guise of the affine Grassmannian.

\subsection{Affine permutations}\label{S:affineperm}

Let $\tS_n$ denote the group of bijections, called \defn{affine
  permutations}, $f: \Z \to \Z$ satisfying
$$
f(i + n) = i + n \ \ \mbox{for all $i \in \Z$}.
$$
Plainly this is a subgroup of $\JJ$. This group fits into an exact
sequence
$$ 1 \to \Z^n \xrightarrow{t} \tS_n \onto S_n \to 1 $$
where for $\mu = (\mu_1,\ldots,\mu_n) \in
\Z^n$, we define the \defn{translation element} $t_\mu \in \tS_n$ by
$t_\mu(i) = n\mu_i + i$ for $1 \leq i \leq n$.
The map $\tS_n \onto S_n$ is evident. We can give a splitting map $S_n \to \tS_n$ by
extending a permutation $\pi : [n] \to [n]$ periodically.
By this splitting, we have $\tS_n \simeq S_n \ltimes \Z^n$,
so every $f \in \tS_n$ can be uniquely factorized as $f =
w\ t_\mu$ with $w \in S_n$ and $\mu \in \Z^n$.

An affine permutation $f \in \tS_n$ is written in one-line notation
as $[\cdots f(1)f(2)\cdots f(n) \cdots]$ (or occasionally just as
$[f(1)f(2) \cdots f(n)]$). As explained in Section
\ref{ssec:jugafftnn}, jugglers instead list one period of the
periodic function $f(i)-i$ (without commas, because very few people
can make $10$-throws and higher), and call this the
\defn{siteswap}. We adopt the same conventions when multiplying
affine permutations as for usual permutations. The ball number $
\av(f) = \frac{1}{n} \sum_{i=1}^n (f(i) - i) $ is
always an integer; indeed $\av(w\ t_\mu) = \av(t_\mu) = \sum_i
\mu_i$. Define
$$
\tS_n^k = \{f \in \tS_n \mid \av(f) = k\} = \shift^k \tS_n^0
$$
so that $\tS_n^0 = \ker \av$ is the Coxeter group with simple generators
$s_0,s_1,\ldots,s_{n-1}$, usually called the \defn{affine symmetric
  group}.\footnote{%
  One reason
  the subgroup $\tS_n^0$ is more commonly studied than $\tS_n$
  is that it is a Coxeter group $\widetilde{A_{n-1}}$;
  its relevance for us, in \cite{KLS2}, is that it indexes the
  Bruhat cells on the affine flag manifold for the group $SL_n$.
  In Section \ref{sec:QH} and
  \cite{KLS2} we will be concerned with the affine flag manifold
  for the group $GL_n$, whose Bruhat cells are indexed by all of $\tS_n$.
}
Note that if $f \in \tS_n^a$ and $g \in \tS_n^b$ then the
product $fg$ is in $\tS_n^{a+b}$.  There is a canonical bijection
$f \mapsto f \circ (i \mapsto i+b-a)$
between the cosets $\tS_n^a$ and $\tS_n^b$.  The group $\tS_n^0$ has a Bruhat
order ``$\leq$'' because it is a Coxeter group $\widetilde{A_{n-1}}$.
This induces a partial order on each $\tS_n^a$, also denoted $\leq$.

An \defn{inversion} of $f$ is a pair $(i,j) \in \Z \times \Z$ such that $i
< j$ and $f(i) > f(j)$.  Two inversions $(i,j)$ and $(i',j')$ are
equivalent if $i' = i + rn$ and $j' = j + rn$ for some integer $r$.
The number of equivalence classes of inversions is the \defn{length}
$\ell(f)$ of $f$. This is sort of an ``excitation number'' of the
juggling pattern; this concept does not seem to have been studied
in the juggling community (though see \cite{ER}).

An affine permutation $f \in \tS_n^k$ is \defn{bounded} if $i \leq
f(i) \leq i+n$ for $i \in \Z$.  We denote the set of bounded affine
permutations by $\Bound(k,n)$.  The restriction of the Bruhat order
to $\Bound(k,n)$ is again denoted $\leq$.

\begin{lem}\label{L:orderideal}
The subset $\Bound(k,n) \subset \tS_n^k$ is a lower order ideal in
$(\tS_n^k,\leq)$.  In particular, $(\Bound(k,n),\leq)$ is graded by
the rank function $\ell(f)$.
\end{lem}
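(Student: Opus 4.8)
The plan is to show directly that if $g \leq f$ in the Bruhat order on $\tS_n^k$ and $f \in \Bound(k,n)$, then $g \in \Bound(k,n)$; since $\Bound(k,n)$ is then a lower order ideal, its induced order is automatically graded by the restriction of $\ell$ (any lower ideal in a graded poset, graded by the same rank function, is graded). So the entire content is the ideal claim. Since Bruhat order is generated by cover relations $g \lessdot gs$ (for $s$ a simple reflection $s_0,\dots,s_{n-1}$, extended by the $\chi$-conjugation to stay inside $\tS_n^k$), and covers correspond to transpositions in the one-line notation, it suffices to treat a single such cover: if $g = f \cdot (\text{transposition})$ with $\ell(g) = \ell(f) - 1$ and $f$ bounded, then $g$ is bounded.

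Concretely, recall that in $\tS_n^k$ a Bruhat cover $g \lessdot f$ is obtained from $f$ by choosing positions $a < b$ with $f(a) < f(b)$, swapping the values $f(a)$ and $f(b)$ (periodically, i.e. also swapping at all $a+rn, b+rn$), and requiring that no position $c$ with $a < c < b$ has $f(a) < f(c) < f(b)$. Write $g$ for the result. We must check $i \leq g(i) \leq i+n$ for all $i$; by periodicity it is enough to check this for $i = a$ and $i = b$ (all other values of $g$ agree with $f$). Since $g(a) = f(b) > f(a) \geq a$, the lower bound $g(a) \geq a$ is immediate. The three potentially problematic inequalities are $g(a) = f(b) \leq a + n$, $g(b) = f(a) \geq b$, and (trivially, since $f(a) \le f(b) \le b+n$) $g(b) \leq b+n$. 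The key point is to rule out the first two using the non-crossing condition on the cover together with the boundedness of $f$.

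The main obstacle — and the crux of the argument — is exactly these two inequalities $f(b) \leq a+n$ and $f(a) \geq b$. I would argue as follows. Suppose for contradiction $f(b) > a + n$. Since $f$ is bounded, $f(a+n) \leq (a+n) + n$, but more usefully $f(a+n) = f(a) + n < f(b) + n$; I would instead look for an intermediate position. If $f(b) \ge a+n+1$, consider the position $c$ with $a < c$ and $f(c) = f(b) - n$: since $f(b) - n > a \ge f(a) - \text{(something)}$... — more carefully, from $f(a) \leq a+n < f(b)$ and boundedness $f(a) \geq a$, the value $v := f(b) - n$ satisfies $a < v$; let $c = f^{-1}(v)$, so $c \le v \le c+n$ and in particular $c \geq v - n = f(b) - 2n$; one checks $a < c < b$ and $f(a) < f(c) < f(b)$, contradicting that $g \lessdot f$ is a cover. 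The inequality $f(a) \ge b$ is handled by the mirror-image argument using $f^{-1}$ (equivalently, working on the left), or by the symmetric estimate with $f(a) < b$ forcing an intermediate crossing position. I would present these two estimates as a short lemma about bounded affine permutations: \emph{a bounded $f$ admits no Bruhat cover $g \lessdot f$ with $g$ unbounded}. Once that lemma is in hand, the theorem follows by induction on $\ell(f) - \ell(g)$ along any saturated chain, and gradedness is free.
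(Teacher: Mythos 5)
Your overall strategy matches the paper's --- reduce to a single Bruhat cover $g \lessdot f$ and check that it preserves boundedness --- but you have the direction of the cover backwards, and that sign error is fatal rather than cosmetic. Going \emph{down} from $f$ to $g$ means removing an inversion, i.e.\ choosing $a<b$ with $f(a)>f(b)$ and swapping the values at positions $a$ and $b$; the convention you write, $f(a)<f(b)$, creates a new inversion and gives $\ell(g)>\ell(f)$, contradicting your own hypothesis $\ell(g)=\ell(f)-1$. (And of course going \emph{up} a cover from a bounded $f$ can break boundedness: that is precisely what makes $\Bound(k,n)$ a \emph{lower} ideal rather than an upper one.)

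Once the sign is corrected, the whole lemma falls out of four chained inequalities with no reference to the cover / non-crossing condition: with $a<b$, $f(a)>f(b)$, and $g = f\cdot(a\,b)$, one has $g(a)=f(b)$ with $a<b\le f(b)$ and $f(b)<f(a)\le a+n$, and $g(b)=f(a)$ with $b\le f(b)<f(a)$ and $f(a)\le a+n<b+n$. In particular, removing \emph{any} inversion of a bounded affine permutation --- not just a covering one --- yields a bounded one. This is exactly the paper's proof, and it is the sign error that led you to believe the non-crossing condition was essential, by making two of the four bounds look genuinely problematic when in fact they are immediate. The auxiliary argument you sketch to salvage $f(b)\le a+n$ is also incorrect on its own terms: after normalizing so that $1\le a<b<a+n$, your candidate crossing position is $c=f^{-1}(f(b)-n)=b-n$, which lies to the \emph{left} of $a$, not in the open interval $(a,b)$, so no contradiction with the cover condition arises. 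The gradedness part of your writeup is fine; it is indeed automatic once $\Bound(k,n)$ is a lower ideal of a graded poset.
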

\begin{proof}
Suppose $f \in \Bound(k,n)$ and $g \lessdot f$.  Then $g$ is
obtained from $f$ by swapping the values of $i + kn$ and $j + kn$
for each $k$, where $i < j$ and $f(i) > f(j)$.  By the assumption on the
boundedness of $f$, we have $i+n \geq f(i) > f(j) = g(i) \geq j > i$
and $j +n > i + n \geq f(i) = g(j) > f(j) \geq j$.  Thus $g \in
\Bound(k,n)$.
\end{proof}

Postnikov, in \cite[Section 13]{Pos}, introduces ``decorated permutations''.
A decorated permutation is an element of $S_n$, with each fixed point colored either $1$ or $-1$.
There is an obvious bijection between the set of decorated permutations and $\coprod_{k=0}^n \Bound(k,n)$:
Given an element $f \in \Bound(k,n)$, form the corresponding decorated permutation by reducing $f$ modulo $n$
and coloring the fixed points of this reduction $-1$ or $1$ according to whether $f(i)=i$ or $f(i)=i+n$ respectively.
In~\cite[Section 17]{Pos}, Postnikov introduces the cyclic Bruhat order, $CB_{kn}$, on those decorated permutations corresponding to elements of $\Bound(k,n)$.
From the list of cover relations in~\cite[Theorem 17.8]{Pos}, it is easy to see that $CB_{kn}$ is anti-isomorphic to $\Bound(k,n)$.

\begin{example}
  In the $\Gr(2,4)$ case there are already $33$ bounded affine permutations,
  but only $10$ up to cyclic rotation. In Figure \ref{fig:gr24} we show
  the posets of siteswaps, affine permuations, and decorated permutations,
  each modulo rotation. Note that the cyclic symmetry is most visible on the
  siteswaps, and indeed jugglers draw little distinction between
  cyclic rotations of the ``same'' siteswap.
  \begin{figure}
    \centering
    \epsfig{file=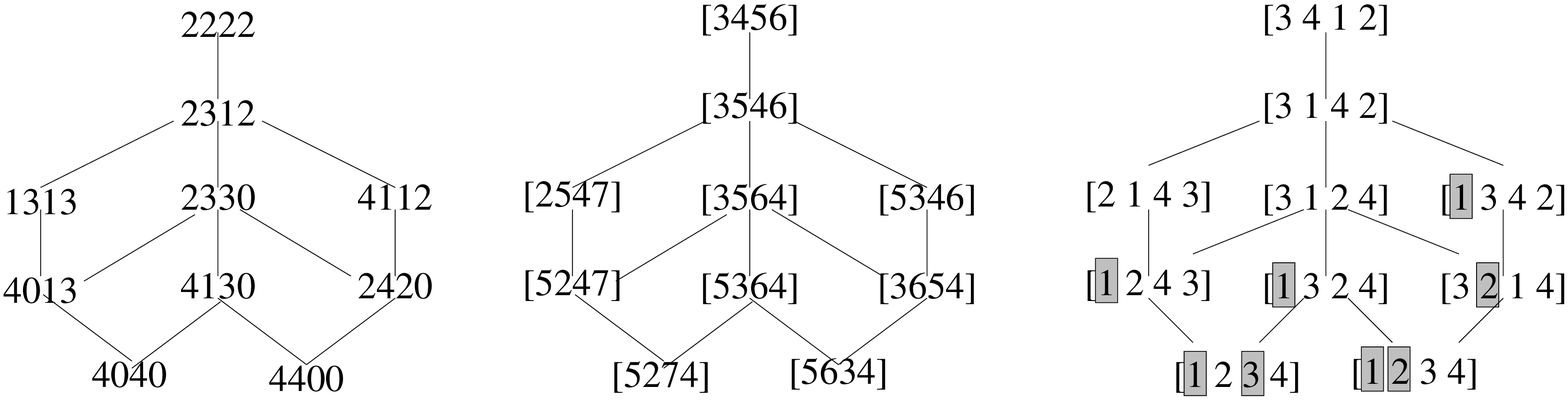,width=6in}
    \caption{The posets of siteswaps, bounded affine permutations,
      and decorated permutations for $\Gr(2,4)$, each up to
      cyclic rotation. (The actual posets each have 33 elements.)}
    \label{fig:gr24}
  \end{figure}
\end{example}

\subsection{Sequences of juggling states}
A \defn{$(k,n)$-sequence of juggling states} is a sequence $\J =
(J_1,\ldots, J_n) \in \binom{[n]}{k}^n$ such that for each $i \in
[n]$, we have that $J_{i+1} \cup -\naturals$ follows $J_i \cup -\naturals$,
where the indices are taken modulo $n$.  Let $\Jugg(k,n)$ denote the
set of such sequences.

Let $f \in \Bound(k,n)$.  Then the sequence of juggling states
$$
\ldots, \st(f,-1), \st(f,0), \st(f,1), \ldots
$$
is periodic with period $n$.  Furthermore for each $i \in \Z$, (a)
$-\naturals \subset \st(f,i)$, and (b) $\st(f,i) \cap [n] \in
\binom{[n]}{k}$.  Thus
$$
\J(f) = (\st(f,0) \cap [n], \st(f,1) \cap [n], \ldots, \st(f,n-1)
\cap [n]) \in \Jugg(k,n).
$$

\begin{lem}
The map $f \mapsto \J(f)$ is a bijection between $\Bound(k,n)$ and
$\Jugg(k,n)$.
\end{lem}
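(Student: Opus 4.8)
The plan is to construct an explicit inverse to $f \mapsto \J(f)$ using Lemma~\ref{lem:reconstructjug}. Given a sequence $\J = (J_1, \ldots, J_n) \in \Jugg(k,n)$, extend it periodically to a bi-infinite sequence $(\tilde J_i)_{i \in \Z}$ by setting $\tilde J_i = J_{\bar\imath} \cup -\naturals$ (so each $\tilde J_i$ is a genuine juggling state of ball number $k$, using the height/ball-number bookkeeping of Section~\ref{ssec:juggling}). The hypothesis defining $\Jugg(k,n)$ says precisely that $\tilde J_{i+1}$ can follow $\tilde J_i$ for every $i$, so by Lemma~\ref{lem:reconstructjug} there is a unique juggling function $f$ with $\st(f,i) = \tilde J_i$ for all $i$, namely $f(i) = i + t_i$ where $t_i$ is the unique element of $\tilde J_{i+1} \setminus (\shift^{-1} \cdot \tilde J_i)$. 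I would then check three things: that $f$ is $n$-periodic in the affine-permutation sense, that $f$ is bounded, and that $\J(f) = \J$.

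First I would verify periodicity: since the sequence $(\tilde J_i)$ is $n$-periodic by construction, the reconstructed throws satisfy $t_{i+n} = t_i$, hence $f(i+n) = (i+n) + t_{i+n} = f(i) + n$, so $f \in \tilde S_n$. Its ball number is the common ball number $k$ of the states, so $f \in \tilde S_n^k$. Next, boundedness: because $\tilde J_{i+1} \supseteq (\tilde J_i \setminus \{1\}) - 1$ and each $\tilde J_i$ contains $-\naturals$, the newly added element $t_i$ must be positive unless the $0$-throw case occurs, and it must be at most $n$ because $\tilde J_i \cap [n] \in \binom{[n]}{k}$ forces $\tilde J_{i+1}$ to contain no element exceeding $n$ that wasn't already forced; translating back through $f(i) = i + t_i$ gives $i \leq f(i) \leq i+n$. (This is the same computation already carried out in reverse in the discussion preceding this lemma — that $\st(f,i) \cap [n] \in \binom{[n]}{k}$ and $-\naturals \subset \st(f,i)$ for bounded $f$.) Finally, $\J(f) = \J$ is immediate from $\st(f,i) = \tilde J_i$ and the definition $\J(f)_j = \st(f,j-1) \cap [n]$; and conversely, starting from an $f \in \Bound(k,n)$, the sequence $(\st(f,i))$ recovers $f$ by Lemma~\ref{lem:reconstructjug}, so the two maps are mutually inverse.

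The main obstacle is the boundedness verification: one must check carefully that the conditions $J_{i+1} \supseteq (J_i \setminus \{1\}) - 1$ and $J_i \in \binom{[n]}{k}$ are exactly equivalent (not merely sufficient) to $i \leq f(i) \leq i+n$, i.e. that nothing in the finite window $[n]$ interacts badly with the infinite tail $-\naturals$ of the virtual state. The cleanest way is to note that a juggling state $S$ of ball number $k$ satisfies $-\naturals \subset S$ and $S \cap \integers_+ \subset [n]$ if and only if $S = J \cup -\naturals$ for a unique $J \in \binom{[n]}{k}$, and then observe that under this correspondence "$T$ can follow $S$ with throw $t$" translates into $1 \le t \le n$ together with the stated containment of $(J_i \setminus\{1\}) - 1$ in $J_{i+1}$; the throw height $t = f(i) - i$ lies in $[0,n]$ exactly on this locus. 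Everything else is formal from Lemmas~\ref{lem:reconstructjug} and~\ref{lem:finitestates}.
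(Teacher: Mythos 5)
The paper states this lemma without proof, treating it as immediate from the observations just before it (that for $f \in \Bound(k,n)$ the sequence $(\st(f,i))$ is $n$-periodic with $-\naturals \subset \st(f,i)$ and $\st(f,i)\cap[n]\in\binom{[n]}{k}$). Your argument is a correct and reasonably complete fill-in of exactly the intended reasoning: you invoke Lemma~\ref{lem:reconstructjug} to build the inverse map, and you verify $n$-periodicity, ball number, and boundedness.

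The only spot that could be tightened is the boundedness verification, which you flag yourself as ``the main obstacle.'' The clean dichotomy is: the throw $t_i$ lies in $\tilde J_{i+1} \setminus (\shift^{-1}\cdot\tilde J_i)$; since $\shift^{-1}\cdot\tilde J_i \supseteq \{j : j \le -1\}$, we get $t_i \ge 0$; since $\tilde J_{i+1} = J_{i+1}\cup -\naturals \subseteq \{j : j \le n\}$, we get $t_i \le n$; and $t_i = 0$ occurs precisely when $1\notin J_i$ (so that $0 \notin \shift^{-1}\cdot\tilde J_i$), which is the $0$-throw. Your prose gestures at this but does not quite pin down the two inequalities separately. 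With that small sharpening, the proof is solid and matches the approach the authors evidently had in mind.
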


We now discuss another way of viewing $(k,n)$-sequences of juggling
states which will be useful in Section~\ref{ssec:crank}.  Let $S$ be
a $k$-ball virtual juggling state.  For every integer $j$, define
$$R_j(S) = k - \# \{x \in S: \ x > j \}.$$
These $\{R_j\}$ satisfy the following properties:
\begin{itemize}
\item $R_j(S) - R_{j-1}(S)$ is either $0$ or $1$, according to whether
  $j \not \in J$ or $j \in J$ respectively,
\item $R_j(S) = k$ for $j$ sufficently positive, and
\item $R_j(S) = j$ for $j$ sufficiently negative.
\end{itemize}
Conversely, from such a sequence $(R_j)$ one can construct a
$k$-ball juggling state.


Let $S_1$ and $S_{2}$ be two $k$-ball juggling states. Define a $2
\times \infty$ matrix $(r_{ij})$ by $r_{ij} = R_{j-i+1}(S_i)$.

\begin{lemma} \label{l:following}
  The state $S_2$ can follow $S_1$ if and only if $r_{1j}-r_{2j} =0$
  or $1$ for all $j \in \Z$ and there is no $2 \times 2$ submatrix for
  which $r_{1j}=r_{2j}=r_{2(j+1)}=r_{1(j+1)}-1$.
\end{lemma}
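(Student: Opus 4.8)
The plan is to translate both sides of the claimed equivalence into statements about the throw $t$ that takes $S_1$ to $S_2$, and then check they coincide term-by-term. Recall from Lemma \ref{lem:reconstructjug} that $S_2$ can follow $S_1$ precisely when $S_2 = \{t\} \cup (\shift^{-1} \cdot S_1)$ for some $t \notin \shift^{-1} \cdot S_1$; here $\shift^{-1} \cdot S_1 = \{x-1 : x \in S_1\}$. So the first step is to write down, for a single juggling state $S$ with rank sequence $(R_j(S))$, how the rank sequence of $\shift^{-1} \cdot S$ and of $\{t\} \cup S$ relate to that of $S$: shifting $S$ left by one just reindexes, $R_j(\shift^{-1}\cdot S) = R_{j+1}(S) - 1$ (the $-1$ coming from the ball-number/height bookkeeping already set up before the lemma), while adjoining a new ball landing at time $t$ increments $R_j$ by $1$ for all $j < t$ and leaves it unchanged for $j \geq t$, provided $t$ was not already occupied. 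Composing these, I get an explicit formula for $R_j(S_2)$ in terms of $R_{j+1}(S_1)$, valid exactly when $S_2$ follows $S_1$.

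The second step is to unwind the definition $r_{ij} = R_{j-i+1}(S_i)$, so that $r_{1j} = R_j(S_1)$ and $r_{2j} = R_{j-1}(S_2)$. Substituting the formula from the first step (with $j$ replaced appropriately) expresses $r_{2j}$ in terms of $r_{1j}$ and the throw height $t$; one finds $r_{1j} - r_{2j} \in \{0,1\}$ automatically whenever $S_2$ follows $S_1$, which gives the ``only if'' direction of the first condition, and the forbidden $2\times 2$ pattern $r_{1j}=r_{2j}=r_{2(j+1)} = r_{1(j+1)}-1$ is seen to encode exactly the situation ``$j$ is not a landing time of $S_1$ shifted, but the new throw $t$ equals $j+1$ and $j+1$ was already occupied'' — i.e. the violation of the hypothesis $t \notin \shift^{-1}\cdot S_1$ in Lemma \ref{lem:reconstructjug}. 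So the absence of that pattern is precisely the condition that the candidate throw is legal.

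For the ``if'' direction, I start from a pair $(S_1, S_2)$ of $k$-ball states satisfying the two matrix conditions and reconstruct the throw: the first condition $r_{1j} - r_{2j} \in \{0,1\}$ together with the boundary behavior of rank sequences (both are eventually $k$ on the right and eventually equal to $j$ on the left, per the bulleted properties preceding the lemma) forces the difference $r_{1j} - r_{2j}$ to be $1$ for $j$ small and $0$ for $j$ large, hence to switch from $1$ to $0$ exactly once; that switch location determines $t$, and the no-forbidden-$2\times2$ condition guarantees $t$ lands in an empty slot of $\shift^{-1} \cdot S_1$, so Lemma \ref{lem:reconstructjug} applies and $S_2$ follows $S_1$. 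The main obstacle I anticipate is purely bookkeeping: getting the index shifts and the constant ``$-1$'' offsets in $R_j(\shift^{-1}\cdot S)$ exactly right so that the asserted $2\times 2$ pattern matches on the nose rather than off by one row/column — it would be easy to state the forbidden submatrix with a sign or index error, so I would verify the correspondence on a small explicit example (say a $2$-throw versus a $0$-throw on a low-rank state) before writing it up.
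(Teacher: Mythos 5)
Your overall strategy is the same as the paper's: translate ``$S_2$ can follow $S_1$'' via Lemma \ref{lem:reconstructjug} into the existence of a single throw value $t$, express $r_{2j}$ in terms of $r_{1j}$ and $t$, and read off the two matrix conditions. For the ``only if'' direction this works cleanly. However, there is a genuine gap in your ``if'' direction, and it is a logical one rather than a bookkeeping one. You assert that the condition $r_{1j}-r_{2j}\in\{0,1\}$ together with the boundary behavior (difference $1$ for $j\ll 0$, difference $0$ for $j\gg 0$) forces the difference ``to switch from $1$ to $0$ exactly once.'' It does not: a $\{0,1\}$-valued sequence with those boundary values can rise from $0$ back up to $1$ arbitrarily many times. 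Ruling out such a $0\to 1$ transition is precisely the job of the no-forbidden-$2\times 2$ condition, not a separate ``legality of the throw'' check as you suggest. Indeed the paper first upgrades the stated condition to the stronger statement ``there is no $j$ with $r_{1j}=r_{2j}$ and $r_{2(j+1)}=r_{1(j+1)}-1$'' (using the $\{0,1\}$ bound on the row differences of $r_{2,\bullet}$), and a $0\to 1$ transition in the column-difference sequence is exactly such a $j$; once the difference is seen to switch exactly once, the legality $t\notin\shift^{-1}S_1$ follows automatically from the fact that the row differences $r_{2j}-r_{2(j-1)}$ of a genuine $k$-ball state are constrained to $\{0,1\}$. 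So your plan attributes the wrong role to the second hypothesis and leaves the uniqueness of the switch point unjustified.

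Separately, your formula ``adjoining a new ball landing at time $t$ increments $R_j$ by $1$ for all $j<t$ and leaves it unchanged for $j\geq t$'' is backwards: with $R_j(S)=k'-\#\{x\in S:x>j\}$ and $k'$ bumped up by one, the increment occurs for $j\geq t$. You flagged exactly this sort of index/sign slip as a risk, and it is worth correcting before writing up, but it is a local computation and not a structural problem with the approach; the missing argument about the uniqueness of the switch is the substantive issue.
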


\begin{proof}
  It is easy to check that $S_2 = \shift^{-1} S_1 \cup \{ t \}$ if and
  only if $r_{1j} = r_{2j}+1$ for $j \leq t$ and $r_{1j} = r_{2j}$ for
  $j \geq t+1$.  If this holds, it immediately follows that
  $r_{1j}-r_{2j} =0$ or $1$ for all $j$ and that there is no $j$ for
  which $r_{1j}=r_{2j}$ while $r_{2(j+1)}=r_{1(j+1)}-1$.

  Conversely, suppose that $r_{1j}-r_{2j} =0$ or $1$ for all $j \in
  \Z$ and there is no $j$ for which
  $r_{1j}=r_{2j}=r_{2(j+1)}=r_{1(j+1)}-1$.  Then we claim that there
  is no $j$ for which $r_{1j}=r_{2j}$ and $r_{2(j+1)}=r_{1(j+1)}-1$.
  Proof: suppose there were. If $r_{2(j+1)}=r_{2j}$, then we are done
  by our hypothesis; if $r_{2(j+1)} = r_{2 j}$ then $r_{1(j+1)} =
  r_{1j}+2$, contradicting that $r_{1(j+1)}-r_{1j}=0$ or $1$.  Since
  $r_{2(j+1)}-r_{2j}=0$ or $1$, we have a contradiction either
  way. This establishes the claim.  Now, we know that $r_{1j}=r_{2j}$
  for $j$ sufficently positive and $r_{1j}=r_{2j}+1$ for $j$
  sufficently negative, so there must be some $t$ such that $r_{1j} =
  r_{2j}+1$ for $j \leq t$ and $r_{1j} = r_{2j}$ for $j \geq t+1$.
  Then $S_{2}$ can follow $S_1$.
\end{proof}

It is immediate to extend this result to a sequence of juggling
states.  Let $\JJ$ be the group of juggling functions introduced in
Section~\ref{ssec:juggling} and let $\JJ^{\av=k}$ be those juggling
functions with ball number $k$.  For any $f$ in $\JJ^{\av=k}$, let
$\J(f) = (J_1, J_2,\ldots,J_{n})$ be the corresponding
$(k,n)$-sequence of juggling states. Define an $\infty \times
\infty$ matrix by $r_{ij} = R_{(j-i+1)}(J_i \cup -\naturals)$. Then,
applying Lemma~\ref{l:following} to each pair of rows of $(r_{ij})$
gives:

\begin{cor} \label{c:manyfollowing} The above construction gives a
  bijection between $\JJ^{\av=k}$ and $\infty \times \infty$ matrices such
  that
  \begin{enumerate}
  \item[(C1)] for each $i$, there is an $m_i$ such that $r_{ij} =
    j-i+1$ for all $j \leq m_i$,
  \item[(C2)] for each $i$, there is an $n_i$ such that $r_{ij} = k$ for all $j \geq n_i$,
  \item[(C3)] $r_{ij} - r_{(i+1)j} \in \{0,1\}$ and $r_{ij} - r_{i(j-1)} \in
    \{0,1\}$ for all $i, j \in \Z$, and
  \item[(C4)] if $r_{(i+1)(j-1)}=r_{(i+1)j}=r_{i(j-1)}$ then $r_{ij}=r_{(i+1)(j-1)}$.
  \end{enumerate}

  Under this bijection, $r_{ij} = r_{(i+1)j} = r_{i(j-1)} >
  r_{(i+1)(j-1)}$ if and only if $f(i)=j$.
\end{cor}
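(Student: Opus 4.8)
The plan is to bootstrap the two-row statement of Lemma~\ref{l:following} to the full doubly-infinite matrix, and to use Lemma~\ref{lem:reconstructjug} to pass back and forth between a sequence of juggling states and the juggling function realizing it. Throughout, write $S_i := \st(f,i-1)$ for the juggling state of $f$ at time $i-1$, so that $S_i$ has ball number $k$ and the $i$-th row of $(r_{ij})$ is the map $j \mapsto R_{j-i+1}(S_i)$, i.e.\ the $R$-sequence of $S_i$ shifted so that consecutive rows are aligned for comparison (this is exactly the normalization used in Lemma~\ref{l:following}). I would also record the reconstruction formula from Lemma~\ref{lem:reconstructjug}: if $(S_i)_{i\in\Z}$ is a sequence of states in which each $S_{i+1}$ can follow $S_i$, the unique juggling function realizing it is $f(i) = i + t_i$, where $t_i$ is the throw height taking $S_i$ to $S_{i+1}$.

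First I would show that $(r_{ij})$ satisfies (C1)--(C4) if and only if (a) each row is the $R$-sequence of some $k$-ball virtual juggling state, and (b) for each $i$ the state of row $i+1$ can follow the state of row $i$. For (a): by the three bulleted properties of $\{R_j\}$ recalled just before Lemma~\ref{l:following} and their converse, a bi-infinite integer sequence is the $R$-sequence of a $k$-ball state exactly when its successive differences lie in $\{0,1\}$, it is eventually equal to $k$, and it eventually equals the identity sequence; after the shift by $i$ these become, respectively, the intra-row half of (C3), condition (C2), and condition (C1). For (b): applying Lemma~\ref{l:following} to rows $i$ and $i+1$ --- which sit in the matrix precisely as its two rows, thanks to the normalization above --- the hypothesis that $S_{i+1}$ follows $S_i$ is equivalent to ``$r_{ij}-r_{(i+1)j}\in\{0,1\}$ for all $j$ and there is no forbidden $2\times 2$ block'', and the forbidden block of Lemma~\ref{l:following}, reindexed to columns $j-1,j$, is exactly the negation of (C4). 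Hence the matrices obeying (C1)--(C4) are identified with bi-infinite sequences $(S_i)_{i\in\Z}$ of $k$-ball states in which each $S_{i+1}$ can follow $S_i$.

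Next I would invoke Lemma~\ref{lem:reconstructjug}: any such sequence $(S_i)$ is the sequence of states of a unique bijection $f:\Z\to\Z$, which is a juggling function because each $S_i$ is a genuine juggling state, and has $\av(f)=k$ because ball number is conserved along the sequence and each $S_i$ has ball number $k$; thus $f\in\JJ^{\av=k}$. Conversely, for $f\in\JJ^{\av=k}$ the matrix $r_{ij} = R_{j-i+1}(\st(f,i-1))$ satisfies (C1)--(C4) by the previous paragraph, and the two assignments are mutually inverse by construction, which is the asserted bijection. For the decoding formula, I would read off the proof of Lemma~\ref{l:following}: since $S_{i+1}$ follows $S_i$ by a $t_i$-throw, after matching indices the difference $r_{ij}-r_{(i+1)j}$ is a step function of $j$, equal to $1$ for $j\le f(i)-1$ and to $0$ for $j\ge f(i)$ (using $f(i)=i+t_i$). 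Taking $j=f(i)$ gives $r_{ij}=r_{(i+1)j}$ and $r_{i(j-1)}>r_{(i+1)(j-1)}$; moreover $r_{ij}=r_{i(j-1)}$, since $r_{ij}-r_{i(j-1)}=1$ would force $f(i)-i+1\in S_i=\st(f,i-1)$, i.e.\ $f(i)=f(m)$ for some $m<i$, contradicting injectivity of $f$. Conversely, if the four-entry pattern holds at $(i,j)$ then $r_{ij}=r_{(i+1)j}$ forces $j\ge f(i)$ while $r_{i(j-1)}>r_{(i+1)(j-1)}$ forces $j-1\le f(i)-1$, so $j=f(i)$.

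The main obstacle is entirely clerical: three index shifts interact --- the shift by $i$ in $r_{ij}=R_{j-i+1}(S_i)$, the $\shift^{-1}$ relating $S_i$ to $S_{i+1}$, and the passage from a throw height $t_i$ to the landing time $f(i)=i+t_i$ --- and one must chase them carefully so that the single transition in Lemma~\ref{l:following} lands on the corner $(i,j)=(i,f(i))$ rather than a neighbor. All the genuine content already sits in Lemmas~\ref{l:following} and~\ref{lem:reconstructjug}; the remaining risk is an off-by-one error.
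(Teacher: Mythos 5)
Your proof is correct and follows the same route the paper intends: the paper itself dispatches this corollary with a single sentence (``applying Lemma~\ref{l:following} to each pair of rows of $(r_{ij})$ gives''), leaving the reader to assemble exactly the pieces you assembled --- the row-wise characterization of $R$-sequences, the row-pair application of Lemma~\ref{l:following}, and the reconstruction of $f$ via Lemma~\ref{lem:reconstructjug}. Your index-chasing is right (in particular the observation that $r_{ij}=r_{i(j-1)}$ at $j=f(i)$ because $f(i)-(i-1)\notin\st(f,i-1)$ by injectivity of $f$), so there is no gap; this is a fleshed-out version of the argument the authors had in mind.
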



\begin{prop} \label{prop:rankBruhat}
Let $f, g \in \tS_n^k \subset \JJ^{\av = k}$ and let $r$ and $s$ be
the corresponding matrices. Then $f \leq g$ (in Bruhat order) if and
only if $r_{ij} \geq s_{ij}$ for all $(i,j) \in \Z^2$.
\end{prop}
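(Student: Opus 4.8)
The plan is to reduce the statement to the classical characterization of Bruhat order on the affine symmetric group in terms of rank functions. Recall that for affine permutations $f, g \in \tS_n^k$ the affine Bruhat order can be described combinatorially: $f \leq g$ if and only if, for all $i \leq j$, one has $\#\{ a \leq i : f(a) \geq j \} \leq \#\{ a \leq i : g(a) \geq j\}$ (this is the affine analogue of the ``dot criterion'' / the characterization via the associated rank matrices of Fulton type, and it is what one gets by applying the finite criterion $u([p]) \leq w([p])$ for all $p$ to each window). So first I would make precise which ``affine rank function'' is being used and verify that it agrees, up to the reindexing in the definition $r_{ij} = R_{(j-i+1)}(J_i \cup -\naturals)$, with the standard one. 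Concretely, $r_{ij}$ counts $k$ minus the number of elements of the state $\st(f, i-1)$ (suitably shifted) that exceed $j - i$; unwinding the definition of $\st$ this becomes a count of the form $\#\{ a \leq i-1 : f(a) > \text{something} \}$ subtracted from a constant, which is exactly a Fulton-style rank entry for the affine permutation matrix of $f$.

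Next I would carry out the translation in both directions. In one direction, suppose $f \leq g$; since the affine Bruhat order is generated by covers, and each cover swaps the values at two positions $p < q$ with $f(p) > f(q)$ (and all their $n$-translates), it suffices to check that a single such cover can only decrease each $r_{ij}$, i.e. increase each $s_{ij} = r^{(g)}_{ij}$ — this is the same local computation as in the proof of Lemma~\ref{L:orderideal}, tracking how swapping two values changes a rank count, and it only affects entries $(i,j)$ with $p \leq i < q$ and values straddled appropriately. In the other direction, suppose $r_{ij} \geq s_{ij}$ for all $(i,j)$; I would appeal to the bijection of Corollary~\ref{c:manyfollowing} together with the order-theoretic fact that the rank-matrix partial order refines to single-cover moves: given $f \neq g$ with $r \geq s$ entrywise, find a position where the matrices first differ and produce a cover $f \lessdot f'$ with $f' \leq g$ and $r^{(f')}$ still $\geq s$ entrywise, then induct on $\ell(g) - \ell(f)$ (which is finite and equals, say, the total discrepancy $\sum$ over a fundamental domain of the $r - s$ entries, suitably normalized). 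The boundedness hypothesis $f, g \in \tS_n^k$ is what makes the discrepancy sum finite and the induction terminate.

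The main obstacle I expect is the careful bookkeeping in the reindexing: the matrix $(r_{ij})$ is indexed so that the ``antidiagonal'' structure from $j - i + 1$ encodes the juggling state, whereas the standard affine rank matrix of a permutation is indexed by $(i,j)$ directly, so one must check that the monotonicity conditions (C3) and the corner condition (C4) correspond exactly to the defining monotonicity of Fulton rank matrices, and that the inequality $r \geq s$ is not accidentally reversed under this change of coordinates. Once the dictionary between $(r_{ij})$ and the genuine affine rank matrix is pinned down, the equivalence with Bruhat order is essentially the known rank-matrix criterion, so I would either cite it or give the short cover-by-cover argument sketched above; I would lean toward the self-contained cover argument since the paper has already set up Lemma~\ref{lem:reconstructjug} and Corollary~\ref{c:manyfollowing}, which supply exactly the tools needed.
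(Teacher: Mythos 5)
Your proposal is correct and takes essentially the same approach as the paper, whose entire proof is the single citation \cite[Theorem 8.3.1]{BjB}, i.e.\ the standard rank-matrix (``dot'') criterion for Bruhat order on the affine symmetric group. The translation you plan to carry out, unwinding the definition to $r_{ij} = k - \#\{a < i : f(a) > j\}$ and matching this against the classical criterion, is exactly the bookkeeping one must do to invoke that theorem, and the cover-by-cover argument you sketch is a valid self-contained substitute for the citation, though more than the paper actually writes down.
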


\begin{proof}
  See \cite[Theorem 8.3.1]{BjB}.
\end{proof}

When $r_{ij} = r_{(i+1)j} = r_{i(j-1)} > r_{(i+1)(j-1)}$, we say
that $(i,j)$ is a \defn{special entry} of $r$.

An easy check shows:
\begin{cor} \label{c:boundmanyfollowing}
Under the above bijection, $\Bound(k,n)$ corresponds to $\infty \times \infty$ matrices such that
\begin{enumerate}
\item[(C1')] $r_{ij} = j-i+1$ for all $j < i$,
\item[(C2')] $r_{ij} = k$ for all $j \geq i+n-1$,
\item[(C3)] $r_{ij} - r_{(i+1)j} \in \{0,1\}$ and $r_{ij} - r_{i(j-1)} \in
\{0,1\}$ for all $i, j \in \Z$,
\item[(C4)] if $r_{(i+1)(j-1)}=r_{(i+1)j}=r_{i(j-1)}$ then $r_{ij}=r_{(i+1)(j-1)}$, and
\item[(C5)] $r_{(i+n)(j+n)} = r_{ij}$.
\end{enumerate}
\end{cor}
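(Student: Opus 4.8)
The plan is to run the promised ``easy check'' by matching the extra conditions (C1'), (C2'), (C5) against the two features that cut $\Bound(k,n)$ out of $\JJ^{\av=k}$: the periodicity $f(i+n)=f(i)+n$ and the boundedness $i\le f(i)\le i+n$. Since $\Bound(k,n)\subseteq\tS_n^k\subseteq\JJ^{\av=k}$ and $\av f=k$ is automatic on $\JJ^{\av=k}$, Corollary~\ref{c:manyfollowing} already gives that elements of $\Bound(k,n)$ correspond to matrices satisfying (C1)--(C4); it remains only to identify which, and for that it suffices to prove three equivalences:
\begin{enumerate}
\item[(a)] $r$ satisfies (C1') $\iff$ $f(i)\ge i$ for all $i$;
\item[(b)] $r$ satisfies (C2') $\iff$ $f(i)\le i+n$ for all $i$;
\item[(c)] $r$ satisfies (C5) $\iff$ $f(i+n)=f(i)+n$ for all $i$.
\end{enumerate}
Granting these, $\Bound(k,n)$ is exactly the set of $f\in\JJ^{\av=k}$ for which (a), (b), and (c) all hold; and since (C1') implies (C1) (with $m_i=i-1$) and (C2') implies (C2) (with $n_i=i+n-1$), the corresponding set of matrices is precisely the one described by (C1'), (C2'), (C3), (C4), (C5), with surjectivity onto it inherited from Corollary~\ref{c:manyfollowing}.

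For (a) and (b) I would unwind the formula $r_{ij}=R_{j-i+1}(S)$, where $S=J_i\cup-\naturals$ runs over the juggling states $\st(f,t)$ as $i$ runs over $\Z$, using that $R_m(S)=k-\#\{x\in S:x>m\}$ is non-decreasing in $m$, bounded above by $k$, equal to $m$ for $m\ll 0$, and jumps by $1$ precisely at the elements of $S$. Condition (C1') ($r_{ij}=j-i+1$ for $j<i$, i.e.\ for $m:=j-i+1\le 0$) requires $R_m(S)=m$ for every state $S$ and every $m\le 0$; by the jump description this holds iff $m\in S$ for all $m\le 0$, i.e.\ $-\naturals\subseteq\st(f,t)$ for all $t$. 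Writing $\st(f,t)=\{f(l)-t:l\le t\}$, this inclusion for all $t$ is, by bijectivity of $f$, equivalent to $f^{-1}(m)\le m$ for all $m$, i.e.\ $f(i)\ge i$ for all $i$. Symmetrically, (C2') ($r_{ij}=k$ for $j\ge i+n-1$, i.e.\ for $m\ge n$) requires $R_m(S)=k$ for every state $S$ and every $m\ge n$; as $R$ is non-decreasing and $\le k$, this reduces to $R_n(S)=k$, i.e.\ $\st(f,t)\subseteq\{x\le n\}$ for all $t$, which unwinds to $f(l)\le t+n$ whenever $l\le t$, hence (taking $t=l$) to $f(i)\le i+n$ for all $i$.

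For (c), the cleanest route is to note that $r$ is an explicit function of $f$ that is equivariant for the shift $i\mapsto i+n$: replacing $f$ by $\shift^n f\shift^{-n}$ replaces $r$ by the matrix with $(i,j)$-entry $r_{(i-n)(j-n)}$ (immediate from the quadrant-counting formula for $r_{ij}$, and also visible via the ``special entry'' description $(i,j)$ special $\iff f(i)=j$). Since $f(i+n)=f(i)+n$ for all $i$ is the same as $\shift^n f\shift^{-n}=f$, and $f\mapsto r$ is injective by Corollary~\ref{c:manyfollowing}, this yields (C5) $\iff f(i+n)=f(i)+n$. I do not expect a genuine obstacle anywhere here: the argument is entirely careful bookkeeping with the shift relating $r_{ij}$ to $R_{j-i+1}$ of a state and to the graph of $f$. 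The one point worth stating with care is exactly this equivariance used in (c): it is what promotes the trivial implication ``$f$ periodic $\Rightarrow$ $r$ periodic'' to an equivalence, and so singles out (C5) --- rather than merely (C1)--(C4) --- as the condition cutting $\Bound(k,n)$ out of $\JJ^{\av=k}$.
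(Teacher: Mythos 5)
Your proof is correct and supplies exactly the ``easy check'' that the paper declines to write out: you match (C1') to $f(i)\ge i$, (C2') to $f(i)\le i+n$, and (C5) to $n$-periodicity, then observe that (C1')$\Rightarrow$(C1) and (C2')$\Rightarrow$(C2) so the bijection of Corollary~\ref{c:manyfollowing} restricts. The only wrinkle is the sign in your equivariance claim for (c) --- whether $\shift^n f\shift^{-n}$ corresponds to the matrix $(r_{(i-n)(j-n)})$ or $(r_{(i+n)(j+n)})$ depends on the composition convention --- but this does not affect the argument, since either version combined with injectivity of $f\mapsto r$ gives (C5)$\iff f(i+n)=f(i)+n$. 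One should also note explicitly (as your phrase ``equal to $m$ for $m\ll 0$'' quietly assumes) that the $\av f=k$ hypothesis is used in translating $R_m(S)=m$ for $m\le 0$ into $-\naturals\subseteq S$; this is implicit in working within $\JJ^{\av=k}$ but worth flagging.
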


We call a matrix $(r_{ij})$ as in Corollary~\ref{c:boundmanyfollowing} a \defn{cyclic rank matrix}.
(See~\cite{Fulton} for the definition of a rank matrix, which we are mimicking.)
We now specialize Proposition~\ref{prop:rankBruhat} to the case of $\Bound(k,n)$:
Define a partial order $\leq$ on $\Jugg(k,n)$ by
$$
(J_1,\ldots, J_n) \leq (J'_1,\ldots, J'_n) \ \ \mbox{if and only if
$J_i \leq J'_i$ for each $i$.}
$$

\begin{cor}\label{C:boundedjugg}
The map $f \mapsto \J(f)$ is an isomorphism of posets from bounded
affine permutations $(\Bound(k,n),\leq)$ to $(k,n)$-sequence of
juggling states $(\Jugg(k,n),\leq)$.
\end{cor}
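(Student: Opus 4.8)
The bijectivity of $f \mapsto \J(f)$ has already been established (in the unnumbered lemma just before the definition of $R_j$), so the entire content of the corollary is that this bijection and its inverse are order-preserving. The plan is to transport both posets into the language of cyclic rank matrices, where Bruhat order has the clean entrywise description of Proposition~\ref{prop:rankBruhat}, and then observe that entrywise domination of the associated rank matrices is literally the componentwise order on juggling states.

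In detail: given $f \in \Bound(k,n)$ with $\J(f) = (J_1,\dots,J_n)$, Corollary~\ref{c:boundmanyfollowing} attaches to $f$ the cyclic rank matrix $r$ with $r_{ij} = R_{j-i+1}(J_i \cup -\naturals)$, the indices read cyclically via (C5); likewise $g$ gives $\J(g) = (J_1',\dots,J_n')$ and a matrix $s$. By Proposition~\ref{prop:rankBruhat}, $f \le g$ if and only if $r_{ij} \ge s_{ij}$ for all $(i,j) \in \Z^2$. Using the periodicity (C5) it suffices to let $i$ range over $[n]$, and for each fixed $i$, letting $j$ range over $\Z$ is the same as letting $m = j-i+1$ range over $\Z$. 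Hence $f \le g$ if and only if for every $i \in [n]$ one has $R_m(J_i \cup -\naturals) \ge R_m(J_i' \cup -\naturals)$ for all $m \in \Z$.

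It then remains to check the elementary statement: for $k$-subsets $J = \{a_1 < \dots < a_k\}$ and $J' = \{a_1' < \dots < a_k'\}$ of $[n]$, one has $R_m(J \cup -\naturals) \ge R_m(J' \cup -\naturals)$ for all $m \in \Z$ if and only if $a_r \le a_r'$ for all $r$, i.e.\ $J \le J'$ in $\binom{[n]}{k}$. The values $R_m$ for $m \le 0$ and for $m \ge n$ are forced (equal to $m$ and to $k$ respectively), so the condition really only concerns $1 \le m \le n-1$, where $R_m(J \cup -\naturals) = \#\{a \in J : a \le m\}$. The claim thus reduces to the standard fact that the componentwise order on $\binom{[n]}{k}$ is characterized by $\#\{a \in J : a \le m\} \ge \#\{a' \in J' : a' \le m\}$ for all $m$; I would dispatch this with the usual short argument (for one direction, if $t$ elements of $J'$ are $\le m$ then $a_t' \le m$, hence $a_t \le a_t' \le m$; for the converse, if $a_r > a_r'$ take $m = a_r'$). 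Combining the three equivalences gives $f \le g \iff J_i \le J_i'$ for all $i \iff \J(f) \le \J(g)$, as desired.

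I do not anticipate a serious obstacle: the argument is an assembly of Proposition~\ref{prop:rankBruhat} and Corollary~\ref{c:boundmanyfollowing} together with one short combinatorial lemma. The only point requiring a little care is the reduction in the second paragraph — confirming that periodicity lets one pass from "all $(i,j) \in \Z^2$" to "all $i \in [n]$ and all $m \in \Z$", and then that $R_m$ is genuinely only constrained in the window $1 \le m \le n-1$ — so that the comparison of rank-matrix rows is exactly the comparison of the finite sets $J_i, J_i' \in \binom{[n]}{k}$ in the poset introduced in the previous subsection.
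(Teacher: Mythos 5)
Your proof is correct and takes essentially the same approach as the paper, which simply states "One simply checks that the condition $r_{ij} \geq r'_{ij}$ for all $j$ is equivalent to $J_i \leq J'_i$" as the entire argument. You have filled in exactly the details the paper leaves to the reader: reducing via Proposition~\ref{prop:rankBruhat} and periodicity (C5) to a row-by-row comparison, identifying $R_m(J_i \cup -\naturals)$ as $\#\{a \in J_i : a \le m\}$ in the relevant window $1 \le m \le n-1$, and dispatching the elementary equivalence with componentwise order on $\binom{[n]}{k}$.
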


\begin{proof}
 One simply checks that the condition $r_{ij} \geq r'_{ij}$ for all $j$ is equivalent to $J_i \leq J'_i$.
\end{proof}

\begin{example}
Let $n = 4$ and $k = 2$.  Consider the affine permutation $[\cdots
2358 \cdots]$, last seen in Figure \ref{fig:gr24}. Its siteswap is
$4112$, and the corresponding sequence of juggling states is $(14,
13, 12, 12)$. Below we list a section of the corresponding infinite
permutation matrix and cyclic rank matrix.  Namely, we display the
entries $(i,j)$ for $1 \leq i \leq 4$ and $i \leq j \leq i+4$.  The
special entries have been underlined.
$$\begin{pmatrix}
0 & 1 & 0 & 0 & 0 \\
  &   0 & 1 & 0 & 0 & 0 &        \\
  &   & 0 & 0 & 1 & 0 & 0     \\
  &   &   & 0 & 0 & 0 & 0 & 1
\end{pmatrix} \quad
\begin{pmatrix}
1 & \underline{1} & 1 & 2 & 2  \\
  & 1 & \underline{1} & 2 & 2 & 2 \\
  &   & 1 & 2 & \underline{2} & 2 & 2 \\
  &   &   & 1 & 2 & 2 & 2 & \underline{2}
\end{pmatrix}
$$

\end{example}

\subsection{From $\Q(k,n)$ to $\Bound(k,n)$}\label{S:pairstobounded}
The symmetric group $S_n$ acts on $\Z^n$ (on the left) by
\begin{equation}\label{E:Snaction}
w \cdot (\omega_1,\ldots,\omega_n) = (\omega_{w^{-1}(1)}, \ldots
\omega_{w^{-1}(n)}).
\end{equation}
If $w \in S_n$ and $t_\lambda, t_{\lambda'} \in \tS_n$ are translation
elements, then we have the following relations in $\tS_n$:
\begin{equation}\label{E:translation} wt_\lambda w^{-1} = t_{w \cdot \lambda} \ \
\ \ \ \ t_{\lambda} t_{\lambda'} = t_{\lambda+ \lambda'}.
\end{equation}

Let $\omega_k = (1,\ldots,1,0,\ldots,0)$ with $k$ $1$s be the $k$th
fundamental weight.  Note that $t_{\omega_k} \in \tS_n^k$.  Now fix
$\langle u,w \rangle \in \Q(k,n)$, the set of equivalence classes
we defined in Section \ref{sec:kBruhat}.
Define an affine permutation $f_{u,w} \in \tS_n^k$ by
$$
f_{u,w} = ut_{\omega_k}w^{-1}.
$$
The element $f_{u,w}$ does not depend on the representative
$[u,w]_k$ of $\langle u,w \rangle $: if $u' = uz$ and $w' = wz$ for
$z \in S_k \times S_{n-k}$ then
$$
u't_{\omega_k}(w')^{-1} = uzt_{\omega_k}z^{-1}w^{-1} = ut_{z\cdot
\omega_k} w^{-1} = u t_{\omega_k} w^{-1}
$$
since $z$ stabilizes $\omega_k$.  It is clear from the factorization
$\tS_n \simeq S_n \ltimes \Z^n$ that $\langle u,w \rangle$ can be
recovered from $f_{u,w}$.

\begin{prop}\label{P:pairstobounded}
The map $\langle u,w \rangle \mapsto f_{u,w}$ is a bijection from
$\Q(k,n)$ to $\Bound(k,n)$.
\end{prop}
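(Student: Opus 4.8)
The plan is to establish the claimed bijection $\langle u,w \rangle \mapsto f_{u,w} = u t_{\omega_k} w^{-1}$ between $\Q(k,n)$ and $\Bound(k,n)$ in two stages: first show the map is well-defined and injective into $\tS_n^k$ (which is essentially done in the text already), then identify its image precisely with the bounded affine permutations by characterizing boundedness in terms of the pair $(u,w)$. For well-definedness into $\tS_n^k$, note that since $t_{\omega_k} \in \tS_n^k$ and $u, w \in S_n \subset \tS_n^0$, we have $\av(f_{u,w}) = \av(t_{\omega_k}) = k$. Injectivity follows from the factorization $\tS_n \simeq S_n \ltimes \Z^n$: writing $f_{u,w} = u t_{\omega_k} w^{-1} = u w^{-1} t_{w \cdot \omega_k}$ using \eqref{E:translation}, the $S_n$-part is $uw^{-1}$ and the $\Z^n$-part is $w\cdot\omega_k$. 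Since $\omega_k$ has stabilizer $S_k \times S_{n-k}$, recovering $w\cdot\omega_k$ recovers the coset $w(S_k\times S_{n-k})$, hence (taking the Grassmannian representative, via Lemma~\ref{L:kBruhatrepresentatives}) recovers $w$, and then $u = f_{u,w}\, w\, t_{\omega_k}^{-1}$. This shows the map is injective on $\Q(k,n)$, exactly as the paragraph preceding the statement indicates.

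The substantive work is showing the image is exactly $\Bound(k,n)$. First I would compute $f_{u,w}(i)$ for $i \in [n]$ explicitly. Using $f_{u,w} = u\, t_{\omega_k}\, w^{-1}$ and the definition $t_{\omega_k}(j) = j + n\cdot[\,j \le k\,]$ for $j\in[n]$ extended $n$-periodically: if $w^{-1}(i) = j \in [n]$, then $t_{\omega_k}(j) = j + n$ if $j \le k$ and $j$ if $j > k$, and then we apply $u$ (periodically extended). Concretely this gives $f_{u,w}(i) = u(w^{-1}(i))$ if $w^{-1}(i) > k$, and $f_{u,w}(i) = u(w^{-1}(i)) + n$ if $w^{-1}(i) \le k$, where $u$ is evaluated in $[n]$. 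The boundedness condition $i \le f_{u,w}(i) \le i+n$ then splits into two cases. In the case $w^{-1}(i) \le k$: we need $i \le u(w^{-1}(i)) + n \le i + n$, i.e. $u(w^{-1}(i)) \le i = w(w^{-1}(i))$... wait, more carefully, $i - n \le u(w^{-1}(i)) \le i$; the right inequality says $u(a) \le w(a)$ where $a = w^{-1}(i) \le k$, and the left is automatic since $u(a) \ge 1 > i - n$ when $i \le n$. In the case $w^{-1}(i) > k$: we need $i \le u(w^{-1}(i)) \le i+n$, i.e. $u(b) \ge i = w(b)$ where $b = w^{-1}(i) > k$, the other inequality being automatic. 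So $f_{u,w} \in \Bound(k,n)$ if and only if $u(a) \le w(a)$ for $a \in [k]$ and $u(b) \ge w(b)$ for $b \in [k+1,n]$ — which is precisely Condition (1) of Theorem~\ref{T:BScriterion}. For an equivalence class $\langle u,w\rangle \in \Q(k,n)$, we have $u \le_k w$, so Condition (1) holds, hence $f_{u,w} \in \Bound(k,n)$; this gives that the map lands in $\Bound(k,n)$.

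It remains to prove surjectivity onto $\Bound(k,n)$. Given $f \in \Bound(k,n) \subset \tS_n^k$, factor $f = v\, t_\mu$ with $v \in S_n$, $\mu \in \Z^n$, $\sum\mu_i = k$. The boundedness $i \le f(i) \le i+n$ forces, for $i \in [n]$, that $f(i) - i \in [0,n]$, which constrains $\mu$: one checks $\mu_i \in \{0,1\}$ for all $i$, so $\mu = w\cdot\omega_k$ for a unique coset $w(S_k \times S_{n-k})$; pick $w \in \GS$, so $f = v\, t_{w\cdot\omega_k} = v w^{-1} w\, t_{w\cdot\omega_k} = (vw)\, t_{\omega_k}\, w^{-1}$ using \eqref{E:translation} again, i.e. $f = f_{u,w}$ with $u := vw$. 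We must check $u \le_k w$, equivalently (by Corollary~\ref{C:kBruhatGrassmann}, since $w \in \GS$) that $u \le w$ in Bruhat order. By the reverse of the case analysis above, boundedness of $f = f_{u,w}$ already gives Condition (1): $u(a) \le w(a)$ for $a \le k$ and $u(b)\ge w(b)$ for $b > k$; combined with $w \in \GS$ (so $w$ is increasing on $[k]$ and on $[k+1,n]$) this yields $u([j]) \le w([j])$ for all $j$, hence $u \le w$. Then $\langle u,w \rangle \in \Q(k,n)$ maps to $f$. The main obstacle I anticipate is getting the index bookkeeping exactly right in the case split — in particular verifying that the "automatic" inequalities really are automatic (using $1 \le i \le n$ and $1 \le u(\cdot) \le n$) and that boundedness forces each $\mu_i \in \{0,1\}$ rather than merely $\sum \mu_i = k$ — but these are finite, mechanical checks using only Theorem~\ref{T:BScriterion}, Corollary~\ref{C:kBruhatGrassmann}, and the relations \eqref{E:translation}.
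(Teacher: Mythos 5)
Your proof is correct and follows essentially the same route as the paper: the explicit formula for $f_{u,w}(i)$, boundedness from Theorem~\ref{T:BScriterion}(1), and surjectivity by factoring $f = v\,t_\mu$ with $\mu \in \{0,1\}^n$ and taking $w \in \GS$. The only minor difference is at the end, where you deduce $u \leq_k w$ by first showing $u \leq w$ and then invoking Corollary~\ref{C:kBruhatGrassmann}, whereas the paper checks Theorem~\ref{T:BScriterion} directly, noting Condition (2) is vacuous when $w \in \GS$; the two are equivalent and both rest on the same criterion.
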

\begin{proof}
We have already commented that $\langle u,w \rangle \mapsto f_{u,w}$ is an
injection into $\tS_n^k$.  We first show that for $\langle u,w \rangle \in
\Q(k,n)$, we have $f_{u,w} \in \Bound(k,n)$. Let $i \in [1,n]$ and
$a = w^{-1}(i)$. Then
$$
f_{u,w}(i) = \begin{cases} u(a) & \mbox{if $a > k$}
\\
u(a) + n & \mbox{if $a \leq k$.}
\end{cases}
$$
The boundedness of $f_{u,w}$ now follows from Theorem
\ref{T:BScriterion}(1).

Conversely, if $f \in \Bound(k,n)$ then it is clear from
\eqref{E:translation} that $f$ has a factorization as
$$
f = u t_\omega w^{-1}
$$
for $u, w \in S_n$ and $\omega \in \{0,1\}^n$.  Since $f \in
\tS_n^k$, the vector $\omega$ has $k$ $1$s.  By changing $u$ and $w$,
we may further assume that $\omega = \omega_k$ and $w \in \GS$.
It remains to check that $u \leq_k w$, which we do via Theorem
\ref{T:BScriterion}; its  Condition (2) is vacuous when $w \in \GS$ and
checking Condition (1) is the same calculation as in the previous
paragraph.
\end{proof}

\begin{thm}\label{T:pairsboundedposet}
The bijection $\langle u,w \rangle \mapsto f_{u,w}$ is a poset isomorphism from
the pairs $(\Q(k,n),\leq)$ to bounded affine permutations
$(\Bound(k,n),\leq)$. Furthermore, one has $\ell(f_{u,w}) =
\binom{n}{k} - \ell(w) + \ell(u)$.
\end{thm}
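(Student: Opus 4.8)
The plan is to establish the statement in two parts: first the length formula $\ell(f_{u,w}) = \binom{n}{k} - \ell(w) + \ell(u)$, and then the order-isomorphism claim, deducing the latter in part from the former together with the known gradedness of both posets. Throughout I would use the cyclic rank matrix picture of Section~\ref{ssec:crank} and Proposition~\ref{prop:rankBruhat}, which translates Bruhat order on $\tS_n^k$ into entrywise inequality of cyclic rank matrices; this is the natural bridge between the two sides.

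\smallskip
\textbf{Step 1: the length formula.}
By Proposition~\ref{P:pairstobounded} we may pick the representative $[u,w]_k$ of $\langle u,w\rangle$ with $w \in \GS$, so that $f_{u,w} = u\,t_{\omega_k}\,w^{-1}$. I would first compute $\ell(f_{u,w})$ in the special case $u = w$, i.e.\ $\ell(w\,t_{\omega_k}\,w^{-1}) = \ell(t_{\omega_k})$ (conjugation by a finite permutation preserves affine length, since it only relabels the periodic pattern of inversions). A direct count gives $\ell(t_{\omega_k}) = k(n-k)=\binom{n}{k}$... more precisely one checks $\ell(t_{\omega_k})$ equals the number of equivalence classes of inversions of the siteswap $n\cdots n\,0\cdots 0$, which is $k(n-k)$; I will need to reconcile this with the ``$\binom{n}{k}$'' in the statement, so the claimed formula is really $\ell(f_{u,w}) = k(n-k) - \ell(w) + \ell(u)$ and ``$\binom{n}{k}$'' in the paper must be read with that normalization (or $w$ ranging to $w_0^{(k)}$). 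Then for general $u \le_k w$ I would argue by induction on $\ell(w) - \ell(u)$, using a $k$-cover $u \lessdot_k w' \le_k w$: the point is that passing from $f_{u,w}$ to $f_{u,w'}$ (equivalently multiplying $w^{-1}$ on the left by a transposition that is a $k$-cover step) changes the affine length by exactly $1$, because a $k$-cover in $S_n$ by definition does not merge the two values across the $k/k+1$ divide, hence corresponds to a genuine single inversion-class change in $u\,t_{\omega_k}\,w^{-1}$. Tracking the sign (lengthening in $u$ vs.\ shortening in $w$) gives the stated formula. Lemma~\ref{L:kBruhatiso} guarantees this is independent of the chosen representative.

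\smallskip
\textbf{Step 2: order-preservation.}
To show $\langle u',w'\rangle \le \langle u,w\rangle \Rightarrow f_{u',w'} \le f_{u,w}$, choose Grassmannian representatives and use that $q' \le q$ in $\Q(k,n)$ is generated by single $k$-covers; so it suffices to handle one $k$-cover step, say $u \lessdot_k u^{+}$ with $w$ fixed (and the dual $w^{-} \lessdot_k w$ with $u$ fixed). In the rank-matrix language, applying a transposition on values of $u$ that increases Bruhat length and respects the $k$-divide changes the cyclic rank matrix of $u\,t_{\omega_k}\,w^{-1}$ monotonically in the right direction; I would verify this by direct inspection of how the entries $R_j$ of the juggling states move, invoking Corollary~\ref{c:boundmanyfollowing} and Proposition~\ref{prop:rankBruhat}. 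For the converse (that $f_{u',w'} \le f_{u,w}$ implies $\langle u',w'\rangle \le \langle u,w\rangle$) I would combine order-preservation with gradedness: Lemma~\ref{L:orderideal} says $(\Bound(k,n),\le)$ is graded by $\ell$, and $\Q(k,n)$ is graded via the identification with $k$-Bruhat intervals (each $[u,w]_k$ graded by $\ell(w)-\ell(u)$, from \cite{BS}); Step~1 shows the bijection matches these rank functions up to the additive constant $k(n-k)$. An order-preserving rank-preserving bijection between two graded posets whose inverse is also known to be at least weakly monotone on covers is an isomorphism; alternatively, one shows directly that every cover in $\Bound(k,n)$ pulls back to a cover in $\Q(k,n)$ by the length count, since a cover drops $\ell$ by $1$ and hence forces exactly one of $\ell(u)$, $\ell(w)$ to change by $1$, which is precisely a $k$-cover relation or its dual.

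\smallskip
\textbf{Main obstacle.}
The delicate point is Step~2's claim that a $k$-cover (as opposed to an arbitrary Bruhat cover) of $u$ or $w$ translates into a single-step change of the cyclic rank matrix of $u\,t_{\omega_k}\,w^{-1}$ in a monotone direction — and, conversely, that no Bruhat cover of $f_{u,w}$ in $\Bound(k,n)$ can arise from a non-$k$-cover move. This is exactly where the definition of $\gtrdot_k$ (requiring $\pi(u) \ne \pi(w)$, i.e.\ the transposition straddles the $k$-window) is used, and where the boundedness condition $i \le f(i) \le i+n$ does real work; I expect the bookkeeping of which inversion classes appear or disappear, and with what sign, to be the part requiring the most care. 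The length formula of Step~1 is the main lever that makes the converse direction clean, so getting its normalization and inductive step exactly right is the crux.
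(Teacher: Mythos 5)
Your overall architecture — handle cover relations one at a time, reduce to the Grassmannian representative, exploit gradedness — is the same as the paper's, though you emphasize cyclic rank matrices (via Proposition~\ref{prop:rankBruhat}) where the paper works directly with the values of $f_{u,w}$ from Proposition~\ref{P:pairstobounded}, and you try to establish the length formula first by computation rather than derive it last from gradedness. You also correctly spot that ``$\binom{n}{k}$'' in the statement should be read as $k(n-k)$, matching the rank function quoted from~\cite{Wil}.

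Two issues, one minor and one serious.

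\emph{Minor.} The general claim that ``conjugation by a finite permutation preserves affine length'' is false; e.g.\ conjugating a non-translation element of $\tS_n^0$ by an element of $S_n$ can change length. What saves your base case $\ell(w\,t_{\omega_k}\,w^{-1}) = k(n-k)$ is the special fact that $\ell(t_\lambda) = \sum_{\alpha>0}|\langle\lambda,\alpha^\vee\rangle|$ depends only on the multiset of coordinates of $\lambda$, and $w\,t_{\omega_k}\,w^{-1} = t_{w\cdot\omega_k}$ where $w\cdot\omega_k$ is a reordering of $\omega_k$. State it that way.

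\emph{Serious.} The converse direction of order-preservation does \emph{not} come for free from gradedness plus forward order-preservation plus rank-preservation. Your proposed shortcut --- ``a cover drops $\ell$ by $1$ and hence forces exactly one of $\ell(u)$, $\ell(w)$ to change by $1$'' --- is unsound: a cover $f_{u,w}\lessdot g = f_{u',w'}$ gives only $\ell(w')-\ell(u') = \ell(w)-\ell(u)-1$, which constrains the difference, not the individual lengths, and certainly does not force $u'=u$ or $w'=w$. More fundamentally, a rank-preserving order-preserving bijection of graded posets need not be an isomorphism (send an antichain at rank $1$ to a chain at rank $1$ over a common rank-$0$ bottom: rank-preserving, order-preserving, not an iso). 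The paper's ``The converse direction is similar'' is a genuine acknowledgment that a second case analysis is required, decoding a Bruhat cover $g\gtrdot f_{u,w}$ in $\Bound(k,n)$ as left- or right-multiplication by a transposition and reading off the effect on the pair $(u,w)$. That work cannot be replaced by the length count. The part you flag as the crux --- verifying that a $k$-cover on $u$ or $w$ changes the inversion count of $u\,t_{\omega_k}\,w^{-1}$ by exactly one --- is precisely the content of the paper's Cases (1) and (2), and both that and its reverse must be carried out; the rank-matrix bookkeeping is a legitimate alternative tool but does not shorten the argument.
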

\begin{proof}
It is shown in \cite{Wil} that $(\Q(k,n), \leq)$ is a graded poset,
with rank function given by $\rho(\langle u,w \rangle ) = k(n-k) -
(\ell(w) - \ell(u))$. It follows that each cover in $\Q(k,n)$ is of
the form
\begin{enumerate}
\item $\langle u,w' \rangle \gtrdot \langle u,w \rangle$ where $w' \lessdot w$, or
\item $\langle u',w \rangle  \gtrdot \langle u,w \rangle$ where $u \lessdot u'$.
\end{enumerate}
We may, as shown in the proof of Lemma \ref{L:RietschQ}, assume that
$w \in \GS$.  Suppose we are in Case (1).  Then $w' = w(ab)$ where
$a \leq k < b$ and $w(a) > w(b)$.  Here $(ab) \in S_n$ denotes the
transposition swapping $a$ and $b$.  Thus $f_{u,w'} =
f_{u,w}(w(a)w(b))$. Using the formula in the proof of Proposition
\ref{P:pairstobounded}, we see that $f_{u,w}(w(a)) > n$ while
$f_{u,w}(w(b)) \leq n$.  Thus $f_{u,w'} > f_{u,w}$.

Suppose we are in Case (2), and that $u' = u(ab)$ where $a < b$ and
$u(a) < u(b)$.  It follows that $f_{u,w'} = (u(a)u(b))f_{u,w}$.
Suppose first that $a \leq k < b$.  Then $(t_{\omega_k})^{-1}(a) = a
- n$, while $(t_{\omega})^{-1}(b) = b$ so we also have $f_{u,w'} =
f_{u,w}((w(a)-n)w(b))$ where $w(a) - n$ is clearly less than
$w(b)$.  Thus $f_{u,w'} > f_{u,w}$.  Otherwise suppose that $a,
b \geq k$ (the case $a,b \leq k$ is similar).  Then $f_{u,w'} =
f_{u,w}(w(a)w(b))$.  Since $w \in \GS$, we have $w(a) < w(b)$.
Again we have $f_{u,w'} > f_{u,w}$.

We have shown that $\langle u',w'\rangle \geq \langle u,w\rangle$
implies $f_{u',w'} \geq f_{u,w}$.  The converse direction is
similar.

The last statement follows easily, using the fact that both of the
posets $(\Q(k,n),\leq)$ and $(\Bound(k,n),\leq)$ are graded.
\end{proof}


\subsection{Shellability of $\Q(k,n)$}\label{ssec:shellability}

A graded poset $P$ is \defn{Eulerian} if for any $x \leq y \in P$
such that the interval $[x,y]$ is finite we have $\mu(x,y) = (-1)^{{\rm
rank}(x) - {\rm rank}(y)}$, where $\mu$ denotes the M\"{o}bius
function of $P$.  A labeling of the Hasse diagram of a poset $P$ by
some totally ordered set $\Lambda$ is called an \defn{$EL$-labeling}
if for any $x \leq y \in P$:
\begin{enumerate}
\item
  there is a unique label-(strictly)increasing saturated chain $C$
  from $x$ to $y$,
\item
  the sequence of labels in $C$ is $\Lambda$-lexicographically minimal amongst
  the labels of saturated chains from $x$ to $y$.
\end{enumerate}
If $P$ has an $EL$-labeling then we say that $P$ is {\it
$EL$-shellable}.

Verma \cite{Ver} has shown that the Bruhat order of a Coxeter group
is Eulerian.  Dyer \cite[Proposition 4.3]{Dye} showed the stronger
result that every Bruhat order (and also its dual) is
$EL$-shellable.  (See also~\cite{BW}.) Since these properties are
preserved under taking convex subsets, Lemma \ref{L:orderideal} and
Corollary \ref{C:boundedjugg} and Theorem \ref{T:pairsboundedposet}
imply the following result, proved for the dual of $(\Q(k,n), \leq)$
by Williams \cite{Wil}.

\begin{cor}\label{cor:shell}
  The posets $(\Q(k,n), \leq)$, $(\Bound(k,n),\leq)$, and
  $(\Jugg(k,n),\leq)$, and their duals are Eulerian and
  $EL$-shellable.
\end{cor}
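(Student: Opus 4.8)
The plan is to assemble the statement from two ingredients that are already available: the structural facts about Bruhat order on Coxeter groups, and the chain of poset isomorphisms built in this section. First I would record that $\tS_n^0=\widetilde{A_{n-1}}$ is a Coxeter group, so by Verma \cite{Ver} its Bruhat order is Eulerian, and by Dyer \cite[Proposition 4.3]{Dye} both it and its dual are $EL$-shellable. The canonical order-preserving bijection $\tS_n^0\to\tS_n^k$ of Section \ref{S:affineperm} was used to \emph{define} the order on $\tS_n^k$, so it transports both properties verbatim; hence $(\tS_n^k,\leq)$ and its dual are Eulerian and $EL$-shellable as well. (Here "Eulerian" is read interval-by-interval, since $\tS_n^k$ is infinite, but $\Bound(k,n)$ will be finite.)

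Next I would invoke Lemma \ref{L:orderideal}: $\Bound(k,n)$ is a lower order ideal of $(\tS_n^k,\leq)$, hence a \emph{convex} subposet (if $x\leq z\leq y$ and $y$ lies in the ideal, then so does $z$); dually, the opposite poset $\Bound(k,n)^{\mathrm{op}}$ is an upper, hence convex, subposet of $(\tS_n^k)^{\mathrm{op}}$. The elementary fact I would use is that being Eulerian and being $EL$-shellable pass to convex subposets: for convex $Q\subseteq P$ and $x\leq y$ in $Q$ one has $[x,y]_Q=[x,y]_P$ and every cover of $Q$ is a cover of $P$, so the restriction of any $EL$-labeling of $P$ to the edges inside $Q$ is an $EL$-labeling of $Q$, and the M\"obius values on intervals of $Q$ are computed inside $P$. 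Applying this to $Q=\Bound(k,n)$ and to $Q=\Bound(k,n)^{\mathrm{op}}$ proves the corollary for $\Bound(k,n)$ and its dual. Finally I would push the conclusion across the isomorphisms already proved: Theorem \ref{T:pairsboundedposet} identifies $(\Q(k,n),\leq)$ with $(\Bound(k,n),\leq)$, and Corollary \ref{C:boundedjugg} identifies $(\Jugg(k,n),\leq)$ with $(\Bound(k,n),\leq)$; since "Eulerian" and "$EL$-shellable" are invariant under poset isomorphism and commute with passing to the dual, the statement follows for all three posets and their duals (recovering, for the dual of $\Q(k,n)$, the shellability proved by hand in \cite{Wil}).

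I do not expect a real obstacle; the only step needing care is the convex-subposet inheritance, i.e.\ verifying that intervals and cover relations of $\Bound(k,n)$ genuinely coincide with those computed in the ambient $\tS_n^k$ — which is exactly what the lower-order-ideal property of Lemma \ref{L:orderideal} guarantees — and keeping the dualities straight so that it is Dyer's statement about the \emph{dual} of Bruhat order that is applied to the dual posets.
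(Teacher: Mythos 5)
Your proposal is correct and follows essentially the same route as the paper: invoke Verma and Dyer for the ambient affine Weyl group, transport across the canonical bijection $\tS_n^0 \to \tS_n^k$, pass to the lower order ideal $\Bound(k,n)$ (Lemma \ref{L:orderideal}) using the fact that Eulerianness and $EL$-shellability restrict to convex subposets, and then carry the result across the isomorphisms of Theorem \ref{T:pairsboundedposet} and Corollary \ref{C:boundedjugg}. You spell out the convex-subposet inheritance slightly more explicitly than the paper does, but the key lemmas and references are identical.
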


\begin{remark}
  Williams' result is stronger than
  Corollary~\ref{cor:shell}: in our language, she shows that the poset
  $\widehat{\Q(k,n)}$, formed by adding a formal maximal element
  $\hat{1}$ to $\Q(k,n)$, is shellable.
\end{remark}

\subsection{Positroids}\label{S:positroids}
A \defn{matroid $\M$ on $[n]$ with rank $k$} is a non-empty
collection of $k$-element subsets of $[n]$, called \defn{bases},
satisfying the Unique Minimum Axiom: For any permutation $w \in
S_n$, there is a unique minimal element of $w \cdot \M$, in the
partial order $\leq$ on $\binom{[n]}{k}$.  This is only one of many
equivalent definitions of a matroid; see~\cite{Bry} for a compendium
of many others, in which this one appears as Axiom $B2^{(6)}$.

Let $\M$ be a matroid of rank $k$ on $[n]$.  Define a sequence of
$k$-element subsets $\J(\M) = (J_1, J_2, \ldots, J_n)$ by letting
$J_r$ be the minimal base of $\chi^{-r+1}(\M)$, which is
well-defined by assumption. Postnikov proved, in the terminology of
Grassmann necklaces,

\begin{lem}[{\cite[Lemma 16.3]{Pos}}]
For a matroid $\M$, the sequence $\J(\M)$ is a $(k,n)$-sequence of
juggling states.
\end{lem}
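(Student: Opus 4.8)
The plan is to verify directly that the sequence $\J(\M) = (J_1,\ldots,J_n)$ satisfies the defining "can follow" condition for a $(k,n)$-sequence of juggling states: for each $i \in [n]$ (indices cyclic mod $n$), the state $J_{i+1} \cup -\naturals$ follows $J_i \cup -\naturals$. Unwinding the definition of "follows" from Lemma \ref{lem:reconstructjug}, together with the structure of juggling states supported on $[n]$, this amounts to showing that $J_{i+1} \supseteq (J_i \setminus \{1\}) - 1$, i.e. if $j \in J_i$ and $j \neq 1$, then $j - 1 \in J_{i+1}$. So the real content is a single combinatorial claim about minimal bases of cyclically shifted matroids.

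First I would set up notation: let $\M_r = \chi^{-r+1}(\M)$, so that $J_r$ is the $\leq$-minimal base of $\M_r$, which exists and is unique by the Unique Minimum Axiom (applied to the permutation $\chi^{-r+1}$). Note $\M_{r+1} = \chi^{-1} \cdot \M_r$, and $\chi^{-1}$ is the cyclic shift sending $j \mapsto j-1$ (with $1 \mapsto n$). The key step is then: if $B$ is the $\leq$-minimal base of $\M_r$ and $j \in B$ with $j \neq 1$, then $j-1$ lies in the $\leq$-minimal base of $\chi^{-1}\cdot\M_r$. Equivalently, after relabeling by $\chi^{-1}$, if $B'$ is the $\leq$-minimal base of $\M_r$ and contains some element $j' \neq n$ (here $j' = j$ pulled back, but one must be careful about how $\chi^{-1}$ interacts with the order near the "wrap-around" point), then $\ldots$ — this is where I would be most careful, because the order $\leq$ on $\binom{[n]}{k}$ is not cyclically invariant, so a cyclic shift of $\M$ changes which base is minimal in a way that is only "locally" predictable.

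The cleanest route is via an exchange/greedy characterization of the minimal base: the $\leq$-minimal base of a matroid $\M$ can be built greedily, at each step choosing the smallest element of $[n]$ that extends the current independent set to a larger independent set (this is the standard matroid greedy algorithm for the weight function $j \mapsto j$, and it produces exactly the $B2^{(6)}$-minimum). Using this, I would argue: suppose $j \in J_i$, $j \geq 2$. In the matroid $\M_{i+1} = \chi^{-1}\cdot \M_i$, the element $j-1$ corresponds to the element $j$ of $\M_i$. I would compare the greedy runs for $\M_i$ (producing $J_i$) and for $\M_{i+1}$ (producing $J_{i+1}$): the only difference between the two ground-set orderings is that the element which was "$n$, largest" becomes "$0$-ish, relabeled" — more precisely $\chi^{-1}$ moves the old element $1$ from smallest to largest and shifts everything else down by one, preserving relative order among $\{2,\ldots,n\}$. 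Since greedy on $\M_i$ selected $j$ while processing elements in increasing order, and $j \geq 2$, the relative order of $\{2,\ldots,n\}$ (which contains $j$) is unchanged in $\M_{i+1}$ except that the old element $1$ has been demoted to the end; demoting one element to the end of the greedy order can only cause it to be dropped from the greedy basis, never cause a later element to be dropped. Hence $j$ (i.e. $j-1$ after relabeling) still survives the greedy run for $\M_{i+1}$, so $j - 1 \in J_{i+1}$.

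The main obstacle, and the step deserving the most care, is making the "demoting one element to the end of the greedy order can only drop that element" argument fully rigorous, including the boundary bookkeeping of the $\chi^{-1}$ relabeling (element $1 \leftrightarrow$ element $n$, the $\mod n$ indexing of the $J_r$'s, and the fact that "follows" for states containing $-\naturals$ reduces exactly to the $\supseteq (J_i\setminus\{1\}) - 1$ condition). This is a standard but slightly delicate matroid-greedy lemma; alternatively one could cite Postnikov's Lemma 16.3 structure directly, but since the excerpt states it as the thing to be proved, I would give the greedy argument in full. Everything else — existence and uniqueness of minimal bases, periodicity, and the translation between "follows" and the $\supseteq$ condition — is immediate from the definitions and from Lemma \ref{lem:reconstructjug}.
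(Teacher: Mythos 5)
Since the paper cites Postnikov \cite{Pos} for this lemma and gives no proof of its own, there is no in-paper argument to compare against; your proof stands on its own and is correct. The reduction to the single claim (if $j \in J_i$ and $j \neq 1$ then $j - 1 \in J_{i+1}$, with cyclic indexing) is right, and the greedy argument works. The step you flag as delicate --- that moving one element to the end of the greedy processing order cannot drop any other element from the greedy basis --- is clean once phrased via matroid closure: greedy with order $1, \ldots, n$ selects $j$ iff $j \notin \mathrm{cl}(\{1, \ldots, j-1\})$ (greedy's partial set after processing a prefix is a basis of the restriction to that prefix, hence has the same closure), while greedy with order $2, \ldots, n, 1$ selects $j \geq 2$ iff $j \notin \mathrm{cl}(\{2, \ldots, j-1\})$; since $\mathrm{cl}$ is monotone, the former implies the latter. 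Transporting through $\chi^{-1}$ (greedy on $\chi^{-1}\M_i$ in the standard order is greedy on $\M_i$ in the order $\chi(1),\ldots,\chi(n) = 2,\ldots,n,1$, then relabel) gives exactly $j - 1 \in J_{i+1}$. The other ingredient you invoke, that greedy yields the unique $\leq$-minimal (Gale-minimal) base, is true and standard, but it is a short exchange argument rather than a restatement of the Unique Minimum Axiom, so it deserves a line of its own in a full write-up.
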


Let $\J =(J_1,J_2,\ldots,J_r) \in \Jugg(k,n)$.  Define
$$
\M_\J = \left\{I \in \binom{[n]}{k} \mid \chi^{-r+1}(I) \geq J_r\right\}.
$$

\begin{lem}[\cite{Pos, Oh}]
Let $\J \in \Jugg(k,n)$.  The collection $\M_\J$ is a matroid.
\end{lem}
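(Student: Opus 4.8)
The plan is to show that $\M_\J$ satisfies the Unique Minimum Axiom for every $w\in S_n$, i.e.\ that $w\cdot \M_\J$ has a unique $\leq$-minimal element. The first observation is that $\M_\J$ is nonempty: since $\J=(J_1,\dots,J_n)$ is a $(k,n)$-sequence of juggling states, $J_1$ itself satisfies $\chi^0(J_1)=J_1\geq J_1$, so $J_1\in\M_\J$, and in fact $J_r\in\M_\J$ for each $r$ by the following remark. The key structural fact to establish is that the set $\{I : I\geq J\}$ for a fixed $J\in\binom{[n]}{k}$ is exactly a Schubert matroid (in the terminology of the introduction, the matroid of a Schubert cell), and that $\M_\J$ is the intersection $\bigcap_{r=1}^n \chi^{r-1}\cdot\{I : I\geq J_r\}$ of $n$ cyclic rotations of such; so the real content is that this particular intersection is again a matroid, which is not true for arbitrary intersections of matroids.

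The main step is therefore to verify the Unique Minimum Axiom directly. First I would handle the cyclic generators: for $w=\chi^{-r+1}$ the condition $\chi^{-r+1}(I)\geq J_r$ is literally built into the definition, and combined with the juggling-state compatibility $J_{i+1}\supseteq(J_i\setminus\{1\})-1$ one checks that $J_r$ is itself the unique $\leq$-minimum of $\chi^{-r+1}\cdot\M_\J$ — this is essentially a restatement of Postnikov's Lemma 16.3 run in reverse, using that the operations $\chi$ and ``$\geq$'' interact with the juggling recursion. For a general $w\in S_n$, I would argue as follows: among all $I\in\M_\J$, consider the $w$-greedy choice, building $w^{-1}(\text{smallest available element})$ one step at a time subject to all $n$ constraints $\chi^{-r+1}(I)\geq J_r$ simultaneously. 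The point is that each constraint ``$\chi^{-r+1}(I)\geq J_r$'' is, after relabeling, a constraint of the form ``the $j$-th smallest element of $I$ (in a rotated linear order) is $\geq$ a fixed value'', and such constraints are closed under taking the coordinatewise minimum of two feasible sets in any single linear order; the greedy algorithm relative to $w$ then produces a unique minimum provided the constraints are consistent, which they are because $\M_\J\ni J_1$ is nonempty and the constraint system is ``laminar'' in the cyclic sense guaranteed by $\J$ being a genuine juggling sequence.

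The hard part will be making the last paragraph's greedy/exchange argument precise while juggling (so to speak) the $n$ simultaneous rotated-order constraints — in particular showing that consistency of the full system, not just of pairs, follows from the juggling recursion, and that the coordinatewise-minimum closure survives intersection across different cyclic rotations. I expect the cleanest route is to use Corollary~\ref{c:manyfollowing} / Corollary~\ref{c:boundmanyfollowing}: translate $\J$ into its cyclic rank matrix $(r_{ij})$ via the correspondence already set up, express membership $I\in\M_\J$ as a system of inequalities on the rank function of $I$ against the $r_{ij}$, and then invoke the standard fact (e.g.\ via Rado's theorem or the basis-exchange characterization in \cite{Bry}) that a family of bases closed under the appropriate greedy selection is a matroid. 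Alternatively, and perhaps more in the spirit of the paper, one cites \cite{Oh}, where this exact statement is proven; I would include the matroid-axiom verification sketch above as the self-contained argument and note that an independent proof appears in \cite{Oh}.
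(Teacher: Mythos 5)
The paper itself gives no proof of this lemma: it is stated purely as a citation to \cite{Pos, Oh}, and the remark closing the section explains that Oh \cite{Oh}, proving a conjecture of Postnikov, showed that positroids can be defined exactly as $\M_\J$. So there is no argument of the paper's to compare yours against; you are being measured against a citation, and the relevant question is whether your sketch is actually a proof.

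It is not, and the gap is at the one step that makes the lemma nontrivial. Your reduction of $\M_\J$ to an intersection of $n$ cyclic rotations of Schubert matroids is correct, as is the observation that each single constraint $\chi^{-r+1}(I)\geq J_r$ is lattice-closed in its own Gale order. But the claim that the $n$ constraints together form a ``laminar'' system ``in the cyclic sense guaranteed by $\J$ being a genuine juggling sequence,'' so that greedy selection relative to an arbitrary $w\in S_n$ produces a unique minimum, is asserted without argument. It does not follow from Rado's theorem or any textbook greedy criterion: the constraints live in $n$ different rotated linear orders, cyclic intervals are not laminar in the ordinary sense, and intersections of matroids are not matroids in general. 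That the juggling recursion $J_{i+1}\supseteq (J_i\setminus\{1\})-1$ is exactly strong enough to rescue this is the content of Oh's theorem, and your sketch supplies no proof of it; your own text acknowledges as much (``the hard part will be making the last paragraph's greedy/exchange argument precise'') and then falls back on citing \cite{Oh}. The net effect is that your proposal reduces the lemma to itself and then cites it away --- which matches what the paper does --- but the intermediate greedy argument should be labeled as an unproved heuristic, not a proof outline, because it is precisely the missing step.
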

The matroids $\M_\J$ are called \defn{positroids}. Thus $\J \mapsto
\M_\J$ is a bijection between $(k,n)$-sequences of juggling states
and positroids of rank $k$ on $[n]$.  If $\M$ is an arbitrary
matroid, then we call the positroid $\M_{\J(\M)}$ the
\defn{positroid envelope} of $\M$ (see the discussion
before Remark \ref{rem:SashaParam}).  Every positroid is a matroid.
The positroid envelope of a positroid is itself.

\begin{example}
  Let $M_1$ and $M_2$ be the matroids $\{ 12, 13, 14, 23,
  24, 34 \}$ and $\{ 12, 23, 34, 14 \}$.  In both cases, $\J({M_i})$ is
  $( 12, 23, 34, 14 )$ and, thus, $M_1$ is the positroid envelope of
  both $M_1$ and $M_2$.  The corresponding affine permutation is
  $[\cdots 3456 \cdots]$.  On the other hand, if $M_3 = \{ 12, 13, 14, 23, 24 \}$,
  then $\J(M_3) = \{ 12, 23, 13, 14 \}$, with corresponding affine
  permutation $[\cdots 3546 \cdots]$.
\end{example}

\begin{remark}
Postnikov \cite{Pos} studied the totally nonnegative part
$\Gr(k,n)_{\geq 0}$ of the Grassmannian.  Each point $V \in
\Gr(k,n)_{\geq 0}$ has an associated matroid $\M_V$.  Postnikov
showed that the matroids that can occur this way, called positroids,
were in bijection with Grassmann necklaces of type $(k,n)$ (similar
to our $(k,n)$-sequences of juggling states), with decorated
permutations of $[n]$ with $k$ anti-exceedances, and with many other
combinatorial objects. Oh \cite{Oh}, proving a conjecture of
Postnikov, showed that positroids can be defined in the way we have
done.
\end{remark}

\section{Background on Schubert and Richardson varieties}

\newcommand\Project{\mathrm{Project}}

We continue to fix nonnegative integers $k$ and $n$, satisfying $0
\leq k \leq n$. For $S$ any subset of $[n]$,
let $\Project_S : \complexes^n \onto \complexes^S$
denote the projection onto the coordinates indexed by $S$.
(So the kernel of $\Project_S$ is $\Span_{s \not \in S} e_s$.)

\subsection{Schubert and Richardson varieties in the flag manifold}
\label{ssec:SchubertRichardsonInFlag}

Let $\Fl(n)$ denote the variety of flags in $\C^n$.
For a permutation $w \in S_n$, we have the
\defn{Schubert cell}
$$
\Xo_w = \{G_\bullet \in \Fl(n) \mid \dim(\Project_{[j]}(G_i)) = \#\{w([i]) \cap [j] \} \; {\rm for \; all}\; i,j\}
$$
and \defn{Schubert variety}
$$
X_w = \{G_\bullet \in \Fl(n) \mid \dim(\Project_{[j]}(G_i)) \leq \#\{w([i]) \cap [j] \} \; {\rm for \; all}\; i,j\}
$$
which both have codimension $\ell(w)$;
moreover $X_w = \overline{\Xo_w}$.
(For basic background on the combinatorics of Schubert varieties, see~\cite{Fulton} or~\cite[Chapter 15]{MS}.)
We thus have
$$\Fl(n) =
\coprod_{w \in S_n} \Xo_w \ \ \ \ \ {\rm and} \ \ \ \ \ X_w =
\coprod_{v \geq w} \Xo_v.
$$

Similarly, we define the
\defn{opposite Schubert cell}
$$
\Xo^{w} = \{G_\bullet \in \Fl(n) \mid \dim(\Project_{[n-j+1,n]}(G_i)) = \#\{w([i]) \cap [n-j+1,n] \} \; {\rm for \; all}\; i,j\}
$$
and \defn{opposite Schubert variety}
$$
X^{w} = \{G_\bullet \in \Fl(n) \mid \dim(\Project_{[n-j+1,n]}(G_i)) \leq \#\{w([i]) \cap [n-j+1,n] \} \; {\rm for \; all}\; i,j\}
$$

It may be easier to understand these definitions in terms of matrices.
Let $M$ be an $n \times n$ matrix of rank $n$ and let $G_i$ be the span of the top $i$ rows of $M$.
Then $G_{\bullet}$ is in $\Xo_w$ (respectively, $X_w$), if and only if,
for all $1 \leq i,j \leq n$, the rank of the top left $i \times j$ submatrix of $M$ is the same as
(respectively, less than or equal to) the rank of the corresponding submatrix of the permutation matrix $w$.
Similarly, $G_{\bullet}$ is in $\Xo^{w}$ (respectively $X^w$) if the ranks of the top right
submatrices of $M$ are equal to (respectively less than or equal to) those of $w$.
(The permutation matrix of $w$ has ones in positions $(i, w(i))$ and zeroes elsewhere.)

Define the \defn{Richardson varieties} as the transverse intersections
$$
X_u^w = X_u \cap X^w \ \ \ \ \ {\rm and} \ \ \ \ \
\Xo_{u}^{w} = \Xo_u \cap \Xo_w.
$$
The varieties $X_v^w$ and $\Xo_{v}^{w}$ are nonempty if and
only if $v \leq w$, in which case each has dimension $\ell(w) -
\ell(v)$. Let $E_{\bullet}$ be the flag $(\Span(e_1), \Span(e_1, e_2), \ldots)$
The coordinate flag $v E_{\bullet}$ is in $X_u^w$ if
and only if $u \leq v \leq w$.

We will occasionally need to define \defn{Schubert cells and varieties with respect to a flag $F_{\bullet}$}.
We set
$$
\Xo_w(F_\bullet) = \{G_\bullet \in \Fl(n) \mid \dim(G_i/(G_i \cap
F_{n-j})) = \#\{w([i]) \cap [j] \} \; {\rm for \; all}\; i,j\}
$$
and define $X_w(F_{\bullet})$ by replacing $=$ with $\leq$.
Warning: under this definition $X_w$ is $X_w(w_0 E_{\bullet})$.

\subsection{Schubert varieties in the Grassmannian}
Let $\Gr(k,n)$ denote the Grassmannian of $k$-planes in $\C^n$, and
let $\pi: \Fl(n) \to \Gr(k,n)$ denote the natural projection.  For
$I \in \binom{[n]}{k}$, we let
$$
\Xo_I = \{V \in \Gr(k,n) \mid \dim \Project_{[j]}(V)) = \#(I \cap [j]) \}
$$
denote the Schubert cell labeled by $I$ and
$$
X_I = \{V \in \Gr(k,n) \mid \dim \Project_{[j]}(V) \leq \#(I \cap [j]) \}
$$
the Schubert variety labeled by $I$.

Thus we have $\pi(X_w) =
X_{\pi(w)}$ and
$$\Gr(k,n) =
\coprod_{I \in \binom{[n]}{k}} \Xo_I \ \ \ \ \ {\rm
and} \ \ \ \ \ X_J = \coprod_{I \geq J} \Xo_I.
$$

We define
$$
\Xo^{I} = \{V \in \Gr(k,n) \mid \dim \Project_{[n-j+1,n]}(V) = \#(I \cap [n-j+1,n]) \},
$$
$$
X^I = \{V \in \Gr(k,n) \mid \dim \Project_{[n-j+1,n]}(V) \leq \#(I \cap [n-j+1,n]) \},
$$

So, for $J \in \binom{[n]}{k}$, the $k$-plane $\Span_{j \in J} e_j$ lies in $X_I$ if and only if $I \leq J$, and lies in $X^K$ if and only if $J \leq K$.

To review: if $u$ and $w$ lie in $S_n$, then $X_u$ is a Schubert variety, $X^w$ an opposite Schubert and $X_u^w$ a Richardson variety in $\Fl(n)$.
If $I$ and $J$ lie in $\binom{[n]}{k}$, then $X_I$, $X^J$ and $X_I^J$ mean the similarly named objects in $\Gr(k,n)$.
(Note that permutations have lower case letters from the end of the alphabet while subsets have upper case letters chosen from the range $\{ I, J, K \}$.)
The symbol $\Xo$ would indicate that we are dealing with an open variety, in any of these cases.

\section{Positroid varieties} \label{sec:pos}

We now introduce the positroid varieties, our principal objects of
study. Like the Schubert and Richardson varieties, they will come in
open versions, denoted $\Pio$, and closed versions, denoted
$\Pi$.\footnote{%
$\Pi$ stands for ``positroid", ``Postnikov", and ``projected Richardson''.}
The positroid varieties will be subvarieties of $\Gr(k,n)$, indexed by
the various posets introduced in Section~\ref{sec:posets}. For each of
the different ways of viewing our posets, there is a corresponding way
to view positroid varieties. The main result of this section will be
that all of these ways coincide. Again, we sketch these results here
and leave the precise definitions until later.

Given $[u,w]_{k}$, representing an equivalence class in $\Q(k,n)$,
we can project the Richardson variety $\Xo_u^w$ (respectively
$X_u^w$) to $\Gr(k,n)$. Given a $(k,n)$-sequence of juggling states
$(J_1, J_2, \ldots, J_n) \in \Jugg(k,n)$, we can take the
intersection $\bigcap \chi^{i-1} \Xo_{J_i}$ (respectively $\bigcap
\chi^{i-1} X_{J_i}$) in $\Gr(k,n)$. (Recall $\chi$ is the cyclic
shift $[234\ldots n1]$.) Given a cyclic rank matrix $r$, we can
consider the image in $\Gr(k,n)$ of the space of $k \times n$
matrices such that the submatrices made of cyclically consecutive
columns have ranks equal to (respectively, less than or equal to)
the entries of $r$. Given a positroid $M$, we can consider those
points in $\Gr(k,n)$ whose matroid has positroid envelope equal to
(respectively, contained in) $M$.

\begin{theorem}
  Choose our $[u,w]_{k}$, $(J_1, \ldots, J_n)$, $r$ and $M$ to correspond
  by the bijections in Section~\ref{sec:posets}. Then the projected open
  Richardson, the intersection of cyclically permuted open Schuberts, the
  space of matrices obeying the rank conditions, and the space of
  matrices whose matroids have the required positroid envelope, will
  all coincide as subsets of $\Gr(k,n)$.
\end{theorem}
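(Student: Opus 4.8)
The plan is to prove the four descriptions coincide by establishing a chain of equalities, using the projected Richardson variety as the central object and the affine permutation $f = f_{u,w}$ as the bookkeeping device. First I would unwind what the projection $\pi: \Fl(n) \to \Gr(k,n)$ does to a Richardson cell $\Xo_u^w$. A point of $\Gr(k,n)$ lies in $\pi(\Xo_u^w)$ iff it has \emph{some} flag refinement lying in $\Xo_u \cap \Xo^w$; since $\pi(\Xo_w) = \Xo_{\sigma_k(w)}$ and (using $w \in \GS$, which we may assume by Lemma~\ref{L:kBruhatrepresentatives}) one checks the fiber condition is automatically satisfiable, the projection is a well-behaved map onto its image. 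The key computation is that the rank data $\dim \Project_{[j]}(G_i)$ controlling membership in $\Xo_u$, together with the opposite ranks controlling $\Xo^w$, translate under $\chi$-twisting into rank conditions on cyclically consecutive windows of columns. The bridge is the formula from the proof of Proposition~\ref{P:pairstobounded}: $f_{u,w}(i) = u(w^{-1}(i))$ or $u(w^{-1}(i)) + n$ according to whether $w^{-1}(i) > k$ or $\le k$, which already encodes exactly how $u$ and $w$ interact through the ``window'' of size $n$.

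Concretely I would proceed as follows. (1) Show the intersection of cyclically permuted open Schuberts equals the locus of $k\times n$ matrices (mod row operations) whose cyclically-consecutive-column submatrices have prescribed ranks: this is essentially the definition of $\Xo_I$ rewritten, since $\chi^{i-1}\Xo_{J_i}$ is the condition that $\Project$ onto a cyclically shifted initial interval of columns has the rank dictated by $J_i$, and the collection of all these initial-interval ranks for all cyclic shifts is equivalent to all consecutive-window ranks; here Corollary~\ref{c:boundmanyfollowing} identifies the rank data $(r_{ij})$ with the cyclic rank matrix of $f$, and the special-entry characterization $r_{ij}=r_{(i+1)j}=r_{i(j-1)}>r_{(i+1)(j-1)} \iff f(i)=j$ ties this precisely to $f$. (2) Identify the matroid-theoretic description: a $k\times n$ matrix $A$ has $\Project_{[j]}(\mathrm{rowspan}\,A)$ of a given dimension iff the lex-first basis / Grassmann necklace entry $J_j$ of the matroid $\M_A$ equals the prescribed set, so the locus where $\st(f,i)\cap[n]$ matches $J_i$ for all $i$ is exactly the locus where the Grassmann necklace (equivalently $\J(\M_A)$) equals $\J$; by the positroid/necklace bijection of Section~\ref{S:positroids} this is the locus where the positroid envelope $\M_{\J(\M_A)}$ equals $M_\J$. (3) Finally, identify $\pi(\Xo_u^w)$ with the cyclic-Schubert intersection: write a flag $G_\bullet$ refining a $k$-plane $V$, observe $G_\bullet \in \Xo_u$ forces $V = G_k \in \Xo_{\sigma_k(u)}$ and dually for $\Xo^w$, and more importantly that the \emph{full} sequence of conditions on $G_\bullet$, when $w \in \GS$, is equivalent to cyclic-window rank conditions on $V$ alone — here the translation element $t_{\omega_k}$ in $f_{u,w} = u t_{\omega_k} w^{-1}$ is exactly what converts the ``flag'' data into the ``wrap-around'' data.

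The main obstacle I expect is step (3): showing that projecting the Richardson \emph{cell} (not just the closed variety) lands exactly on the cyclic-Schubert-cell intersection, with no loss and no extra points. The subtlety is that a priori $\pi(\Xo_u \cap \Xo^w)$ could be strictly smaller than $\pi(\Xo_u) \cap \pi(\Xo^w)$, because a $k$-plane might admit a refinement in $\Xo_u$ and a \emph{different} refinement in $\Xo^w$ but none in both. Resolving this requires using the Grassmannian-ness of $w$ (and dually anti-Grassmannian-ness of $u$, via the other representative) to show the flag refinement is essentially forced once $V$ is fixed — i.e., the fiber of $\pi|_{\Xo_u^w}$ over a point of the image is an open Richardson variety in a smaller flag manifold and in particular nonempty. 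I would lean on the transversality of $X_u \cap X^w$ and on Theorem~\ref{T:BScriterion} to control which rank inequalities are active. The remaining steps (1) and (2) are bookkeeping: matching three combinatorial encodings of the same rank data via the bijections already established in Section~\ref{sec:posets}, so the real content is purely the geometry of the fibration $\pi$ restricted to a Richardson variety.
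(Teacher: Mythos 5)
Your chain of equalities is the right one, and you have correctly located the hard step: showing $\pi(\Xo_u^w)$ equals the cyclic-Schubert-cell intersection, and in particular that this image is not strictly smaller than $\pi(\Xo_u)\cap\pi(\Xo^w)$. Steps (1) and (2) are indeed bookkeeping via Lemma~\ref{lem:OpenCyclic} and Corollary~\ref{c:boundmanyfollowing}, as you say. But your proposed resolution of step (3) has a gap, and the gap is exactly where the paper has to do work.

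You write that ``the fiber of $\pi|_{\Xo_u^w}$ over a point of the image is an open Richardson variety in a smaller flag manifold and in particular nonempty.'' This is not how the argument goes, and it also does not match the geometry: once you reduce to $w\in\GS$ (as you do), the fiber of $\pi|_{\Xo_u^w}$ over any point of its image is a \emph{single flag}, not a positive-dimensional Richardson variety --- indeed Lusztig's result, quoted in Theorem~\ref{thm:projectedRichardsons}, is that $\pi$ restricts to an isomorphism on $\Xo_u^w$. The missing idea is the ``closest flag'' construction $E_\bullet(V)$: for any $V\in\Gr(k,n)$ one intersects and sums the standard flag with $V$ to produce a canonical flag containing $V$, and Lemma~\ref{lem:GrassSchubert} shows that when $w$ is Grassmannian, $E_\bullet(V)$ is the \emph{unique} candidate refinement of $V$ lying in $\Xo^w$. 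This converts the potentially hard existence question (``does $V$ admit some refinement in $\Xo_u\cap\Xo^w$?'') into a concrete decidable one (``does the specific flag $E_\bullet(V)$ lie in $\Xo_u$, and does $V$ lie in $\Xo^{\sigma(w)}$?''), encoded in Lemma~\ref{lem:RietschMainPoint}. Only after that reduction does the bookkeeping with $f_{u,w}=ut_{\omega_k}w^{-1}$ and the special-entry characterization of the cyclic rank matrix go through, and it requires a genuine two-case computation (depending on whether $a\in w([k])$) to verify $r_{\bullet\bullet}(V)=r(f_{u,w})$ --- this is the bulk of the proof of Proposition~\ref{P:triplecyclicrank}. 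Without $E_\bullet(V)$ or something playing the same role, ``lean on transversality and Theorem~\ref{T:BScriterion}'' does not by itself pin down which $V$ admit a lift, so the image could a priori still be too small; your proposal identifies the obstacle but does not cross it.

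Two smaller points: the paper first proves Proposition~\ref{P:EquivalenceClass} (that $\pi(\Xo_u^w)$ depends only on $\langle u,w\rangle\in\Q(k,n)$) by an explicit $\PP^1$-bundle argument, which is what legitimizes your reduction to $w\in\GS$; you invoke Lemma~\ref{L:kBruhatrepresentatives} for the combinatorial representative but you also need the geometric invariance. And the converse inclusion ($\Pio_f\subseteq\pi(\Xo_u^w)$) in the paper is handled by a uniqueness argument --- for $V\in\Pio_f$ one takes whatever $(u',w')$ satisfies $E_\bullet(V)\in\Xo_{u'}^{w'}$, shows $w'\in\GS$, and uses injectivity of $\langle u,w\rangle\mapsto f_{u,w}$ to force $(u',w')=(u,w)$ --- which again hinges on $E_\bullet(V)$ being canonical.
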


We call the varieties we obtain in this way \defn{open positroid
  varieties} or \defn{positroid varieties} respectively, and denote
them by $\Pio$ or $\Pi$ with a subscript corresponding to any of the
possible combinatorial indexing sets.

The astute reader will note that we did not describe how to define a
positroid variety using a bounded affine permutation (except by
translating it into some other combinatorial data). This will be the
subject of a future paper \cite{KLS2}. The significance of bounded
affine permutations can already be seen in this paper, as it is
central in our description in Section \ref{sec:cohomology}
of the cohomology class of $\Pi$.

\subsection{Cyclic rank matrices} \label{ssec:crank}
Recall the definition of a cyclic rank matrix from the end of
Section~\ref{ssec:juggling}.  As we explained there, cyclic rank
matrices of type $(k,n)$ are in bijection with $\Bound(k,n)$ and hence
with $\Q(k,n)$ and with bounded juggling patterns of type $(k,n)$.

Let $V \in \Gr(k,n)$.  We define an infinite array $r_{\bullet\bullet}(V) = (r_{ij}(V))_{i, j \in \Z}$ of integers as follows: For $i>j$, we set $r_{ij}(V)=j-i+1$ and for $i \leq j$ we have
$$
r_{ij}(V) = \dim (\Project_{\{i, i+1, \ldots, j \}}(V) \}).
$$
where the indices are cyclic modulo $n$. (So, if $n=5$, $i=4$ and
$j=6$, we are projecting onto $\Span(e_4, e_5, e_1)$.) Note that,
when $j \geq i+n-1$, we project onto all of $[n]$. If $V$ is the row
span of a $k \times n$ matrix $M$, then $r_{ij}(V)$ is the rank of
the submatrix of $M$ consisting of columns $i$, $i+1$, \dots, $j$.

\subsection{Positroid varieties and open positroid varieties}
\begin{lem}\label{L:rankcyclic}
Let $V \in \Gr(k,n)$. Then $r_{\bullet \bullet}(V)$ is a cyclic
rank matrix of type $(k,n)$.
\end{lem}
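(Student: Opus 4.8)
The plan is to verify directly that $r_{\bullet\bullet}(V)$ satisfies the five conditions (C1')--(C5) of Corollary~\ref{c:boundmanyfollowing} that characterize a cyclic rank matrix of type $(k,n)$. The one useful preliminary is to rephrase the entries in terms of linear functionals rather than images. For $m\in\Z$ let $x_m$ be the restriction to $V$ of the coordinate functional $e_{\bar m}^{*}$, so that $x_{m+n}=x_m$ for all $m$. For $i\le j$ the kernel of the restriction of $\Project_{\{i,i+1,\dots,j\}}$ to $V$ is $\bigcap_{i\le m\le j}\ker x_m$, and the common kernel of a finite set of functionals on a finite-dimensional space has codimension equal to the dimension of their span; hence
$$ r_{ij}(V)=\dim_{\C}\Span(x_i,x_{i+1},\dots,x_j)\qquad(i\le j). $$
Moreover $x_1,\dots,x_n$ span $V^{*}$, since $e_1^{*},\dots,e_n^{*}$ span $(\C^n)^{*}$ and restriction to $V$ is surjective.

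Given this, most conditions are immediate. Condition (C1') is the defining convention for $r_{ij}(V)$ when $j<i$. For (C2'): once $j\ge i+n-1$ the list $x_i,\dots,x_j$ contains all of $x_1,\dots,x_n$, so its span is $V^{*}$ and $r_{ij}(V)=\dim V=k$. Condition (C5) is clear from $x_{m+n}=x_m$. For (C3) the cases $j\le i$ follow from $r_{ij}(V)=j-i+1$ (together with $r_{ii}(V)=\dim\Span(x_i)\in\{0,1\}$), while for $j>i$ one has $\Span(x_i,\dots,x_j)=\Span(x_i)+\Span(x_{i+1},\dots,x_j)$, and symmetrically $\Span(x_i,\dots,x_j)=\Span(x_i,\dots,x_{j-1})+\Span(x_j)$, so each of the two differences is $0$ or $1$.

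The only condition needing a genuine argument is (C4), and the functional reformulation makes it routine. Set $W=\Span(x_{i+1},\dots,x_{j-1})$, so that $r_{(i+1)(j-1)}(V)=\dim W$, $r_{(i+1)j}(V)=\dim(W+\C x_j)$, and $r_{i(j-1)}(V)=\dim(W+\C x_i)$. The hypothesis of (C4) says these three dimensions coincide, which forces $x_i\in W$ and $x_j\in W$; therefore $\Span(x_i,x_{i+1},\dots,x_j)=W$, whence $r_{ij}(V)=\dim W=r_{(i+1)(j-1)}(V)$, as required. (Equivalently, the rank function is submodular, $r_{ij}+r_{(i+1)(j-1)}\le r_{i(j-1)}+r_{(i+1)j}$, which combined with the monotonicity coming from (C3) gives (C4); the functional picture simply supplies both at once.) I do not expect a real obstacle here: the whole content is the translation in the first paragraph, after which (C1'), (C2'), (C5) are bookkeeping, (C3) is ``a span grows by at most one per generator,'' and (C4), which superficially looks like a submodularity constraint, collapses to the observation that two of the functionals already lie in the span of the remaining ones.
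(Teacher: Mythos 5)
Your proof is correct and is essentially the paper's proof in a coordinate-free disguise: the paper takes a $k\times n$ matrix $M$ with row span $V$ and observes that $r_{ij}(V)$ is the dimension of the span of columns $M_i,\dots,M_j$, while your functionals $x_m=e_{\bar m}^*|_V$ are precisely those columns once a basis of $V$ (the rows of $M$) is chosen, so the two reformulations are identical. The treatment of (C1'), (C2'), (C5), (C3), and in particular the key point for (C4) --- that the hypotheses force $x_i$ and $x_j$ to lie in $\Span(x_{i+1},\dots,x_{j-1})$ --- matches the paper exactly.
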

\begin{proof}
Conditions (C1'), (C2'), and (C5) are clear from the definitions.
Let $M$ be a $k \times n$ matrix whose row span is $V$; let $M_i$ be the $i^{\textrm{th}}$ column of $M$.
Condition (C3) says that adding a column to a matrix either preserves the rank of that matrix or increases it by one.
The hypotheses of condition (C4) state that $M_{i}$ and $M_{j}$ are in the span of $M_{i+1}$, $M_{i+2}$, \dots, $M_{j-1}$; the conclusion is that
$\dim \Span(M_i, M_{i+1}, \dots,M_{j-1}, M_j) = \dim \Span(M_{i+1}, \dots, M_{j-1})$.
\end{proof}

For any cyclic rank matrix $r$, let $\Pio_r$ be the subset of $\Gr(k,n)$ consisting of those $k$-planes $V$ with cyclic rank matrix $r$.
We may also write $\Pio_f$, $\Pio_{\J}$ or $\Pio_u^w$ where $f$ is the bounded affine permutation, $\J$ the juggling pattern or $\langle u, w \rangle$ the equivalence class of $k$-Bruhat interval corresponding to $r$.

The next result follows directly from the definitions.
Recall that $\chi = [23\cdots (n-1)n1] \in S_n$ denotes the long cycle.

\begin{lem} \label{lem:OpenCyclic}
For any $\J = (J_1, J_2, \ldots, J_n) \in \Jugg(k,n)$, we have
$$
\Pio_{\J} = \Xo_{J_1} \cap \chi(\Xo_{J_2}) \cap \cdots \cap
\chi^{n-1}(\Xo_{J_n}).
$$
\end{lem}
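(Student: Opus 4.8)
The plan is to unwind both sides of the claimed equality directly from the definitions, so that the statement becomes a tautology. Recall that for $I \in \binom{[n]}{k}$ the open Schubert cell in the Grassmannian is
$$
\Xo_I = \{V \in \Gr(k,n) \mid \dim \Project_{[j]}(V) = \#(I \cap [j]) \text{ for all } j\},
$$
and that $\chi = [23\cdots n1]$ acts on $\C^n$ by $e_i \mapsto e_{i-1}$ (indices mod $n$), hence $\chi^{m}$ sends the coordinate subspace $\Span(e_1,\dots,e_j)$ to $\Span(e_{1-m},\dots,e_{j-m})$, i.e. to $\Span(e_{m+1},\dots,e_{m+j})$ read cyclically. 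First I would observe that $V \in \chi^{i-1}(\Xo_{J_i})$ if and only if $\chi^{-(i-1)}V \in \Xo_{J_i}$, which by definition says $\dim \Project_{[j]}(\chi^{-(i-1)}V) = \#(J_i \cap [j])$ for all $j$; applying $\chi^{i-1}$ and tracking the coordinate windows, this is exactly $\dim \Project_{\{i, i+1, \ldots, i+j-1\}}(V) = \#(J_i \cap [j])$ for all $j$, with the column indices read cyclically mod $n$.

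Next I would compare this with the cyclic rank matrix. By definition $r_{i,i+j-1}(V) = \dim \Project_{\{i,\dots,i+j-1\}}(V)$, so the condition ``$V \in \chi^{i-1}(\Xo_{J_i})$ for all $i$'' translates precisely into ``$r_{i, i+j-1}(V) = \#(J_i \cap [j])$ for all $i \in [n]$ and all $j$''. On the other side, $V \in \Pio_{\J}$ means $r_{\bullet\bullet}(V)$ equals the cyclic rank matrix $r$ associated to $\J$. So the lemma reduces to the purely combinatorial fact that, under the bijections of Section~\ref{sec:posets} between $\Jugg(k,n)$ and cyclic rank matrices, the $i$-th row of $r$ (in the relevant window) records exactly the function $j \mapsto \#(J_i \cap [j])$. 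I would extract this from Corollary~\ref{c:boundmanyfollowing} together with the construction $R_j(S) = k - \#\{x \in S : x > j\}$ preceding Lemma~\ref{l:following}: writing $S_i = J_i \cup -\naturals$ one has $R_j(S_i) = \#(J_i \cap [j])$ for $0 \le j \le n$, and $r_{i,\,i+j-1} = R_{j}(S_i)$ by the indexing $r_{ij} = R_{j-i+1}(J_i \cup -\naturals)$, so the two prescriptions for the entries of $r$ literally agree. Periodicity (C5) and the boundary conventions (C1'), (C2') handle the indices outside the window $i \le i+j-1 \le i+n-1$ automatically, and these impose no further constraint on $V$.

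The only genuine bookkeeping is making sure the cyclic shift of windows lines up correctly: one must check that $\chi^{i-1}$ carries the ``initial segment'' projection $\Project_{[j]}$ to the cyclically-shifted window $\Project_{\{i,\dots,i+j-1\}}$ and not to $\Project_{\{i,\dots,i+j-1\}}$ reflected or shifted the other way. This is the step most prone to an off-by-one or orientation error, so I would pin down the convention for the $S_n$-action on $\C^n$ once (matching \eqref{E:Snaction} and the earlier statement ``$X_w = X_w(w_0 E_\bullet)$'' and ``$\pi(X_w) = X_{\pi(w)}$'') and then the rest is mechanical. Indeed, as the paper says, ``the next result follows directly from the definitions'': once both sides are rewritten as the system of rank equalities $\dim \Project_{\{i,\dots,i+j-1\}}(V) = \#(J_i \cap [j])$, there is nothing left to prove. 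I do not expect any real obstacle here; the substance of the lemma lives in the combinatorial identifications already established in Section~\ref{sec:posets}, and this statement merely transports them to the geometric side.
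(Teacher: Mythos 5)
Your plan is exactly the definition-unwinding the paper has in mind (it states the lemma ``follows directly from the definitions'' and gives no argument), and the final translation you arrive at — $V \in \chi^{i-1}(\Xo_{J_i})$ for all $i$ iff $r_{i,\,i+j-1}(V) = \#(J_i\cap[j])$ for all $i,j$, together with the identity $r_{i,\,i+j-1} = R_j(J_i\cup -\naturals) = \#(J_i\cap[j])$ — is correct and is precisely what is needed. One caution: your initial statement of the convention is off by a sign. With (\ref{E:Snaction}), $\chi$ sends $e_i$ to $e_{\chi(i)} = e_{i+1}$, not $e_{i-1}$, so $\chi^m(\Span(e_1,\dots,e_j)) = \Span(e_{m+1},\dots,e_{m+j})$; your ``i.e.'' line is inconsistent with the $e_i\mapsto e_{i-1}$ you wrote just before it (those two cyclic windows are genuinely different), but the window you actually carry forward, $\{i,\dots,i+j-1\}$, is the right one. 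You flag this as the step prone to an off-by-one, and indeed it is the only place the argument could go wrong; with the convention pinned down as above, the rest is mechanical and sound.
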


By Lemma \ref{L:rankcyclic} and our combinatorial  bijections, we
have
$$
\Gr(k,n) = \coprod_{\J \in \Jugg(k,n)} \Pio_{\J}.
$$
We call the sets $\Pio_{\J}$ \defn{open positroid varieties}.
Postnikov \cite{Pos} showed that $\Pio_{\J}$ is non-empty if $\J \in
\Jugg(k,n)$ (this statement also follows from Proposition
\ref{P:triplecyclicrank} below).  We define the
\defn{positroid varieties} $\Pi_{\J}$ to be the closures $\Pi_{\J}
:= \overline{\Pio_{\J}}$.

\subsection{From $\Q(k,n)$ to cyclic rank matrices}

We now describe a stratification of the Grassmannian due to
Lusztig~\cite{Lus}, and further studied by Rietsch~\cite{Rie}.
(This stratification was also independently discovered by Brown, Goodearl
and Yakimov~\cite{BGY, GY}, motivated by ideas from Poisson geometry;
we will not discuss the Poisson perspective further in this paper.)
Lusztig and Rietsch's work applies to any partial flag variety, and we
specialize their results to the Grassmannian.

The main result of this section is the following:

\begin{prop}\label{P:triplecyclicrank}
  Let $u \leq_k w$, and
  $f_{u,w}$ be the corresponding affine
  permutation from Section \ref{S:pairstobounded}.  Recall that
  $\Xo_u^w$ denotes the open Richardson variety in $\Fl(n)$ and $\pi$ the map
  $\Fl(n) \to \Gr(k,n)$.  Then   $\Pio_f = \pi(\Xo_u^w)$.
\end{prop}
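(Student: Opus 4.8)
The plan is to prove the two inclusions $\pi(\Xo_u^w) \subseteq \Pio_{f_{u,w}}$ and $\Pio_{f_{u,w}} \subseteq \pi(\Xo_u^w)$ separately. Before starting I would reduce to the case $w \in \GS$: by Lemma~\ref{L:kBruhatrepresentatives} every class in $\Q(k,n)$ has such a representative, the affine permutation $f_{u,w}$ depends only on the class (as noted just before Proposition~\ref{P:pairstobounded}), and the subset $\pi(\Xo_u^w) \subseteq \Gr(k,n)$ also depends only on the class --- if $x = uz$, $y = wz$ with $z \in S_k \times S_{n-k}$ and both factorizations length-additive, then right multiplication by $z$ identifies $\Xo_u^w$ with $\Xo_x^y$ compatibly with $\pi$. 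So it suffices to treat a single well-chosen representative.

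For the inclusion $\pi(\Xo_u^w) \subseteq \Pio_{f_{u,w}}$, take $G_\bullet \in \Xo_u^w$ and set $V = G_k$; I must show the cyclic rank matrix $r_{\bullet\bullet}(V)$ equals $r_{\bullet\bullet}(f_{u,w})$. By Lemma~\ref{lem:OpenCyclic} this is equivalent to $V \in \chi^{c-1}(\Xo_{J_c})$ for $c = 1,\ldots,n$, where $\J(f_{u,w}) = (J_1,\ldots,J_n)$; that is, to computing $\dim\bigl(V \cap \Span(e_S)\bigr)$ for every cyclic interval $S$. The conditions defining $\Xo_u$ and $\Xo^w$ pin down $\dim\Project_{[1,j]}(G_i)$ and $\dim\Project_{[j,n]}(G_i)$ for all $i,j$; the case $c=1$ already drops out from the $\Xo_u$ conditions together with the identity $\st(f_{u,w},0)\cap[n] = u([k])$, which one reads off from the formula $f_{u,w}(i) = u(w^{-1}(i)) + n\cdot[\,w^{-1}(i)\le k\,]$ for $i \in [1,n]$ established in the proof of Proposition~\ref{P:pairstobounded}. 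For a general cyclic interval $S=[a,b]$, the coordinate subspace $\Span(e_S)$ is $E_b \cap E'_{n-a+1}$ when $S$ does not wrap and $E_b + E'_{n-a+1}$ when it does, where $E_\bullet$ is the standard flag and $E'_\bullet$ its opposite. Choosing a matrix representative of $G_\bullet$ adapted to both $\Xo_u$ and $\Xo^w$ (a staircase form for the first set of conditions, a reverse staircase for the second, made compatible using the transversality of the intersection), one reads off $\dim(V \cap \Span(e_S))$ and matches it against the value prescribed by $f_{u,w}$ through the juggling-state description of Corollary~\ref{c:manyfollowing}.

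For the reverse inclusion I would argue by covering rather than by lifting points individually. The Richardson cells partition $\Fl(n)$, so the sets $\pi(\Xo_u^w)$ cover $\Gr(k,n)$, and by the theory of projected Richardson varieties of Lusztig~\cite{Lus} and Rietsch~\cite{Rie} --- which we are specializing here --- each such set coincides with $\pi(\Xo_{u'}^{w'})$ for some $u'\le_k w'$. The strata $\Pio_{\J}$ partition $\Gr(k,n)$ by Lemma~\ref{L:rankcyclic} and the bijections of Section~\ref{sec:posets}, and the forward inclusion places each $\pi(\Xo_{u'}^{w'})$ inside the single stratum $\Pio_{f_{u',w'}}$. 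Hence a point $V \in \Pio_{f_{u,w}}$ lies in some $\pi(\Xo_{u'}^{w'})$, which forces $f_{u',w'} = f_{u,w}$, i.e.\ $\langle u',w'\rangle = \langle u,w\rangle$, and therefore $\pi(\Xo_{u'}^{w'}) = \pi(\Xo_u^w) \ni V$. (Alternatively one can lift $V$ by hand: complete $V = G_k$ to a full flag satisfying the $\Xo_u$ conditions, and check that the cyclic rank constraints on $V$ are exactly what makes such a completion land in $\Xo^w$.)

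I expect the main obstacle to be the rank computation in the forward inclusion --- concretely, showing that the wrapping cyclic ranks $r_{ab}(V)$ (for $[a,b]$ not contained in $[1,n]$) are constant on $\Xo_u^w$ and equal to the values dictated by $f_{u,w}$. This is precisely where a Richardson cell behaves better than an arbitrary intersection of Schubert and opposite Schubert cells, and where the reduction to $w$ Grassmannian pays off, since then the partial flags below and above $V$ that complete it to an element of $\Xo_u^w$ are essentially rigid and the cyclic ranks of $V$ can be extracted combinatorially.
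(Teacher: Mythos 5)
Your skeleton is right --- reduce to $w$ Grassmannian, prove each inclusion --- and the reduction step matches the paper, but the crux of the forward inclusion is hand-waved in a way that I don't think can be made to work as stated. You want to compute $\dim(V \cap \Span(e_S))$ for a cyclic interval $S$ from a matrix representative of $G_\bullet$ that is ``a staircase form for the $\Xo_u$ conditions, a reverse staircase for the $\Xo^w$ conditions, made compatible using transversality.'' These two normalizations genuinely conflict: a matrix cannot simultaneously be in reduced echelon form with respect to both the standard flag and the opposite flag, and ``transversality'' does not supply a canonical reconciliation. The real difficulty is that the cyclic ranks of $V$ involve double intersections $\dim(V \cap E_a \cap E'_b)$, which are not determined by the two separate Schubert memberships $V \in \Xo_{\sigma(u)}$ and $V \in \Xo^{\sigma(w)}$ --- they depend on the full Richardson condition on $G_\bullet$. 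The paper's key device, which is absent from your proposal, is the canonical flag $E_\bullet(V)$ (the ``closest'' flag to $E_\bullet$ with $k$-th step $V$): Lemma~\ref{lem:GrassSchubert} shows that for $w$ Grassmannian the opposite Schubert membership forces $G_\bullet = E_\bullet(V)$, so the Richardson condition collapses to intrinsic conditions on $V$ (Lemma~\ref{lem:RietschMainPoint}), after which a careful two-case computation (according to whether $a \in w([k])$) produces the special entries of the cyclic rank matrix. That computation is the heart of the proposition, and your proposal acknowledges but does not supply it.

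Your reverse inclusion is a legitimate argument, but it takes a different route from the paper. You quote Lusztig and Rietsch for the fact that the projected open Richardsons $\pi(\Xo_{u'}^{w'})$ with $u' \le_k w'$, $w'$ Grassmannian partition $\Gr(k,n)$; given that quotation plus the forward inclusion, the disjointness of the cyclic-rank strata does force the matching $f_{u',w'} = f_{u,w}$, and Proposition~\ref{P:pairstobounded} and Proposition~\ref{P:EquivalenceClass} finish. The paper instead reuses $E_\bullet(V)$ to produce the pair $(u',w')$ directly (reading off the Grassmannian $w'$ and then the $u'$ for which $E_\bullet(V) \in \Xo_{u'}$) and then applies the forward direction --- this is self-contained and does not cite Lusztig's disjointness. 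Your ``alternative'' parenthetical remark about lifting $V$ by hand is actually pointing at the paper's approach, but you don't name the construction that makes the lift canonical. So: the outline is sound and the reverse direction is a workable shortcut if you are willing to cite Lusztig, but the forward direction is missing its key idea.
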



\begin{remark}
If $u \leq w$, but  $u \not \leq_k w$, then $\pi(\Xo_u^w)$ may not
be of the form $\Pio_f$. For example, take $(k,n)=(1,3)$, with
$u=123$ and $w=321$. Then $\Xo_{u}^w$ is the set of pairs $(p,
\ell)$ of a point and a line in $\PP^2$ with $p \in \ell$, such that
$p$ does not lie on $\{ x_1=0 \}$ or $\{x_3=0 \}$ and $\ell$ does
not pass through $(1:0:0)$ or $(0:0:1)$. The projection of $\Xo_u^w$
to $\Gr(1,3) \cong \PP^2$ is $\{ (x_1: x_2 : x_3) \in \PP^2 : x_1,
x_3 \neq 0 \}$. This is the union of two open positroid varieties:
$\Pio_{234} = \{ (x_1: x_2 : x_3) \in \PP^2 : x_1, x_2, x_3 \neq 0
\}$ and $\Pio_{324} = \{ (x_1: 0 : x_3) : x_1, x_3 \neq 0 \}$. The
situation is much better when one considers closed Richardson
varieties instead of open ones; see Section~\ref{sec:closures}.
\end{remark}
%



We first show that $\pi(\Xo_{u}^{w})$ depends only on the
equivalence class of $[u,w]_k$ in $\Q(k,n)$.

\begin{prop} \label{P:EquivalenceClass}
Suppose $[u,w]_k \sim [u',w']_k$ in $\Q(k,n)$. Then
$\pi(\Xo_{u}^{w}) = \pi(\Xo_{u'}^{w'})$.
\end{prop}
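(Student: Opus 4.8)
The equivalence relation on $k$-Bruhat intervals is generated by the elementary moves $[u,w]_k \sim [us_i, ws_i]_k$ where $i \neq k$ and both $us_i = u\cdot s_i$ and $ws_i = w\cdot s_i$ are length-additive factorizations (equivalently $us_i \gtrdot u$ and $ws_i \gtrdot w$, or both $\lessdot$). So by induction it suffices to treat a single such move, say $u' = us_i$, $w' = ws_i$, with $i \neq k$ and $\ell(u')=\ell(u)+1$, $\ell(w')=\ell(w)+1$ (the case of both decreasing is the same statement read backwards).

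The natural approach is to compare the two Richardson cells fiber-by-fiber over $\Gr(k,n)$ under $\pi : \Fl(n) \to \Gr(k,n)$. First I would recall that right multiplication by $s_i$ corresponds to modifying a flag $G_\bullet$ only in its $i$-th step $G_i$, leaving all $G_j$ for $j \neq i$ fixed; since $i \neq k$, the image $\pi(G_\bullet) = G_k$ is unaffected by this modification. Concretely, I would fix $V \in \Gr(k,n)$ and analyze the set of flags $G_\bullet$ with $G_k = V$ that lie in $\Xo_u^w = \Xo_u \cap \Xo^w$; this is a product of ``flag varieties in subquotients'' $\prod_{j} \mathrm{Fl}(G_{j-1}\backslash G_{j+1}\text{-type pieces})$, one factor for each index, cut out by the Schubert and opposite-Schubert cell conditions, which are conditions on consecutive quotients. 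The point is that the $i$-th factor (which is a $\PP^1$-worth of choices for $G_i$ between $G_{i-1}$ and $G_{i+1}$) is governed by the pair of local conditions coming from $(u,i)$ and $(w,i)$; the length-additive hypotheses $u s_i \gtrdot u$ and $w s_i \gtrdot w$ say precisely that at index $i$ both the $u$-condition and the $w$-condition, when translated to the single relevant $\PP^1$, are nonempty-open rather than a point. I would then argue that $V$ admits \emph{some} flag $G_\bullet \in \Xo_u^w$ with $G_k=V$ if and only if it admits \emph{some} flag in $\Xo_{u'}^{w'}$ with $G_k=V$: in both cases the $i$-th factor is a nonempty open subset of $\PP^1$ and every other factor is literally the same, so one family of fibers is nonempty exactly when the other is. Hence the two images coincide.

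An alternative, possibly cleaner, route avoids dimension-counting on fibers: use the known facts (from \cite{BrionLakshmibai} / the Richardson variety literature, or directly) that $\pi$ restricted to $\Xo_u^w$ is a fiber bundle whose fibers, over their image, are isomorphic to an open Richardson variety in a smaller flag manifold (the fiber being $\Xo^{w''}_{u''}$-type for the ``$S_k \times S_{n-k}$-part''), together with Lemma~\ref{L:kBruhatrepresentatives}, which lets me move $[u,w]_k$ within its equivalence class until $w \in \GS$. Once $w$ is Grassmannian the claim becomes: $\pi(\Xo_u^w)$ depends only on $\langle u,w\rangle$, and since the Grassmannian representative is \emph{unique} (Lemma~\ref{L:kBruhatrepresentatives}), there is nothing left to prove once I know the elementary move $[u,w]_k \sim [us_i, ws_i]_k$ preserves $\pi(\Xo_u^w)$ in the direction that pushes $w$ toward $\GS$. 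So the whole proposition reduces to that one elementary move, and I would spend the proof on exactly that.

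The main obstacle is making rigorous the claim that the single move $u \mapsto us_i$, $w \mapsto ws_i$ (with $i \neq k$, length-additive) does not change $\pi(\Xo_u^w)$ — i.e.\ that enlarging or shrinking the flag constraint at step $i \neq k$ neither adds nor removes any $k$-plane from the image. The clean way to see this is: for fixed $V = G_k$, the condition to extend $V$ to a full flag in $\Xo_u \cap \Xo^w$ factors as an intersection of independent conditions at each step $j$, the conditions at $j \neq i$ are identical for $(u,w)$ and $(u',w')$, and at $j = i$ both conditions define nonempty Zariski-open subsets of the relevant $\PP^1$ (this is the content of length-additivity via Theorem~\ref{T:BScriterion}(2) or by a direct check on $s_i$-multiplication), so the fibered sets over $V$ are simultaneously empty or nonempty. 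I expect the bookkeeping translating ``$us_i \gtrdot u$'' into ``the $i$-th local condition is open, not a point'' — essentially the $a=i, b=i+1$ case already appearing in the proof of Lemma~\ref{L:kBruhatiso} — to be the fussy part, but it is genuinely routine.
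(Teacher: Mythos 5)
Your overall strategy is the same as the paper's: reduce to a single length-additive elementary move $(u,w) \mapsto (us_i, ws_i)$ with $i \neq k$, then compare the two Richardson cells in the $\PP^1$-worth of choices for $G_i$. (The paper sets this up slightly more cleanly by projecting only to the partial flag variety $F$ that forgets the $i$-th step, so that $\phi: \Fl(n) \to F$ is literally a $\PP^1$-bundle and one compares the images $\phi(\Xo_u^w)$ and $\phi(\Xo_{us_i}^{ws_i})$ rather than fibering directly over $\Gr(k,n)$. Your description of the fiber over $V\in\Gr(k,n)$ as a ``product of conditions at each step'' is not literally correct — the Richardson conditions at each step constrain the individual $G_j$, but the flag chain condition links them, so the fiber is an iterated $\PP^1$-fibration, not a product — though this is a reparable imprecision, not the main issue.)

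The genuine gap is in your key technical claim that ``the length-additive hypotheses $us_i \gtrdot u$ and $ws_i \gtrdot w$ say precisely that at index $i$ both the $u$-condition and the $w$-condition \ldots are nonempty-open rather than a point.'' This is wrong for the $w$-condition, because the dimension formulas for Schubert and opposite Schubert cells run in opposite directions. With the paper's conventions, $X_v$ has \emph{codimension} $\ell(v)$ while $X^w$ has \emph{dimension} $\ell(w)$. Consequently, on the relevant $\PP^1$-fiber the cell $\Xo_u$ does indeed cut out the $\mathbb{A}^1$ (with $\Xo_{us_i}$ the remaining point $a$), but $\Xo^w$ cuts out a \emph{single point} $b$ (with $\Xo^{ws_i}$ the remaining $\mathbb{A}^1$), precisely because $\ell(ws_i) = \ell(w)+1$ makes $X^{ws_i}$ the \emph{bigger} opposite Schubert variety. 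So the intersection condition at step $i$ for $(u,w)$ is $(\PP^1 \setminus \{a\}) \cap \{b\}$, which is either the point $b$ (if $a\neq b$) or empty (if $a = b$); similarly, for $(us_i, ws_i)$ it is $\{a\} \cap (\PP^1 \setminus \{b\})$, which is either $a$ or empty under the same dichotomy. Your proposed argument — that two nonempty opens in $\PP^1$ must intersect — would (incorrectly) conclude that the condition at step $i$ is \emph{always} satisfiable, i.e.\ that both images equal the full set carved out by the step-$j$ conditions for $j\neq i$. This is false: the case $a=b$ does occur, and when it does, \emph{neither} Richardson cell contributes that fiber to the image. What the paper actually proves, and what you need, is the weaker and subtler statement that the two step-$i$ conditions are \emph{simultaneously} nonempty or empty (both nonempty exactly when the two distinguished points $a$ and $b$ are distinct). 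Correcting the dimension count and replacing ``both open, hence both nonempty'' with this simultaneity argument repairs the proof, and then it matches the paper's.
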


\begin{proof}
Clearly, it is enough to prove the result in the case that $u'=u s_i$ and $w' = w s_i$ with $\ell(u')=\ell(u)+1$ and $\ell(w')=\ell(w)+1$.
Moreover, we know that $i \neq k$.

Define $F$ to be the partial flag variety of flags of dimension $(1,2, \ldots, i-1, i+1, \ldots, n-1)$, and let
$\phi$ be the projection $\Fl(n) \to F$.
Since $\pi$ factors through $\phi$, it is enough to show that $\phi(\Xo_{u}^{w}) = \phi(\Xo_{u s_i}^{w s_i})$.
The map $\phi$ is a $\PP^1$-bundle.
Let $U = \phi(\Xo_{u}) = \phi(\Xo_{u s_i})$ and $W = \phi(\Xo^{w}) = \phi(\Xo^{w s_i})$.
For $x \in U$, the fiber $\phi^{-1}(x)$ is the disjoint union of an $\mathbb{A}^1$, consisting of $\phi^{-1}(x) \cap \Xo_{u}$, and a point, consisting of  $\phi^{-1}(x) \cap \Xo_{u s_i}$.
(Here we use that $\ell(u s_i) = \ell(u) +1$.)
Similarly, with $x \in W$, the fiber $\phi^{-1}(x)$ is the disjoint union of an $\mathbb{A}^1$, consisting of $\phi^{-1}(x) \cap \Xo^{w s_i}$, and a point, consisting of $\phi^{-1}(x) \cap \Xo^{w s_i}$.

Clearly, both $\phi(\Xo_{u}^{w})$ and $\phi(\Xo_{u s_i}^{w s_i})$ lie in $U \cap W$.
Let $z \in U \cap W$; we will show that $z$ lies in $\phi(\Xo_{u}^{w})$ if and only if it lies in $\phi(\Xo_{u s_i }^{w s_i})$.
As described in the previous paragraph, $\phi^{-1}(z)$ is isomorphic to $\PP^1$ and there are two distinguished points in $\phi^{-1}(z)$, namely $a : = \phi^{-1}(z) \cap \Xo_{u s_i}$ and $b := \phi^{-1}(z) \cap \Xo^{w s_i}$.
If $a \neq b$, then $a$ lies in $\Xo_{u}^{w}$ and $b$ lies in $\Xo_{u s_i }^{w s_i}$, so $z$ lies in both $\phi(\Xo_{u}^{w})$ and $\phi(\Xo_{u s_i }^{w s_i})$.
Similarly, if $a=b$, then $z$ lies in neither $\phi(\Xo_{u}^{w})$ nor $\phi(\Xo_{u s_i }^{w s_i})$.
\end{proof}

We now introduce a piece of notation which will be crucial in the proof of Proposition~\ref{P:triplecyclicrank}, but will then never appear again.
Let $V \in \Gr(k,n)$.  Given a flag $F_\bullet$ in $\C^n$, we obtain
another flag $F_\bullet(V)$ containing $V$ as the $k$th subspace,
as follows.  Take the sequence $F_0 \cap V, F_1 \cap V, \ldots, F_n
\cap V$ and remove repetitions to obtain a partial flag $F_\bullet \cap V$
inside $\C^n$, with dimensions $1$, $2$, \dots $k$.  Next take the sequence $V+F_0$, $V+F_1$,
$V+F_2$, \dots, $V+F_n$ and remove repetitions to obtain a partial
flag $F_\bullet + V$ inside $\C^n$ of dimensions $k$, $k+1$, \dots, $n$.
Concatenating $F_\bullet \cap V$ and $F_\bullet + V$ gives a flag $F_\bullet(V)$ in $\C^n$.  The
flag $F_\bullet(V)$ is the ``closest'' flag to $F_\bullet$ which
contains $V$ as the $k$th subspace.
This notion of ``closest flag'' is related to the notion of ``closest Borel subgroup'' in~\cite[Section 5]{Rie}, and many of our arguments are patterned on arguments of~\cite{Rie}.

%

\begin{lemma} \label{lem:GrassSchubert}
Let $w$ be a Grassmannian permutation.
Let $F_{\bullet}$ be a complete flag and let $V = F_k$.
Then $F_{\bullet} \in \Xo^{w}$ if and only if
\begin{enumerate}
 \item $V \in \Xo^{\sigma(w)}$
and
\item  $F_{\bullet} = E_{\bullet}(V)$.
\end{enumerate}
\end{lemma}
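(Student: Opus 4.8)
The plan is to unwind the definitions of the opposite Schubert cell in $\Fl(n)$ and in $\Gr(k,n)$ and show that, for a Grassmannian permutation $w$, the rank conditions defining $\Xo^w$ split into the rank conditions on the $k$th subspace $V = F_k$ (which say exactly $V \in \Xo^{\sigma(w)}$) together with the assertion that all the other subspaces of $F_\bullet$ are forced. First I would recall that $F_\bullet \in \Xo^w$ means $\dim \Project_{[n-j+1,n]}(F_i) = \#\{w([i]) \cap [n-j+1,n]\}$ for all $i,j$. For a Grassmannian permutation $w$, we have $w(1) < w(2) < \cdots < w(k)$ and $w(k+1) < \cdots < w(n)$, so $w([i])$ for $i < k$ is the set of the $i$ smallest elements of $w([k]) = \sigma(w)$, and for $i > k$ it is $\sigma(w) \cup \{$the $i-k$ smallest elements of $[n] \setminus \sigma(w)\}$. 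This is precisely the combinatorial statement that $w([i])$ for $i \neq k$ is determined by $\sigma(w)$ in the way that $E_\bullet(V)$'s flag is determined by $V$.

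Next I would prove the two implications. For the forward direction, assume $F_\bullet \in \Xo^w$. Taking $i = k$ in the defining equations gives, since $V = F_k$, exactly the conditions $\dim \Project_{[n-j+1,n]}(V) = \#(\sigma(w) \cap [n-j+1,n])$ for all $j$, i.e.\ $V \in \Xo^{\sigma(w)}$. For the remaining subspaces, I would argue that the rank conditions on $F_i$ for $i < k$ force $F_i = F_i \cap V$ to be the (unique, because of genericity of the ranks) $i$-dimensional subspace of $V$ sitting in the opposite Schubert cell structure — that is, $F_i$ must equal the $i$th step of $E_\bullet \cap V$ built from the ``closest flag'' construction; similarly for $i > k$ the conditions force $F_i = E_i + V$, the $i$th step of $E_\bullet + V$. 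Concretely, the equalities (as opposed to inequalities) in the cell definition pin down each $F_i$ uniquely given $V$ and given that $w$ is Grassmannian, and the resulting flag is by definition $E_\bullet(V)$. Conversely, if $V \in \Xo^{\sigma(w)}$ and $F_\bullet = E_\bullet(V)$, one checks directly that the flag $E_\bullet(V)$ satisfies all the rank equalities: for $i \leq k$ one uses $F_i \subseteq V$ and the minimality/genericity built into $E_\bullet \cap V$ to compute $\dim \Project_{[n-j+1,n]}(F_i)$, matching $\#(w([i]) \cap [n-j+1,n])$ because $w([i])$ is the bottom $i$ elements of $\sigma(w)$; for $i \geq k$ one uses $F_i \supseteq V$ together with the analogous genericity of $E_\bullet + V$.

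I expect the main obstacle to be the bookkeeping in the converse direction: verifying that the ``closest flag'' $E_\bullet(V)$ — defined by intersecting and adding the \emph{standard} coordinate flag $E_\bullet$ rather than the antidiagonal one $w_0 E_\bullet$ used to define $X^w$ — actually lands in the opposite Schubert cell $\Xo^{\sigma(w)}$-compatible position, and that no extra degeneracy creeps in. The subtlety is that $\Xo^w$ here is defined with respect to the \emph{opposite} flag (projections onto last coordinates), while $E_\bullet(V)$ is built from $E_\bullet$; I would need to check carefully that the genericity encoded by $V \in \Xo^{\sigma(w)}$ is exactly what guarantees that $E_\bullet \cap V$ and $E_\bullet + V$ have the ``expected'' dimensions at each step and hence produce a flag with the required rank sequence. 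Once this compatibility is pinned down, the rest is a routine unravelling of the definition of a Grassmannian permutation. I would also remark that this lemma is the key technical step feeding into Proposition~\ref{P:triplecyclicrank}, so I would state the dimension counts explicitly enough to be reused there.
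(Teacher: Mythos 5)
Your proposal is correct and follows essentially the same route as the paper: rewrite the defining equations of $\Xo^w$ as conditions on $\dim(F_i \cap E_j)$, note the $i=k$ case is exactly $V \in \Xo^{\sigma(w)}$, use the genericity this gives to compute $E_\bullet(V)$ explicitly as $(V\cap E_{w(1)},\ldots,V\cap E_{w(k)}, V+E_{w(k+1)},\ldots)$, and then pin down each $F_i$ by a containment-plus-dimension-count argument. The ``subtlety'' you flag about $E_\bullet(V)$ being built from $E_\bullet$ while $\Xo^w$ uses projections onto last coordinates dissolves immediately once you observe $\ker \Project_{[n-j+1,n]} = E_{n-j}$, so the cell conditions are already statements about $F_i \cap E_j$ — which is precisely the reformulation the paper opens with.
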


\begin{proof}
The flag $F_{\bullet}$ is in $\Xo^{w}$ if and only if, for every $i$ and $j$,
\[
\dim(F_i \cap E_j) = \# \left( w([i]) \cap [j] \right) \mbox{ or, equivalently, } \dim(F_i + E_j) = (i+j) - \# \left( w([i]) \cap [j] \right). \label{InSchubert}
\]
When $i=k$, the equation above 
is precisely the condition that $V \in \Xo^{\sigma(w)}$.
Therefore, when proving either direction of the equivalence, we may assume that $V \in \Xo^{\sigma(w)}$.

Since $V \in \Xo^{\sigma(w)}$,
$$E_{\bullet}(V) = \left( E_{w(1)} \cap V, E_{w(2)} \cap V, \ldots, E_{w(k)} \cap V, E_{w(k+1)}+V, E_{w(k+2)}+V, \ldots, E_{w(n)}+V \right).$$

Let $i \leq k$. If $F_{\bullet} \in  \Xo^{w}$, then $\dim (F_i \cap E_{w(i)}) = \# \left( w([i]) \cap [w(i)] \right) = i$, where we have used that $w$ is Grassmannian.
However, $F_i \cap E_{w(i)} \subseteq V \cap E_{w(i)}$, which also has dimension $i$ because $V \in \Xo^{\sigma(w)}$.
So $F_{\bullet} \in  \Xo^{w}$ implies that $F_i = V \cap E_{w(i)}$.
Similarly, for $i > k$, the equation $\dim (F_i + E_{w(i)}) = i+w(i) - \# \left( w([i]) \cap [w(i)] \right)$
implies that $F_i = E_{w(i)}+V$.
So, if $F_{\bullet} \in \Xo^{x}$ then $F_{\bullet} = E_{\bullet}(V)$.
The argument is easily reversed.
\end{proof}

%
%
%

\begin{lemma} \label{lem:RietschMainPoint}
Let $w$ be a Grassmannian permutation, with $u \leq w$ and let $V \in \Gr(k,n)$.
Then $V \in \pi(\Xo_{u}^{w})$ if and only if
\begin{equation}
E_{\bullet}(V)_i= V \cap E_{w(i)} \mbox{ for $1 \leq i \leq k$} \label{E:RMP1}
\end{equation}
\begin{equation}
E_{\bullet}(V)_i =V + E_{w(i)} \mbox{ for $k < i \leq n$} \label{E:RMP2}
\end{equation}
and
\begin{equation}
E_{\bullet}(V) \in \Xo_{u}. \label{E:RMP3}
\end{equation}

\end{lemma}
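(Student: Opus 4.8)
The plan is to deduce the statement from Lemma~\ref{lem:GrassSchubert} together with a short dimension count. Observe first that, by construction, $E_\bullet(V)_k = V$, so $\pi(E_\bullet(V)) = V$. Consequently $V \in \pi(\Xo_u^w)$ if and only if some complete flag $F_\bullet$ with $F_k = V$ lies in $\Xo_u \cap \Xo^w$. Since $w$ is Grassmannian, Lemma~\ref{lem:GrassSchubert} tells us that the only such flag can be $E_\bullet(V)$ itself, and that $E_\bullet(V)$ lies in $\Xo^w$ exactly when $V \in \Xo^{\sigma(w)}$. So the lemma will follow once we show that $V \in \Xo^{\sigma(w)}$ is equivalent to the pair of conditions \eqref{E:RMP1}--\eqref{E:RMP2}: then \eqref{E:RMP3} is literally the condition $E_\bullet(V) \in \Xo_u$, and $V \in \pi(\Xo_u^w)$ becomes ``$E_\bullet(V) \in \Xo_u \cap \Xo^w$''.

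For that equivalence, one direction is immediate: the explicit description of $E_\bullet(V)$ displayed in the proof of Lemma~\ref{lem:GrassSchubert} under the hypothesis $V \in \Xo^{\sigma(w)}$ is exactly the assertion \eqref{E:RMP1}--\eqref{E:RMP2}. For the converse I would take dimensions. Condition \eqref{E:RMP1} forces $\dim(V \cap E_{w(i)}) = i$ for $1 \le i \le k$; condition \eqref{E:RMP2} forces $\dim(V + E_{w(i)}) = i$, equivalently $\dim(V \cap E_{w(i)}) = k + w(i) - i$, for $k < i \le n$. Because $w$ is Grassmannian, so that $w(1) < \cdots < w(k)$ and $w(k+1) < \cdots < w(n)$, an elementary count gives $\#\bigl(w([k]) \cap [w(i)]\bigr) = i$ in the first range and $\#\bigl(w([k]) \cap [w(i)]\bigr) = k + w(i) - i$ in the second. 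Since $\{w(1), \dots, w(n)\} = [n]$, we conclude $\dim(V \cap E_m) = \#\bigl(w([k]) \cap [m]\bigr)$ for every $m$, which is precisely the condition $V \in \Xo^{\sigma(w)}$.

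With the equivalence in hand the proof assembles in two lines. If $V \in \pi(\Xo_u^w)$, choose $F_\bullet \in \Xo_u^w$ with $\pi(F_\bullet) = V$; Lemma~\ref{lem:GrassSchubert} gives $V \in \Xo^{\sigma(w)}$ and $F_\bullet = E_\bullet(V)$, so \eqref{E:RMP1}--\eqref{E:RMP2} hold, and \eqref{E:RMP3} is just $E_\bullet(V) = F_\bullet \in \Xo_u$. Conversely, assuming \eqref{E:RMP1}--\eqref{E:RMP3}, the equivalence yields $V \in \Xo^{\sigma(w)}$, whence $E_\bullet(V) \in \Xo^w$ by Lemma~\ref{lem:GrassSchubert} (applied to the flag $E_\bullet(V)$, for which condition~(2) of that lemma is automatic and condition~(1) is $V \in \Xo^{\sigma(w)}$); together with \eqref{E:RMP3} this gives $E_\bullet(V) \in \Xo_u^w$, and $\pi(E_\bullet(V)) = V$ finishes the argument.

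The one place requiring care is the converse half of the equivalence: a priori \eqref{E:RMP1} and \eqref{E:RMP2} only constrain $\dim(V \cap E_j)$ at the $n$ indices $j \in \{w(1), \dots, w(n)\}$, and the point is that, since those indices exhaust $[n]$, this already forces the full Schubert-cell condition at all $j$. It is crucial that both \eqref{E:RMP1} and \eqref{E:RMP2} are used --- \eqref{E:RMP1} alone controls only $k$ of the relevant indices and does not suffice to determine the opposite Schubert cell of $V$.
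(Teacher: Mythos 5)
Your proof is correct and follows essentially the same route as the paper's: reduce via Lemma~\ref{lem:GrassSchubert} to the observation that the only candidate flag over $V$ is $E_\bullet(V)$, and then observe that \eqref{E:RMP1}--\eqref{E:RMP2} are equivalent to $V \in \Xo^{\sigma(w)}$. The paper states the last equivalence in one sentence (``conditions \eqref{E:RMP1} and \eqref{E:RMP2} determine the dimension of $V \cap E_{w(i)}$ for all $i$; they are precisely the condition\ldots in the definition of $\Xo^{\sigma(w)}$''), whereas you helpfully spell out the dimension count, the identity $\#\bigl(w([k]) \cap [w(i)]\bigr) = i$ resp.\ $k + w(i) - i$, and the point that the values $w(1),\dots,w(n)$ exhaust $[n]$ so the rank conditions at those indices already pin down $V \in \Xo^{\sigma(w)}$; but these are the details the paper is compressing, not a different argument.
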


\begin{proof}
By definition, $V \in \pi(\Xo_{u}^{w})$ if and only if there is a flag $F_{\bullet}$ with $V= F_k$ and $F_{\bullet} \in \Xo_{u} \cap \Xo^{w}$.
By Lemma~\ref{lem:GrassSchubert}, this flag $F_{\bullet}$, should it exist, must be $E_{\bullet}(V)$.
By Lemma~\ref{lem:GrassSchubert}, $E_{\bullet(V)}$ lies in $\Xo^{w}$ if and only if $V$ lies in $\Xo^{\sigma(w)}$.
So $E_{\bullet(V)} \in  \Xo_{u} \cap \Xo^{w}$ if and only if $V \in \Xo^{\sigma(w)}$ and $E_{\bullet}(V) \in \Xo_{u}$.
Now, conditions~(\ref{E:RMP1}) and~(\ref{E:RMP2}) determine the dimension of $V \cap E_{w(i)}$ for all $i$.
They are precisely the condition on $\dim \left( V \cap w(j) \right)$ occuring in the definition of $\Xo^{\sigma(w)}$.
So conditions~(\ref{E:RMP1}), (\ref{E:RMP2}) and~(\ref{E:RMP3}) are equivalent to the condition that $V \in \Xo^{\sigma(w)}$ and $E_{\bullet}(V) \in \Xo_{u}$.
\end{proof}

\begin{proof}[Proof of Proposition~\ref{P:triplecyclicrank}]
First, note that by Proposition~\ref{P:EquivalenceClass}, and the
observation that $f_{u,w}$ only depends on the equivalence class
$\langle u,w \rangle$ in $\Q(k,n)$, we may replace $(u,w)$ by any
equivalent pair in $\Q(k,n)$. Thus, by
Lemma~\ref{L:kBruhatrepresentatives}, we may assume that $w$ is
Grassmannian. By Lemma~\ref{lem:RietschMainPoint}, $V \in
\pi(\Xo_{u}^{w})$ if and only if conditions~(\ref{E:RMP1})
and~(\ref{E:RMP2}) hold.

Suppose that $V \in \pi(\Xo_{u}^{w})$.
Let $r=r_{\bullet \bullet}(V)$.
Let $a \in \Z$ and let $b = f_{u,w}(a)$.
We now check that $(a,b)$ is a special entry of $r$.
We must break into two cases.

\textbf{Case 1:} $a \not \in w([k])$, with $a = w(i)$.
In this case, $w^{-1}(a) \not \in [k]$ so $f_{u,w}(a) = u t_{\omega_k} w^{-1}(a) \in [n]$.
Since $f_{u,w} \in \Bound(k,n)$, we deduce that $a \leq b \leq n$.
(Occasionally, our notation will implicitly assume $a<b$, we leave it to the reader to check the boundary case.)
By conditions~(\ref{E:RMP2}) and~(\ref{E:RMP3}),
$$\dim \Project_{[b]} \left( V + E_a \right) =\# \left( u([i]) \cap [b] \right).$$
We can rewrite this as
$$\dim \left( V + E_a + w_0 E_{n-b} \right) = (n-b) + \left( u([i]) \cap [b] \right)$$
or, again,
$$ \Project_{[a+1,b]} (V) = \left( u([i]) \cap [b] \right)-a. $$
(We have used $a \leq b \leq n$ to make sure that $\dim  E_a + w_0 E_{n-b} = n-b+a$.)
In conclusion,
$$r_{(a+1)b} = \left( u([i]) \cap [b] \right) -a.$$
A similar computation gives us
$$r_{(a+1)(b-1)} = \left( u([i]) \cap [b-1] \right) -a.$$

We now wish to compute $r_{ab}$ and $r_{a(b-1)}$.
This time, we have $V+E_{a-1} = E_{\bullet}(V)_{i-1}$.
So we deduce from condition~(\ref{E:RMP2}) that
$$r_{ab} = \left( u([i-1]) \cap [b] \right) - (a-1)$$
and
$$r_{a(b-1)} = \left( u([i-1]) \cap [b-1] \right) - (a-1).$$

Now, $u(i) = u(w^{-1}(a))=b$.
So, $r_{(a+1)b} - r_{(a+1)(b-1)}=1$ and, since $b \not \in u([i-1])$, we also have $r_{ab} - r_{a(b-1)}=0$.
So $(a,b)$ is special as claimed.

\textbf{Case 2:} $a \in w([k])$, with $a = w(i)$.
In this case, $b=u(w^{-1}(a))+n$ and $n+1 \leq b \leq a+n$.
In this case we mimic the previous argument, using $V \cap E_a \cap w_0 E_{2n-b}$ in place of $V+E_a+w_0 E_{n-b}$, the conclusion again is that $(a,b)$ is a special entry of $r$.

We have now checked, in both cases, that $(a,b)$ is a special entry of $r$.
Therefore, the affine permutation $g$ associated to $r$ has $g(a)=b$.
Since $f_{u,w}(a)=b$, we have checked that $f_{u,w}=g$.
We have thus shown that, if $V \in \pi(\Xo_{u}^{w})$, then $V \in \Pio_{f_{u,w}}$.

We now must prove the converse. Let $r_{\bullet \bullet}(V) =
r(f_{u,w})$. Let $(u', w')$ be such that $E_{\bullet}(V) \in
\Xo_{u'}^{w'}$, so we know that $V \in \pi(\Xo_{u'}^{w'})$. By
Lemma~\ref{lem:GrassSchubert}, $w'$ is Grassmannian. So $r_{\bullet
\bullet}(V) = r(f_{u',w'})$ and $f_{u',w'}=f_{u,w}$. However, by
Proposition~\ref{P:pairstobounded}, this shows that $[u,w]_k$ and
$[u',w']_k$ represent the same element of $\Q(k,n)$. Since $w$ and
$w'$ are both Grassmannian, this means that $u = u'$ and $w = w'$,
and $V \in \pi(\Xo_{u}^{w})$ as desired.
\end{proof}

\subsection{Positroid varieties are projected Richardson varieties}
\label{ssec:projectedRichardsons}


Lusztig~\cite{Lus} exhibited a stratification $\coprod P_{(u,v,w)}$
of $\Gr(k,n)$ and showed that his strata satisfy
$$P_{(u,v,w)} = \pi(\Xo^{wv}_{u}) = \pi(\Xo^{w}_{uv^{-1}}).$$
Furthermore, the projection $\pi: \Fl(n) \to \Gr(k,n)$ restricts to
an isomorphism on $\Xo^{wv}_{u}$.  Using the bijection between the
triples $(u,v,w)$ and $\Q(k,n)$ (see Section \ref{sec:kBruhat}), it
thus follows from Proposition \ref{P:triplecyclicrank} that

\begin{thm}\label{thm:projectedRichardsons}
  The stratification of $\Gr(k,n)$ by open positroid
  varieties is identical to Lusztig's stratification.
  If $f = f_{u,w}$ corresponds to $\langle u, w \rangle$ under the
  bijection $\Q(k,n) \to \Bound(k,n)$ of Section \ref{S:pairstobounded},
  then $\pi(\Xo_u^w) = \Pio_f$. The varieties $\Pi_f$ and $\Pio_f$ are
  irreducible of codimension~$\ell(f)$, and $\Pio_f$ is smooth.
\end{thm}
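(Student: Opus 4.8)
The plan is to deduce all three assertions from Proposition~\ref{P:triplecyclicrank} together with Lusztig's refinement of it, transporting the standard properties of open Richardson varieties along the projection $\pi$. Fix $\langle u,w\rangle\in\Q(k,n)$, take the Grassmannian representative $[u,w]_k$ (so $w\in\GS$), and set $f=f_{u,w}$. By Proposition~\ref{P:triplecyclicrank}, $\Pio_f=\pi(\Xo^w_u)$, and $\Pi_f=\overline{\Pio_f}$ by definition. As recalled in Section~\ref{ssec:SchubertRichardsonInFlag}, since $u\le w$ the open Richardson variety $\Xo^w_u$ is nonempty, irreducible, smooth, and of dimension $\ell(w)-\ell(u)$. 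Irreducibility of $\Pio_f$ and of $\Pi_f$ is then immediate: $\Pio_f$ is the image of an irreducible variety under a morphism, and $\Pi_f$ is its closure.

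For smoothness of $\Pio_f$ I would invoke the statement quoted just before the theorem. Let $(u_0,v,w)$ be the triple corresponding to $\langle u,w\rangle$ under the bijection of Section~\ref{sec:kBruhat}, so that $u_0\in\AGS$, $v\in S_k\times S_{n-k}$ and $u=u_0v^{-1}$. Lusztig's result gives $\pi(\Xo^{wv}_{u_0})=\pi(\Xo^w_{u})=\Pio_f$ and, crucially, that $\pi$ restricts to an \emph{isomorphism} on $\Xo^{wv}_{u_0}$. Since $\Xo^{wv}_{u_0}$ is a smooth open Richardson variety, its isomorphic image $\Pio_f$ is smooth. (One does not expect $\Pi_f$ itself to be smooth, and the theorem does not claim this.)

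For the codimension, the same isomorphism gives $\dim\Pio_f=\dim\Xo^{wv}_{u_0}=\ell(wv)-\ell(u_0)$. Because $w\in\GS$, the factorization $wv$ is length-additive; because $u_0\in\AGS$, one has $\ell(u_0v^{-1})=\ell(u_0)-\ell(v)$. Combining these, $\dim\Pio_f=\ell(w)+\ell(v)-\ell(u_0)=\ell(w)-\ell(u)$, using the Grassmannian representative. Hence $\operatorname{codim}_{\Gr(k,n)}\Pio_f=k(n-k)-(\ell(w)-\ell(u))$, which equals $\ell(f)$ by the length formula in Theorem~\ref{T:pairsboundedposet}. Since $\Pi_f=\overline{\Pio_f}$ has the same dimension as $\Pio_f$, it too has codimension $\ell(f)$.

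All of the individual steps are short, and the theorem is essentially a repackaging of Proposition~\ref{P:triplecyclicrank}. The one point needing care — which I would flag as the crux — is using the \emph{right} representative of $\langle u,w\rangle$, namely the triple $(u_0,v,w)$ on which Lusztig shows $\pi$ is an isomorphism rather than merely surjective; this is what powers the smoothness statement. The remaining input is the small amount of Coxeter-theoretic bookkeeping needed to identify $k(n-k)-(\ell(w)-\ell(u))$ with $\ell(f)$ via Theorem~\ref{T:pairsboundedposet}, and the rest (irreducibility, passage to closures) is formal.
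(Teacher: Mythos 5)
Your proof is correct and follows essentially the same route as the paper: both deduce the theorem from Proposition~\ref{P:triplecyclicrank}, Lusztig's result that $\pi$ restricts to an isomorphism on the appropriate open Richardson variety, and the length formula in Theorem~\ref{T:pairsboundedposet}. Your version is slightly more scrupulous than the published proof about which representative $\Xo^{wv}_{u_0}$ (with $u_0\in\AGS$) of $\langle u,w\rangle$ Lusztig's isomorphism is stated for, and about the length-additivity bookkeeping needed to translate $\ell(wv)-\ell(u_0)$ into $\ell(w)-\ell(u)$; the paper glosses both of these.
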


\begin{proof}
  Open Richardson varieties in the flag manifold are smooth and irreducible
  (by Kleiman transversality).
Lusztig's strata are, by definition, the projected open Richardsons, which we have just showed are the same as the open positroid varieties. Lusztig shows that $\pi$ restricted to $\Xo_u^w$ is an isomorphism on its image, so $\dim \Pi_f=\dim X_u^w = \ell(w) - \ell(u)$ and $\Pi_f$ is irreducible. By Theorem~\ref{T:pairsboundedposet}, $\ell(w)-\ell(u) = k(n-k) -\ell(f)$, so $\Pio_f$ has codimension $\ell(f)$, as does its closure $\Pi_f$.
\end{proof}

For $u \leq_k w$, we shall call $X^w_u$ a \defn{Richardson model}
for $\Pi_{f_{u,w}}$.



Postnikov \cite{Pos} parametrized the ``totally nonnegative part''
of any open positroid variety, showing that it is homeomorphic to an
open ball. Before one knows that positroid varieties are actually
irreducible, one can use this parametrizability to show that only one
component intersects the totally nonnegative part of the
Grassmannian. A priori there might be other components, so it is
nice to know that in fact there are not.


We now describe the containments between positroid varieties:

\begin{thm}\label{thm:intersectSchubs}
  Open positroid varieties form a stratification of the Grassmannian.
  Thus for $f \in \Bound(k,n)$ we have
  $$
  \Pi_f = \coprod_{f' \geq f} \Pio_{f'} = X_{J_1} \cap
  \chi(X_{J_2}) \cap \cdots \cap \chi^{n-1}(X_{J_n}).
  $$
  where $(J_1,J_2,\ldots,J_n) \in \Jugg(k,n)$ corresponds to $f$.
\end{thm}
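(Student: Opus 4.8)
The plan is to establish the two equalities in the displayed formula separately, after first reducing everything to a statement about closures. The first equality, $\Pi_f = \coprod_{f' \geq f} \Pio_{f'}$, amounts to the claim that the open positroid varieties form a stratification, i.e. that $\overline{\Pio_f}$ is a union of open positroid varieties. Since $\Pio_f = \pi(\Xo_u^w)$ for a Richardson model $X_u^w$ (Theorem~\ref{thm:projectedRichardsons}), and $\pi$ restricted to $\Xo_u^w$ is an isomorphism onto its image, I would argue that $\overline{\pi(\Xo_u^w)} = \pi(\overline{\Xo_u^w}) = \pi(X_u^w)$, using properness of $\pi$ to push the closure through. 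Then $X_u^w = \coprod_{u \leq v \leq w} \Xo_v^w \cap$ (analogous decomposition), and the projection of this stratified set is a union of projected open Richardsons; by Theorem~\ref{thm:projectedRichardsons} each such piece is an open positroid variety $\Pio_{f'}$. Matching up the order via Theorem~\ref{T:pairsboundedposet} (the poset isomorphism $\Q(k,n) \cong \Bound(k,n)$ is order-preserving with respect to the closure order on one side and Bruhat order on the other, so "$f' \geq f$" is exactly "$\Pio_{f'} \subseteq \overline{\Pio_f}$") gives the first equality. One must also check that the containments are the expected ones and that no extra strata appear; this follows because $\pi|_{X_u^w}$ is generically one-to-one and the projected Richardson decomposition of the Grassmannian is precisely the positroid decomposition.

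For the second equality, $\Pi_f = X_{J_1} \cap \chi(X_{J_2}) \cap \cdots \cap \chi^{n-1}(X_{J_n})$, the containment "$\subseteq$" is straightforward: by Lemma~\ref{lem:OpenCyclic}, $\Pio_f = \Xo_{J_1} \cap \chi(\Xo_{J_2}) \cap \cdots \cap \chi^{n-1}(\Xo_{J_n}) \subseteq X_{J_1} \cap \chi(X_{J_2}) \cap \cdots$, and the right-hand side is closed, so it contains the closure $\Pi_f$. The reverse containment "$\supseteq$" is the substantive point. Here I would use the cyclic rank matrix description: a point $V$ lies in $\chi^{i-1}(X_{J_i})$ iff certain rank inequalities $r_{ab}(V) \leq$ (value determined by $J_i$) hold for the appropriate cyclically-consecutive column ranks. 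So the right-hand intersection is exactly $\{V : r_{\bullet\bullet}(V) \leq r(f) \text{ entrywise}\}$, where $\leq$ is the componentwise order on cyclic rank matrices. By Proposition~\ref{prop:rankBruhat} and Corollary~\ref{C:boundedjugg}, this componentwise inequality on rank matrices corresponds to $f' \geq f$ in Bruhat order on $\Bound(k,n)$. Hence $\{V : r_{\bullet\bullet}(V) \leq r(f)\} = \coprod_{f' \geq f} \Pio_{f'}$, which by the first equality is exactly $\Pi_f$.

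The main obstacle, I expect, is the reverse containment in the second equality — specifically, the claim that the intersection of the cyclically permuted Schubert varieties equals the union of the open positroid varieties $\Pio_{f'}$ over $f' \geq f$, with nothing extra. The delicate point is that while each closed Schubert variety $\chi^{i-1}(X_{J_i})$ imposes the rank inequalities $r_{ab}(V) \leq r(f)_{ab}$ on a single window of consecutive columns, a priori the array of inequalities coming from all $n$ windows might be satisfied by some $V$ whose full cyclic rank matrix $r_{\bullet\bullet}(V)$ is not $\leq r(f)$ for any valid reason, or might fail to be itself a cyclic rank matrix of a lower-dimensional positroid in the expected way. One must verify that the Schubert rank conditions on $[j]$ for all $j$ (which is what $X_{J_i}$ really encodes) translate, under the cyclic shifts, into precisely the inequalities $r_{ab}(V) \leq r(f)_{ab}$ for all cyclically-consecutive intervals $[a,b]$, and that satisfying all of these forces $r_{\bullet\bullet}(V)$ to be a genuine cyclic rank matrix dominated by $r(f)$ — i.e. that the conditions (C1')–(C5) of Corollary~\ref{c:boundmanyfollowing} are automatic for any $V \in \Gr(k,n)$ (Lemma~\ref{L:rankcyclic}) so the only real content is the entrywise inequality. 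Once that translation is pinned down, Proposition~\ref{prop:rankBruhat} closes the argument; but getting the bookkeeping between Schubert conditions indexed by $[j]$ and rank conditions indexed by cyclic intervals exactly right, including the boundary/wraparound cases, is where the care is needed.
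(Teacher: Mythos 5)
Your argument for the second equality is essentially correct and runs parallel to the paper's: you both express the cyclic Schubert intersection as the locus $\{V : r_{\bullet\bullet}(V) \leq r(f)\}$ and convert to a union of strata $\Pio_{f'}$ over $f' \geq f$. The paper does this slightly more directly by expanding $X_{J_i} = \coprod_{I_i \geq J_i} \Xo_{I_i}$ and invoking Lemma~\ref{lem:OpenCyclic}, which sidesteps most of the bookkeeping you were worried about, since each cyclic intersection of open cells is by definition a single $\Pio_{\J'}$ or empty.

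The real problem is in your first equality. You decompose $X_u^w = \coprod_{u \le v \le v' \le w} \Xo_v^{v'}$ and assert that each $\pi(\Xo_v^{v'})$ is a single open positroid stratum $\Pio_{f'}$, citing Theorem~\ref{thm:projectedRichardsons}. That theorem (via Proposition~\ref{P:triplecyclicrank}) only applies when $v \le_k v'$; when $v \not\le_k v'$, the image $\pi(\Xo_v^{v'})$ need not be a single stratum --- the remark following Proposition~\ref{P:triplecyclicrank} gives an explicit example in $\Gr(1,3)$ where it is a union of two. So your decomposition only tells you that $\Pi_f$ is \emph{some} union of open positroid strata, which is automatic from the fact that the strata partition $\Gr(k,n)$ and $\Pi_f$ is closed; it does not identify which strata occur. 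Your appeal to Theorem~\ref{T:pairsboundedposet} is also misused: that is a purely combinatorial poset isomorphism $\Q(k,n) \cong \Bound(k,n)$ and carries no geometric information about closures. The geometric input you are missing is precisely Rietsch's theorem that the closure relations of Lusztig's stratification of $G/P$ agree with the combinatorial order on $\Q(k,n)$; this is what the paper cites for the first equality. To make your approach self-contained you would have to control the $v \not\le_k v'$ pieces, for instance by a Demazure-product argument like Proposition~\ref{P:EquivalenceClassClosed}, and verify that they introduce no strata outside $\{\Pio_{f'} : f' \geq f\}$.
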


\begin{proof}
Rietsch \cite{Rie} described the closure relations of Lusztig's
stratification of partial flag varieties in terms of the partial
order \eqref{E:Rietsch}; see also \cite{BGY}. The first equality is Rietsch's result, translated from the language of $\Q(k,n)$ to $\Bound(k,n)$.

We know that $X_J = \coprod_{I \geq J} \Xo_I$. Using this to expand the
intersection $X_{J_1} \cap \chi(X_{J_2}) \cap \cdots \cap \chi^{n-1}(X_{J_n})$
and applying Lemma~\ref{lem:OpenCyclic} gives the second equality.
\end{proof}

We note that Postnikov \cite{Pos} also described the same closure
relations for the totally nonnegative Grassmannian, using Grassmann
necklaces and decorated permutations.

For a matroid $\M$ let
$$
\GGMS(\M) = \{V \in \Gr(k,n) \mid \Delta_I(V) \neq 0
\Longleftrightarrow I \in M \}
$$
denote the GGMS stratum of the Grassmannian \cite{GGMS}.  Here for $I
\in \binom{[n]}{k}$, $\Delta_I$ denotes the Pl\"{u}cker coordinate
labeled by the columns in $I$.  Recall that in Section
\ref{S:positroids}, we have defined the positroid envelope of a
matroid.  It is easy to see that
$$
\Pio_f = \coprod_{\M :\ \J(\M) = \J(f)} \GGMS(\M).
$$

\begin{prop}\label{P:GGMSdense}
Let $\M$ be a positroid.  Then $\GGMS(\M)$ is dense in
$\Pio_{\J(\M)}$.
\end{prop}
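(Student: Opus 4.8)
The plan is to exhibit $\GGMS(\M)$ as a nonempty Zariski-open subset of $\Pio_{\J(\M)}$ and then invoke irreducibility. The combinatorial heart of this is the observation that among all matroids $\M'$ with $\J(\M') = \J(\M)$, the positroid $\M = \M_{\J(\M)}$ is the largest one: if $\J(\M) = (J_1,\dots,J_n)$, then each $J_r$ is by definition the $\leq$-minimal base of $\chi^{-r+1}(\M')$, so every base $I$ of $\M'$ satisfies $\chi^{-r+1}(I) \geq J_r$ for all $r$, that is, $I \in \M_{\J(\M)} = \M$. (Here I use that the positroid envelope of a positroid is itself.) Thus $\M' \subseteq \M$ for every such $\M'$.

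Feeding this into the decomposition $\Pio_f = \coprod_{\M' :\, \J(\M') = \J(f)} \GGMS(\M')$ displayed just above (with $f$ the bounded affine permutation of $\J(\M)$) gives two things at once: first, $\GGMS(\M) \subseteq \Pio_{\J(\M)}$, since $\M$ itself occurs as an index; and second, every $V \in \Pio_{\J(\M)}$ has $\M_V \subseteq \M$, so that $\Delta_I(V) = 0$ automatically for all $I \notin \M$. Consequently
$$\GGMS(\M) = \{\, V \in \Pio_{\J(\M)} : \Delta_I(V) \neq 0 \text{ for every } I \in \M \,\},$$
which displays $\GGMS(\M)$ as the intersection of $\Pio_{\J(\M)}$ with the finitely many basic open sets $\{\Delta_I \neq 0\}$, $I \in \M$. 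Hence $\GGMS(\M)$ is open in $\Pio_{\J(\M)}$.

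Finally, $\Pio_{\J(\M)}$ is irreducible by Theorem~\ref{thm:projectedRichardsons}, so a nonempty open subset of it is automatically dense, and it remains only to check $\GGMS(\M) \neq \varnothing$. This is the one genuinely external input, and the step I expect to be the real obstacle: one must know that the ``generic'' GGMS stratum inside $\Pio_{\J(\M)}$ is the one attached to $\M$ itself rather than to some proper sub-matroid. The cleanest guarantee is Postnikov's theorem \cite{Pos} that every positroid is the matroid of some point of $\Gr(k,n)$ (indeed of a totally positive point of the corresponding positroid cell), which directly produces a point of $\GGMS(\M)$. Everything else in the argument is formal manipulation of the order on $\binom{[n]}{k}$ and of irreducible topological spaces.
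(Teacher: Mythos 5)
Your proof is correct, and it takes a genuinely different route from the paper's. The paper argues by a dimension count: Postnikov's parametrization shows that the totally nonnegative part $\GGMS(\M)_{\geq 0}$ is a real cell of dimension $k(n-k) - \ell(f)$, so $\GGMS(\M)$ has complex dimension at least $k(n-k)-\ell(f)$; since $\Pio_f$ is irreducible of exactly that dimension (Theorem~\ref{thm:projectedRichardsons}), density follows. You instead show that $\GGMS(\M)$ is Zariski-open in $\Pio_{\J(\M)}$: your observation that $\M = \M_{\J(\M)}$ is the \emph{largest} matroid among those sharing its necklace makes the vanishing conditions $\Delta_I = 0$ for $I \notin \M$ automatic on $\Pio_{\J(\M)}$, so $\GGMS(\M)$ is the locus in $\Pio_{\J(\M)}$ cut out by the finitely many non-vanishing conditions $\Delta_I \neq 0$, $I \in \M$. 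Both arguments rest on irreducibility of $\Pio_{\J(\M)}$ and invoke Postnikov's work; but you only need the realizability of positroids (existence of a single point of $\GGMS(\M)$), which is a weaker input than knowing the dimension of the totally nonnegative cell. Your route also yields a slightly stronger conclusion worth recording: $\GGMS(\M)$ is not merely dense in $\Pio_{\J(\M)}$ but is its unique open GGMS stratum, a fact the paper states informally near Remark~\ref{rem:SashaParam} but obtains only indirectly.

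One minor point of exposition: the step from ``$J_r$ is the unique minimal base of $\chi^{-r+1}(\M')$'' to ``every base $I'$ of $\chi^{-r+1}(\M')$ satisfies $I' \geq J_r$'' tacitly uses that in a finite poset a unique minimal element is in fact a minimum. This is true (for any element, take a minimal element of its down-set), but it is worth a half sentence, since you are leaning on the precise formulation of the Unique Minimum Axiom.
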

\begin{proof}
Suppose $f \in \Bound(k,n)$ is such that $\J(f) = \J(\M)$. Postnikov
\cite{Pos} showed that the totally nonnegative part $\GGMS(\M)_{
\geq 0}$ of $\GGMS(\M)$ is a real cell of dimension $k(n-k) -
\ell(f)$. Thus $\GGMS(\M)$ has at least dimension $k(n-k) -
\ell(f)$. By Theorem~\ref{thm:projectedRichardsons} $\Pio_{f}$ is
irreducible with the same dimension.  It follows that $\GGMS(\M)$ is
dense in $\Pio_{\J(\M)}$.
\end{proof}

\begin{cor}\label{cor:pluckerdefined}
Let $\M$ be a positroid.  Then as sets,
$$
\Pi_{\J(\M)} = \overline{\GGMS(\M)} = \{V \in \Gr(k,n) \mid I \notin
\M \Rightarrow \Delta_I(V) = 0 \}.
$$
\end{cor}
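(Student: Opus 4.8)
The plan is to establish the two displayed equalities in turn: the first, $\Pi_{\J(\M)} = \overline{\GGMS(\M)}$, is essentially immediate from Proposition~\ref{P:GGMSdense}, while the second comes from Theorem~\ref{thm:intersectSchubs} together with the classical Pl\"ucker description of Grassmannian Schubert varieties.

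For $\Pi_{\J(\M)} = \overline{\GGMS(\M)}$: by definition $\Pi_{\J(\M)} = \overline{\Pio_{\J(\M)}}$, and since $\M$ is a positroid, Proposition~\ref{P:GGMSdense} says $\GGMS(\M)$ is dense in $\Pio_{\J(\M)}$. Two sets sharing a common dense subset have equal closures, so the equality follows at once. (This is the only step where the positroid hypothesis on $\M$ is used.)

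For the second equality I would instead prove $\Pi_{\J(\M)} = \{V : I\notin\M \Rightarrow \Delta_I(V) = 0\}$ directly, and then combine with the first. Write $(J_1,\dots,J_n) = \J(\M)$; Theorem~\ref{thm:intersectSchubs} gives $\Pi_{\J(\M)} = X_{J_1}\cap\chi(X_{J_2})\cap\cdots\cap\chi^{n-1}(X_{J_n})$. Each Grassmannian Schubert variety is cut out set-theoretically by Pl\"ucker coordinates, namely $X_J = \{W : \Delta_I(W) = 0 \text{ for all } I\not\geq J\}$ (see \cite{Fulton} or \cite{MS}), and $\chi$ acts on $\Gr(k,n)$ through a permutation matrix, hence permutes the Pl\"ucker coordinates up to an irrelevant sign, $\Delta_I\circ\chi^{j} = \pm\,\Delta_{\chi^{-j}(I)}$ as functions on $\Gr(k,n)$. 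Therefore $\chi^{r-1}(X_{J_r})$ is exactly the locus where $\Delta_I$ vanishes for every $I$ with $\chi^{-r+1}(I)\not\geq J_r$. Intersecting over $r$, a point $V$ lies in $\Pi_{\J(\M)}$ if and only if $\Delta_I(V) = 0$ whenever $\chi^{-r+1}(I)\not\geq J_r$ for some $r\in[n]$. But by the definition of $\M = \M_{\J(\M)}$ in Section~\ref{S:positroids}, membership $I\in\M$ is precisely the condition that $\chi^{-r+1}(I)\geq J_r$ for every $r$, so ``$\chi^{-r+1}(I)\not\geq J_r$ for some $r$'' is precisely ``$I\notin\M$''. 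Thus the two descriptions of $\Pi_{\J(\M)}$ agree, and with the first equality this proves the corollary.

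This argument is essentially bookkeeping, and I do not expect a real obstacle: the substantive content is already in hand, via Postnikov's cell structure and the irreducibility and dimension count (through Proposition~\ref{P:GGMSdense} and Theorem~\ref{thm:projectedRichardsons}) and via Rietsch's closure-order computation (through Theorem~\ref{thm:intersectSchubs}). The only points that genuinely need care are the sign and index conventions governing how $\chi$ permutes Pl\"ucker coordinates, and aligning the $\chi^{-r+1}$ appearing in the definition of $\M_\J$ with the $\chi^{r-1}$ in the product of Schubert varieties; one should also note that only the set-theoretic form of the Pl\"ucker description of $X_J$ is required here, consistent with the ``as sets'' qualifier in the statement.
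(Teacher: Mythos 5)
Your proof is correct and takes essentially the same route as the paper: the first equality follows from Proposition~\ref{P:GGMSdense} (a set and a dense subset have the same closure), and the second combines Theorem~\ref{thm:intersectSchubs} with the Pl\"ucker description of Grassmannian Schubert varieties $X_J = \{V : \Delta_I(V) = 0 \text{ for } I \not\geq J\}$, together with the fact that $\chi$ permutes Pl\"ucker coordinates up to sign. The paper states these ingredients without spelling out the index bookkeeping; you carry it through explicitly, correctly matching the $\chi^{r-1}$ shifts of the Schubert varieties against the $\chi^{-r+1}$ in the definition of $\M_\J$, which is fine.
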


\begin{proof}
  The first equality follows from Proposition \ref{P:GGMSdense}. The
  second follows from Theorem~\ref{thm:intersectSchubs} and the
  description of Schubert varieties by vanishing of Pl\"{u}cker coordinates:

\hfill
$
X_J = \{V \in \Gr(k,n) \mid I < \sigma(J) \Rightarrow
\Delta_I(V) = 0 \}.
$\hfill
\end{proof}

Consider the set on the right hand side of the displayed equation in Corollary~\ref{cor:pluckerdefined}. Lauren Williams conjectured that this set was irreducible; this now follows from Corollary~\ref{cor:pluckerdefined} and Theorem~\ref{thm:projectedRichardsons}.

\begin{remark}\label{rem:notPluckerDefined}
  For a subvariety $X\subseteq G/P \subseteq \PP V$ of a general flag
  manifold embedded in the projectivization $\PP V$ of an irreducible
  representation, one can ask whether $X$ is defined as a set by the
  vanishing of extremal weight vectors in $V$.  This is easy to show
  for Schubert varieties (see \cite{FZrec}) and more generally for
  Richardson varieties.

Since the above collorary proves this property for positroid varieties,
one might conjecture that it would be true for
projected Richardson varieties in other $G/P$'s.
This is {\em not} the case:
consider the Richardson variety $X_{1324}^{4231}$ projecting
to a divisor in the partial flag manifold $\{(V_1 \subset V_3 \subset \complexes^4)\}$.
One can check that the image contains every $T$-fixed point,
so no extremal weight vector vanishes on it.

For any irreducible $T$-invariant subvariety $X \subseteq G/P$,
the set of $T$-fixed points $X^T \subseteq (G/P)^T \iso W/W_P$
forms a {\em Coxeter matroid} \cite{CoxeterMatroids},
and $X$ is contained in the set where the extremal weight vectors
corresponding to the complement of $X^T$ vanish. If the containment is
proper, as in the above example, one may take this as evidence that the
Coxeter matroid is not a good description of $X$.
We saw a different knock against matroids in Remark \ref{rem:SashaParam}.
\end{remark}

In a different direction, one may ask for
the scheme-theoretic version of Corollary \ref{cor:pluckerdefined},
as was first proved for Grassmannian Schubert varieties by Hodge
(see e.g. \cite{Ramanathan}).
This is more subtle and will be addressed in Section \ref{sec:frobsplit}.

\subsection{Projections of closed Richardson varieties} \label{sec:closures}

So far, we have discussed projections of open Richardson varieties. In this section, we will switch our focus to projections of closed Richardsons. One pleasant aspect of this change is that we will be able to give a good description of $\pi(X_u^w)$ even when $u \not \leq_k w$.

\begin{prop} \label{P:ObviousClosure}
Let $u \leq_k w$ and let $f = f_{u,w}$ be the affine permutation
corresponding to $\langle u,w \rangle$. Then $\pi(X_u^w) = \Pi_f$.
\end{prop}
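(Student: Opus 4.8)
The plan is to reduce the closed statement $\pi(X_u^w) = \Pi_f$ to the open one $\pi(\Xo_u^w) = \Pio_f$ (Proposition~\ref{P:triplecyclicrank}) by a closure argument, using the fact that both sides decompose into open positroid pieces indexed by the same interval in $\Q(k,n)$. First I would observe that $X_u^w = \coprod_{u \leq v \leq w} \Xo_v^w$ is {\em not} quite what we want; the correct stratification of the closed Richardson variety that is compatible with the projection $\pi$ is by the open Richardson varieties $\Xo_{u'}^{w'}$ it contains, and by the Lusztig/Rietsch description these project onto open positroid strata. Concretely, since $\pi$ is proper, $\pi(X_u^w)$ is closed, and since $X_u^w = \overline{\Xo_u^w}$ we get $\pi(X_u^w) = \overline{\pi(\Xo_u^w)} = \overline{\Pio_f} = \Pi_f$ by Proposition~\ref{P:triplecyclicrank} and the definition $\Pi_f := \overline{\Pio_f}$.

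So the essential content is really just: $\pi$ proper $\Rightarrow$ $\pi$ sends closures to closures, i.e. $\pi(\overline{Z}) = \overline{\pi(Z)}$ for $Z = \Xo_u^w$. This is standard: properness gives that $\pi$ is a closed map, so $\pi(\overline Z) \supseteq \overline{\pi(Z)}$ is automatic and $\pi(\overline Z)$ is closed and contains $\pi(Z)$, hence contains $\overline{\pi(Z)}$; conversely $\pi(\overline Z) \subseteq \overline{\pi(Z)}$ holds for any continuous map since $\overline Z \subseteq \pi^{-1}(\overline{\pi(Z)})$. Combining with Proposition~\ref{P:triplecyclicrank}, which identifies $\pi(\Xo_u^w)$ with $\Pio_f$, and with the definition of $\Pi_f$ as the closure of $\Pio_f$, we are done.

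The one subtlety worth spelling out — and the step I expect to be the only real point requiring care — is that $f_{u,w}$ genuinely depends only on the class $\langle u,w\rangle \in \Q(k,n)$, so that the statement is well-posed; but this was already established before Proposition~\ref{P:pairstobounded} and reused in the proof of Proposition~\ref{P:triplecyclicrank}, so here it requires no new argument. I would therefore keep the proof to essentially two sentences: invoke properness of $\pi$ to get $\pi(X_u^w) = \overline{\pi(\Xo_u^w)}$, then apply Proposition~\ref{P:triplecyclicrank} and the definition of $\Pi_f$.
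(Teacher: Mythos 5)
Your proof is correct and is essentially identical to the paper's: both deduce $\pi(X_u^w)=\overline{\pi(\Xo_u^w)}$ from properness of $\pi$, then invoke Proposition~\ref{P:triplecyclicrank} and the definition $\Pi_f=\overline{\Pio_f}$. The extra musings about stratifications and well-posedness are unnecessary but harmless.
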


\begin{proof}
Since $\pi$ is proper, and $X_u^w$ is the closure of $\Xo_u^w$, we
know that $\pi(X_u^w)$ is the closure of $\pi(\Xo_u^w)$. By
definition, $\Pi_f$ is the closure of $\Pio_f$, and we showed in
Proposition \ref{P:triplecyclicrank} that $\Pio_f=\pi(\Xo_u^w)$, so
$\Pi_f=\pi(X_u^w)$.
\end{proof}

\begin{cor}
If $[u,w]_k$ and $[u',w']_k$ are two intervals representing the same
class in $\Q(k,n)$ then $\pi(X_{u}^w)=\pi(X_{u'}^{w'})$.
\end{cor}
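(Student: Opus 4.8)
The plan is to reduce the statement to Proposition~\ref{P:ObviousClosure} via the fact that $f_{u,w}$ depends only on the class $\langle u,w\rangle \in \Q(k,n)$. Concretely: if $[u,w]_k$ and $[u',w']_k$ represent the same class, then by the discussion preceding Proposition~\ref{P:pairstobounded} (where it is checked that $u t_{\omega_k} w^{-1}$ is unchanged when $u,w$ are replaced by $uz,wz$ for $z \in S_k \times S_{n-k}$), we have $f_{u,w} = f_{u',w'}$; call this common affine permutation $f$. Then Proposition~\ref{P:ObviousClosure} applies to each interval separately, giving $\pi(X_u^w) = \Pi_f = \pi(X_{u'}^{w'})$.

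First I would recall that $\Q(k,n)$ is defined as the set of equivalence classes of $k$-Bruhat intervals under the relation $[u,w]_k \sim [x,y]_k$ generated by $uz = x$, $wz = y$ with both factorizations length-additive. Next I would invoke the computation already in the excerpt showing $f_{u,w}$ is well-defined on these classes, so that writing $f := f_{u,w} = f_{u',w'}$ is legitimate. Then I would simply cite Proposition~\ref{P:ObviousClosure} twice: $\pi(X_u^w) = \Pi_f$ and $\pi(X_{u'}^{w'}) = \Pi_f$, and conclude.

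There is essentially no obstacle here — this corollary is a one-line consequence of Proposition~\ref{P:ObviousClosure} combined with the already-established invariance of $f_{u,w}$ under the equivalence relation defining $\Q(k,n)$. The only thing worth being careful about is that Proposition~\ref{P:ObviousClosure} is stated for pairs with $u \leq_k w$, so one should note that both representatives $[u,w]_k$ and $[u',w']_k$ are genuine $k$-Bruhat intervals (which is part of what it means to represent a class in $\Q(k,n)$), so the hypothesis is met in both applications. With that noted, the proof is immediate.

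\begin{proof}
By the computation preceding Proposition~\ref{P:pairstobounded}, the affine permutation $f_{u,w} = u t_{\omega_k} w^{-1}$ depends only on the equivalence class $\langle u,w\rangle \in \Q(k,n)$; hence $f_{u,w} = f_{u',w'}$. Writing $f$ for this common value, Proposition~\ref{P:ObviousClosure} gives $\pi(X_u^w) = \Pi_f = \pi(X_{u'}^{w'})$.
\end{proof}
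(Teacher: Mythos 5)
Your proof is correct and is exactly the argument the paper intends: the corollary is stated with no proof in the paper precisely because it is an immediate consequence of Proposition~\ref{P:ObviousClosure} combined with the well-definedness of $f_{u,w}$ on $\Q(k,n)$, which is what you wrote. Your remark that both representatives automatically satisfy $u \leq_k w$ (so the hypothesis of Proposition~\ref{P:ObviousClosure} is met) is a nice bit of care that the paper leaves implicit.
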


In the case of closed Richardsons, we can give a description which
applies to the case where $u\leq w$ but $u \not \leq_k w$.

Recall that \cite{KM} the \defn{Demazure product} $\circ$ of $S_n$
is defined by
$$
w \circ s_i  = \begin{cases} ws_i & \mbox{if $ws_i > w$,} \\
w &\mbox{otherwise.}
\end{cases}
$$
One then defines $w \circ v$ by picking a reduced expression $v =
s_{i_1} s_{i_2} \cdots s_{i_\ell}$ and setting $w \circ v = (((w
\circ s_{i_1}) \circ s_{i_2}) \circ \cdots) \circ s_{i_\ell}$. (So
if $wv$ is length-additive, then $w\circ v = wv$.) The result does
not depend on the choice of reduced expression.

\begin{prop} \label{P:EquivalenceClassClosed}
Let $u \leq w$ and let $x$ be the element of $S_k \times S_{n-k}$ such that
 $ux \in \AGS$. Then $\pi(X_u^w) = \pi(X_{ux}^{w \circ x})$.
 \end{prop}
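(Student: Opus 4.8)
The plan is to reduce the statement to the already-established identity $\pi(X_u^w) = \Pi_{f_{u,w}}$ from Proposition~\ref{P:ObviousClosure}, but we cannot apply it directly on the left since we are not assuming $u \leq_k w$. Instead, I would work on the right side. First, observe that $x \in S_k \times S_{n-k}$ and $ux \in \AGS$ means $ux = u \circ x$ is a length-additive factorization extending $u$ to the anti-Grassmannian representative of its coset $u(S_k \times S_{n-k})$; in particular $\ell(ux) = \ell(u) + \ell(x)$ and $ux \geq u$. Next I would check that $w \circ x \geq w$, so that $ux \leq w \circ x$ still holds, using the standard monotonicity of the Demazure product together with $u \leq w$: since $u \leq w$ and $x$ is a fixed sequence of simple reflections, one proves by induction on $\ell(x)$ that $u \circ x \leq w \circ x$ (lifting property of Bruhat order), and here $u \circ x = ux$ because the factorization is length-additive.

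The key step is then to show $\pi(X_u^w) = \pi(X_{ux}^{w \circ x})$ as sets. I would do this one simple reflection at a time: it suffices to treat $x = s_i$ with $i \neq k$ (since $ux \in \AGS$ forces all the reflections in a reduced word for $x$ to lie in $S_k \times S_{n-k}$, avoiding $s_k$) and $us_i > u$. There are two cases. If $ws_i > w$, then $w \circ s_i = ws_i$ and both $us_i \leq u$ is false—we have $us_i \leq ws_i$—and the argument of Proposition~\ref{P:EquivalenceClass} applies almost verbatim: factoring $\pi$ through the $\PP^1$-bundle $\phi : \Fl(n) \to F$ that forgets the $i$-dimensional subspace, one has $\phi(X_u^w) = \phi(X_{us_i}^{ws_i})$ because on each fiber the closed conditions ``$\geq \Xo_{us_i}$'' (a point's worth stricter than ``$\geq \Xo_u$'') and ``$\geq \Xo^{ws_i}$'' carve out the same image in the base after taking closures. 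If instead $ws_i < w$, then $w \circ s_i = w$, and I must show $\pi(X_u^w) = \pi(X_{us_i}^w)$; here the point is that $X_{us_i}^w \subseteq X_u^w$ but the projections agree because $\pi = \pi' \circ \phi$ factors through $\phi$, and $\phi(X_u^w) = \phi(X_{us_i}^w)$ since $ws_i < w$ means the fiber of $\phi$ over a generic point of $\phi(X^w)$ already lies entirely in $X^w$, so intersecting with the smaller Schubert variety $X_{us_i}$ upstairs does not shrink the image downstairs—this is exactly the ``the $\PP^1$ collapses'' phenomenon.

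Assembling these, a reduced word $x = s_{i_1} \cdots s_{i_\ell}$ gives a chain $\pi(X_u^w) = \pi(X_{u s_{i_1}}^{w \circ s_{i_1}}) = \cdots = \pi(X_{ux}^{w \circ x})$, where at each step the new left index grows by a length-additive simple reflection (staying $\leq$ the new right index) and the new right index is the Demazure product update. One needs the intermediate left indices to still be below the intermediate right indices in Bruhat order, which follows from the lifting-property induction above; one does not need them to be $\leq_k$ comparable, which is the whole point of switching to closed Richardsons. I expect the main obstacle to be the case analysis in the single-reflection step, specifically bookkeeping the two sub-cases $ws_i > w$ versus $ws_i < w$ and making the $\PP^1$-bundle argument of Proposition~\ref{P:EquivalenceClass} go through for \emph{closed} rather than open Richardson varieties—one must be slightly careful that taking fiberwise closures commutes with the projection, which holds here because $\phi$ is proper and flat with irreducible fibers.
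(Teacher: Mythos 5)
Your proof follows the same route as the paper's: induct on $\ell(x)$ by stripping one simple reflection $s_i$ ($i \neq k$) at a time, factor $\pi$ through the $\PP^1$-bundle $\phi\colon \Fl(n) \to F$ that forgets the $i$-th subspace, and compare $X_u^w$ and $X_{us_i}^{w \circ s_i}$ fiber by fiber, splitting into the sub-cases $ws_i > w$ and $ws_i < w$. The paper runs this as an explicit enumeration of the possible pairs $(X_u \cap \phi^{-1}(z),\, X_{us_i}\cap \phi^{-1}(z))$ and $(X^{w\circ s_i} \cap \phi^{-1}(z),\, X^w \cap \phi^{-1}(z))$, but the argument is essentially yours; your write-up is a bit loose in the single-reflection step (e.g.\ ``generic point of $\phi(X^w)$'' should be \emph{every} point of $\phi(X^w)$ when $ws_i < w$, since $X^w$ is then a union of $\phi$-fibers by the lifting property, and the phrase ``carve out the same image in the base after taking closures'' should be replaced by the fiberwise check that $X_u \cap \phi^{-1}(z)$ is all of $\PP^1$ whenever nonempty while $X_{us_i}\cap \phi^{-1}(z)$ is at least a point), but the idea matches the paper's.
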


\begin{proof}
Our proof is by induction on the length of $x$; if $x=e$ then the
claim is trivial. If $x \neq e$ then $\ell(s_i x) <\ell(x)$ for some
$i \neq k$. We will show that $\pi(X_{u}^w) = \pi(X_{u s_i}^{w \circ
s_i})$, at which point we are done by induction. Define $F$ and
$\phi$ as in the proof of Proposition \ref{P:EquivalenceClass}. Once
again, it is enough to show that $\phi(X_u^w)= \phi(X_{u s_i}^{w
\circ s_i})$. Let $U = \phi(X_{u}) = \phi(X_{u s_i})$ and $W =
\phi(X^{w}) = \phi(X^{w s_i})$. Clearly, both $\phi(X_u^w)$ and $
\phi(X_{u s_i}^{w \circ s_i})$ lie in $U \cap W$. Let $z$ be a point
of $U \cap W$, so $\phi^{-1}(z) \cong \PP^1$. We will look at the
intersection of $\phi^{-1}(z)$ with $X_u$, $X_{u s_i}$, $X^w$ and
$X^{w s_i}$.

The pair $(X_u \cap \phi^{-1}(z), X_{u s_i} \cap \phi^{-1}(z))$ is
either $(\emptyset, \emptyset)$, $(\PP^1, \{ \mbox{pt} \})$ or
$(\PP^1, \PP^1)$. If $\ell(w s_i) > \ell(w)$ then $w \circ s_i = w
s_i$. In that case, the pair  $(X^{w s_i} \cap \phi^{-1}(z), X^w
\cap \phi^{-1}(z))$ is also limited to one of the three preceding
cases. If, on the other hand, $\ell(w s_i)<\ell(w)$ then $w \circ
s_i=w$ and $X^w \cap \phi^{-1}(z)$ is either $\emptyset$ or $\PP^1$.
Checking all $15$ cases, we see that, in each case, $\phi^{-1}(z)
\cap X_u^w$ is nonempty if and only if $\phi^{-1}(z) \cap X_{u
s_i}^{w \circ s_i}$ is.
\end{proof}

We summarize the above.

\begin{theorem} \label{T:ProjectedClosedRichardson}
Let $u\leq w$. Then there exists a bounded affine permutation $f$ such that $\pi(X_u^w) = \Pi_f$.
\end{theorem}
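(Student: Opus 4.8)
The plan is to reduce the general case $u \leq w$ to the case $u \leq_k w$ already handled in Proposition~\ref{P:ObviousClosure}, by passing through an anti-Grassmannian representative. First I would invoke Proposition~\ref{P:EquivalenceClassClosed}: given $u \leq w$, choose $x \in S_k \times S_{n-k}$ with $ux \in \AGS$, and replace the pair $(u,w)$ by $(ux, w\circ x)$, which has the same projected closed Richardson variety $\pi(X_u^w) = \pi(X_{ux}^{w\circ x})$. So it suffices to treat the case where the bottom permutation $u' := ux$ is anti-Grassmannian.

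Next I would check that, for $u' \in \AGS$ and $u' \leq w' := w\circ x$, one automatically has $u' \leq_k w'$. This is exactly Corollary~\ref{C:kBruhatGrassmann}, case (ii): if $u' \in \AGS$ and $u' \leq w'$ then $u' \leq_k w'$. (I should double-check that $u' \leq w'$ still holds after applying the Demazure product; since $w \circ x \geq w \geq u$ and $w\circ x \geq u x$ follows because the Demazure product only increases length and one can track that $ux \leq w \circ x$ — indeed $ux$ and $w\circ x$ are built compatibly from the same multiplications by $s_i$ with $i \neq k$, each of which either fixes or lengthens both sides. This monotonicity is implicit in the proof of Proposition~\ref{P:EquivalenceClassClosed}, where the $15$-case analysis shows $\phi^{-1}(z)\cap X_{u'}^{w'}$ behaves consistently, but I would state it cleanly.)

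Once $u' \leq_k w'$ is established, Proposition~\ref{P:ObviousClosure} applies directly: setting $f = f_{u',w'}$, the bounded affine permutation attached to the equivalence class $\langle u', w'\rangle \in \Q(k,n)$ via Section~\ref{S:pairstobounded}, we get $\pi(X_{u'}^{w'}) = \Pi_f$. Combining with the first step, $\pi(X_u^w) = \pi(X_{u'}^{w'}) = \Pi_f$, which is the desired statement. The whole argument is a two-line chain: Proposition~\ref{P:EquivalenceClassClosed}, then Corollary~\ref{C:kBruhatGrassmann}(ii), then Proposition~\ref{P:ObviousClosure}.

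The main obstacle I anticipate is the bookkeeping verification that $ux \leq w\circ x$ genuinely holds, so that Corollary~\ref{C:kBruhatGrassmann}(ii) is applicable — the statement $u \leq w$ does not obviously survive the simultaneous modification $u \mapsto ux$, $w \mapsto w\circ x$. The clean way to see it is to run the induction in the proof of Proposition~\ref{P:EquivalenceClassClosed} one step at a time: at each step we pass from $(u, w)$ to $(us_i, w\circ s_i)$ with $i \neq k$ and $\ell(s_i x) < \ell(x)$; since $u \leq w$ and $\ell(us_i) = \ell(u)+1$ (as $us_i$ moves toward $\AGS$), lifting property of Bruhat order gives $us_i \leq w$ or $us_i \leq ws_i = w \circ s_i$, and in the case $\ell(ws_i) < \ell(w)$ one has $w \circ s_i = w \geq us_i$ by the same lifting argument. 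Everything else is immediate.
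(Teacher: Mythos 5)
Your proof is correct and follows the paper's argument exactly: apply Proposition~\ref{P:EquivalenceClassClosed} to pass to the anti-Grassmannian pair $(ux, w\circ x)$, then apply Proposition~\ref{P:ObviousClosure}. The paper simply asserts ``$ux \leq_k w\circ x$'' without comment; your careful check that $ux = u\circ x \leq w\circ x$ via the lifting property (monotonicity of Demazure product in Bruhat order), followed by Corollary~\ref{C:kBruhatGrassmann}(ii), correctly supplies the one step the paper leaves implicit.
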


\begin{proof}
Let $x$  be the element of $S_k \times S_{n-k}$ such that
 $ux \in \AGS$.  Then, by Proposition~\ref{P:EquivalenceClassClosed}, $\pi(X_u^w) = \pi(X_{ux}^{w \circ x})$. We have $ux \leq_k w \circ x$, so  Proposition~\ref{P:ObviousClosure} applies and $\pi(X_{ux}^{w \circ x})=\Pi_f$ for the appropriate $f$.
\end{proof}

\section{Examples of positroid varieties}
\label{sec:examples}

In this section, we will see that a number of classical objects
studied in algebraic geometry are positroid varieties, or closely
related to positroid varieties.

First, for any $I \in \binom{[n]}{k}$, the Schubert variety
$X_{I}$ in the Grassmannian is the positroid variety associated
to the positroid $\{ J : J \geq I \}$.  Similarly, the cyclic
permutations $\chi^i X_{I}$ 
are also positroid varieties.
Similarly, the Richardson variety $X_{I}^{K}$ are positroid varieties, corresponding to the positroid
$\{ J : I \leq J \leq K \}$.

\newcommand{\GL}{\mathrm{GL}}
\newcommand{\Mat}{\mathrm{Mat}}
\newcommand{\Id}{\mathrm{Id}}

Another collection of objects, closely related to Schubert
varieties, are the {\em graph Schubert varieties}.  Let $X_w$ be a
Schubert variety in $\Fl(n)$.  Considering $\Fl(n)$ as $B_{-}
\backslash \GL_n$ (where $B_{-}$ is the group of invertible lower
triangular matrices), let $X'_w$ be the preimage of $X_w$ in
$\GL_n$. The \defn{matrix Schubert variety} $MX_w$, introduced in
\cite{Fulton}, is the closure of $X'_w$ in $\Mat_{n \times n}$.
$MX_w$ is cut out of $\Mat_{n \times n}$ by imposing certain rank
conditions on the top-left justified submatrices (as was explained
in Section \ref{ssec:SchubertRichardsonInFlag}). Embed $\Mat_{n
\times n}$ into $\Gr(n,2n)$ by the map $\Gamma$ which sends a matrix
$M$ to the graph of the linear map $\vec v \mapsto M\vec v$; its
image is the \defn{big cell} $\{ \Delta_{[n]} \neq 0 \}$. In
coordinates, $\Gamma(M)$ is the row span of the $n \times 2n$ matrix
$\left[ \begin{smallmatrix} \Id & M \end{smallmatrix} \right]$. We
will abuse notation by also calling this matrix $\Gamma(M)$. We
introduce here the \defn{graph Schubert variety}, $GX_w$, as the
closure of $\Gamma(MX_w)$ in $\Gr(n,2n)$.  Graph Schubert varieties
will be studied further in a separate paper by the first author,
\cite{Knutson}.

Let us write $M_{[1,i],[1,j]}$ for the top-left $i \times j$ submatrix
of $M$.  Then the rank of $M_{[1,i], [1,j]}$ is $n-i$ less than the
rank of the submatrix of $\Gamma(M)$ using rows $\{i+1, i+2, \ldots,
n, n+1, \ldots, n+j \}$.  So every point of $GX_w$ obeys certain rank
bounds on the submatrices of these types.  These rank bounds are
precisely the rank bounds imposed by $r(f)$, where $f$ is the affine
permutation $f(i)=w(i)+n$ for $1 \leq i \leq n$, $f(i)=i+n$ for
$n+1 \leq i \leq 2n$.  So $GX_w$ is contained in $\Pi_f$, with
equality on the open set $\Gamma(\Mat_{n \times n})$.  But $\Pi_f$ and $GX_w$
are both irreducible, so this shows that $GX_w=\Pi_f$.
In Section~\ref{sec:cohomology}, we will see that cohomology classes of general
positroid varieties will correspond to affine Stanley symmetric
functions; under this correspondence, graph Schubert varieties give
the classical Stanley symmetric functions.

The example of graph Schubert varieties can be further generalized
\cite[Section 0.7]{BGY}. Let $u$ and $v$ be two elements of $S_n$
and consider the affine permutation $f(i)=u(i)+n$ for $1 \leq i \leq
n$, $f(i)=v^{-1}(i-n)+2n$ for $n+1 \leq i \leq 2n$.  (So our
previous example was when $v$ is the identity.)  Let us look at
$\Pi_f \cap \Gamma(\Mat_{n \times n})$. This time, we impose
conditions both on the ranks of the upper left submatrices and the
lower right submatrices.  In fact, $\Pio_f$ lies entirely within
$\Gamma(\GL_n)$ and is $\Gamma \left( B_{-} u B_{+} \cap B_{+} v
B_{-} \right)$.  Fomin and Zelevinsky \cite{FZdouble} define the
\defn{double Bruhat cell} $\GL_n^{u,v}$ to be $B_{+} u B_{+} \cap
B_{-} v B_{-}$. So the positroid variety $\Pi_{f}$ is the closure in
$\Gr(n,2n)$ of $\Gamma(w_0 \GL_n^{w_0 u, w_0 v})$.

Finally, we describe a connection of positroid varieties to quantum
cohomology, which we discuss further in Section~\ref{sec:QH}. For
$C$ any algebraic curve in $\Gr(k,n)$, one defines the degree of $C$
to be its degree as a curve embedded in $\PP^{\binom{n}{k}-1}$ by
the Pl\"ucker embedding; this can also be described as $\int [C]
\cdot [X_{\Box}]$ where the computation takes place in
$H^*(\Gr(k,n))$, and $X_{\Box}$ is the Schubert divisor.  Let $I$,
$J$ and $K$ be three elements of $\binom{n}{k}$ and $d$ a
nonnegative integer, $d \leq k$, such that $\codim X_I + \codim X_J
+ \codim X_K = k(n-k) + d n$.

Intuitively, the (genus
zero) quantum product $\langle X_{I} X_{J} X_{K} \rangle_d$
is the number of curves in $\Gr(k,n)$, of genus zero and degree $d$,
which meet $X_{I}(F_{\bullet})$, $X_{J}(G_{\bullet})$ and
$X_{K}(H_{\bullet})$ for a generic choice of flags $F_{\bullet}$, $G_{\bullet}$ and
$H_{\bullet}$.  This is made precise via the construction of spaces of stable
maps, see \cite{FP}.

Define $E(I,J,d)$ to be the space of degree $d$ stable maps of a genus zero curve with three marked points to $\Gr(k,n)$, such that the first marked point lands in $X^I$ and the second marked point lands in $X_J$. Let $S(I,J,d)$ be the subset of $\Gr(k,n)$ swept out by the third marked point. It is intuitively plausible that  $\langle X_{I} X_{J} X_{K} \rangle_d$ is $\int [S(I,J,d)] \cdot [X_K] $ and we will show that, under certain hypotheses, this holds.  We will show that (under the same hypotheses) $S(I,J,d)$ is a positroid variety.

%

%

%



%

%

\section{Frobenius splitting of positroid varieties}
\label{sec:frobsplit}

In this section we show that the Grassmannian possesses a Frobenius
splitting which compatibly splits all the positroid varieties
therein. More generally, any partial flag manifold $G/P$ possesses a
Frobenius splitting which compatibly splits all its projected
Richardson varieties. This technical condition, and a related result
from \cite{BrionLakshmibai}, will allow us to prove that the map to
a positroid variety (more generally, a projected Richardson variety)
from its Richardson model is ``cohomologically trivial''.  From
there we obtain that positroid varieties (and more generally
projected Richardson varieties) are normal, Cohen-Macaulay, and have
rational singularities.

There is one additional consequence that is special to the
Grassmannian case: positroid varieties are defined {\em as schemes}
by the vanishing of Pl\"ucker coordinates.

We will not really need to define (compatible) Frobenius splittings,
as everything we will need about them is contained in
the following lemma, all parts quoted from \cite{BK}.

\newcommand\LL{{\mathcal L}}
\begin{lem}\label{lem:splittings}
  \begin{enumerate}
  \item If $X$ is Frobenius split, it is reduced.
  \item If $X_1,X_2$ are compatibly split subvarieties then $X_1 \cup X_2$,
    $X_1 \cap X_2$, and their components are also compatibly split in $X$.
  \item If $f:X\to Y$ is a morphism such that
    the map $f^{\#}: \O_Y \to f_* \O_X$ is an isomorphism and
    $X$ is Frobenius split, then $f$ induces a natural splitting on $Y$.

    Moreover, if $X'\subseteq X$ is compatibly split,
    then the splitting on $Y$ compatibly splits $f(X')$.

  \item Any flag manifold has a Frobenius splitting that compatibly
    splits all its Richardson varieties.
  \item If $X$ is Frobenius split and proper and $\LL$ is ample on $X$,
    then $H^i(X; \LL) = 0$ for $i>0$.
  \end{enumerate}
\end{lem}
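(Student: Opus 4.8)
The plan is to observe that parts (1), (2), (3) and (5) are formal consequences of the definition of a Frobenius splitting, while part (4) is the single substantive geometric input, for which I would invoke the Mehta--Ramanathan/Ramanathan construction as packaged in \cite{BK} rather than reprove it. Throughout, write $F\colon X\to X$ for the absolute Frobenius (in characteristic $p>0$), so that $F^{\#}\colon \O_X\to F_*\O_X$ is the $p$-th power map; a \emph{Frobenius splitting} is an $\O_X$-linear $\phi\colon F_*\O_X\to\O_X$ with $\phi\circ F^{\#}=\mathrm{id}$, and a closed subscheme $Y\subseteq X$ with ideal sheaf $\I_Y$ is \emph{compatibly split} by $\phi$ if $\phi(F_*\I_Y)\subseteq\I_Y$, so that $\phi$ descends to a splitting of $Y$.

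For part (1), I would use that $\phi$ being $\O_X$-linear for the twisted module structure gives $\phi(g^p)=g$ for every local section $g$: then if $f$ is a local section with $f^N=0$, choosing $m$ with $p^m\ge N$ yields $f^{p^m}=0$, and applying this identity with $g=f^{p^{m-1}}$ gives $f^{p^{m-1}}=\phi(0)=0$; descending induction on $m$ forces $f=0$, so $X$ is reduced. For part (5), a splitting exhibits $\O_X$ as an $\O_X$-module direct summand of $F_*\O_X$; tensoring with $\LL$ and using $F^*\LL\cong\LL^{\otimes p}$ together with the projection formula exhibits $\LL$ as a direct summand of $F_*(\LL^{\otimes p})$, hence $H^i(X;\LL)$ as a direct summand of $H^i(X;F_*(\LL^{\otimes p}))=H^i(X;\LL^{\otimes p})$ (the Frobenius being affine, indeed finite). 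Iterating, $H^i(X;\LL)$ is a summand of $H^i(X;\LL^{\otimes p^m})$ for all $m$; since $X$ is proper and $\LL$ is ample, Serre vanishing kills the right-hand side for $m\gg 0$ and $i>0$, whence $H^i(X;\LL)=0$.

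Parts (2) and (3) are ideal-sheaf bookkeeping. If $Y_1,Y_2$ are compatibly split, then so are $Y_1\cap Y_2$ and $Y_1\cup Y_2$, since their ideal sheaves are $\I_{Y_1}+\I_{Y_2}$ and $\I_{Y_1}\cap\I_{Y_2}$, each preserved by any $\phi$ preserving both $\I_{Y_i}$; that each irreducible component of a compatibly split (hence reduced) $Y$ is again compatibly split follows, as in \cite{BK}, by showing that the minimal primes of a $\phi$-stable radical ideal are $\phi$-stable. For part (3), I would push $\phi_X$ forward along $f$: since Frobenius commutes with $f_*$ one has $f_*F_*\O_X=F_*f_*\O_X$, and combining with the isomorphism $\O_Y\cong f_*\O_X$ produces $\phi_Y\colon F_*\O_Y\to\O_Y$, which splits $F^{\#}_Y$ because $f_*$ carries $\phi_X\circ F^{\#}_X=\mathrm{id}$ to $\phi_Y\circ F^{\#}_Y=\mathrm{id}$. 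If moreover $X'\subseteq X$ is compatibly split, then under the identification $\O_Y=f_*\O_X$ the ideal sheaf of $f(X')$ is exactly $f_*\I_{X'}$ (by left-exactness of $f_*$), and pushing forward the inclusion $\phi_X(F_*\I_{X'})\subseteq\I_{X'}$ shows $f(X')$ is compatibly split.

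The real work is part (4), which I would not reprove but cite from \cite{BK}: the flag manifold $\Fl(n)$ (more generally $G/B$) carries a Frobenius splitting arising from a section $\tau$ of the anticanonical bundle $\omega^{-1}_{G/B}$ whose divisor is the reduced union of all Schubert divisors $X_{s_i}$ and all opposite Schubert divisors $X^{s_i}$, via the identification $\mathrm{Hom}_{\O}(F_*\omega^{1-p}_{G/B},\O_{G/B})\cong\mathrm{Hom}_{\O}(F_*\O_{G/B},\O_{G/B})$ and the fact that $\tau^{p-1}$ defines a splitting compatible with the components of its divisor; this splitting compatibly splits every Schubert variety and every opposite Schubert variety, so by part (2) it compatibly splits every Richardson variety $X^w_u=X_u\cap X^w$ and each of their components. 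This is the only step that genuinely uses the geometry of $G/B$ rather than the formalism of splittings, so it is the step I expect to be the crux; everything else is manipulation of $F_*$ and ideal sheaves.
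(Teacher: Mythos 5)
Your arguments are correct, but you should be aware that they take a genuinely different route from the paper, which simply cites all five parts out of \cite{BK} (parts (1)--(2) are Proposition~1.2.1, part (3) is Lemma~1.1.8, part (4) is Theorem~2.3.1, and part (5) is Theorem~1.2.8(1) there). What you do instead is reprove the formal parts (1), (2), (3), and (5) from the definitions and cite only (4). Your proofs of (1) and (5) are the standard ones (the $p^m$-th power trick for reducedness, and the ``$\LL$ is a summand of $F_*\LL^{\otimes p}$ plus Serre vanishing'' argument). Your sketch of (2) is fine, including the delegation to \cite{BK} of the fact that minimal primes of a $\phi$-stable radical ideal are $\phi$-stable. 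For (3), the construction $\phi_Y := f_*\phi_X$ via $f_*F_{X*}\O_X = F_{Y*}f_*\O_X$ and the identification $\O_Y\cong f_*\O_X$ is exactly right; the one point worth flagging is that when you say the ideal sheaf of $f(X')$ is $f_*\I_{X'}$, what left-exactness of $f_*$ actually gives is that $f_*\I_{X'}$ is the ideal of the \emph{scheme-theoretic image} of $X'$, and you need one more sentence noting that this image is reduced (since $X'$ is compatibly split, hence reduced) and closed (assuming $f$ is proper, as it is for the flag-manifold projection), so it coincides with $f(X')$. The tradeoff between the two approaches is transparent: the paper's citation is the economical choice for a research article, while your self-contained treatment makes clear that (1)--(3) and (5) are mere bookkeeping and that all the geometric content lives in part (4).
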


\begin{proof}
  The first two are Proposition 1.2.1,
  the third is Lemma 1.1.8,
  the fourth is Theorem 2.3.1,
  and the fifth is part (1) of Theorem 1.2.8,
  all from \cite{BK}.
\end{proof}

\begin{remark}
If $f: X \to Y$ is a projective and surjective map of reduced and
irreducible varieties with connected fibers, and $Y$ is normal, then
$f^{\#}: \O_Y \to f_* \O_X$ is an isomorphism (see
\cite[p.125]{Laz}).
\end{remark}

\begin{cor}
  There is a Frobenius splitting on the Grassmannian that compatibly
  splits all the positroid varieties therein.
\end{cor}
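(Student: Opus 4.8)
The plan is to descend the Frobenius splitting from the flag manifold $\Fl(n)$ to $\Gr(k,n)$ along the projection $\pi$, and then to recognize every positroid variety as the image under $\pi$ of a compatibly split Richardson variety.

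First I would invoke Lemma~\ref{lem:splittings}(4): the flag manifold $\Fl(n)$ carries a Frobenius splitting compatibly splitting all of its Richardson varieties $X_u^w$. Next I would check the hypothesis of Lemma~\ref{lem:splittings}(3) for the map $\pi : \Fl(n) \to \Gr(k,n)$, namely that $\pi^{\#} : \O_{\Gr(k,n)} \to \pi_* \O_{\Fl(n)}$ is an isomorphism. This holds by the Remark following Lemma~\ref{lem:splittings}, since $\pi$ is projective and surjective with connected fibers (each fiber being a copy of $\Fl(k) \times \Fl(n-k)$) and $\Gr(k,n)$ is smooth, hence normal. Thus Lemma~\ref{lem:splittings}(3) produces a Frobenius splitting on $\Gr(k,n)$, and moreover this induced splitting compatibly splits $\pi(X')$ for every compatibly split subvariety $X' \subseteq \Fl(n)$.

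Applying the last clause with $X' = X_u^w$ for all $u \leq w$ in $S_n$ shows that the induced splitting on $\Gr(k,n)$ compatibly splits every projected Richardson variety $\pi(X_u^w)$. By Proposition~\ref{P:ObviousClosure} (combined with the bijection $\Q(k,n) \to \Bound(k,n)$ of Proposition~\ref{P:pairstobounded}), every positroid variety $\Pi_f$ equals $\pi(X_u^w)$ for a suitable Richardson variety; indeed Theorem~\ref{T:ProjectedClosedRichardson} shows that all projections of closed Richardson varieties are positroid varieties. Hence this single Frobenius splitting of $\Gr(k,n)$ compatibly splits all positroid varieties, which is the assertion.

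The argument is essentially bookkeeping once Lemma~\ref{lem:splittings} and the identification of positroid varieties with projected Richardson varieties are in hand; the one spot that deserves explicit attention — the main ``obstacle'' — is verifying $\pi_* \O_{\Fl(n)} = \O_{\Gr(k,n)}$, i.e. that $\pi$ has connected fibers and normal target, which is exactly what the cited Remark supplies (alternatively, $\pi$ is a Zariski-locally-trivial fibration with fiber $\Fl(k) \times \Fl(n-k)$, whose structure sheaf has a one-dimensional space of global sections).
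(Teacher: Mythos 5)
Your argument is correct and follows the same route as the paper: descend the splitting from $\Fl(n)$ via Lemma~\ref{lem:splittings}(4) and (3), then identify the images of the compatibly split Richardson varieties with the positroid varieties. The only difference is that you make two things explicit which the paper leaves implicit — the verification that $\pi^{\#}$ is an isomorphism, and the use of Proposition~\ref{P:ObviousClosure} and Theorem~\ref{T:ProjectedClosedRichardson} rather than simply citing Theorem~\ref{thm:projectedRichardsons} (which is stated for open Richardsons); this is, if anything, slightly more careful bookkeeping rather than a different approach.
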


\begin{proof}
  Consider the map from a complete flag manifold to a Grassmannian;
  this map satsfies the hypothesis of (3).
  By parts (4) and (3) of the lemma, the Grassmannian acquires a
  splitting that compatibly splits all projected Richardson varieties.
  By Theorem \ref{thm:projectedRichardsons}, these are exactly
  the positroid varieties.
\end{proof}

\begin{theorem} \label{T:LinearGeneration}
  Let $\M$ be a positroid. Then the ideal defining the variety $\Pi_{\J(\M)}$ inside $\Gr(k,n)$
  is generated by the Pl\"ucker coordinates $\{\Delta_I : I \notin \M\}$.
\end{theorem}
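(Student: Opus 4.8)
The plan is to prove the stronger statement that $\I := (\Delta_I : I \notin \M)$, taken in the homogeneous coordinate ring of $\Gr(k,n)$, is the full (radical) homogeneous ideal of $\Pi_{\J(\M)}$. Writing $(J_1,\dots,J_n) = \J(\M)$, the strategy has three steps: realize $\I$ as the defining ideal of the \emph{scheme-theoretic} intersection $Y := \bigcap_{i=1}^n \chi^{i-1}(X_{J_i})$; show that $Y$ is reduced, using Frobenius splitting; and then read off $Y = \Pi_{\J(\M)}$ from the set-theoretic identity of Theorem \ref{thm:intersectSchubs}, which forces $I(\Pi_{\J(\M)}) = I(Y) = \I$.

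First I would carry out the ideal-theoretic bookkeeping. By Hodge's theorem (see \cite{Ramanathan}), the homogeneous ideal of a Grassmannian Schubert variety is generated by the Pl\"ucker coordinates vanishing on it; since $\chi$ acts on $\Gr(k,n)$ by a linear automorphism that permutes the Pl\"ucker coordinates, the ideal of each $\chi^{i-1}(X_{J_i})$ is likewise generated by Pl\"ucker coordinates. Hence $I(Y) = \sum_{i=1}^n I(\chi^{i-1}(X_{J_i}))$ is generated by the union of these $n$ families, and an elementary comparison identifies this union with $\{\Delta_I : I \notin \M\}$: one inclusion holds because any such coordinate vanishes on $\Pi_{\J(\M)} \subseteq \chi^{i-1}(X_{J_i})$, and by Corollary \ref{cor:pluckerdefined} and Proposition \ref{P:GGMSdense} these are exactly the $\Delta_I$ with $I \notin \M$, while the reverse inclusion is immediate from the definition $\M_{\J} = \{I : \chi^{-r+1}(I) \geq J_r \text{ for all } r\}$ of Section \ref{S:positroids}. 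Thus $I(Y) = \I$.

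The essential step is that $Y$ is reduced, and here I would use the Frobenius splitting results. Each cyclic rotation $\chi^{i-1}(X_{J_i})$ of a Grassmannian Schubert variety is a positroid variety (Section \ref{sec:examples}); by the Corollary immediately preceding this statement --- equivalently, parts (3) and (4) of Lemma \ref{lem:splittings} combined with Theorem \ref{thm:projectedRichardsons} --- there is a \emph{single} Frobenius splitting of $\Gr(k,n)$ that compatibly splits every positroid variety, hence all of $X_{J_1}, \chi(X_{J_2}), \dots, \chi^{n-1}(X_{J_n})$ at once. Applying Lemma \ref{lem:splittings}(2) inductively, the scheme-theoretic intersection $Y$ is compatibly split, and therefore reduced by Lemma \ref{lem:splittings}(1). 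Since Theorem \ref{thm:intersectSchubs} gives $Y_{\mathrm{red}} = X_{J_1} \cap \chi(X_{J_2}) \cap \cdots \cap \chi^{n-1}(X_{J_n}) = \Pi_{\J(\M)}$ as sets, reducedness of $Y$ yields $Y = \Pi_{\J(\M)}$ as schemes, which completes the argument.

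I expect the only genuinely non-formal step to be the reducedness of $Y$. The set-theoretic description of $\Pi_{\J(\M)}$ as the intersection of the $n$ cyclically rotated Schubert varieties is already available (Theorem \ref{thm:intersectSchubs}); upgrading it to the scheme-theoretic statement --- equivalently, showing that the sum of the $n$ rotated Schubert ideals is radical --- requires a Frobenius splitting compatible with all $n$ cyclic rotations simultaneously, which the standard, cyclically asymmetric splitting of the Grassmannian does not supply. Its existence is exactly what the preceding corollary provides, so essentially all the geometric content of the theorem is concentrated there.
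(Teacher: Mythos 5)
Your proof is correct and follows essentially the same route as the paper's: both rest on Hodge's theorem that (rotated) Schubert varieties are cut out by Pl\"ucker coordinates, on the single Frobenius splitting compatibly splitting all positroid varieties to upgrade the set-theoretic intersection of Theorem \ref{thm:intersectSchubs} to a scheme-theoretic one, and on the identification of vanishing Pl\"ucker coordinates with $T$-fixed points outside $\M$. You have merely reordered the steps (computing the ideal of the scheme-theoretic intersection before proving reducedness rather than after) and spelled out the $\M_\J$ bookkeeping a bit more explicitly than the paper does.
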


\begin{proof}
  By Theorem \ref{thm:intersectSchubs}, $\Pi_{\J(\M)}$ is the
  set-theoretic intersection of some permuted Schubert varieties.
  By parts (2) and (1) of Lemma \ref{lem:splittings}, it is
  also the scheme-theoretic intersection.

  Hodge proved that Schubert varieties (and hence permuted
  Schubert varieties) are defined by the vanishing of Pl\"ucker
  coordinates. (See e.g. \cite{Ramanathan}, where a great
  generalization of this is proven using Frobenius splitting.)
  The intersection of a family of them is defined by the
  vanishing of all their individual coordinates.

  As explained in \cite[Proposition 3.4]{FZrec},
  it is easy to determine which Pl\"ucker coordinates vanish on
  a $T$-invariant subscheme $X$ of the Grassmannian;
  they correspond to the fixed points not lying in $X$.
\end{proof}

\begin{cor} \label{cor:QuadGeneration}
 Let $M$ be a positroid. Embed $\Gr(k,n)$ into $\PP^{\binom{n}{k}-1}$ by the Pl\"ucker embedding.
Then the ideal of $\Pi_{\J(M)}$ in $\PP^{\binom{n}{k}-1}$ is
generated in degrees $1$ and $2$.
\end{cor}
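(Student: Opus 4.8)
The plan is to read this off from Theorem~\ref{T:LinearGeneration} together with the classical fact that the Grassmannian, in its Pl\"ucker embedding, is cut out by quadrics. Concretely, let $S = \CC[\Delta_I : I \in \binom{[n]}{k}]$ be the homogeneous coordinate ring of $\PP^{\binom{n}{k}-1}$, and let $P \subseteq S$ be the ideal generated by the quadratic Pl\"ucker relations; Hodge's theorem says that $S/P$ is the homogeneous coordinate ring of $\Gr(k,n)$ (see e.g.~\cite{Ramanathan}). Theorem~\ref{T:LinearGeneration}, read scheme-theoretically, asserts that the ideal of $\Pi_{\J(M)}$ inside $\Gr(k,n)$ --- that is, its homogeneous ideal inside $S/P$ --- is generated by the degree-one elements $\{\Delta_I \bmod P : I \notin M\}$.

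Pulling this back along the surjection $S \onto S/P$, the homogeneous ideal of $\Pi_{\J(M)}$ in $S$ is $P + (\Delta_I : I \notin M)$, a sum of an ideal generated in degree $2$ and one generated in degree $1$. Hence it is generated in degrees $1$ and $2$, as claimed: once Theorem~\ref{T:LinearGeneration} is in hand the corollary is essentially a restatement of it combined with quadratic generation of the Grassmannian's ideal.

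The one step that deserves attention is the scheme-theoretic reading of Theorem~\ref{T:LinearGeneration}: one needs the linear forms $\{\Delta_I : I \notin M\}$ to cut out $\Pi_{\J(M)}$ as a \emph{scheme}, not merely as a set, and --- if one insists on the \emph{saturated} homogeneous ideal rather than just some ideal defining the scheme --- one also wants $P + (\Delta_I : I \notin M)$ to be saturated, which follows from projective normality of positroid varieties. The scheme-theoretic point is exactly what the proof of Theorem~\ref{T:LinearGeneration} already provides, since there $\Pi_{\J(M)}$ is realized as a scheme-theoretic intersection of cyclically permuted Schubert varieties via parts (1) and~(2) of Lemma~\ref{lem:splittings}, and Schubert varieties are scheme-theoretically defined by the vanishing of Pl\"ucker coordinates. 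So I do not expect a genuine obstacle here.
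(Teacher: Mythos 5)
Your proof is correct and takes essentially the same route as the paper: Theorem~\ref{T:LinearGeneration} provides the linear generators inside $\Gr(k,n)$, and the classical quadratic generation of the Pl\"ucker ideal does the rest. Your extra observation about saturation (via projective normality, which follows from Corollary~\ref{c:Normal} together with the cohomology vanishing of Theorem~\ref{T:crepant}) is a sensible point the paper glosses over but does not change the argument.
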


\begin{proof}
By Theorem~\ref{T:LinearGeneration}, the ideal of $\Pi_{\J(M)}$ is
the sum of a linearly generated ideal and the ideal of $\Gr(k,n)$.
It is classical that the ideal of $\Gr(k,n)$ is generated
in degree $2$.
\end{proof}

\begin{theorem} \label{T:crepant}
  Let $\Pi_f$ be a positroid variety and $X_u^w$ its Richardson model.
  The map $\pi : X_u^w \onto \Pi_f$ is \defn{cohomologically trivial},
  i.e. $\pi_* \O_{X_u^w} = \O_{\Pi_f}$ and $R^i \pi_* \O_{X_u^w} = 0$ for $i>0$.

  In particular, for any $N>0$, we have $H^0(\Pi_f; \O(N)) \iso H^0(X_u^w; \pi^* \O(N))$,
  and $H^i(\Pi_f; \O(N)) \iso H^i(X_u^w; \pi^* \O(N)) = 0$ for $i>0$.
\end{theorem}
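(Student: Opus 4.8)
The statement has two halves: the cohomological triviality of $\pi$ --- that $\pi_*\O_{X_u^w} = \O_{\Pi_f}$ and $R^i\pi_*\O_{X_u^w} = 0$ for $i>0$ --- and its consequence for the line bundles $\O(N)$. The plan is to obtain the triviality by transporting the known results on projections of Richardson varieties to our setting, and then to deduce the $\O(N)$ statement formally. Note in advance that one cannot apply the Frobenius-splitting vanishing of Lemma~\ref{lem:splittings}(5) directly on $X_u^w$, since $\pi^*\O(N)$ is only globally generated (it is trivial on the contracted positive-dimensional fibers), not ample, there; this is exactly why the computation must pass through $\Pi_f$.

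For the triviality, first I would reduce to the Grassmannian case. By Theorem~\ref{thm:projectedRichardsons} together with Proposition~\ref{P:ObviousClosure} a Richardson model may be taken with $w$ a Grassmannian permutation, i.e.\ a minimal representative for $S_n/(S_k\times S_{n-k})$, so that $\Pi_f = \pi(X_u^w)$ is precisely the kind of projected Richardson variety analyzed by Brion and Lakshmibai \cite{BrionLakshmibai}. The map $\pi|_{X_u^w}$ is proper and surjective, and it is birational because, by Lemma~\ref{lem:RietschMainPoint} (equivalently, Theorem~\ref{thm:projectedRichardsons}), it restricts to an isomorphism from $\Xo_u^w$ onto the dense open stratum $\Pio_f$. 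The key geometric input is the structure of the fibers: since $w$ is Grassmannian, Lemma~\ref{lem:GrassSchubert} realizes $X^w$ as the closure of the ``closest-flag'' section $V \mapsto E_\bullet(V)$ over $X^{\sigma(w)}$, so the fibers of $\pi$ restricted to $X^w$, and hence (after intersecting with $X_u$) of $\pi$ restricted to $X_u^w$, are Richardson-type subvarieties of the $P/B$-fiber, which are connected with $H^0(\O)=\C$ and vanishing higher cohomology of the structure sheaf. Combining this fiber structure with the fact (Brion \cite{BrionPos}) that Richardson varieties in $\Fl(n)$ are normal, Cohen--Macaulay and have rational singularities yields $R^i\pi_*\O_{X_u^w}=0$ for $i>0$ and $\pi_*\O_{X_u^w} = \O_{\Pi_f}$; the last equality --- that the pushforward is $\O_{\Pi_f}$ itself and not a larger subsheaf of its normalization --- is the subtle part, and is secured using that $\Pi_f$, being compatibly Frobenius split in $\Gr(k,n)$ (the corollary preceding Theorem~\ref{T:LinearGeneration}), is weakly normal, so that the Stein factorization of $\pi$ collapses.

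Given the triviality, the $\O(N)$ statement follows by a spectral-sequence argument. By the projection formula $R^q\pi_*(\pi^*\O(N)) = R^q\pi_*\O_{X_u^w} \otimes \O_{\Pi_f}(N)$, which equals $\O_{\Pi_f}(N)$ for $q=0$ and vanishes for $q>0$, so the Leray spectral sequence degenerates and gives $H^i(X_u^w;\pi^*\O(N)) \cong H^i(\Pi_f;\O(N))$ for every $i$. For $i=0$ this is the claimed identification of global sections, and for $i>0$ the right-hand side vanishes because $\Pi_f$ is proper and Frobenius split and $\O(N)$ is ample, again by Lemma~\ref{lem:splittings}(5).

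The main obstacle is the identification $\pi_*\O_{X_u^w} = \O_{\Pi_f}$. The naive argument --- a proper birational morphism out of a normal variety has structure sheaf pushing forward to that of the normalization of the target --- only pins down the pushforward up to normalization, and since normality of $\Pi_f$ is itself one of the intended consequences of this theorem, that argument would be circular. Avoiding the circularity, by controlling the fibers of $\pi$ and replacing a priori normality of $\Pi_f$ with the weak normality coming from its Frobenius splitting (via the machinery of \cite{BK} and \cite{BrionLakshmibai}), is the heart of the matter.
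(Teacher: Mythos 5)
Your proof of the $\O(N)$ statement from the cohomological triviality, via the projection formula and the Leray spectral sequence, is fine, and your observation that $\pi^*\O(N)$ is not ample on $X_u^w$ (so one cannot simply quote Frobenius-splitting vanishing there directly) is a good one. The gap is in the part that establishes the triviality itself, and it is in exactly the place you flag as the heart of the matter.

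First, the claim that the fibers of $\pi|_{X_u^w}$ over arbitrary points of $\Pi_f$ are ``Richardson-type subvarieties of the $P/B$-fiber'' with connected, cohomologically trivial structure sheaf is not established. Lemma~\ref{lem:GrassSchubert} realizes the \emph{open} piece $\Xo^w$ as the graph of the closest-flag section $V\mapsto E_\bullet(V)$; it says nothing about the fibers of $X^w\to\Gr(k,n)$ over boundary points, and even if those fibers were understood, intersecting a connected fiber with $X_u$ need not yield something connected. Second, the step ``weak normality of $\Pi_f$ collapses the Stein factorization'' has a genuine hole. Weak normality rules out nontrivial finite birational universally injective morphisms onto $\Pi_f$, but the finite birational morphism $g:\tilde\Pi_f\to\Pi_f$ arising in the Stein factorization is only universally injective (in characteristic $0$, injective on closed points) if one \emph{already knows} the fibers of $\pi$ are connected --- otherwise one has exactly the normalization-of-a-nodal-curve situation, where the source is normal, the target weakly normal, $\pi$ proper birational surjective, yet $\pi_*\O\neq\O$. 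In the paper the connectedness of fibers (Corollary~\ref{c:ConnectedFibers}) is \emph{deduced from} Theorem~\ref{T:crepant} via \cite[Corollary~III.11.4]{Har}, so it cannot serve as input to it; your argument is circular at precisely the point you identified as subtle.

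The paper's proof sidesteps all of this. It never touches the fiber structure of $\pi$. Instead it compares the commutative square $X_u^w\to\Pi_f$ sitting inside $\Fl(n)\to\Gr(k,n)$: Borel--Weil handles the bottom row, and the actual Frobenius-splitting input is \cite[Proposition~1]{BrionLakshmibai}, which says that restriction $H^i(\Fl(n);\pi^*\O(N))\to H^i(X_u^w;\pi^*\O(N))$ is \emph{surjective}. This gives surjectivity (and vanishing for $i>0$) of $H^i(\Pi_f;\O(N))\to H^i(X_u^w;\pi^*\O(N))$ for all $N$, and then Serre's theorem for $N\gg0$ forces the cokernel of $\O_{\Pi_f}\hookrightarrow\pi_*\O_{X_u^w}$ to vanish and the higher direct images $R^i\pi_*\O_{X_u^w}$ to vanish, without any discussion of fibers or normality. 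To repair your proof you would need an independent argument for connectedness of the fibers of $\pi|_{X_u^w}$ (and for the vanishing of higher fiber cohomology); the cleaner route is the one the paper takes.
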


\begin{proof}
  The commuting square below on the left induces the one on the right:
  $$
  \begin{array}{ccccc}
    X_u^w & \longrightarrow & \Pi_f \\
    \downarrow  & & \downarrow \\
    \Fl(n) & \longrightarrow & \Gr(k,n)
  \end{array}
  \qquad\qquad\qquad
  \begin{array}{ccccc}
   H^i(X_u^w; \pi^* \O(N)) & \longleftarrow & H^i(\Pi_f; \O(N)) \\
   \uparrow  & & \uparrow \\
   H^i(\Fl(n); \pi^* \O(N)) & \longleftarrow & H^i(\Gr(k,n); \O(N))
  \end{array}
  $$
  By Borel-Weil, we know the bottom cohomology map is an isomorphism,
  and both sides are zero for $i>0$. By \cite[Proposition 1]{BrionLakshmibai},
  the left cohomology map is a surjection. Hence the composite map
  $H^i(\Gr(k,n); \O(N)) \to    H^i(X_u^w; \pi^* \O(N))$ is a
surjection whose image is zero if $i>0$. The top cohomology map
$H^i(\Pi_f; \O(N)) \to  H^i(X_u^w; \pi^* \O(N))$ is then also
  is a surjection whose image is zero for $i>0$.

  If $i=0$, this top map is injective as well,
  since $X_u^w \to \Pi_f$ is a surjection.
  This proves the claim that $H^0(\Pi_f; \O(N)) \iso H^0(X_u^w; \pi^* \O(N))$.
  Now, let $K$ be the cokernel of $0 \to \O_{\Pi_f} \to \pi_* \O_{X_u^w}$.
  For $N$ sufficiently large, the sequence
  $$0 \to H^0(\O(N)) \to H^0(\pi_* \O_{X_u^w} \otimes \O(N)) \to H^0(K \otimes \O(N)) \to 0$$
  is exact. (Here all sheaves live on $\Gr(k,n)$.)
  But, by \cite[Ex.~II.5.1(d)]{Har}, the middle term is $H^0(X_u^w; \pi^* \O(N))$ and we just showed that $H^0(\Pi_f; \O(N)) \to  H^0(X_u^w; \pi^* \O(N))$ is an isomorphism.
  So $H^0(K \otimes \O(N))=0$ for all sufficiently large $N$, and we deduce that $K$ is the zero sheaf.

 Now consider the case that $i>0$.
  Consider the Leray spectral sequence for $\pi$ and $\O_{X_u^w}  \otimes \pi^* \O(N)$.
  The $E_2$ term is $H^p((R^q \pi_*) (\O_{X_v^w} \otimes \pi^* \O(N)), \Gr(k,n))$, which, by \cite[Ex. III.8.3]{Har} is $H^p((R^q \pi_*) (\O_{X_u^w}) \otimes \O(N), \Gr(k,n))$.
  We take $N$ sufficiently large that this vanishes except when $p=0$.
  So we deduce that, for $N$ sufficiently large, $H^i(\O_{X_u^w}  \otimes \pi^* \O(N), \Fl(n)) \cong H^0((R^i \pi_*)(\O_{X_u^w}) \otimes \O(N), \Gr(k,n))$.
As we observed in the first paragraph, the left hand side is zero.
So $H^0((R^i \pi_*)(\O_{X_u^w}) \otimes \O(N), \Gr(k,n))$ vanishes
for $N$ sufficiently large. But this means that $(R^i
\pi_*)(\O_{X_u^w})$ is zero, as desired.

Finally, to see that $H^i(\Pi_f; \O(N)) = 0$, use part (5) of Lemma \ref{lem:splittings}.
\end{proof}

Theorem~\ref{T:crepant} has many consequences for the geometry of positroid varieties, as we now describe.

\begin{cor}
Let $\Pi$ be a positroid variety and $X_u^w$ a Richardson model for
$\Pi$. Then the fibers of $\pi : X_u^w \to \Pi_f$ are connected.
\end{cor}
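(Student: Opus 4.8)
The plan is to deduce this immediately from Theorem~\ref{T:crepant}, which already records the one nontrivial input: $\pi_* \O_{X_u^w} = \O_{\Pi_f}$. Everything else is a standard application of the theorem on formal functions.

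First I would check the routine hypotheses. The Richardson model $X_u^w$ is a closed subvariety of the flag manifold $\Fl(n)$, hence projective, so $\pi : X_u^w \to \Pi_f$ is a projective (in particular proper) morphism; and it is surjective, since $\Pi_f = \pi(X_u^w)$ by Proposition~\ref{P:ObviousClosure}. By Theorem~\ref{thm:projectedRichardsons}, $\Pi_f$ is a reduced and irreducible variety, so there is no pathology on the target side.

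Next, with the identification $\pi_* \O_{X_u^w} = \O_{\Pi_f}$ in hand, I would invoke the standard consequence of the theorem on formal functions (equivalently, Stein factorization together with Zariski's connectedness theorem; see e.g.\ \cite[Corollary III.11.3]{Har}): if $g : X \to Y$ is a projective morphism of noetherian schemes with $g_* \O_X = \O_Y$, then $g^{-1}(y)$ is nonempty and connected for every point $y \in Y$. Applying this with $g = \pi$ gives at once that every fiber of $\pi : X_u^w \to \Pi_f$ is connected.

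There is essentially no obstacle here: the entire content of this corollary is buried in Theorem~\ref{T:crepant}, whose proof already did the work of establishing $\pi_* \O = \O$ using the compatible Frobenius splitting and the surjectivity result of \cite{BrionLakshmibai}. The only thing to be careful about is quoting a version of the connectedness statement that applies to the projective morphism $\pi$ (rather than, say, requiring finite type over a field only through the ambient Grassmannian), which is exactly the form in \cite[III.11.3]{Har}.
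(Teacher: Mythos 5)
Your argument is correct and is exactly the paper's argument: the paper's proof of this corollary is a one-line citation to Hartshorne's Corollary~III.11.4, whereas you cite the closely related Corollary~III.11.3; both are the standard ``$\pi_*\O_X = \O_Y$ implies connected fibers'' consequence of the theorem on formal functions, applied to the conclusion of Theorem~\ref{T:crepant}. Nothing differs in substance.
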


\begin{proof} \label{c:ConnectedFibers}
 See \cite[Corollary~III.11.4]{Har}.
\end{proof}

\begin{remark}
Theorem~\ref{T:crepant} and Corollary~\ref{c:ConnectedFibers} hold
in slightly greater generality; if $X_u^w$ is any Richardson
variety, even with $u \not \leq_k w$, and $\Pi$ is $\pi(X_u^w)$,
then the conclusions of these results hold.
\end{remark}

\begin{cor} \label{c:Normal}
Positroid varieties are normal.
\end{cor}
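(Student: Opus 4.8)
The plan is to bootstrap from the normality of the Richardson model $X_u^w$ using the cohomological triviality of the projection $\pi\colon X_u^w \to \Pi_f$ established in Theorem~\ref{T:crepant}. Two facts may be taken for granted here: Richardson varieties in the flag manifold are normal (in fact Cohen--Macaulay with rational singularities) by \cite{BrionPos}; and by Theorem~\ref{thm:projectedRichardsons} the map $\pi$ restricts to an isomorphism from the dense open $\Xo_u^w$ onto $\Pio_f$, while by Proposition~\ref{P:ObviousClosure} it maps $X_u^w$ onto all of $\Pi_f$. Since $\pi$ is the restriction of the proper map $\Fl(n)\to\Gr(k,n)$, it is thus a proper, birational, surjective morphism from the normal variety $X_u^w$ onto the irreducible variety $\Pi_f$.

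With this in place, I would invoke the standard principle that a variety receiving a proper birational morphism $\pi$ from a normal variety with $\pi_*\O=\O$ is itself normal. Concretely: because $\pi$ is birational we have $K(X_u^w)=K(\Pi_f)$, so $\pi$ factors through the normalization $\nu\colon \widetilde{\Pi_f}\to\Pi_f$ as $\pi=\nu\circ g$; then
$$\O_{\Pi_f}\ \subseteq\ \nu_*\O_{\widetilde{\Pi_f}}\ \subseteq\ \nu_* g_*\O_{X_u^w}\ =\ \pi_*\O_{X_u^w}\ =\ \O_{\Pi_f},$$
where the last equality is exactly the content of Theorem~\ref{T:crepant}. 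Hence $\nu_*\O_{\widetilde{\Pi_f}}=\O_{\Pi_f}$, and since $\nu$ is finite this forces $\widetilde{\Pi_f}=\Spec_{\Pi_f}\bigl(\nu_*\O_{\widetilde{\Pi_f}}\bigr)=\Pi_f$, i.e.\ $\Pi_f$ is normal. (Alternatively one can run Serre's $R_1+S_2$ criterion, or simply note that $\pi_*\O_{X_u^w}$, viewed as a subsheaf of the constant sheaf $K(\Pi_f)$, is integrally closed on affine opens and is squeezed between $\O_{\Pi_f}$ and itself.)

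I expect essentially no obstacle to remain at this stage: all the real work has been pushed into Theorem~\ref{T:crepant} — whose proof rests on the Frobenius-splitting results of \cite{BrionLakshmibai} together with Borel--Weil — and into the known normality of Richardson varieties. The only point to be careful about is that $\pi$ genuinely is birational onto $\Pi_f$ (this is Lusztig's isomorphism statement, recorded in Theorem~\ref{thm:projectedRichardsons}), so that the factorization through the normalization is legitimate; granting that, the remaining argument is purely formal. Incidentally, the same input also yields that the fibres of $\pi$ are connected.
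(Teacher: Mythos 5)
Your proposal is correct and follows essentially the same route as the paper: both deduce normality of $\Pi_f$ from normality of the Richardson model $X_u^w$ together with the identity $\pi_*\O_{X_u^w}=\O_{\Pi_f}$ of Theorem~\ref{T:crepant}. The only difference is cosmetic — the paper verifies directly (on an affine chart) that any element of $\Frac(\O_{\Pi_f})$ integral over $\O_{\Pi_f}$ pulls back to a regular function on the normal variety $X_u^w$ and hence already lies in $\O_{\Pi_f}$, whereas you package the same reasoning as a factorization through the normalization $\nu\colon\widetilde{\Pi_f}\to\Pi_f$ and conclude $\nu$ is an isomorphism; the two are formally equivalent.
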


\begin{proof}
Let $X$ be a positroid variety and $Y$ its Richardson model. We
establish, more generally, that if $Y$ is a normal variety and $\pi
: Y \to X$ a morphism such that $\O_X \to \pi_* \O_Y$ is an
isomorphism, then $X$ is normal.

Normality is a local condition, so we may assume that $X = \Spec A$
for $A$ some integral domain. Let $K$ be the fraction field of $A$
and let $x \in K$ be integral over $A$, obeying the equation $x^n =
\sum_{i=0}^{n-1} a_i x^i$ for some $a_0$, $a_1$, \dots, $a_{n-1} \in
A$. Then we also have the relation $\pi^*(x)^n = \sum_{i=0}^{n-1}
\pi^*(a_i) \pi^*(x)^i$ in $\O_Y(Y)$. So, since $Y$ is normal,
$\pi^*(x)$ is in $\O_Y(Y)=\left( \pi_* \O_Y \right)(X)$. But, by
hypothesis, $\left( \pi_* \O_Y \right)(X)=\O_X(X)$, so $x$ is in
$\O_X(X)$. Thus, we see that $X$ is normal.
\end{proof}

\begin{cor} \label{c:RatSing}
  Positroid varieties have rational singularities.
\end{cor}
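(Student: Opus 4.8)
The plan is to combine Theorem~\ref{T:crepant} with the classical fact \cite{BrionPos} that Richardson varieties have rational singularities. Let $\Pi_f$ be a positroid variety with Richardson model $X_u^w$, and work over a field of characteristic zero, so that resolutions of singularities are available. Choose a resolution $\rho : \widetilde Y \to X_u^w$; since $X_u^w$ has rational singularities, $\rho_* \O_{\widetilde Y} = \O_{X_u^w}$ and $R^i \rho_* \O_{\widetilde Y} = 0$ for all $i > 0$. First I would check that the composite $\pi \circ \rho : \widetilde Y \to \Pi_f$ is itself a resolution of singularities: $\widetilde Y$ is smooth, $\rho$ is proper and birational, and $\pi : X_u^w \to \Pi_f$ is proper (being the restriction of the proper projection $\Fl(n) \to \Gr(k,n)$) and birational, since by Theorem~\ref{thm:projectedRichardsons} it restricts to an isomorphism $\Xo_u^w \xrightarrow{\sim} \Pio_f$ onto a dense open subset of $\Pi_f$. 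Hence $\pi \circ \rho$ is a proper birational morphism from a smooth variety onto $\Pi_f$.

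It then remains to verify the two defining conditions for $\pi \circ \rho$. For the zeroth direct image, $(\pi \circ \rho)_* \O_{\widetilde Y} = \pi_*(\rho_* \O_{\widetilde Y}) = \pi_* \O_{X_u^w} = \O_{\Pi_f}$ by Theorem~\ref{T:crepant} (this also re-derives normality, cf.\ Corollary~\ref{c:Normal}). For the higher direct images, the Grothendieck spectral sequence $R^p \pi_*(R^q \rho_* \O_{\widetilde Y}) \Rightarrow R^{p+q}(\pi \circ \rho)_* \O_{\widetilde Y}$ collapses, because $R^q \rho_* \O_{\widetilde Y} = 0$ for $q > 0$, to give $R^p(\pi \circ \rho)_* \O_{\widetilde Y} \cong R^p \pi_* \O_{X_u^w}$, which vanishes for $p > 0$ again by Theorem~\ref{T:crepant}. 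Therefore $\Pi_f$ has rational singularities.

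There is no real obstacle here: the substantive input — cohomological triviality of the map from the Richardson model — is exactly Theorem~\ref{T:crepant}, and the rational singularities of Richardson varieties are known. The only points requiring care are that $\pi$ is birational onto $\Pi_f$ (supplied by Theorem~\ref{thm:projectedRichardsons}) and the routine composition-of-derived-functors bookkeeping. Equivalently, one may simply invoke the general principle that rational singularities descend along a proper morphism $f : Y \to X$ with $Y$ having rational singularities and $Rf_* \O_Y = \O_X$, the normality making the statement clean being provided by Corollary~\ref{c:Normal}.
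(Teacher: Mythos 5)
Your proposal is correct and follows essentially the same path as the paper: compose a resolution of the Richardson model with the projection $\pi$, note the composite is a proper birational map from a smooth variety (birationality of $\pi$ coming from Theorem~\ref{thm:projectedRichardsons}), and then use the Grothendieck spectral sequence together with Theorem~\ref{T:crepant} to see that all higher direct images vanish. The only cosmetic difference is that you cite \cite{BrionPos} rather than \cite{Bri} for rational singularities of Richardson varieties and you spell out the birationality of $\pi$ explicitly, which the paper leaves implicit.
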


\begin{proof}
  By definition, a variety $V$ has rational singularities if there is
  a smooth variety $W$ and a proper birational map $p: W \to V$ such
  that $p_* \O_W \to \O_V$ is an isomorphism and $R^i p_* \O_W=0$ for $i>0$.
  Richardson varieties have rational singularities~\cite[Theorem 4.2.1]{Bri}.

  Let $X$ be a positroid variety, $Y$ its Richardson model, and
  $\psi : W \to Y$ a resolution of singularities demonstrating that
  $Y$ has rational singularities. Consider the resolution of
  singularities $\psi \circ \pi: W \to X$. Since $\psi$ and $\pi$ are
  proper and birational, so is $\psi \circ \pi$. By functoriality of
  pushforward, $(\psi \circ \pi)_* \O_W \to \O_X$ is an
  isomorphism. From the Grothendieck spectral sequence \cite[Section 5.8]{Wei},
  and the knowledge that $R^i \psi_* \O_W$ and $R^i \pi_* \O_Y$
  vanish, we know that $R^i (\psi \circ \pi)_* \O_W$ vanishes.
\end{proof}

\begin{cor} \label{c:CohenMacaulay}
  Positroid varieties are Cohen-Macaulay.
\end{cor}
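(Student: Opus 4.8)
The plan is simply to extract Cohen--Macaulayness from the rational singularities already in hand. The two steps are: (i) recall the standard fact that a complex variety with rational singularities is Cohen--Macaulay, and (ii) apply Corollary~\ref{c:RatSing}. For step (i): if $p\colon W\to V$ is a resolution of singularities with $p_*\O_W=\O_V$ and $R^ip_*\O_W=0$ for $i>0$, then Grauert--Riemenschneider vanishing together with local duality (or, if one prefers, a direct appeal to the literature on rational singularities) shows that every local ring of $V$ is Cohen--Macaulay. For step (ii): Corollary~\ref{c:RatSing} establishes that positroid varieties have rational singularities, and Corollary~\ref{c:Normal} that they are normal, so Cohen--Macaulayness follows at once. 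There is therefore no genuine obstacle to overcome here: the substance is entirely contained in Corollary~\ref{c:RatSing} (and hence ultimately in the cohomological triviality of Theorem~\ref{T:crepant}), and the present corollary is the routine consequence.

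It may be worth recording that an independent argument becomes available once the combinatorial results of Section~\ref{sec:combgeom} are in place. Inside Hodge's Gr\"obner degeneration of the Grassmannian, a positroid variety $\Pi_f$ degenerates flatly to the projective Stanley--Reisner scheme of the shellable ball $\Delta(\Pi_f)$; a shellable complex is Cohen--Macaulay (Reisner's criterion), and Cohen--Macaulayness of the fibres is an open condition in a flat family of finite type, so the general fibre --- which is isomorphic to $\Pi_f$ itself --- is Cohen--Macaulay. Either route suffices; we take the first, as it requires nothing beyond Corollary~\ref{c:RatSing}. One could also try to argue directly that a cohomologically trivial proper image of the Cohen--Macaulay Richardson model $X_u^w$ is Cohen--Macaulay, but since the fibres of $\pi\colon X_u^w\onto\Pi_f$ are not equidimensional this is no simpler than passing through rational singularities, so we do not pursue it.
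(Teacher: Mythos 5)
Your main argument is correct, and it differs from the paper's proof only in which black box it invokes. The paper applies Kov\'acs's characterization~\cite[Theorem~3]{Kov} directly, taking $Y$ to be the Richardson model $X_u^w$ (which has rational singularities) and $X$ the positroid variety; the hypothesis of Kov\'acs's theorem is exactly the cohomological triviality of Theorem~\ref{T:crepant}. You instead pass through Corollary~\ref{c:RatSing} and then invoke the standard implication ``rational singularities $\Rightarrow$ Cohen--Macaulay'' (Grauert--Riemenschneider vanishing plus local duality), which is valid with the definition of rational singularities the paper uses, and normality is already built in so citing Corollary~\ref{c:Normal} is harmless but unnecessary. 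Both routes are one-step deductions from Theorem~\ref{T:crepant}; yours is arguably more economical since it reuses Corollary~\ref{c:RatSing} rather than citing Kov\'acs afresh. Your ``independent argument'' via the Gr\"obner degeneration to the Stanley--Reisner scheme of a shellable ball, together with Reisner's criterion and openness of the Cohen--Macaulay locus in a flat family, is precisely the ``redux'' proof the paper itself gives in Section~\ref{sec:combgeom} via Proposition~\ref{prop:normalCM}, so you have anticipated that as well.
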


\begin{proof}
  We apply~\cite[Theorem 3]{Kov} with $X$ as our positroid variety and
  $Y$ a Richardson model. As noted above, $Y$ has rational singularities,
  so the theorem applies.
\end{proof}

We mention another curious property, that follows directly from Lemma
\ref{lem:splittings} and the fact that the positroid decomposition
is a stratification (Theorem \ref{thm:intersectSchubs}).

\begin{cor}\label{c:IntersectUnions}
  Let $A,B \subseteq \Gr(k,n)$ each be unions of some positroid varieties.
  Then $A\cap B$ is also a reduced union of positroid varieties.
\end{cor}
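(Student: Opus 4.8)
The plan is to combine the Frobenius-splitting machinery of Lemma~\ref{lem:splittings} with the stratification property of Theorem~\ref{thm:intersectSchubs}. First I would note that each positroid variety is compatibly Frobenius split in $\Gr(k,n)$, by the Corollary immediately following Lemma~\ref{lem:splittings}. Since there are only finitely many positroid varieties, $A$ and $B$ are each \emph{finite} unions of them; and a finite union of compatibly split subvarieties is again compatibly split, by induction from part~(2) of Lemma~\ref{lem:splittings}. Hence $A$ and $B$, taken with their reduced scheme structures, are compatibly split, and applying part~(2) once more the scheme-theoretic intersection $A \cap B$ is compatibly split as well. By part~(1) of the same lemma, $A \cap B$ is therefore reduced.

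It then remains to identify the underlying set of $A \cap B$ as a union of positroid varieties. Write $A = \bigcup_{f \in S} \Pi_f$ and $B = \bigcup_{g \in T} \Pi_g$ for finite $S, T \subseteq \Bound(k,n)$. By Theorem~\ref{thm:intersectSchubs} every closed positroid variety is the disjoint union of the open positroid varieties it contains, namely $\Pi_f = \coprod_{f' \geq f} \Pio_{f'}$; so $A$ and $B$, and hence the closed set $A \cap B$, are unions of open positroid strata. Because $\{\Pio_f\}$ is a stratification (again Theorem~\ref{thm:intersectSchubs}), whenever an open stratum $\Pio_f$ meets the closed set $A \cap B$ it lies inside it, and then so does its closure $\overline{\Pio_f} = \Pi_f$. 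Thus as sets $A \cap B = \bigcup \{ \Pi_f : \Pio_f \subseteq A \cap B \}$, a union of positroid varieties.

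Finally, since $A \cap B$ is reduced (first paragraph) and a reduced scheme is determined by its underlying topological space, $A \cap B$ coincides scheme-theoretically with the reduced union of the $\Pi_f$'s listed above, proving the claim. The only point needing any care is the bookkeeping of scheme structures in this last step — matching the compatibly split structure produced by Lemma~\ref{lem:splittings}(2) with the reduced structure on the set-theoretic union — but this is immediate from reducedness; everything else is a formal consequence of the two cited results.
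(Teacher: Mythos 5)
Your proof is correct and is exactly the argument the paper intends but does not spell out: the paper simply asserts that the corollary ``follows directly'' from Lemma~\ref{lem:splittings} and Theorem~\ref{thm:intersectSchubs}, and you have filled in precisely those details (Frobenius splitting for reducedness, the stratification property for identifying the support as a union of closed positroid varieties). One small expository nit: the fact that an open stratum meeting $A\cap B$ lies inside it comes merely from $A\cap B$ being a union of (pairwise disjoint) strata, while the stratification property is what you actually invoke earlier to write each $\Pi_f$ as $\coprod_{f'\geq f}\Pio_{f'}$; the substance of the argument is unaffected.
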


It was recently proven in \cite{Schwede,KumarMehta}
that on a variety with a fixed Frobenius splitting, there are only
finitely many compatibly split subvarieties.

\begin{conjecture}
  With respect to the standard Frobenius splitting on the Grassmannian,
  the {\em only} compatibly split irreducible subvarieties are the
  closed positroid varieties.
\end{conjecture}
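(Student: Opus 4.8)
The plan is to combine the structural theory of Frobenius splittings with the combinatorial control over intersections of positroid varieties given by Corollary~\ref{c:IntersectUnions}. One direction is already in hand: the Corollary following Lemma~\ref{lem:splittings} shows every closed positroid variety is compatibly split in the standard splitting $\sigma$ of $\Gr(k,n)$. So the content is the converse, and since compatibly split subschemes are reduced (Lemma~\ref{lem:splittings}(1)) it suffices to show every $\sigma$-compatibly split closed subvariety of $\Gr(k,n)$ is a union of positroid varieties.

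\emph{Step 1: identify the splitting divisor.} Let $D$ be the splitting divisor of $\sigma$, i.e.\ the union of all codimension-one $\sigma$-compatibly split subvarieties. The $n$ cyclic shifts of the Schubert divisor, namely the hypersurfaces $\{\Delta_{[i,\,i+k-1]}=0\}$ for cyclic intervals of length $k$, are positroid varieties of codimension one, hence $\sigma$-compatibly split, hence components of $D$; thus $D$ contains their union, a reduced effective divisor of degree $n$ in the Pl\"ucker embedding. On the other hand, general Frobenius-splitting theory bounds $D$ from above: the section of $\omega_{\Gr}^{1-p}$ defining $\sigma$ vanishes to order at least $p-1$ along any compatibly split divisor, so $(p-1)\deg D \le \deg \omega_{\Gr}^{1-p} = (p-1)\deg(-K_{\Gr}) = (p-1)n$. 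Hence $\deg D = n$ and $D = \bigcup_i \{\Delta_{[i,i+k-1]}=0\}$ exactly; in particular every component of $D$ is a positroid variety. (This matches small cases: the splitting divisor of $\PP^2=\Gr(1,3)$ is the union of the three coordinate lines, and that of $\Gr(2,4)$ is $\{\Delta_{12}\Delta_{23}\Delta_{34}\Delta_{14}=0\}$ --- notably not involving $\Delta_{13}$ or $\Delta_{24}$, consistent with $\{\Delta_{13}=0\}$ not being a positroid variety.)

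\emph{Step 2: bootstrap along the stratification.} Recall the standard facts for a Frobenius split $(X,\sigma)$: a closed subvariety $Y\subseteq X$ is $\sigma$-compatibly split iff it lies in a chain $X=Y_0\supsetneq Y_1\supsetneq\cdots\supsetneq Y_m=Y$ with each $Y_{l+1}$ an irreducible component of the splitting divisor of the splitting induced on $Y_l$; compatible splitting is transitive along these restrictions; and splitting divisors have pure codimension one (the finiteness theorem of \cite{Schwede,KumarMehta} guarantees these chains terminate). From this one gets: if $V\subseteq\Gr(k,n)$ is $\sigma$-compatibly split then the induced splitting divisor $D(V)$ on $V$ satisfies $D(V)\subseteq D$ set-theoretically --- the chain for any component $W$ of $D(V)$ begins $\Gr\supsetneq W_1\supsetneq\cdots$ with $W_1$ a component of $D$, so $W\subseteq W_1\subseteq D$. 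Now let $V$ be a positroid variety. Then $D(V)\subseteq D\cap V$, which by Corollary~\ref{c:IntersectUnions} is a reduced union of positroid varieties; discarding the at most one component equal to $V$, the rest have codimension $\ge 1$ in $V$, and since $D(V)$ is a proper divisor of pure codimension one in $V$, each of its components must coincide with one of those codimension-one positroid varieties. So the components of $D(V)$ are positroid varieties whenever $V$ is.

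\emph{Step 3: induct on dimension.} Let $V$ be a positroid variety, and assume for every positroid variety $V'\subsetneq V$ that every $\sigma$-compatibly split subvariety of $V'$ is a union of positroid varieties. Let $Z\subseteq V$ be $\sigma$-compatibly split and irreducible. If $Z=V$ we are done; otherwise $Z$ is $\sigma|_V$-compatibly split and hence contained in some component $W$ of $D(V)$, which by Step 2 is a positroid variety properly inside $V$, so the inductive hypothesis forces $Z$ to be a positroid variety. Applying this with $V=\Gr(k,n)$ (base case $\dim V=0$) and combining with the already-known converse proves the conjecture. The main obstacle is Step 1: pinning down $D$ exactly, which requires the standard splitting realized concretely enough to read off its anticanonical class and reducedness, together with the multiplicity bound for compatibly split divisors; everything after that is formal, modulo the structural facts about compatibly split subvarieties quoted from \cite{BK}.
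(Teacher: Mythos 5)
Note first that the paper itself only \emph{conjectures} this statement, so there is no in-paper proof to compare against; the question is whether your argument actually closes it. Step 1 is sound: $\omega_{\Gr(k,n)}^{-1}\cong\O(n)$, the $n$ cyclic Schubert divisors $\chi^i X_\Box=\{\Delta_{\chi^i[k]}=0\}$ are distinct hyperplane sections and are compatibly split (being positroid varieties), and the order-$\geq(p-1)$ vanishing of the splitting section along any compatibly split prime divisor forces $\mathrm{div}(s)=(p-1)\sum_i\chi^iX_\Box$ on the nose. The problem is that Step 2 invokes as a ``standard fact'' a going-up property --- that a compatibly split irreducible subvariety of codimension $\geq 2$ lies on a compatibly split prime divisor, hence on a chain descending one codimension at a time --- and this is \emph{false} for general Frobenius splittings; it is not what \cite{Schwede,KumarMehta} prove. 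Counterexample: on $\mathbb{A}^2$ over $\overline{\mathbb{F}_p}$ with $p\geq 3$, take the splitting with Fedder element $\tilde\sigma=x^{p-2}y^{p-2}(x^2+xy+y^2)$. Then $\tilde\sigma\equiv(xy)^{p-1}\ (\mathrm{mod}\ (x^p,y^p))$, so $\tilde\sigma\in(\mathfrak{m}^{[p]}:\mathfrak{m})$ and the origin $(x,y)$ is compatibly split; but every prime occurs in $\mathrm{div}(\tilde\sigma)$ with multiplicity at most $p-2<p-1$, so \emph{no} prime divisor is compatibly split and the origin lies on no chain of the kind you describe.

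What rescues going-up on $\Gr(k,n)$ itself is exactly the tightness from Step 1: localizing at the generic point $\eta_Z$ of a compatibly split $Z$ of codimension $c\geq 1$, Fedder gives $\tilde\sigma\in(\mathfrak{m}^{[p]}:\mathfrak{m})\subseteq\mathfrak{m}$, a nonunit, so $\mathrm{div}(\tilde\sigma)\neq 0$ there; since $\mathrm{div}(s)=(p-1)\sum_i\chi^iX_\Box$ globally, the only possible components are $\chi^iX_\Box$, and one of them contains $Z$. But the induction in Step 3 needs the \emph{analogous} going-up on every positroid variety $V$ with its induced splitting, which would require the Step-1 degree identity $\mathrm{div}(s_V)=(p-1)D(V)$ on $V$, not merely $D(V)\subseteq D\cap V$. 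Your Step 2 shows the components of $D(V)$ are positroid divisors of $V$; it does not show a compatibly split $Z\subsetneq V$ must lie on $D(V)$. To push the degree count through one would have to compute $\deg\omega_V^{[1-p]}$ for all positroid $V$ (which are normal Cohen--Macaulay but singular, generally not Gorenstein, typically with $\mathrm{Pic}(V)\neq\mathbb{Z}$) and match it against the total degree of the positroid divisors of $V$; none of that is in the proposal. The fallback route --- apply going-up in $\Gr$ to get $Z\subseteq\chi^iX_\Box$, then use Corollary~\ref{c:IntersectUnions} to drop into a positroid component of $\chi^iX_\Box\cap V$ --- stalls precisely when $V\subseteq\chi^iX_\Box$ for every such $i$, which puts you back to needing going-up on a positroid variety smaller than the Grassmannian. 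That is the genuine gap: the strategy is plausible, but the tightness of the splitting divisor must be established on all positroid varieties, not just $\Gr(k,n)$, and that is the substantial missing step.
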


If true, this would be another sense in which the positroid stratification is a
natural boundary between the Schubert and matroid stratifications,
though perhaps there are additional possibilities afforded by other Frobenius
splittings than the standard one.

\section{Combinatorics of $k$-Bruhat order}

If $P$ is any poset, let $\Delta(P)$ denote the \defn{order complex}
of $P$; this is the simplicial complex with vertex set $P$ and whose
faces are the chains of $P$. If $S$ and $T$ are sets, $K$ is a
simplicial complex with vertex set $S$, and $p$ a map from $S$ to $T$, then
$p(K)$ is the simplicial complex with vertex set $T$ whose faces are
the images under $p$ of the faces of $K$.

In this section, the primary
object of study is $\sigma_k(\Delta([u,w]))$. As we will explain in
Section \ref{sec:grobner}, when $u \leq_k w$, this simplicial
complex is closely related to $\Pi_{u}^{w}$. In this section, we will
study $\sigma_k(\Delta([u,w]))$ from a purely combinatorial
perspective. We begin with some warnings.

\begin{remark}
The vertex set of
$\sigma_k(\Delta([u,w]))$ is $\sigma_k([u,w])$. The reader should note
that $\sigma_k([u,w])$ is not the interval $[\sigma_k(u),
\sigma_k(w)]$ in $\binom{[n]}{k}$. For example, take $n=3$, $k=1$,
$u=[132]$ and $w=[312]$. Then $\sigma_k([u,w])= \{ \{ 1\}, \{ 3 \}
\}$, which is not $[ \{ 1 \}, \{ 3 \}]=\{ \{ 1 \}, \{ 2 \}, \{ 3 \}
\}$.
\end{remark}

\begin{remark} \label{rem:nonfaces}
An important description of any simplicial complex is by its set of
minimal nonfaces.  A complex whose minimal nonfaces all have size $2$
is called a \defn{flag} or \defn{clique complex}; for any poset $P$,
$\Delta(P)$ is a flag complex.  The complexes
$\sigma_k(\Delta([u,w]))$ are very far from being flag complexes.
For example, take $k=2$, $w=[(n-1) n 1 2 \ldots (n-2)]$ and $u=[2134\ldots n]$,
for $n \geq 4$. Then $\sigma_2([u,w])$ is all of $\binom{[n]}{2}$.
We leave it to the reader to check that
$$( \{ 12 \},  \{ 13 \}, \{ 24 \}, \ldots, \{(n-2) n \}, \{(n-1) n \})$$
is a minimal nonface of $\sigma_2(\Delta([u,w]))$.

Thus, this family of examples shows that nonfaces of these complexes
can be arbitrarily large.  In particular, $\sigma_k(\Delta([u,w]))$ is
not isomorphic to $\Delta(P)$ for any poset $P$.  In
Section~\ref{sec:grobner}, we will show that there is a Gr\"obner
basis for $\Pi_u^w$ whose generators have degrees corresponding to the
minimal nonfaces of $\sigma_k(\Delta([u,w]))$, so this example shows
that our Gr\"obner basis can contain elements of arbitrarily high
degree.  (In contrast, Corollary~\ref{cor:QuadGeneration} showed that
the ideal of $\Pi_u^w$ is generated in degrees $1$ and $2$.)
\end{remark}

\begin{remark} \label{rem:not balanced}
A simplicial complex $K$ is called \defn{balanced of rank $r$} if it is possible to color the vertices of $K$ with $\{ 0,1,\ldots,r \}$ such that each maximal face of $K$ contains exactly one vertex of each color.
If $P$ is a graded poset of rank $r$, then $\Delta(P)$ is balanced of rank $r$.
We will use the previous example from the previous remark to check that $\sigma_k(\Delta([u,w]_k))$ is not necessarily balanced, thus showing another way in which $\sigma_k(\Delta([u,w]_k))$ is unlike an order complex.
Take $(k,n)=(2,5)$, $u=21345$ and $w=45123$.
Consider the chains $C_1 := (21345, 31245, 32145, 34125, 35124, 45123)$, $C_2 := (21345, 23145, 24135, 34125, 35124, 45123)$ and $C_3 := (21345, 31245, 32145, 42135, 43125, 45123)$, whose images under $\sigma_k$ are $K_1 := \{ 12, 13, 23, 34, 35, 45 \}$, $K_2 := \{ 12, 23, 24, 34, 35, 45 \}$ and $K_3 := \{ 12, 13, 23, 24, 34, 45 \}$.
Suppose we had a balanced coloring.
Since $K_1$ and $K_2$ only differ by exchanging $13$ for $24$, the vertices $13$ and $24$ would have to have the same color.
However, $13$ and $24$ are both vertices of $K_3$, so this coloring would not be balanced.
\end{remark}

We will usually restrict to the case $u \leq_k w$, as motivated by the
following proposition.

\begin{prop} The dimension of $\sigma_k(\Delta([u,w]))$ is at most
$\ell(w) - \ell(u)$, with equality if and only if $u \leq_k w$.
\end{prop}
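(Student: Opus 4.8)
The plan is to treat the inequality and the equality condition separately, with almost all the content being supplied by the gradedness of Bruhat order (a standard fact) and of $k$-Bruhat intervals (shown in \cite{BS}); throughout I assume $u \leq w$, so that $[u,w]$ is a nonempty interval. For the inequality: the interval $[u,w]$ is graded of rank $\ell(w)-\ell(u)$, so every chain in it has at most $\ell(w)-\ell(u)+1$ elements, and in particular $\dim \Delta([u,w]) = \ell(w)-\ell(u)$. Since $\sigma_k$ is merely a map of vertex sets, every face of $\sigma_k(\Delta([u,w]))$ is $\sigma_k(F)$ for some face $F$ of $\Delta([u,w])$, and $|\sigma_k(F)| \leq |F| \leq \ell(w)-\ell(u)+1$. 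Hence $\dim \sigma_k(\Delta([u,w])) \leq \ell(w)-\ell(u)$, with equality exactly when some chain $C$ in $[u,w]$ satisfies $|\sigma_k(C)| = \ell(w)-\ell(u)+1$.

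For the ``if'' half of the equality statement, suppose $u \leq_k w$. By \cite{BS} the interval $[u,w]_k$ is graded of rank $\ell(w)-\ell(u)$, so there is a saturated chain $u = x_0 \lessdot_k x_1 \lessdot_k \cdots \lessdot_k x_r = w$ with $r = \ell(w)-\ell(u)$. Each step $x_{i-1} \lessdot_k x_i$ is a Bruhat cover satisfying $\sigma_k(x_{i-1}) \neq \sigma_k(x_i)$ by the definition of a $k$-cover; since $\sigma_k$ is order-preserving for Bruhat order, this forces $\sigma_k(x_0) < \sigma_k(x_1) < \cdots < \sigma_k(x_r)$, so these $r+1$ subsets are distinct. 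Thus $\sigma_k(\{x_0,\dots,x_r\})$ is a face of $\sigma_k(\Delta([u,w]))$ of dimension $r = \ell(w)-\ell(u)$, and combined with the inequality we get equality.

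For the ``only if'' half, suppose $\dim \sigma_k(\Delta([u,w])) = \ell(w)-\ell(u)$, so by the first paragraph there is a chain $C$ in $[u,w]$ with $|\sigma_k(C)| = \ell(w)-\ell(u)+1$. From $|\sigma_k(C)| \leq |C| \leq \ell(w)-\ell(u)+1$ we read off that $|C| = \ell(w)-\ell(u)+1$ and that $\sigma_k$ is injective on $C$. A chain of the maximum possible cardinality in the graded interval $[u,w]$ is a maximal chain, i.e.\ a saturated chain $u = x_0 \lessdot x_1 \lessdot \cdots \lessdot x_r = w$ with $r = \ell(w)-\ell(u)$; injectivity of $\sigma_k$ on $C$ then gives $\sigma_k(x_{i-1}) \neq \sigma_k(x_i)$ for every $i$, so each cover $x_{i-1} \lessdot x_i$ is in fact a $k$-cover $x_{i-1} \lessdot_k x_i$. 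Taking the transitive closure of these $k$-covers yields $u \leq_k w$.

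There is no genuinely hard step here; the content is entirely packaged into gradedness. The one place that requires a little care is the ``only if'' half, where one must observe that a chain attaining the largest possible number of elements is forced to run from $u$ to $w$ and to be saturated — this is precisely where gradedness of $[u,w]$ is used, and it is what converts the statement ``$\sigma_k$ changes at every step of $C$'' into the statement that every step is a $k$-cover.
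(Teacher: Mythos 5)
Your proof is correct and follows essentially the same route as the paper's: both rest on the observations that $\Delta([u,w])$ has dimension $\ell(w)-\ell(u)$ by gradedness, that $\sigma_k$ can only shrink faces, and that $\sigma_k$ is injective on a maximal chain precisely when every step is a $k$-cover. You spell out a couple of points the paper leaves implicit — notably that strict monotonicity of $\sigma_k$ along the chain forces global injectivity, not just distinctness of consecutive values — but the underlying argument is the same.
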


\begin{proof}
  The maximal chains of $[u,w]$ have cardinality $\ell(w) - \ell(u)
  +1$, so $\Delta([u,w])$ has dimension $\ell(w) - \ell(u)$. So
  $\sigma_k(\Delta([u,w])$ has dimension at most
  $\ell(w) - \ell(u)$, with equality if and only if there is a
  maximal face of $[u,w]$ on which $\sigma_k$ is injective.

  Let $u = v_0 \lessdot v_1 \lessdot \cdots \lessdot v_{\ell(w) -
    \ell(u) +1} = w$ be a maximal chain of $[u,w]$. Then $\sigma_k$ is
  injective on $(v_0, v_1, \cdots, v_{ \ell(w) - \ell(u) +1})$ if and
  only if $v_{i} \lessdot_k v_{i+1}$ for every $i$. The existence of
  such a chain is obviously equivalent to having $u \leq_{k} w$.
\end{proof}

\begin{lem} \label{L:minimallifting}
  Let $M \in \binom{[n]}{k}$ and let $a \in S_n$ with $M \geq
  \sigma_k(a)$. Then the set $\{ b: b \geq a, 
  \sigma_k(b)=M \}$ has a unique minimal element, $c$. Moreover, $a
  \leq_k c$.
\end{lem}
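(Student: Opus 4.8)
The plan is to prove three statements in turn: that $S:=\{b\in S_n : b\ge a,\ \sigma_k(b)=M\}$ is nonempty, that $S$ has a unique minimal element $c$, and that $a\le_k c$. Note that $\sigma_k(b)$ alone never determines $b$ — the orderings of $M$ among positions $1,\dots,k$ and of $[n]\setminus M$ among positions $k+1,\dots,n$ are unconstrained — so the lemma is really asserting that minimality above $a$ selects both orderings canonically.

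For nonemptiness I would lift a chain. Since $\sigma_k$ restricts to a poset isomorphism from $\GS$ onto $(\binom{[n]}{k},\le)$, the hypothesis $\sigma_k(a)\le M$ gives a saturated chain $\sigma_k(a)=I_0\lessdot I_1\lessdot\cdots\lessdot I_N=M$, each step replacing some $x\in I_t$ by $x+1\notin I_t$. Lift it to $S_n$ starting at $a$: if $\sigma_k(b_t)=I_t$, then $x$ occupies some position $p\le k$ of $b_t$ and $x+1$ some position $q>k$, and $b_{t+1}:=b_t\cdot(p\,q)$ has $\sigma_k(b_{t+1})=I_{t+1}$; since no integer lies strictly between $x$ and $x+1$, one checks $\ell(b_{t+1})=\ell(b_t)+1$, so $b_t\lessdot b_{t+1}$. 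The top of the lifted chain lies in $S$, so $S\neq\emptyset$.

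For the other two statements I would pass to the coset $C_M:=\{b:\sigma_k(b)=M\}$. Let $m'\in\GS$ be the Grassmannian permutation with $\sigma_k(m')=M$. Then $C_M=m'\,(S_k\times S_{n-k})$, every factorization $m'v$ (with $v\in S_k\times S_{n-k}$) is length-additive because $m'$ is a minimal coset representative, and $v\mapsto m'v$ identifies $(S_k\times S_{n-k},\le)$ with $(C_M,\le)$. Hence $S=\{m'v:v\in S_k\times S_{n-k},\ m'v\ge a\}$, and producing $\min S$ is the same as producing the Bruhat-minimal $v\in S_k\times S_{n-k}$ with $m'v\ge a$. The key claim is that $\{v\in S_k\times S_{n-k}:m'v\ge a\}$ is a principal up-set $\{v:v\ge v_0\}$; granting it, $c:=m'v_0$ is the unique minimal element of $S$. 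The relation $a\le_k c$ can then be checked directly from Theorem~\ref{T:BScriterion} — minimality of $c$ forbids any within-block inversion of $c$ that is not already an inversion of $a$ (such a pair would give a length-lowering transposition staying inside $S$), and this, together with $a\le c$, yields both conditions of the criterion — or else deduced from the Grassmannian case (Corollary~\ref{C:kBruhatGrassmann}) after writing $a$ and $c=m'v_0$ with a common right factor from $S_k\times S_{n-k}$ and applying Lemma~\ref{L:kBruhatiso}.

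The step I expect to be the real obstacle is the uniqueness of the minimal element, i.e.\ the principality of $\{v:m'v\ge a\}$. This is exactly where lattice structure is unavailable: Bruhat order on $S_n$ is not a lattice, so $a$ and $m'$ can have several minimal common upper bounds, and what has to be shown is that precisely one of them lies in $C_M$. I would attack this by analyzing $m'v\ge a$ through the subset conditions $b([j])\ge a([j])$ (noting that for $j\le k$ the set $(m'v)([j])$ ranges over $\binom{M}{j}$ as $v$ varies, via the order isomorphism $m'|_{[k]}\colon[k]\to M$), or equivalently through the rank-matrix description of Bruhat order as in Proposition~\ref{prop:rankBruhat}, where $\min S$ should be the entrywise-largest valid permutation rank matrix dominated by that of $a$ with its $k$-th row pinned to the profile of $M$; the delicate point is that ``entrywise-largest valid rank matrix'' is not formally guaranteed to exist and must be constructed. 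A lesser subtlety worth noting is that, unlike in Corollary~\ref{C:kBruhatGrassmann} where $w$ is Grassmannian, condition~(1) of the criterion of~\cite{BS} is not automatic from $a\le c$, so the verification of $a\le_k c$ genuinely uses the minimality of $c$.
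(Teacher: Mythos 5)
Your setup is sound and you have correctly identified the crux: showing that $S := \{b : b \geq a,\ \sigma_k(b)=M\}$ has a unique minimal element — equivalently, that $\{v \in S_k\times S_{n-k} : m'v \geq a\}$ is a principal up-set — is where all the work lies, and you have not actually shown it. You sketch two possible attacks (direct analysis of the conditions $b([j]) \geq a([j])$, or rank matrices as in Proposition~\ref{prop:rankBruhat}) but concede that neither is carried out and that the existence of an ``entrywise-largest valid rank matrix'' is not something you can currently establish. As written, that is a genuine gap.

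The idea you are missing is one you almost talk yourself out of: you are right that Bruhat order on $S_n$ is not a lattice, but the fix is to work level-by-level in the distributive lattices $\binom{[n]}{i}$ rather than in $S_n$ itself. For each $i$ define
$$\Sigma_i = \begin{cases} \{\,I \in \binom{[n]}{i} : I \geq \sigma_i(a),\ I \subseteq M\,\} & 1 \leq i \leq k, \\[2pt] \{\,I \in \binom{[n]}{i} : I \geq \sigma_i(a),\ I \supseteq M\,\} & k < i \leq n.\end{cases}$$
Each $\Sigma_i$ is nonempty (for $i\leq k$, the $i$ largest elements of $M$ lie in it) and, crucially, is closed under the lattice meet $\wedge$ of $\binom{[n]}{i}$, since domination of $\sigma_i(a)$, containment in $M$, and containment of $M$ are all meet-stable. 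Hence $\Sigma_i$ has a unique minimum $M_i$, with $M_k=M$. One then verifies that $M_{i-1}\subseteq M_i$, so there is a (unique) permutation $c$ with $\sigma_i(c)=M_i$ for all $i$. Minimality of $c$ in $S$ is immediate, since any $b\in S$ satisfies $\sigma_i(b)\in\Sigma_i$, hence $\sigma_i(b)\geq M_i=\sigma_i(c)$. This explicit construction is also what rescues your argument for $a\leq_k c$: tracking which element is added at each step $M_{i-1}\subset M_i$ yields $c_i\geq a_i$ for $i\leq k$ and $c_i\leq a_i$ for $i>k$, which is condition~(1) of Theorem~\ref{T:BScriterion}; condition~(2) is then the contradiction you sketched (an unwanted within-block inversion of $c$ would give $c(i\,j)\in S$ strictly below $c$). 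By contrast, your alternative route for $a\leq_k c$ via Lemma~\ref{L:kBruhatiso} and Corollary~\ref{C:kBruhatGrassmann} does not obviously supply condition~(1), as you yourself note, so the explicit construction seems essential.
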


\begin{proof}
The poset $\binom{[n]}{k}$ is isomorphic to the poset of Young diagrams fitting in a $k \times (n-k)$ box, with the poset relation being containment of Young diagrams.
From this description, it is obvious that $\binom{[n]}{k}$ is a distributive lattice, with meet and join corresponding to the intersection and union of Young diagrams.
Under this lattice structure, we have $I \cap J \subseteq I \wedge J \subseteq I \cup J$.

For $1 \leq i \leq k$, let $\Sigma_i$ be the set $\{ I \in
\binom{[n]}{i} : I \geq \sigma_i(a) \mbox{ \ and \ } I \subseteq M
\}$; for $k+1 \leq i \leq n$ let $\Sigma_i$ be the set $\{ I \in
\binom{[n]}{i} : I \geq \sigma_i(a) \mbox{ \ and \ } I \supseteq M
\}$. Since $M \geq \sigma_k(a)$, the set of the $i$ largest elements
of $M$ is an element of $\Sigma_i$ for $1 \leq i \leq k$; similarly,
$\Sigma_i$ is also nonempty when $i \geq k$. It is easy to see that
$\Sigma_i$ is closed under $\wedge$ and we showed above that
$\Sigma_i$ is nonempty, so $\Sigma_i$ has a unique minimal element
which we will term $M_i$. Clearly, $M_k=M$.

We claim that $M_{i-1} \subseteq M_{i}$ for $2 \leq i \leq n$. For $i=k$ or $k+1$, this is true by definition. We give the proof for $i <k$; the case where $i > k+1$ is similar.
Let $M_i = \{ M_i^1, M_i^2, \ldots, M_i^i \}$ with $M_i^1 < M_i^2 < \cdots < M_i^i$ and label the elements of $M_{i-1}$ similarly.
Let $\sigma_i(a)= \{ \alpha_1 < \alpha_2 < \cdots < \alpha_i \}$ with $\alpha_r=a_i$.
For $j \in [i-1]$, let $j'=j$ if $j <r$ and let $j'=j+1$ if $j \geq r$.
We show by induction on $j$ that $M_{i-1}^j \in \{ M_i^1, M_i^2, \ldots, M_i^{j'} \}$.
Since $M_i$ is the unique least element of $\Sigma_i$ in the order restricted from $\binom{[n]}{k}$, it is in particular the least element of $\Sigma_i$ in lexicographic order.
Thus, $M_{i-1}^{j} = \min \{ m \in M: m \geq \alpha_{j'} \mbox{\ and \ } m \neq M_{i-1}^1,\ M_{i-1}^2,\ \ldots, M_{i-1}^{j-1} \}$ and $M_{i}^{j'} = \min \{ m \in M: m \geq \alpha_{j'} \mbox{\ and \ } m \neq M_{i}^1,\ M_{i}^2,\ \ldots, M_{i}^{j'-1} \}$.
For $1 \leq j \leq r$, these formulas show by induction that $M_{i-1}^j = M_i^j$.
For $r+1 \leq j \leq i-1$, this shows that either $M_{i-1}^j = M_i^{j'}$ or $M_{i-1}^j \in \{ M_{i}^1,\ M_{i}^2,\ \ldots, M_{i}^{j'-1} \}$.
In either case, the induction goes through as required.
We note for future observation the following consequence of our proof: Let $\{ M^{s}_i \} = M_i \setminus M_{i-1}$. For $i \leq k$, we have $s \geq r$ and we either have $s=r$ and $M_i^s \geq \alpha_r$ or else $s>r$ and $M_i^s > M_i^r \geq \alpha_r$. So, either way, $M_i^s \geq \alpha_r = a_i$.
A similar argument shows, when $i \geq k+1$, that $M_i^s \leq a_i$.

So the sets $M_i$ are nested and thus there is some permutation $c
\in C_n$ such that $\sigma_i(c)=M_i$. We claim that $c$ has the
properties claimed in the lemma. Since $\sigma_i(c) = M_i \geq
\sigma_i(a)$ by definition for every $i$, we know that $c \geq a$
and, as observed above, $\sigma_k(c)=M_k=M$. Also, if $b \geq a$ and
$\sigma_k(b) =M$ then, for every $i$, it is easy to see that
$\sigma_i(b) \in \Sigma_i$. So, since $M_i$ is the minimal element
of $\Sigma_i$, we have $\sigma_i(b) \geq \sigma_i(c)$ for all $i$
and, thus, $b \geq c$.

Finally, we must show that $c \geq_k a$.
In the notation of two paragraphs earlier, $c_i = M_i^s$. As observed there, $c_i \geq a_i$ for $i \leq k$ and $c_i \leq a_i$ for $i \geq k+1$.
So condition~(1) in Theorem~\ref{T:BScriterion} is obeyed. We now check condition~(2) of Theorem~\ref{T:BScriterion}.
Let $1 \leq i < j \leq k$, the case where $k+1 \leq i < j \leq n$ is similar.
Suppose (for the sake of contradiction) that $a_i < a_j$ and $c_i > c_j$.  Set $c'=c (i j)$.
Then, since we have already checked condition~(1), we know that $c'_i = c_j \geq a_j > a_i$ and $c'_j = c_i > c_j \geq a_i$.
So, for all $l \leq k$, we know that $c'_l \geq a_l$. Thus, for $l \leq k$, we know that $\sigma_l(c') \geq \sigma_l(a)$.
For $l \geq k+1$, of course, $\sigma_l(c')=\sigma_l(c) \geq \sigma_l(a)$.
So we know that $c' \geq a$ and, of course, $\sigma_k(c') = \sigma_k(c) = M$, contradicting the minimality of $c$.
Thus, we have established that, whenever $1 \leq i < j \leq k$, the inequality $a_i < a_j$ implies $c_i < c_j$.
Similarly, when $k+1 \leq i < j \leq n$, we also know that $a_i < a_j$ implies $c_i < c_j$.
This checks condition~(2) of Theorem~\ref{T:BScriterion}, so $c \geq_k a$, as desired.
\end{proof}

Of course, we also have the dual of Lemma~\ref{L:minimallifting}.

\begin{lem} \label{L:maximallifting}
  Let $M \in \binom{[n]}{k}$ and let $a \in S_n$ with $\mu \leq
  \sigma_k(a)$. Then the set $\{ b: b \leq a \mbox{\ and\ }
  \sigma_k(b)=M \}$ has a unique maximal element, $c$. Moreover, $a
  \geq_k c$.
\end{lem}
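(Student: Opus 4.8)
The plan is to obtain Lemma~\ref{L:maximallifting} from Lemma~\ref{L:minimallifting} by conjugating with the order-reversing symmetry $w \mapsto w_0 w$. Let $\phi \colon S_n \to S_n$ be the involution $\phi(w) = w_0 w$. It is standard that $\phi$ reverses the Bruhat order, and being an order-reversing bijection of the graded poset $(S_n,\leq)$ onto itself it carries covers to covers. On the indexing set of $k$-subsets, let $\iota \colon \binom{[n]}{k} \to \binom{[n]}{k}$ be the involution $\iota(I) = \{\,n+1-i : i \in I\,\}$; one checks directly that $\iota$ reverses the order $\leq$ on $\binom{[n]}{k}$, and that $\sigma_k(\phi(w)) = \iota(\sigma_k(w))$ for every $w \in S_n$, since $(w_0 w)([k]) = w_0(w([k]))$.

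First I would verify that $\phi$ also reverses the $k$-Bruhat order. By definition $u \lessdot_k v$ means $u \lessdot v$ together with $\sigma_k(u) \neq \sigma_k(v)$; since $\phi$ reverses Bruhat covers and $\iota$ is a bijection, this is equivalent to $\phi(v) \lessdot \phi(u)$ together with $\sigma_k(\phi(v)) \neq \sigma_k(\phi(u))$, that is, to $\phi(v) \lessdot_k \phi(u)$. Passing to transitive closures, $\phi$ is an order-reversing involution of $(S_n, \leq_k)$ as well.

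Now I would apply Lemma~\ref{L:minimallifting} with $a' := \phi(a) = w_0 a$ and $M' := \iota(M)$. The hypothesis $M \leq \sigma_k(a)$ becomes, after applying the order-reversing $\iota$, the inequality $M' = \iota(M) \geq \iota(\sigma_k(a)) = \sigma_k(a')$, so Lemma~\ref{L:minimallifting} applies and produces a unique minimal element $c'$ of $\{\,b' : b' \geq a',\ \sigma_k(b') = M'\,\}$ with $a' \leq_k c'$. Since $\phi$ is a bijection carrying the set $\{\,b : b \leq a,\ \sigma_k(b) = M\,\}$ onto $\{\,b' : b' \geq a',\ \sigma_k(b') = M'\,\}$ and reversing $\leq$, the former set has a unique maximal element, namely $c := \phi(c') = w_0 c'$. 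Finally, $a' \leq_k c'$ together with the fact that $\phi$ reverses $\leq_k$ gives $\phi(c') \leq_k \phi(a')$, that is $c \leq_k a$, i.e.\ $a \geq_k c$, as claimed.

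I do not expect a genuine obstacle here: the only work beyond citing Lemma~\ref{L:minimallifting} is the routine bookkeeping that $\phi$ is compatible with $\sigma_k$ through $\iota$ and reverses both $\leq$ and $\leq_k$. (One could instead rerun the proof of Lemma~\ref{L:minimallifting} verbatim with every inequality reversed and the sets $\Sigma_i$ redefined using $\leq$, $\supseteq$, $\subseteq$ in place of $\geq$, $\subseteq$, $\supseteq$, but the dualization above is cleaner and less error-prone.)
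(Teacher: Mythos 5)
Your proof is correct, and it is essentially the argument the paper intends: the paper gives no proof for Lemma~\ref{L:maximallifting}, saying only that it is the ``dual'' of Lemma~\ref{L:minimallifting}, and the natural duality is exactly the one you exhibit, conjugation by $w \mapsto w_0 w$. You have carefully verified the bookkeeping the paper leaves implicit: that $\phi$ reverses Bruhat order (hence carries covers to reversed covers), that $\sigma_k \circ \phi = \iota \circ \sigma_k$ with $\iota$ order-reversing on $\binom{[n]}{k}$, and, crucially, that $\phi$ reverses the $k$-Bruhat order as well, which requires observing that $\leq_k$ is by definition the transitive closure of covers $u \lessdot v$ with $\sigma_k(u) \neq \sigma_k(v)$, a condition preserved under $\phi$. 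One small thing to note in passing: the statement of the lemma has a typo, $\mu \leq \sigma_k(a)$ should read $M \leq \sigma_k(a)$, and you correctly read it that way.
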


\begin{lem} \label{L:equivcomplex}
  Suppose $[u,w]_{k} \sim [u',w']_{k}$ represent the same element of
  $\Q(k,n)$.  Then the simplicial complexes $\sigma_k(\Delta([u,w]_k))$ and
  $\sigma_k(\Delta([u',w']_k))$ are identical, and so are
  $\sigma_k(\Delta([u,w]))$ and $\sigma_k(\Delta([u',w']))$.
\end{lem}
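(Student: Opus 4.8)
We prove the final assertion; we treat the $k$-Bruhat intervals and the ordinary Bruhat intervals separately, since $\sigma_k$ behaves rather differently on the two.

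\textbf{Reduction to a single move.} The equivalence relation defining $\Q(k,n)$ is generated by moves $[u,w]_k\sim[us_i,ws_i]_k$ with $i\neq k$, $us_i\gtrdot u$ and $ws_i\gtrdot w$: any length-additive factorization $uz=x$, $wz=y$ with $z\in S_k\times S_{n-k}$ decomposes into such moves, exactly as in the proof of Lemma~\ref{L:kBruhatiso}. So fix such an $i$. The elementary point driving everything is that, since $i\neq k$, the transposition $s_i$ interchanges two indices that are either both $\le k$ or both $>k$; hence $s_i([k])=[k]$, and therefore $\sigma_k(vs_i)=\sigma_k(v)$ for \emph{every} $v\in S_n$. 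Right multiplication by $s_i$ thus fixes every vertex of our complexes.

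\textbf{The $k$-Bruhat intervals.} I would show that $v\mapsto vs_i$ is a poset isomorphism $([u,w]_k,\le_k)\to([us_i,ws_i]_k,\le_k)$. If $v\in[u,w]_k$ then $u\le_k v$; applying condition~(2) of Theorem~\ref{T:BScriterion} to the pair of positions $\{i,i+1\}$ together with $u(i)<u(i+1)$ (which holds because $us_i\gtrdot u$) forces $v(i)<v(i+1)$, since the alternative would require $i\le k<i+1$, i.e.\ $i=k$. Hence $\ell(vs_i)=\ell(v)+1$, and likewise $\ell(ws_i)=\ell(w)+1$; then (the needed Bruhat inequalities $us_i\le vs_i$, $vs_i\le ws_i$ come from the lifting property) Lemma~\ref{L:kBruhatiso} applied to the pairs $(u,v)$ and $(v,w)$ gives $us_i\le_k vs_i\le_k ws_i$, i.e.\ $vs_i\in[us_i,ws_i]_k$. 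Dually, an element $v'\in[us_i,ws_i]_k$ has $v'(i)>v'(i+1)$ (now using $v'\le_k ws_i$ and $ws_i(i)>ws_i(i+1)$), so $v=v's_i$ satisfies $\ell(vs_i)=\ell(v)+1$, and the same use of Lemma~\ref{L:kBruhatiso} shows $v's_i\in[u,w]_k$. Thus $v\mapsto vs_i$ and $v'\mapsto v's_i$ are mutually inverse order-preserving bijections; since they fix all $\sigma_k$-values, the resulting isomorphism of order complexes identifies $\sigma_k(\Delta([u,w]_k))$ with $\sigma_k(\Delta([us_i,ws_i]_k))$ exactly.

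\textbf{The ordinary Bruhat intervals.} Here $s_i$ no longer preserves the intervals, so I would instead use the ``extreme coset'' maps $v\mapsto v_+:=\max(v,vs_i)$ and $v\mapsto v_-:=\min(v,vs_i)$, which again fix $\sigma_k$. Using the lifting property (with $ws_i\gtrdot w$ and $us_i\gtrdot u$) one checks the two inclusions $v\in[u,w]\Rightarrow v_+\in[us_i,ws_i]$ and $v\in[us_i,ws_i]\Rightarrow v_-\in[u,w]$; moreover $v\mapsto v_+$ and $v\mapsto v_-$ are order-preserving, being the maximal- and minimal-length coset-representative maps for $S_n/\langle s_i\rangle$. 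Consequently, for any chain $C$ of $[u,w]$ the image $v_+(C)$ is a weakly increasing, hence totally ordered, chain of $[us_i,ws_i]$ with $\sigma_k(v_+(C))=\sigma_k(C)$, so every face of $\sigma_k(\Delta([u,w]))$ is a face of $\sigma_k(\Delta([us_i,ws_i]))$; the reverse inclusion follows symmetrically via $v_-$, so the two complexes coincide.

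The only substantive work is the two displayed inclusions in the Bruhat case (and the deductions $v(i)<v(i+1)$ on $[u,w]_k$, $v'(i)>v'(i+1)$ on $[us_i,ws_i]_k$): these are short but require a careful case analysis with the lifting property and Theorem~\ref{T:BScriterion}, tracking how inversions through positions $i$ and $i+1$ change under multiplication by $s_i$. The monotonicity of the coset-representative maps is standard, and everything else is formal.
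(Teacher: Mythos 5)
Your proof is correct, and it takes a genuinely different route from the paper's.

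For the $k$-Bruhat statement, the paper simply cites \cite[Theorem 3.13]{BS} together with Lemma~\ref{L:kBruhatiso}, while you give a self-contained argument: you use condition (2) of Theorem~\ref{T:BScriterion} to show that the sign of the inversion between positions $i,i+1$ is constant on $[u,w]_k$ (resp.\ on $[us_i,ws_i]_k$), so $v\mapsto vs_i$ is the desired poset isomorphism. (A small simplification: once you know $\ell(vs_i)=\ell(v)+1$ for $v\in[u,w]_k$, Lemma~\ref{L:kBruhatiso} already delivers $us_i\le_k vs_i\le_k ws_i$ directly; the intermediate appeal to the lifting property to establish the ordinary Bruhat inequalities is not actually needed, because the criterion in Theorem~\ref{T:BScriterion} implies the Bruhat comparison on its own.)

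For the ordinary Bruhat statement, the paper's proof constructs the image chain recursively: starting from a saturated chain $u=v_0\lessdot\cdots\lessdot v_r=w$, it multiplies by $s_i$ as long as this raises length, and when it first fails (case (2)), the lifting property forces $v_{j+1}=v_j s_i$, so the chain ``collapses'' once and then follows the original chain shifted by one, capped with $ws_i$. Your approach instead applies the coset projection maps $v\mapsto v_+=\max(v,vs_i)$ and $v\mapsto v_-=\min(v,vs_i)$, which are order-preserving on all of $S_n$ and carry $[u,w]$ into $[us_i,ws_i]$ and back, and which fix $\sigma_k$ since $i\ne k$. Both arguments rest on the same lifting property of Bruhat order; yours packages it into the monotonicity of the parabolic projection maps, which is cleaner and avoids the explicit chain surgery, at the cost of verifying the two interval-inclusion statements and the monotonicity of $v_\pm$ (the latter being standard). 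Either way works; yours is arguably more conceptual, the paper's more elementary in that it stays entirely inside a single chain.
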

\begin{proof}
It follows from \cite[Theorem 3.13]{BS} and Lemma~\ref{L:kBruhatiso}
that the intervals $[u,w]_k$ and $[u',w']_k$ are isomorphic and
furthermore the isomorphism preserves the image under $\sigma_k$.
This proves the first statement.

For the second statement, by arguing as in the proof of
Lemma~\ref{L:kBruhatrepresentatives} it is enough to prove the case
that $u' = us_i$ and $w' = ws_i$ for some $i \neq k$, where both
products are length-additive.  Let us show that
$\sigma_k(\Delta([u',w']))$ contains $\sigma_k(\Delta([u,w]))$.  It
will be enough to show that the image under $\sigma_k$ of a
saturated chain in $[u,w]$ is contained in $\sigma_k([u',w'])$.  Let
$u = v_0 \lessdot v_1 \lessdot \cdots \lessdot v_r = w$ be such a
chain.  We construct a chain $u'= v'_0 \lessdot v'_1 \lessdot \cdots
\lessdot v'_r = w'$ recursively.  First set $v'_0 = v_0s_i$.
Suppose $v'_i$ has been constructed.  Then we set $v'_{i+1}$ to be
either (1) $v_{i+1} s_i$, if $v_{i+1} s_i \gtrdot v_{i+1}$ or (2)
$v_{i+2}$, if $v_{i+1}s_i \lessdot v_{i+1}$.  In case (1), we
continue the recursive construction; in case (2), we set $v'_{j} =
v_{j+1}$ for $r > j > i+1$, and let $v'_r = w'$.

Since $v_i s_i \gtrdot v_i$, by \cite[Proposition 5.9]{Hum}, case
(2) only occurs if $v_{i+1} = v_i s_i$.  In particular it follows
that $v'_{i+1} = v_{i+2} \gtrdot v_{i+1} = v'_i$, and that $v_i$ and
$v_{i+1}$ have the same image under $\sigma_k$.  It follows that
$\sigma_k(v_0 \lessdot v_1 \lessdot \cdots \lessdot v_r) \subset
\sigma_k(v'_0 \lessdot v'_1 \lessdot \cdots \lessdot v'_r)$.  A
similar argument shows the reverse inclusion
$\sigma_k(\Delta([u',w'])) \subseteq \sigma_k(\Delta([u,w]))$.

\end{proof}

\begin{cor} \label{C:chainlifting}
 Let $u \leq_k w$. The image of $\Delta([u,w]_k)$ under $\sigma_k$ is the same as the image of $\Delta([u,w])$. The complex $\sigma_k(\Delta([u,w]))$ is pure of dimension $\ell(u) - \ell(w)$ and every maximal face of $\sigma_k(\Delta([u,w]))$ is the image of precisely one maximal chain in $[u,w]$, which is also a chain in $[u,w]_k$.
\end{cor}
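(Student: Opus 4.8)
The plan is to establish first that the two simplicial complexes $\sigma_k(\Delta([u,w]))$ and $\sigma_k(\Delta([u,w]_k))$ literally coincide, and then to read off purity, dimension, and the uniqueness statement from a chain-lifting construction. Write $N=\ell(w)-\ell(u)$. One inclusion is free: every chain of $[u,w]_k$ is a chain of $[u,w]$, so $\sigma_k(\Delta([u,w]_k))\subseteq\sigma_k(\Delta([u,w]))$. The work is all in the reverse inclusion.

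For that, I would first invoke Lemma~\ref{L:kBruhatrepresentatives} and Lemma~\ref{L:equivcomplex} to replace $[u,w]_k$ by the equivalent interval whose top element is Grassmannian, reducing to the case $w\in\GS$; this changes neither complex, nor $N$. Given a face of $\sigma_k(\Delta([u,w]))$, write it as $\sigma_k$ of a chain in $[u,w]$; enlarging that chain only enlarges its image and a simplicial complex is closed under subfaces, so it suffices to treat a chain of the form $u=v_0\le v_1\le\cdots\le v_m=w$. I then build a chain $u=x_0\le_k x_1\le_k\cdots\le_k x_m$ lying in $[u,w]_k$ with $\sigma_k(x_j)=\sigma_k(v_j)$ for all $j$, carrying along the invariant $x_j\le v_{j+1}$. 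Having produced $x_j$, apply Lemma~\ref{L:minimallifting} with $a=x_j$ and $M=\sigma_k(v_{j+1})$ (note $M\ge\sigma_k(v_j)=\sigma_k(x_j)$): it supplies the minimal $x_{j+1}\ge x_j$ with $\sigma_k(x_{j+1})=M$, together with $x_j\le_k x_{j+1}$. Minimality, together with $v_{j+1}\ge x_j$ and $\sigma_k(v_{j+1})=M$, forces $x_{j+1}\le v_{j+1}\le w$; since $w\in\GS$, Corollary~\ref{C:kBruhatGrassmann} upgrades this to $x_{j+1}\le_k w$, so $x_{j+1}\in[u,w]_k$, and $x_{j+1}\le v_{j+1}\le v_{j+2}$ maintains the invariant. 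The $\sigma_k$-image of $(x_j)$ is exactly the original face, so $\sigma_k(\Delta([u,w]))\subseteq\sigma_k(\Delta([u,w]_k))$; equality gives the first assertion of the corollary.

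Granting the equality of complexes, purity and dimension follow from the fact (\cite{BS}) that $[u,w]_k$ is graded of rank $N$. A maximal chain $u=x_0\lessdot_k x_1\lessdot_k\cdots\lessdot_k x_N=w$ has $\sigma_k(x_0)\le\sigma_k(x_1)\le\cdots\le\sigma_k(x_N)$ by the Bruhat-order comparison criterion, with consecutive terms distinct (by definition of $\lessdot_k$, which requires $\sigma_k$ to change at each step), hence all $N+1$ terms distinct, so its image is a face with $N+1$ vertices; and every face of the complex, being the image of a chain of $[u,w]_k$, extends to such a maximal face. Thus $\sigma_k(\Delta([u,w]))$ is pure of dimension $\ell(w)-\ell(u)$. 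For the last sentence, realize a maximal face $\{M_0<M_1<\cdots<M_N\}$ by a chain in $[u,w]$ and delete repetitions to get $v_0\le\cdots\le v_N$ with $\sigma_k(v_i)=M_i$ all distinct; then the $v_i$ are distinct, so $\ell(v_0)<\cdots<\ell(v_N)$ are $N+1$ integers inside $[\ell(u),\ell(w)]$, which forces $v_0=u$, $v_N=w$, and $v_i\lessdot v_{i+1}$ — a maximal chain of $[u,w]$, which is a chain of $[u,w]_k$ because consecutive $\sigma_k$-values differ. Uniqueness comes from the observation that $v_{i+1}=v_i\cdot(a_i\,b_i)$ for a transposition of positions that, since it must change $\sigma_k(v_i)=M_i$ into $M_{i+1}$, changes it in exactly one element; hence $M_i$ and $M_{i+1}$ differ in a single element, $a_i=v_i^{-1}(M_i\setminus M_{i+1})$ and $b_i=v_i^{-1}(M_{i+1}\setminus M_i)$ are determined by $v_i$ and the face, so the chain is determined inductively starting from $v_0=u$.

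The step I expect to be the genuine obstacle is the reverse inclusion above, and within it the single delicate point of keeping the minimal lift of Lemma~\ref{L:minimallifting} inside the interval $[u,w]$; everything else is bookkeeping. That is precisely why one reduces to $w\in\GS$ at the outset, so that $\le w$ and $\le_k w$ become interchangeable and the lift cannot leave $[u,w]_k$.
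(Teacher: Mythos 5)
Your proposal is correct and follows essentially the same line as the paper's own proof: reduce to $w\in\GS$ via Lemmas~\ref{L:kBruhatrepresentatives} and~\ref{L:equivcomplex}, then build a $k$-Bruhat chain from any chain in $[u,w]$ by inductively applying Lemma~\ref{L:minimallifting} while maintaining domination by the original chain (so that Corollary~\ref{C:kBruhatGrassmann} keeps the lift inside $[u,w]_k$); purity then follows from gradedness of $(S_n,\leq_k)$-intervals and uniqueness from the fact that each $k$-cover $v_i\lessdot_k v_{i+1}$ is forced to be $v_i\cdot(a\,b)$ with $a=v_i^{-1}(M_i\setminus M_{i+1})$, $b=v_i^{-1}(M_{i+1}\setminus M_i)$. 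The only cosmetic difference is that the paper works directly with a maximal face from the start, whereas you handle an arbitrary face by enlarging the witnessing chain to run from $u$ to $w$; the lifting construction, the role of $w\in\GS$, and the uniqueness argument are identical in substance.
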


\begin{proof}
It suffices to prove the first statement for the case $w \in \GS$, as Lemma \ref{L:equivcomplex} and Lemma \ref{L:kBruhatrepresentatives} will then imply the statement for arbitrary $u \leq_k w$.  Let $\tau = (M_1, \ldots, M_l)$ be a maximal face of  $\sigma_k(\Delta([u,w]))$.  Thus $M_1 = \sigma_k(u)$ and $M_l = \sigma_k(w)$.  Let $v_1 < v_2 \cdots < v_l$ be a chain in $[u,w]$ which maps to $\tau$; after renumbering the $M_i$, we may assume that $\sigma_k(v_i)=M_i$, and furthermore that $v_1 = u$.
Define $v'_1=v_1 = u$ and inductively let $v'_i$ be the minimal element of the set $\{b : b \geq v'_{i-1} \mbox{\ and \ } \sigma_k(v'_i)=M_i \}$.
Since $\sigma_k(v'_{i-1}) = M_{i-1} < M_i$, such a minimal element exists by Lemma~\ref{L:minimallifting}, and
furthermore, one has $u = v_1 = v'_1 \leq_k v'_2 \leq_k \cdots \leq_k v'_{i}$ for each $i$.
On the other hand, an easy induction shows that $v'_i \leq v_i$, so $v'_i \leq v_i \leq v_l \leq w$.  But $w \in \GS$, so by Corollary \ref{C:kBruhatGrassmann}, one has $u = v'_1 \leq_k v'_2 \leq_k \cdots \leq_k v'_l \leq_k w$.
So we have shown that $\tau$ is the image of a chain in $[u,w]_k$.

Now assume that $\tau$ is a maximal face of  $\sigma_k(\Delta([u,w]))$ (no longer assuming $w \in \GS$).
Let $v_1 < v_2 < \cdots < v_l$ be a chain of $[u,w]_k$ mapping to $\tau$.
We claim that $(v_i)$ is a maximal chain of $[u,w]_k$.
If not, we could enlarge $(v_i)$ to a longer chain in $[u,w]_k$.
But $\sigma_k$ is injective on chains of $[u,w]_k$, so that chain would map to a larger face of $\Sigma_k(\Delta([u,w]))$, contradicting that $\tau$ is maximal.
Since $(v_i)$ is a maximal chain, we know that $l = \ell(w) - \ell(u) +1$ and so $\tau$ has dimension $\ell(w) - \ell(u)$.
This establishes that $\sigma_k(\Delta([u,w]))$ is pure of the required dimension, and every facet of $\sigma_k(\Delta([u,w]))$ is the image of at least one maximal chain of $[u,w]_k$.

Finally, we must prove uniqueness.
Let $\tau=(M_1, M_2, \ldots, M_l)$ be a maximal face of $\sigma_k(\Delta([u,w]))$.
Let $u = v_1 \lessdot v_2 \lessdot \cdots \lessdot v_l = w$ be a preimage of $\tau$.
Suppose that $u = v'_1 \lessdot v'_2 \lessdot \cdots \lessdot v'_l = w$ is another preimage.
(We know that $u = v_1$, $w = v_l$ and $v_i \lessdot v_{i+1}$ for $1 \leq i \leq l-1$ because that is the only way to get a chain of length $\ell(w) - \ell(u)$ in $[u,w]$, and the same argument applies to the primed variables.)
We will show that $v_i=v'_i$ by induction on $i$. Our base case is that $u=v_1=v'_1$.
Assume inductively that we know that $v_i=v'_i$.
Since $v_{i+1} \gtrdot v_i$, we have $v_{i+1} = v_i (a \ b)$ for some $a < b$ and similarly $v'_{i+1} = v'_i (a' \ b')$.
Moreover, since $\sigma_k(v_i) \neq \sigma_k(v_{i+1})$, we know that $a \leq k < b$.
So $\sigma_k(v_{i+1}) = \sigma_k(v_i) \setminus \{ v_i(a) \} \cup \{ v_i(b) \}$.
In other words, $M_i \setminus M_{i+1} = \{ v_i(a) \} = \{ v'_i(a) \}$ and $M_{i+1} \setminus M_{i} = \{ v_i(b) \} = \{ v'_i(b) \}$.
Since we know inductively that $v_i=v'_i$, this shows $(a \ b) = (a' \ b')$, so $v_{i+1} = v'_{i+1}$ as required.
\end{proof}

Recall the Demazure product defined in Section \ref{sec:closures}.
The following result is the combinatorial analogue of Proposition
\ref{P:EquivalenceClassClosed}.

\begin{cor}\label{C:Hecke}
Let $u \leq w$.  Suppose $ux \in \AGS$ is length-additive, where $x
\in S_k \times S_{n-k}$.  Then $\sigma_k(\Delta([u,w])) =
\sigma_k(\Delta([ux,w \circ x]))$.
\end{cor}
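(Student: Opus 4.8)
The plan is to induct on $\ell(x)$, removing one simple generator of $S_k\times S_{n-k}$ at a time; the base case $x=e$ is vacuous. For the inductive step, pick a left descent $s_i$ of $x$, necessarily with $i\neq k$ since $x\in S_k\times S_{n-k}$, so $\ell(s_ix)=\ell(x)-1$. From the factorization $ux=(us_i)(s_ix)$ and the length-additivity of $ux$ one gets $\ell(us_i)\geq\ell(u)+1$, hence $u\lessdot us_i$, and $(us_i)(s_ix)$ is again a length-additive factorization of an element of $\AGS$. Since the Demazure product is associative and $x=s_i\cdot(s_ix)$ is length-additive, $w\circ x=(w\circ s_i)\circ(s_ix)$. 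So if we can establish
\[
\sigma_k(\Delta([u,w]))=\sigma_k(\Delta([us_i,\,w\circ s_i])),
\]
then applying the inductive hypothesis to the triple $(us_i,\,w\circ s_i,\,s_ix)$ finishes the argument. This reduces everything to the case $x=s_i$ with $i\neq k$ and $u\lessdot us_i$, which I would handle by splitting on the sign of $\ell(ws_i)-\ell(w)$.

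\textbf{Case $ws_i>w$.} Then $w\circ s_i=ws_i$ and $w\lessdot ws_i$; by the lifting property of Bruhat order (applied to $u\le w$ with $us_i>u$ and $ws_i>w$) we get $us_i\le ws_i$. I claim that the chain-surgery argument in the proof of the second statement of Lemma~\ref{L:equivcomplex} applies verbatim: it uses only that $u\lessdot us_i$ and $w\lessdot ws_i$ are covers obtained by right multiplication by an $s_i$ with $i\neq k$ (never the $k$-Bruhat hypothesis), together with the two facts that $s_i([k])=[k]$ when $i\neq k$ (so right multiplication by $s_i$ does not change $\sigma_k$) and Humphreys' Proposition~5.9~\cite{Hum} (to handle the place along a saturated chain where right multiplication by $s_i$ ceases to be length-increasing). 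The conclusion is that saturated chains in $[u,w]$ and in $[us_i,ws_i]$ have the same images under $\sigma_k$, giving the desired equality.

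\textbf{Case $ws_i<w$.} Then $w\circ s_i=w$, and the lifting property gives $us_i\le w$, so $[us_i,w]\subseteq[u,w]$ and the inclusion $\sigma_k(\Delta([us_i,w]))\subseteq\sigma_k(\Delta([u,w]))$ is automatic. For the reverse inclusion, take a maximal face $\sigma_k(C)$ of $\sigma_k(\Delta([u,w]))$ with $C\colon u=v_0\lessdot v_1\lessdot\cdots\lessdot v_r=w$. Since $v_0s_i>v_0$ but $v_rs_i<v_r$, there is a least index $l\ge 1$ with $v_ls_i<v_l$; by minimality $v_{l-1}s_i>v_{l-1}$, so Humphreys' Proposition~5.9 forces $v_l=v_{l-1}s_i$, whence $\sigma_k(v_l)=\sigma_k(v_{l-1})$. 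Using the lifting property again,
\[
us_i=v_0s_i\lessdot v_1s_i\lessdot\cdots\lessdot v_{l-1}s_i=v_l\lessdot v_{l+1}\lessdot\cdots\lessdot v_r=w
\]
is a saturated chain in $[us_i,w]$, and since $s_i([k])=[k]$ its image under $\sigma_k$ is exactly $\{\sigma_k(v_0),\ldots,\sigma_k(v_r)\}=\sigma_k(C)$. Hence $\sigma_k(\Delta([u,w]))\subseteq\sigma_k(\Delta([us_i,w]))$, and equality holds.

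The step I expect to require the most care is the chain surgery itself: confirming that the proof of Lemma~\ref{L:equivcomplex} genuinely does not use $u\leq_k w$, and treating the degenerate sub-cases cleanly (the descent index $l$ equalling $r$, or the first length-decreasing right multiplication occurring at an endpoint of the chain). The remaining ingredients — the lifting property of Bruhat order and the identity $s_i([k])=[k]$ for $i\neq k$ — are entirely routine.
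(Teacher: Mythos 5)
Your proof is correct, and the forward inclusion is handled by the same chain-surgery mechanism the paper invokes (iterating the argument in Lemma~\ref{L:equivcomplex} one simple reflection at a time, noting $i\neq k$ so $\sigma_k$ is unchanged, and using the lifting property when the multiplication ceases to be length-increasing). Where you diverge is in the reverse inclusion $\sigma_k(\Delta([ux,w\circ x]))\subseteq\sigma_k(\Delta([u,w]))$. The paper handles this in one stroke by writing $w\circ x = w'x$ with $w'x$ length-additive and $w'\leq w$, then invoking Lemma~\ref{L:equivcomplex} to identify $\sigma_k(\Delta([ux,w'x]))$ with $\sigma_k(\Delta([u,w']))$, which sits inside $\sigma_k(\Delta([u,w]))$ because $[u,w']\subseteq[u,w]$; note this implicitly relies on $u\leq_k w'$, which holds because $ux\in\AGS$ and $ux\leq w'x$ force $ux\leq_k w'x$ by Corollary~\ref{C:kBruhatGrassmann}(ii) and then $u\leq_k w'$ by Lemma~\ref{L:kBruhatiso}. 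Your treatment instead makes the reverse inclusion explicit per step: when $ws_i>w$ you get both inclusions from the two-sided surgery of Lemma~\ref{L:equivcomplex}, and when $ws_i<w$ the reverse inclusion is the trivial containment $[us_i,w]\subseteq[u,w]$ while the forward inclusion is your hands-on chain modification at the unique sign-flip index. Your claim that the chain surgery never actually uses $u\leq_k w$ is accurate — it only uses that right multiplication by $s_i$ ($i\neq k$) preserves $\sigma_k$, that the relevant products are covers, and the lifting property — so the application in the case $ws_i>w$ is legitimate. The upshot is that you trade the paper's appeal to the $w\circ x=w'x$ factorization (and the tacit $k$-Bruhat check it entails) for a slightly longer but more self-contained case analysis; both are valid and of comparable length.
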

\begin{proof}
To prove $\sigma_k(\Delta([u,w])) \subset \sigma_k(\Delta([ux,w
\circ x]))$ we pick a reduced decomposition of $x$ and apply the
argument in the proof of Lemma \ref{L:equivcomplex}.  Note that if
$w \circ s_i = w$, then Case (2) of the argument in Lemma
\ref{L:equivcomplex} always occurs.

The Demazure product $w \circ v$ can also be calculated by picking a
reduced expression of $w$ and using a similar formula for $s_i \circ v$.
It follows that one can write $w \circ x = w'x$ where the product $w'x$ is
length-additive, and $w' < w$.  It then follows from Lemma \ref{L:equivcomplex}
that $\sigma_k(\Delta([ux,w\circ x])) = \sigma_k(\Delta([u,w']))$. Since
$w' < w$, we have $\sigma_k(\Delta([u,w'])) \subset \sigma_k(\Delta([u,w]))$.
\end{proof}

\section{Gr\"obner degenerations of positroid varieties}
\label{sec:grobner}

Let $\CC[p_I]$ be the polynomial ring whose variables are indexed by
$\binom{[n]}{k}$.  For any simplicial complex $K$ on the vertex set
$\binom{[n]}{k}$, let $\SR(K)$ be the \defn{Stanley-Reisner ring} of
$K$; this is the quotient of $\CC[p_I]$ by the ideal generated by
all monomials which are not supported on $K$.  Choose any total
order on $\binom{[n]}{k}$ refining the standard order and let
$\omega$ be the corresponding reverse lexicographic term order on
the monomials of $\CC[p_I]$.  If $J$ is a homogeneous ideal of
$\CC[p_I]$, let $\In_{\omega}$ be the initial ideal of $J$ with
respect to $\omega$. If $X$ is a subvariety of projective space, let
$\CC[X]$ be the corresponding homogeneous coordinate ring.  We have
$\CC[X]= \CC[p_I]/J$ for a saturated homogeneous ideal $J = I(X)$
and we write $\In_{\omega} ( \CC[X] )$ for $\CC[p_I]/(\In_{\omega}
J)$.

It is well known \cite{SW} that $\In_{\omega} (\CC[\Gr(k,n)]) =
\SR(\Delta(\binom{[n]}{k}))$, and traces back to Hodge.
The main result of this section is
the following strengthening of this result:

\begin{theorem} \label{T:HodgePedoe}
  For any $u \leq w$,
  we have $\In_{\omega} (\CC[\Pi_{u}^w] ) = \SR(\sigma_k(\Delta([u,w])))$.
\end{theorem}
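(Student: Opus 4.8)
The plan is to pin down $\In_\omega(\CC[\Pi_u^w])$ by exhibiting the standard monomials of $I(\Pi_u^w)$ explicitly: since for the chosen term order $\omega$ the standard monomials (those outside the initial ideal) form a $\CC$-basis of $\CC[\Pi_u^w]$, once we show that they are exactly the monomials supported on faces of $\sigma_k(\Delta([u,w]))$, the initial ideal is forced to be $\SR(\sigma_k(\Delta([u,w])))$, and the reducedness of the degenerate limit comes along for free. First some reductions. By Corollary~\ref{C:Hecke} and Proposition~\ref{P:EquivalenceClassClosed} we may replace $(u,w)$ by $(ux,w\circ x)$ with $ux\in\AGS$, and then by Lemma~\ref{L:equivcomplex} together with the invariance of $\pi(X_u^w)$ under $k$-Bruhat equivalence (the corollary following Proposition~\ref{P:ObviousClosure}) we may move within an equivalence class of $\Q(k,n)$; by Lemma~\ref{L:kBruhatrepresentatives} we arrange $w\in\GS$ and $u\le_k w$. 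Both sides of the asserted identity are unchanged under these moves, so it suffices to treat this case. Finally, the $T$-fixed points of $X_u^w$ are the coordinate flags $vE_\bullet$ with $u\le v\le w$, so those of $\Pi_u^w$ are the coordinate $k$-planes $\Span_{i\in\sigma_k(v)}e_i$; comparing with the Pl\"ucker description of $T$-fixed points, Theorem~\ref{T:LinearGeneration} shows the underlying positroid has base set $\sigma_k([u,w])$ and that, as an ideal of $\CC[p_I]$,
$$
I(\Pi_u^w)\;=\;I(\Gr(k,n))\;+\;\bigl(\,p_I:I\notin\sigma_k([u,w])\,\bigr).
$$

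Recall Hodge's basis of $\CC[\Gr(k,n)]$ by \emph{standard monomials} --- those whose support is a chain in $\binom{[n]}{k}$ --- together with the quadratic straightening law; by \cite{SW}, $\In_\omega(\CC[\Gr(k,n)])=\SR(\Delta(\binom{[n]}{k}))$, so the standard monomials are precisely the monomials outside this initial ideal. The two facts I would establish are: \textbf{(A)} in $\CC[\Pi_u^w]$ every standard monomial whose support is \emph{not} a face of $\sigma_k(\Delta([u,w]))$ maps to $0$ --- equivalently, the face-supported standard monomials span $\CC[\Pi_u^w]$; and \textbf{(B)} the face-supported standard monomials are $\CC$-linearly independent in $\CC[\Pi_u^w]$. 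Together these say the face-supported standard monomials are a basis, so a standard monomial survives in $\CC[\Pi_u^w]$ exactly when its support is a face of $\sigma_k(\Delta([u,w]))$, which is the desired identity.

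Step (B) I would deduce from the geometry already in hand. By Theorem~\ref{T:crepant}, $H^{>0}(\Pi_u^w;\O(m))=0$ and $H^0(\Pi_u^w;\O(m))\cong H^0(X_u^w;\pi^*\O(m))$; combining this with the surjectivity of $H^0(\Gr(k,n);\O(m))\to H^0(X_u^w;\pi^*\O(m))$ established in the proof of Theorem~\ref{T:crepant} gives $(\CC[\Pi_u^w])_m\cong H^0(X_u^w;\pi^*\O(m))$ (in particular $\Pi_u^w$ is projectively normal). This last dimension is computed by Littelmann's Lakshmibai--Seshadri path model on the interval $[u,w]$ \cite{LL}; one checks that the LS paths of shape $m\omega_k$ are in bijection with multichains $I_1\le\cdots\le I_m$ in $\binom{[n]}{k}$ whose set of distinct values is a face of $\sigma_k(\Delta([u,w]))$ --- here Corollary~\ref{C:chainlifting} identifies facets of $\sigma_k(\Delta([u,w]))$ with images of maximal chains of $[u,w]_k$, making the bijection transparent --- which is precisely the number of degree-$m$ face-supported standard monomials. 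By (A) this number bounds $\dim(\CC[\Pi_u^w])_m$ from above, and we have just matched it; hence (B).

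The main obstacle is step (A). A standard monomial with support $F=\{I_1<\cdots<I_r\}$ not a face of $\sigma_k(\Delta([u,w]))$ falls into two cases: either some $I_j\notin\sigma_k([u,w])$, in which case $p_{I_j}$ is one of the linear generators of $I(\Pi_u^w)$ displayed above and we are done; or each $I_j=\sigma_k(v_j)$ for some $v_j\in[u,w]$ but no such $v_j$ can be chosen to form a chain. In the second (hard) case I would straighten the monomial using Hodge's quadratic relations together with the linear relations $p_I\equiv 0$ ($I\notin\sigma_k([u,w])$), inducting on the term order $\omega$: the aim is to show that after each straightening exchange every standard monomial appearing either involves some $p_I$ with $I\notin\sigma_k([u,w])$, or is $\omega$-strictly smaller, or is again supported on a chain of $\sigma_k([u,w])$ that fails to lift, so that the reduction terminates with the monomial written in terms of the generators of $I(\Pi_u^w)$. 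Here Corollary~\ref{C:chainlifting} and the minimal/maximal lifting Lemmas~\ref{L:minimallifting}--\ref{L:maximallifting} control which chains lift, and are what forces even the large minimal nonfaces of Remark~\ref{rem:nonfaces} into the initial ideal. I expect the delicate point to be the bookkeeping showing that the straightening cannot get ``stuck'' at a face-supported standard monomial: a careful analysis of which exchanges $p_Ip_{I'}\rightsquigarrow p_{I\vee I'}p_{I\wedge I'}$ can occur, relative to the reverse-lexicographic refinement of the standard order, should close this.
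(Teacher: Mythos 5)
Your plan breaks at step (A). The claim there --- that a Hodge-standard (chain-supported) monomial whose support is not a face of $\sigma_k(\Delta([u,w]))$ lies in $I(\Pi_u^w)$ --- is false. Take $k=2$, $n=4$, $u=[2134]$, $w=[3412]$ (so $u\leq_2 w$ and $w\in\GS$). Then $\sigma_2([u,w])=\{12,13,23,24,34\}$, so by Theorem~\ref{T:LinearGeneration}, $I(\Pi_u^w)=(p_{14},\; p_{13}p_{24}-p_{12}p_{34}-p_{14}p_{23})=(p_{14},\; p_{13}p_{24}-p_{12}p_{34})$. The set $\{13,24\}$ is a chain in $\binom{[4]}{2}$, so $p_{13}p_{24}$ is Hodge-standard; but $\{13,24\}$ is not a face of $\sigma_2(\Delta([u,w]))$, because no chain of $[u,w]$ passes through an element projecting to $13$ and one projecting to $24$ (the elements of $[u,w]$ over $13$ are $[3124],[3142]$ and the only one over $24$ is $[2413]$, and none of these are comparable). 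Yet $p_{13}p_{24}\equiv p_{12}p_{34}\neq 0$ in $\CC[\Pi_u^w]$. Running your straightening exhibits this precisely: the single Pl\"ucker relation rewrites $p_{13}p_{24}$ as $p_{12}p_{34}+p_{14}p_{23}$, the linear relation kills $p_{14}p_{23}$, and you are stuck at a face-supported standard monomial, not at $0$. Also note the parenthetical ``equivalently, the face-supported standard monomials span'' is not an equivalence (spanning is a strictly weaker statement than ``non-face monomials vanish''), and even ``face-supported monomials are a basis of $\CC[\Pi_u^w]$'' would not by itself identify $\In_\omega(I(\Pi_u^w))$: a set of monomials forming a basis of $R/I$ need not be the complement of $\In(I)$ (e.g.\ $I=(x+y)$ in $\CC[x,y]$ with lex: $\{x^i\}$ is a basis of the quotient but the initial ideal is $(x)$). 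You always need one explicit containment of monomial ideals.

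What the example actually shows is that the statement you want is about initial terms: $p_{13}p_{24}$ is the leading term of $p_{13}p_{24}-p_{12}p_{34}\in I(\Pi_u^w)$, hence in $\In_\omega(I(\Pi_u^w))$. Proving ``every non-face Hodge-standard $m$ is a leading term of some element of $I(\Pi_u^w)$'' directly amounts to constructing a Gr\"obner basis by hand, and you would need to track leading terms through a process that mixes Pl\"ucker straightenings with the linear relations $p_I\equiv 0$ --- precisely the delicate bookkeeping you flag, but it is not merely bookkeeping; the reduction genuinely terminates at nonzero face-supported monomials, so the argument must be reorganized around exhibiting leading terms rather than showing vanishing. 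The paper instead proves the opposite containment $\In_\omega(I(\Pi_u^w))\subseteq I(\SR(\sigma_k(\Delta([u,w]))))$, by induction on $\ell(w)-\ell(u)$, using that $p_{\sigma_k(u)}$ is a nonzerodivisor: a revlex slice/cone lemma (Lemma~\ref{L:revlex}) and the hyperplane-section description $\Pi_u^w\cap\{p_{\sigma_k(u)}=0\}=\bigcup_{u'\gtrdot_k u}\Pi_{u'}^w$ (Lemma~\ref{L:GeomMonk}) reduce the claim to smaller cases, and the cone of $\bigcup_{u'}\sigma_k(\Delta([u',w]))$ over the vertex $\sigma_k(u)$ is $\sigma_k(\Delta([u,w]))$. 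Your step (B) is sound and is essentially the paper's Proposition~\ref{P:sameHilbert}; it is (A) that needs a genuinely different idea.
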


We begin by establishing that $\CC[\Pi_{u}^w]$ has the correct Hilbert series.

\begin{prop}\label{P:sameHilbert}
  Let $d$ be any nonnegative integer. Then the degree $d$ summands of
  $\CC[\Pi_{u}^w]$ and $\SR(\sigma_{k}(\Delta([u,w])))$ have the same dimension.
\end{prop}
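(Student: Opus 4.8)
The plan is to compute both Hilbert functions and check that they agree in each degree.

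First some reductions. By Proposition~\ref{P:EquivalenceClassClosed} and Corollary~\ref{C:Hecke}, replacing $(u,w)$ by $(ux,w\circ x)$ for the element $x\in S_k\times S_{n-k}$ with $ux\in\AGS$, neither side changes, so we may assume $u\le_k w$; then $\Pi_u^w=\pi(X_u^w)$ with $X_u^w$ a Richardson model in the sense of Theorem~\ref{T:crepant}. Next I would identify the left side with sheaf cohomology: $\CC[\Pi_u^w]_d=H^0(\Pi_u^w;\O(d))$ for all $d\ge 0$. The inclusion $\CC[\Pi_u^w]_d\hookrightarrow H^0(\Pi_u^w;\O(d))$ holds because the defining ideal is taken saturated. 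For the reverse, the surjection $\CC[\Gr(k,n)]_d\onto\CC[\Pi_u^w]_d$ followed by this inclusion is the restriction map $H^0(\Gr(k,n);\O(d))\to H^0(\Pi_u^w;\O(d))$ (using the classical fact, due to Hodge, that $\CC[\Gr(k,n)]_d=H^0(\Gr(k,n);\O(d))$); by Theorem~\ref{T:crepant} the composite $H^0(\Gr(k,n);\O(d))\onto H^0(X_u^w;\pi^*\O(d))$ is surjective and factors through the isomorphism $H^0(\Pi_u^w;\O(d))\cong H^0(X_u^w;\pi^*\O(d))$, so $H^0(\Gr(k,n);\O(d))\to H^0(\Pi_u^w;\O(d))$ is surjective, hence so is $\CC[\Pi_u^w]_d\hookrightarrow H^0(\Pi_u^w;\O(d))$. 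Therefore $\dim\CC[\Pi_u^w]_d=\dim H^0(X_u^w;\pi^*\O(d))$, and $\pi^*\O(d)$ is the restriction to $X_u^w$ of the line bundle $\mathcal{L}_{d\omega_k}$ on $\Fl(n)$.

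Next I would evaluate $\dim H^0(X_u^w;\mathcal{L}_{d\omega_k})$ via the Lakshmibai--Seshadri path (standard monomial) basis for sections of line bundles on Richardson varieties, \cite[Theorem~32]{LL}. As $\omega_k$ is cominuscule, an LS path of shape $d\omega_k$ amounts to a Pl\"ucker monomial $p_{M_1}p_{M_2}\cdots p_{M_d}$ with $M_1\le M_2\le\cdots\le M_d$ in $\binom{[n]}{k}$, and the condition that the path belong to the interval $[u,w]$ becomes the condition that the chain of distinct values among the $M_i$ be the $\sigma_k$-image of a chain in $[u,w]$, i.e.\ that $\{M_1,\dots,M_d\}$ be a face of $\sigma_k(\Delta([u,w]))$. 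On the other hand $\dim\SR(\sigma_k(\Delta([u,w])))_d$ is, by definition of the Stanley--Reisner ring, the number of degree-$d$ monomials in the $p_I$ supported on faces of $\sigma_k(\Delta([u,w]))$ --- exactly the number of weakly increasing $M_1\le\cdots\le M_d$ with $\{M_1,\dots,M_d\}\in\sigma_k(\Delta([u,w]))$. Comparing the two counts finishes the proof.

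The main obstacle is this last translation. Lakshmibai and Littelmann's indexing set is phrased in terms of paths (equivalently, admissible pairs) belonging to the interval $[u,w]\subseteq S_n$ in Bruhat order, while the Stanley--Reisner side only sees the projected complex on $\binom{[n]}{k}$. Bridging the two requires knowing that a multichain in $\binom{[n]}{k}$ which can be realized inside $[u,w]$ can be realized compatibly with the $k$-Bruhat refinement --- this is exactly the force of Corollary~\ref{C:chainlifting} together with the lifting Lemmas~\ref{L:minimallifting} and~\ref{L:maximallifting}, which cope with the fact that $\sigma_k$ is injective on chains of $[u,w]_k$ but not on chains of $[u,w]$. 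One must also carefully match the conventions of \cite{LL} --- which endpoint of the interval bounds the initial versus terminal direction of a path, and the orientation of $\le$ on $\binom{[n]}{k}$ --- with the conventions fixed in Sections~\ref{ssec:conventions} and~\ref{ssec:SchubertRichardsonInFlag}.
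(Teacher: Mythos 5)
Your proposal is correct and follows essentially the same route as the paper: use Theorem~\ref{T:crepant} to identify $\CC[\Pi_u^w]_d$ with $H^0(X_u^w;\pi^*\O(d))$, then invoke \cite[Theorem~32(ii)]{LL} to count that space by faces of $\sigma_k(\Delta([u,w]))$, matching the Stanley--Reisner Hilbert function, with your weakly-increasing-multichain count of Pl\"ucker monomials being equivalent to the paper's directions-plus-breakpoints count. The extra steps you spell out --- the reduction to $u\le_k w$ via Proposition~\ref{P:EquivalenceClassClosed} and Corollary~\ref{C:Hecke}, and the justification that the saturated coordinate ring equals $\bigoplus_d H^0(\Pi_u^w;\O(d))$ --- are left implicit in the paper, so your version is slightly more careful, but the core argument is the same.
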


\begin{proof}
  Recall that $\pi$ is the projection $\Fl(n) \to \Gr(k,n)$.  By definition,
  $\CC[\Pi_{u}^w]_d$ is given by $H^0(\O(d), \Pi_u^w)=H^0(\O(d) \otimes
  \O_{\Pi_u^w}, \Gr(k,n))$. By Theorem~\ref{T:crepant} and \cite[Ex.~II.5.1(d)]{Har},
  this is isomorphic to $H^0(X_u^w, \pi^*(\O(d)))$.  We now use
  Theorem~32(ii) of~\cite{LL}. Taking $m=1$ and $\lambda =
  (d,d,\ldots,d,0,0,\ldots,0)$, this states that the dimension of
  $H^0(X_u^w, \pi^*(\O(d)))$ is the number of ordered pairs $\big((M_1,
  M_2, \ldots, M_r)$, $(a_1, a_2, \ldots, a_r)\big)$ where $(M_1,
  M_2, \ldots, M_r)$ are the vertices of a face of
  $\sigma_k(\Delta([u,w]))$ and the $a_i$ are rational numbers of the
  form $b/d$, for $b$ integral, such that $0 < a_1 < \cdots < a_r<
  1$. So each $(r-1)$-dimensional face of $\sigma_k(\Delta([u,w]))$
  has $\binom{d-1}{r-1}$ choices for $(a_1, \ldots, a_r)$. There are
  also $\binom{d-1}{r-1}$ monomials of degree $d$ using exactly the variables
  of an $(r-1)$-dimensional face. So we have the desired equality.
\end{proof}

\begin{lemma} \label{L:GeomMonk}
  Let $u \leq w$ and let $p_{\sigma_k(u)}$ be the Pl\"ucker coordinate
  on $\Gr(k,n)$ indexed by $\sigma_k(u)$.
  Then set-theoretically, one has $X_u^w \cap \{ p_{\sigma_k(u)} =0 \}
  = \bigcup_{u' \gtrdot_k u} X_{u'}^w \subseteq \Fl(n)$ and
  $\Pi_u^w \cap \{ p_{\sigma_k(u)} =0 \}
  = \bigcup_{u' \gtrdot_k u} \Pi_{u'}^w \subseteq \Gr(k,n)$.
\end{lemma}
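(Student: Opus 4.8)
The plan is to prove the two displayed equalities more or less in parallel, deducing the Grassmannian statement from the flag-manifold one by applying the projection $\pi$, and to prove the flag-manifold statement by combining a standard fact about Schubert varieties with the $k$-Bruhat combinatorics of Section~\ref{sec:kBruhat}. First recall that the Pl\"ucker coordinate $p_{\sigma_k(u)}$ pulls back along $\pi$ to the section of $\pi^*\O(1)$ whose vanishing locus in $\Fl(n)$ is the union of Schubert divisors $\bigcup X_v$, $v \gtrdot u$, and that on the Grassmannian $p_I$ vanishes exactly on $X_J$ for $J \not\le I$; more precisely $\{p_{\sigma_k(u)}=0\} \cap X_u = \bigcup_{u' \gtrdot u} X_{u'}$ inside $\Fl(n)$. (This is the classical fact that a Pl\"ucker coordinate cuts out a Schubert divisor; see~\cite{Ramanathan}.) Intersecting with $X^w$ gives $X_u^w \cap \{p_{\sigma_k(u)}=0\} = \bigcup_{u'\gtrdot u} X_{u'}^w$, where the union runs over those $u' \gtrdot u$ with $u' \le w$.

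The work is then to show that this union equals $\bigcup_{u' \gtrdot_k u} X_{u'}^w$, i.e.\ that among the covers $u' \gtrdot u$ the ones not satisfying $u' \gtrdot_k u$ contribute nothing new. If $u' \gtrdot u$ but $u' \not\gtrdot_k u$, then by definition $\sigma_k(u')=\sigma_k(u)$, so $u' = u(ab)$ with both $a,b \le k$ or both $a,b > k$; writing $u' = u s$ with $s \in S_k \times S_{n-k}$ a suitable transposition conjugate, one has a length-additive factorization, and then $X_{u'}^w \subseteq X_{u''}^w$ for any $u'' \gtrdot_k u$ that ``dominates'' this move — more cleanly, one checks directly that $X_{u'}^w \subseteq X_u^w \cap \{p_{\sigma_k(u)}=0\}$ forces $X_{u'}^w$ to lie in the union over the $k$-covers, because any irreducible component of $X_u^w \cap \{p_{\sigma_k(u)} = 0\}$ has codimension one in $X_u^w$ and hence is $X_{u'}^w$ for some $u' \gtrdot u$; one then argues that such a component with $u' \not\gtrdot_k u$ is already contained in $X_{u''}^w$ for some $k$-cover $u''$, using that $X_{u'} \supseteq X_{u''}$-type containments in the flag manifold track the Bruhat order. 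Alternatively, and perhaps more robustly, I would invoke Corollary~\ref{C:chainlifting}: every maximal chain of $[u,w]$ refines (after collapsing the non-$k$-cover steps) to a maximal chain of $[u,w]_k$, so the codimension-one strata of $X_u^w$ that meet $\{p_{\sigma_k(u)}=0\}$ are accounted for by $k$-covers. Running the same computation with $X^w$ replaced throughout and using that the intersections are proper gives the first displayed equality.

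For the second equality I would apply $\pi$ to the first. Since $\pi$ is proper and $\Pi_u^w = \pi(X_u^w)$ (Proposition~\ref{P:ObviousClosure}), and since $p_{\sigma_k(u)}$ on $\Gr(k,n)$ pulls back to $p_{\sigma_k(u)}$ on $\Fl(n)$, we get $\Pi_u^w \cap \{p_{\sigma_k(u)}=0\} = \pi\bigl(X_u^w \cap \{p_{\sigma_k(u)}=0\}\bigr) = \bigcup_{u'\gtrdot_k u} \pi(X_{u'}^w) = \bigcup_{u' \gtrdot_k u} \Pi_{u'}^w$, again using Proposition~\ref{P:ObviousClosure} for each $u' \gtrdot_k u$ (note $u' \gtrdot_k u \le_k w$ implies $u' \le_k w$, so each $\Pi_{u'}^w$ is a genuine positroid variety). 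The only subtlety here is that taking $\pi$ of a set-theoretic intersection with a hypersurface is compatible with pullback of the defining section, which holds because $\pi$ is surjective onto $\Pi_u^w$ and $p_{\sigma_k(u)}$ is not identically zero on $\Pi_u^w$ (indeed $\sigma_k(u)$ is the minimal element of the positroid, so $\Delta_{\sigma_k(u)}$ is a nonzero coordinate on $\Pi_u^w$).

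**Main obstacle.** The delicate point is the reduction from all covers $u' \gtrdot u$ to the $k$-covers $u' \gtrdot_k u$: one must rule out the possibility that some $X_{u'}^w$ with $u' \not\gtrdot_k u$ is a component of the intersection not contained in the union over $k$-covers. I expect the cleanest route is the dimension/combinatorial argument via Corollary~\ref{C:chainlifting} (every codimension-one stratum of $X_u^w$ inside $\{p_{\sigma_k(u)}=0\}$ comes from a $k$-cover), rather than a direct manipulation of Pl\"ucker coordinates, since the latter gets tangled in exactly the non-flag, non-balanced pathologies flagged in Remarks~\ref{rem:nonfaces} and~\ref{rem:not balanced}.
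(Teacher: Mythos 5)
Your proposal starts from a false identity, and the subsequent ``reduction from all covers to $k$-covers'' is aimed at repairing a claim that is actually in the wrong direction. Concretely, you assert that $\{p_{\sigma_k(u)}=0\}\cap X_u = \bigcup_{u'\gtrdot u} X_{u'}$, taking the union over \emph{all} Bruhat covers of $u$. This is false whenever $u$ has a cover $u'$ with $\sigma_k(u')=\sigma_k(u)$, i.e.\ a cover that is \emph{not} a $k$-cover: the $T$-fixed point $u'E_\bullet \in X_{u'}$ projects under $\pi$ to the coordinate $k$-plane $\Span_{i\in\sigma_k(u)}e_i$, at which $p_{\sigma_k(u)}$ is nonzero, so $X_{u'}\not\subseteq\{p_{\sigma_k(u)}=0\}$. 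The correct identity, and the one the lemma actually asserts, already has only the $k$-covers: $X_u\cap\{p_{\sigma_k(u)}=0\}=\bigcup_{u'\gtrdot_k u}X_{u'}$. This is what the paper deduces from the characterization of Schubert cells in \cite{FZrec}; it can also be seen directly from the matroid-theoretic fact that for $F_\bullet\in\Xo_v$ the $k$-plane $\pi(F_\bullet)$ has $\sigma_k(v)$ as the unique minimum of its set of bases, so $p_{\sigma_k(u)}(F_\bullet)=0$ iff $\sigma_k(v)>\sigma_k(u)$, which for $v\geq u$ is equivalent (via Lemma~\ref{L:minimallifting}) to $v\geq u'$ for some $u'\gtrdot_k u$.

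Because your starting equality overcounts, the step where you try to show the non-$k$-covers ``contribute nothing new'' cannot be carried out, and in fact is pointing the wrong way: if $u'\gtrdot u$ with $\sigma_k(u')=\sigma_k(u)$, then $X_{u'}^w$ is a codimension-one subvariety of $X_u^w$ that is generically \emph{disjoint} from every $X_{u''}^w$ with $u''\gtrdot_k u$, so it is not absorbed by the union over $k$-covers --- it is simply not contained in $\{p_{\sigma_k(u)}=0\}$ to begin with. The invocation of Corollary~\ref{C:chainlifting} does not rescue this: that corollary concerns lifting chains, not showing that a divisor lies in a union of other divisors. Your treatment of the second displayed equality (pushing forward by $\pi$, using properness, surjectivity, and $\pi^{-1}\{p_{\sigma_k(u)}=0\}=\{p_{\sigma_k(u)}=0\}$) is fine and agrees with the paper, but it inherits the error from the first part.
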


\begin{proof}
  If $w=w_0$, then the first claim says
  $X_u \cap \{ p_{\sigma_k(u)} =0 \} = \bigcup_{u' \gtrdot_k u} X_{u'}$,
  which follows from the characterizations of Schubert varieties
  in \cite{FZrec}. Intersecting that with $X^w$ we get the general case.

  For the second part, let $H$
  denote the divisor $\{ p_{\sigma_k(u)} =0 \}$ on $\Gr(k,n)$. So

\hfill
$\Pi_u^w \cap H = \pi(X_u^w) \cap H
  = \pi\left( X_u^w \cap \pi^{-1}(H) \right)
  = \pi \left( \bigcup_{u' \gtrdot_k u} X_{u'}^w \right)
  = \bigcup_{u' \gtrdot_k u} \Pi_{u'}^w.$ \hfill
\end{proof}

\newcommand\Proj{{\rm Proj\ }}
\newcommand\Slice{{\rm Slice}}
\newcommand\Cone{{\rm Cone}}

For $I$ an ideal of $\CC[x_1, \ldots, x_{n}]$, let
$\Slice(I) \leq \CC[x_1, \ldots, x_{n-1}]$
denote the image of the composite
$$ I \into  \CC[x_1, \ldots, x_{n}] \onto \CC[x_1, \ldots, x_{n}]/x_n
\iso \CC[x_1, \ldots, x_{n-1}]. $$
For $J$ an ideal of
$\CC[x_1, \ldots, x_{n-1}]$, view $\CC[x_1, \ldots, x_{n-1}]$ as a
subring of $\CC[x_1, \ldots, x_{n}]$
and let $\Cone(J) \leq \CC[x_1, \ldots, x_{n}]$  be the ideal
generated by $J$.

\begin{lemma} \label{L:revlex}
  Suppose $I$ is a homogeneous ideal in $\CC[x_1,...,x_n]$ such that
  $f\,x_n \in I \Rightarrow f \in I$. Then $\In(I) = \Cone(\In(\Slice(I)))$,
  where the initial ideals are defined with respect to reverse
  lexicographic (``revlex'') order.
\end{lemma}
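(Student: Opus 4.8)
The plan is to reduce everything to one feature of the reverse lexicographic order with $x_n$ as the last variable: \emph{among monomials of a fixed total degree, every monomial divisible by $x_n$ is strictly smaller than every monomial not divisible by $x_n$}. For a nonzero polynomial $f$, write $\overline f$ for its image in $\CC[x_1,\dots,x_{n-1}]$ under $x_n \mapsto 0$, so that $\Slice(I) = \{\overline f : f \in I\}$. From the displayed feature I would first record two facts valid for \emph{homogeneous} $f$: (i) $x_n \mid \In(f)$ if and only if $x_n \mid f$, and in that case, writing $f = x_n h$, one has $\In(f) = x_n \In(h)$; (ii) if $\overline f \neq 0$ then $\In(f) = \In(\overline f)$. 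Both are immediate from the feature: the monomials of $f$ divisible by $x_n$ are dominated by every monomial of $\overline f$, so the leading term of $f$ is attained inside $\overline f$ exactly when $\overline f \neq 0$.

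Next I would dispatch the inclusion $\Cone(\In(\Slice(I))) \subseteq \In(I)$, which holds for \emph{any} homogeneous ideal $I$ and does not use the hypothesis. The ideal $\Cone(\In(\Slice(I)))$ is generated in $\CC[x_1,\dots,x_n]$ by the monomials $\In(g)$ for $g$ a nonzero homogeneous element of $\Slice(I)$; any such $g$ equals $\overline f$ for some homogeneous $f \in I$ of the same degree (since $I$ is homogeneous, pass to the appropriate graded piece), and fact (ii) then gives $\In(g) = \In(\overline f) = \In(f) \in \In(I)$.

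The reverse inclusion $\In(I) \subseteq \Cone(\In(\Slice(I)))$ is the one that needs the hypothesis $f x_n \in I \Rightarrow f \in I$, and it is where the only real step lies. Since $I$ is homogeneous, $\In(I)$ is generated by the monomials $\In(f)$ with $f \in I$ homogeneous, so it suffices to show each such $\In(f)$ lies in $\Cone(\In(\Slice(I)))$. Write $\In(f) = x_n^{\,j} m$ with $x_n \nmid m$ and induct on $j$. If $j = 0$, then $x_n \nmid f$ by (i), so $\overline f \neq 0$ and $\In(f) = \In(\overline f) \in \In(\Slice(I)) \subseteq \Cone(\In(\Slice(I)))$ by (ii). If $j \geq 1$, then $x_n \mid \In(f)$, hence $x_n \mid f$ by (i); write $f = x_n h$ with $h$ homogeneous. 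The hypothesis on $I$ forces $h \in I$, and $\In(h) = x_n^{\,j-1} m$ by (i). By the inductive hypothesis $\In(h) \in \Cone(\In(\Slice(I)))$, and since that set is an ideal of $\CC[x_1,\dots,x_n]$, we conclude $\In(f) = x_n \In(h) \in \Cone(\In(\Slice(I)))$. Combining the two inclusions proves the lemma.

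The main obstacle — really the only point with content — is the inductive step for $j \geq 1$: it is precisely there that one uses the hypothesis to divide through by $x_n$ while staying inside $I$, and one uses the revlex-specific feature both to learn that $f$ is divisible by $x_n$ and to track $\In$ under that division. Everything else (the homogeneity reductions, the description of the generators of $\Cone(\In(\Slice(I)))$, and the ``easy'' inclusion) is routine once facts (i) and (ii) are in hand.
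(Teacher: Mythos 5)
Your proof is correct and follows essentially the same route as the paper's: both hinge on the revlex-specific observation that if the leading term of a homogeneous polynomial is divisible by $x_n$ then the whole polynomial is, and both then divide by $x_n$ and invoke the hypothesis $fx_n\in I\Rightarrow f\in I$. Where the paper phrases the reduction as ``a minimal generator of $\In(I)$ cannot be divisible by $x_n$'' (a contradiction argument), you run the equivalent induction on the exponent of $x_n$; this is a cosmetic repackaging, and your facts (i)--(ii) actually make explicit a small step the paper elides, namely that $x_n\mid\In(f)$ forces $x_n\mid f$ so that $f/x_n$ is a genuine polynomial.
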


\begin{proof}
Both sides of the equation are monomial ideals, so it is enough to
show that the minimal generators of each side are contained in the
other side. (A monomial $x^a$ is called a minimal generator of the
monomial ideal $J$ if $x^a \in J$ but no proper divisor of $x^a$
is in $J$.)

Let $x^a$ be a minimal generator of the LHS, so $x^a$ is the leading
term of $f$ for some homogeneous $f$. If $x_n$ divides $x^a$ then
$f/x_n$ is in $I$ and has leading term $x^a/x_n$, contradicting the
minimality of $x^a$. So $x_n$ does not divide $x^a$. Then $x^a$ is
also the leading term of $f|_{x_n=0}$ and hence lies in the RHS.

Conversely, suppose that $x^a$ is a minimal generator of the RHS.
Then $x^a$ is the leading term of $f|_{x_n=0}$ for some $f \in I$
and, without loss of generality, we may assume that $f$ is
homogeneous. Then $x^a$ is also the revlex-leading term of $f$, and hence
contained in the LHS.
\end{proof}

\begin{proof}[Proof of Theorem~\ref{T:HodgePedoe}.]
It is enough to show that $\In_\omega(I(\Pi_u^w)) \subseteq I(
\SR(\sigma_k([u,w])))$, as Proposition \ref{P:sameHilbert} will then
imply that they are equal.

First we note that $p_{\sigma_k(u)}$ is not a zero divisor in
$\CC[\Pi_u^w]$ because this ring is a domain and $\Pi_u^w$ is not
contained in $\{p_{\sigma_k(u)} = 0\}$.  Our proof is by induction
on $\ell(w) - \ell(u)$; the base case where $w=u$ is obvious.  In
the following, the slices and cones are with respect to $x_n =
p_{\sigma_k(u)}$.  (All the ideals contain the Pl\"ucker coordinates
$p_K$ for $K < \sigma_k(u)$ so we shall ignore these coordinates.)

By Lemma \ref{L:GeomMonk}, we have $\Slice(I(\Pi_u^w)) ) \subseteq
I\left( \cup_{u' \gtrdot_k u} \Pi_u^w \right)$.  Taking initial
ideals preserves containment, so we have $\In_\omega( I(
\cup_{u'\gtrdot_k u} \Pi_{u'}^w) ) \subseteq \bigcap_{u' \gtrdot_k
u} \In_\omega( I( \Pi_{u'}^w ) )$.  By induction, we have $\In(
I(\cup_{u'\gtrdot_k u} \Pi_{u'}^w )) \subseteq \bigcap_{u'\gtrdot_k
u} I( \SR( \sigma_k([u',w])))$.  Combining this we get
$$
\Slice(I(\Pi(u^w))) \subseteq \bigcap_{u' \gtrdot_k u} I(\SR(
\sigma_k([u',w]))).
$$
But by Lemma \ref{L:revlex}, we have
$$
\In_\omega(I(\Pi_u^w) ) = \Cone( \In_\omega( \Slice(I(\Pi_u^w))))
\subseteq \Cone\left( \bigcap_{u' \gtrdot_k u} I(\SR( \sigma_k([u',w])))\right).
$$
Now, a face of $\bigcup_{u' \gtrdot_k u} \sigma_k(\Delta([u',w]))$
is precisely the image under $\sigma_k$ of a chain in $[u,w]$ whose
least element does not lie in $\sigma_k^{-1}(u)$. In other words,
$\bigcup_{u' \gtrdot_k u} \sigma_k(\Delta([u',w]))$ is precisely the
simplicial complex of all faces in $\sigma_k(\Delta([u,w]))$ which
do not contain $\sigma_k(u)$. Since every maximal face of
$\sigma_k(\Delta([u,w]))$ contains $\sigma_k(u)$, the cone on
$\bigcup_{u' \gtrdot_k u} \sigma_k(\Delta([u',w]))$ is
$\sigma_k(\Delta([u,w]))$.  Thus $\In_\omega(I(\Pi_u^w)) \subseteq
I( \SR(\sigma_k([u,w])))$, as required.
\end{proof}

\begin{remark}
  The Grassmannian has a SAGBI degeneration to an irreducible toric
  variety \cite{Sturmfels}, which in turn degenerates to Hodge's
  Stanley-Reisner scheme. Having now proven that the positroid varieties
  stay reduced under the Hodge degeneration, it follows that they
  stay reduced under the less drastic SAGBI degeneration.
\end{remark}

\begin{remark}
  To define this degeneration, we made use of the basis $\{p_I\}$ of the
  Pl\"ucker embedding of the Grassmannian. This is a ``minuscule''
  embedding, meaning that the Weyl group (here $S_n$) acts transitively
  on this basis of weight vectors.
  The key formula \cite[Theorem 32(ii)]{LL} we used in Proposition
  \ref{P:sameHilbert} reduces to computing the Hilbert series of
  a simplicial complex in exactly the minuscule case, and the results
  of this section generalize to that case. We plan to treat the
  general case in \cite{KLS3}.
\end{remark}

\section{Shelling $\sigma_k([u,w])$}\label{sec:shelling}

Fix $u \leq_k w$.  In Theorem~\ref{T:HodgePedoe}, we showed that
$\Pi_{u}^{w}$ has a Gr\"obner degeneration to the Stanley-Reisner ring
of $\sigma_k(\Delta([u,w]))$.  Thus, it is of importance to describe
the combinatorics of $\sigma_k(\Delta([u,w]))$.  The primary result of
this section are that $\sigma_k(\Delta([u,w]))$ is shellable.  We
first show that $\sigma_k(\Delta([u,w]))$ is ``thin''.

\begin{lemma} \label{L:Thin}
  If $F$ is a face of $\sigma_k(\Delta([u,w]))$ of dimension
  $\ell(w)-\ell(u)-1$, called a \defn{ridge}, then $F$ lies in either one or two
  $(\ell(w)-\ell(u))$-dimensional faces of $\sigma_k(\Delta([u,w]))$,
  and is called \defn{exterior} or \defn{interior} respectively.
  If $F$ lies in $\sigma_k(\Delta([u',w']))$, with $\langle u',w' \rangle > \langle u,w \rangle$
  in $\Q(k,n)$, then $F$ lies in an exterior ridge of
  $\sigma_k(\Delta([u,w]))$.
\end{lemma}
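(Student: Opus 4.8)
\textit{Proof sketch.} The plan is to reduce everything to maximal chains via Corollary~\ref{C:chainlifting}, which identifies the facets of $\sigma_k(\Delta([u,w]))$ with the maximal chains of $([u,w]_k,\lessdot_k)$, each facet being the image under $\sigma_k$ of exactly one such chain. By Lemmas~\ref{L:equivcomplex} and~\ref{L:kBruhatrepresentatives} we may and do assume $w\in\GS$. Note that $\sigma_k$ strictly increases along $k$-covers: if $v\lessdot_k v'$ then $v'=v(ab)$ for a transposition with $a\le k<b$ and $v(a)<v(b)$, so $\sigma_k(v')$ is obtained from $\sigma_k(v)$ by replacing $v(a)$ with the strictly larger element $v(b)$, whence $\sigma_k(v')>\sigma_k(v)$ in $\binom{[n]}{k}$. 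Thus the vertex set of every facet is a chain $\sigma_k(u)=N_0<N_1<\dots<N_\ell=\sigma_k(w)$, where $\ell=\ell(w)-\ell(u)$. Write the ridge as $F=\{M_1<\dots<M_\ell\}$. Since the complex is pure (Corollary~\ref{C:chainlifting}), $F$ lies in a facet $G$, and the one vertex of $G\setminus F$ sits below $M_1$, strictly between two consecutive $M_i$, or above $M_\ell$; accordingly, either $M_1\ne\sigma_k(u)$ (and then $M_\ell=\sigma_k(w)$), or $M_\ell\ne\sigma_k(w)$ (and then $M_1=\sigma_k(u)$), or else $M_1=\sigma_k(u)$ and $M_\ell=\sigma_k(w)$. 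In particular $M_1\ne\sigma_k(u)$ and $M_\ell\ne\sigma_k(w)$ cannot both hold.

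Suppose $M_1\ne\sigma_k(u)$ (the case $M_\ell\ne\sigma_k(w)$ is symmetric). Then any facet $G\supseteq F$ has least vertex $\sigma_k(u)\notin F$, so $G=F\cup\{\sigma_k(u)\}$ is the only candidate; as $F$ does lie in a facet, it lies in exactly one, and is exterior. This also yields the final statement. Suppose $F$ additionally lies in $\sigma_k(\Delta([u',w']))$ with $\langle u',w'\rangle>\langle u,w\rangle$, taking also $w'\in\GS$. By Theorem~\ref{T:pairsboundedposet} the order on $\Q(k,n)$ is Bruhat order on $\Bound(k,n)$, and by its rank formula $\ell(f_{u',w'})>\ell(f_{u,w})$ forces $\ell(w')-\ell(u')<\ell$; since $\sigma_k(\Delta([u',w']))$ is pure of dimension $\ell(w')-\ell(u')$ yet contains the $\ell$-vertex face $F$, we get $\ell(w')-\ell(u')=\ell-1$ and $F$ is a facet there, so $M_1=\sigma_k(u')$ and $M_\ell=\sigma_k(w')$ by Corollary~\ref{C:chainlifting}. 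If $F$ were interior in $\sigma_k(\Delta([u,w]))$ it would lie in at least two facets, hence in the last case above, forcing $\sigma_k(u')=\sigma_k(u)$ and $\sigma_k(w')=\sigma_k(w)$; with $w,w'\in\GS$ the latter gives $w=w'$, and then $u'=uz$ for some $z\in S_k\times S_{n-k}$, so $f_{u',w'}=u'\,t_{\omega_k}\,w'^{-1}=uz\,t_{\omega_k}\,w^{-1}=u\,t_{\omega_k}\,w^{-1}=f_{u,w}$ because $z$ stabilizes $\omega_k$ --- contradicting $\langle u',w'\rangle>\langle u,w\rangle$. Hence $F$ is exterior.

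It remains to bound the number of facets through a ridge $F=\{M_1<\dots<M_\ell\}$ with $M_1=\sigma_k(u)$ and $M_\ell=\sigma_k(w)$. Lifting the chain $M_1<\dots<M_\ell$ from below by repeated use of Lemma~\ref{L:minimallifting} (set $d_1=u$ and $d_{i+1}=\min\{b\ge d_i:\sigma_k(b)=M_{i+1}\}$) produces $u=d_1\le_k d_2\le_k\cdots\le_k d_\ell$ with $\sigma_k(d_i)=M_i$; since $w\in\GS$, Corollary~\ref{C:kBruhatGrassmann} together with the strict monotonicity of $\sigma_k$ along $k$-covers forces $d_\ell=w$. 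The chain $(d_i)$ has $\ell$ terms and total rank $\ell$, so all of its steps are covers except one, say $d_j\le_k d_{j+1}$, which has rank $2$. I claim every facet $G=F\cup\{N\}\supseteq F$, written $\sigma_k(c_0\lessdot_k\cdots\lessdot_k c_\ell)$ with $N$ in position $r$, satisfies $c_i=d_{i+1}$ for $i<r$ and $c_i=d_i$ for $i>r$ --- a short induction from the two ends using minimality of the lifts (and Lemma~\ref{L:maximallifting} at the upper end) --- so that $d_r\lessdot_k c_r\lessdot_k d_{r+1}$; since $d_i\lessdot_k d_{i+1}$ is a cover for $i\ne j$, necessarily $r=j$ and $c_j$ is an atom of $[d_j,d_{j+1}]_k$. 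Conversely each atom $z$ of $[d_j,d_{j+1}]_k$ gives a facet $\sigma_k(d_1\lessdot_k\cdots\lessdot_k d_j\lessdot_k z\lessdot_k d_{j+1}\lessdot_k\cdots\lessdot_k d_\ell)\supseteq F$, distinct atoms giving distinct facets by Corollary~\ref{C:chainlifting}. Thus the facets through $F$ biject with the atoms of $[d_j,d_{j+1}]_k$, which are among the (exactly two) atoms of the rank-$2$ Bruhat interval $[d_j,d_{j+1}]\subseteq S_n$ --- a standard property of Bruhat order, see~\cite{BjB} --- so $F$ lies in one or two facets, as claimed. The delicate point, where Lemmas~\ref{L:minimallifting} and~\ref{L:maximallifting} do the real work, is the well-definedness of the gap $j$ and the fact that every facet through $F$ fills it; the rest is bookkeeping. \qed
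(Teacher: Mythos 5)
Your strategy is genuinely different from the paper's. The paper supposes that $F$ lies in three facets, lifts all three to maximal chains of $[u,w]_k$, and locates three distinct elements strictly between two fixed endpoints of a rank-$2$ Bruhat interval -- a contradiction. You instead try to pin down the \emph{position} $r$ of the added vertex by comparing the lifted chain $c_\bullet$ of a facet with the minimal-lift chain $d_\bullet$ of the ridge. The first two paragraphs -- the reduction to $w\in\GS$, the observation that a ridge $F$ missing $\sigma_k(u)$ or $\sigma_k(w)$ lies in a unique facet (hence is exterior), and the deduction that a ridge shared with $\sigma_k(\Delta([u',w']))$, $\langle u',w'\rangle>\langle u,w\rangle$, is exterior -- are correct and nicely done.

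The gap is in the third paragraph, in the assertion that every facet satisfies $c_i=d_i$ for $i>r$. The forward induction (using Lemma~\ref{L:minimallifting}) does give $c_i=d_{i+1}$ for $i<r$: one has $d_i\le c_{i-1}$ by minimality and $\ell(d_i)\ge\ell(u)+i-1=\ell(c_{i-1})$, forcing equality, and incidentally forcing the rank-$2$ step of $d_\bullet$ to satisfy $j\ge r$. But the backward induction, which you say uses Lemma~\ref{L:maximallifting}, produces the \emph{maximal} lift $e_\bullet$ of $F$ below $w$, not $d_\bullet$: one gets $c_i=e_i$ for $i>r$. For $r<i\le j$ one only knows $d_i\le c_i=e_i$ with $\ell(d_i)=\ell(u)+i-1<\ell(u)+i=\ell(c_i)$, so $d_i\ne c_i$; the two chains only rejoin after the rank-$2$ step. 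Thus the deduction $d_r\lessdot_k c_r\lessdot_k d_{r+1}$ (and hence $r=j$) does not follow: you have not excluded a facet whose inserted vertex sits strictly before the rank-$2$ step of $d_\bullet$, and the argument as written is circular (the claim $c_{r+1}=d_{r+1}$ is exactly equivalent to $r=j$). The correct rank-$2$ interval to consider is $[d_r,e_{r+1}]$, but then one has to rule out facets at two different positions (the rank-$2$ steps of $d_\bullet$ and $e_\bullet$ may a priori differ, and $e_1$ may even fail to equal $u$), which requires extra work -- essentially the comparison of \emph{pairs} of facet chains that the paper carries out. A smaller issue: the justification of $d_\ell=w$ via Corollary~\ref{C:kBruhatGrassmann} alone is too terse; one needs $d_\ell\le w$ first, which comes from comparing $d_\bullet$ with the lift of an actual facet through $F$ (which exists by purity), after which $w\in\GS$ and $\sigma_k(d_\ell)=\sigma_k(w)$ give $d_\ell=w$.
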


\begin{proof}
Since by Corollary~\ref{C:chainlifting}, $\sigma_k(\Delta([u,w]))$ is pure of dimension $\ell(w)-\ell(u)$,
we know that $F$ lies in at least one $(\ell(w)-\ell(u))$-dimensional face of $\sigma_k(\Delta([u,w]))$.

Let the vertices of $F$ be $M_1 < M_2 < \ldots < M_l$ where the inequalities are in the partial order on $\binom{[n]}{k}$.
Suppose (for the sake of contradiction) that $F$ lies in three maximal faces of $\sigma_k(\Delta([u,w]))$ with the additional vertices
$\alpha$, $\beta$ and $\gamma$.
Let $M_{a-1} < \alpha < M_a$, $M_{b-1} < \beta < M_b$ and $M_{c-1} < \gamma < M_c$
with $a \leq b \leq c$.
By Corollary~\ref{C:chainlifting}, each of these faces lifts to a unique chain in $[u,w]$;
let these lifts be $x_1 \lessdot x_2 \lessdot \ldots \lessdot x_{l+1}$, $y_1 \lessdot y_2 \lessdot \ldots \lessdot y_{l+1}$ and $z_1 \lessdot z_2 \lessdot \ldots \lessdot z_{l+1}$.

In the proof of Corollary~\ref{C:chainlifting}, we noted that for $i
< a$, one has $x_i = \min \{v : v \geq x_{i-1} \mbox{\ and\ }
\sigma_k(v)=M_i \}$ and $z_i = \min \{v : v \geq z_{i-1} \mbox{\
and\ } \sigma_k(v)=M_i \}$ so we deduce by induction that $x_i=z_i$
for $i<a$. Now, we claim that $x_{i+1} \gtrdot z_i$ for $a-1 \leq i
< c$. Our proof is by induction on $i$; for $i = a-1$ we have $x_a >
x_{a-1} = z_{a-1}$, establishing the base case. For $i>a$, we have
$x_{i+1} = \min \{v : v \geq x_{i} \mbox{\ and\ } \sigma_k(v)=M_i
\}$ and $z_i = \min \{v : v \geq z_{i-1} \mbox{\ and\ }
\sigma_k(v)=M_i \}$ and our inductive hypothesis shows that the
right hand side of first equation is contained in the right hand
side of the induction, so $x_{i+1} \geq z_i$. But every link in the
chains $x_{\bullet}$ and $z_{\bullet}$ is a cover, so
$\ell(x_{i+1})=\ell(z_i)+1$ and we complete the induction.

The same arguments show that $y_i = z_i$ for $1 \leq i < b$.
Similarly, we have $y_i = x_i$ for $b < i \leq l+1$. So we have
$y_{b-1} = z_{b-1} \lessdot x_b \lessdot x_{b+1} = y_{b+1}$,
$y_{b-1} \lessdot y_b \lessdot y_{b+1}$ and $y_{b-1} = z_{b-1}
\lessdot z_b \lessdot x_{b+1} = y_{b+1}$ But there are only two
elements of $[u,w]$ between $y_{b-1}$ and $y_{b+1}$, so two of the
three of $x_b$, $y_b$ and $z_b$ are equal. We describe the case
where $x_b=y_b$; the other cases are similar. If $a < b$, then
$\sigma_k(x_{b})=\sigma_{k}(y_{b-1}) = M_{b-1}$. But also
$\sigma_k(x_{b})=\sigma_{k}(y_{b}) = \beta$, contradicting that
$\beta > M_{b-1}$. On the other hand, if $a=b$ then we have
$x_i=y_i$ for all $i$, so the two lifts $x_{\bullet}$ and
$y_{\bullet}$ are not actually different. We have now established
the first claim.

The second claim is very similar to the first.
We must either have $(u',w') = (u,w')$ with $w' \lessdot w$ or else $(u', w') = (u', w)$ with $u' \gtrdot u$;
we treat the former case.
If $\sigma_k(w') \neq \sigma_k(w)$, then the claim is easy:
$F$ does not contain $\sigma_k(w)$ but every maximal face of $\sigma_k(\Delta([u,w]))$ does, so the only maximal face of
$\sigma_k(\Delta([u,w]))$ containing $F$ is the one whose additional vertex is $\sigma_k(w)$.
Thus, we assume instead that $\sigma_k(w') = \sigma_k(w)$.

Suppose for contradiction that there are two faces of $\sigma_k(\Delta([u,w]))$ containing $F$,
with additional vertices $\alpha$ and $\beta$ obeying $M_{a-1} < \alpha < M_a$, $M_{b-1} < \beta < M_b$
with $a \leq b$. Let $x_{\bullet}$ and $y_{\bullet}$ be the corresponding chains in $[u,w]$.
Also, let $z_{\bullet}$ be the chain in $[u,w']$ lifting $F$.
Then, as before, we show that $y_{b-1} \lessdot x_b \lessdot x_{b+1}=y_{b+1}$ and $y_{b-1} \lessdot y_b \lessdot y_{b+1}$.
Also, the same induction as before shows that $y_i=z_i$ for $i \leq b-1$ and $y_{i+1} \gtrdot z_i$ for $b \leq i \leq l$.
So $y_{b-1} = z_{b-1} \lessdot z_b \lessdot y_{b+1}$.
But there are only two elements of $[u,w]$ between $y_{b-1}$ and $y_{b+1}$ and we conclude as before.
\end{proof}

\newcommand{\lex}{\mathrm{lex}}

We now describe an ordering on the maximal chains of $[u,w]_k$, and hence on the facets of $\sigma_k(\Delta([u,w]))$.
If $x \lessdot_k x (i j)$ is a cover in $[u,w]_k$, label the edge $x \lessdot x (i j)$ by the transposition $(i j)$, with $i \leq k < j$.
We label a maximal chain of $[u,w]$ by the sequence of its reflections, read from bottom to top.
We fix the following total order $<$ on the transpositions $(i j)$ with $i \leq k < j$:
if $w(i) > w(i')$ then $(i j) < (i' j')$ and, if $w(j) < w(j')$ then $(i j) < (i j')$.
We order the maximal chains of $[u,w]_k$ lexicographically by their reflection sequences, using the order $<$.
By Corollary~\ref{C:chainlifting}, the facets of $\sigma_k(\Delta([u,w]))$ correspond to certain maximal chains of $[u,w]$;
we order the facets of $\sigma_k(\Delta([u,w]))$ by the induced ordering on these chains.
Denote this order $<_{\lex}$.

Given a simplicial complex with maximal faces $\sigma_1$, $\sigma_2$, \dots, $\sigma_N$ all of dimension $d-1$,
the ordering  $\sigma_1$, $\sigma_2$, \dots, $\sigma_N$ is called a \defn{shelling order} if, for each $i$, $\sigma_i \cap \bigcup_{j < i } \sigma_j$ is pure of dimension $d-2$.
A simplicial complex is called \defn{shellable} if its maximal faces can be put into a shelling order.
If $P$ is an $EL$-shellable poset (defined in Section~\ref{ssec:shellability}), then $\Delta(P)$ is shellable.
(See~\cite{Bjo}.)

\begin{theorem} \label{T:shelling}
  This ordering $<_{\lex}$ of the facets of $\sigma_k(\Delta([u,w]))$
  is a shelling order.
\end{theorem}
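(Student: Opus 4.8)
The plan is to verify that $<_{\lex}$ is a shelling by the usual restriction-face criterion. List the facets of $\sigma_k(\Delta([u,w]))$ as $\tau_1 <_{\lex} \tau_2 <_{\lex} \cdots$; by Corollary~\ref{C:chainlifting} each $\tau$ is the image of a unique maximal chain $C = (u = x_0 \lessdot_k x_1 \lessdot_k \cdots \lessdot_k x_r = w)$ of $[u,w]_k$, with $\sigma_k$ injective on it, and $<_{\lex}$ is lexicographic comparison of the reflection-label sequences $(t_1,\dots,t_r)$, where $x_i = x_{i-1}t_i$ and each $t_i$ is a transposition crossing position $k$ (a cover of $[u,w]$ is a $k$-cover exactly when its reflection crosses $k$, by Theorem~\ref{T:BScriterion}). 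I attach to $\tau$ the restriction face $R(\tau) = \{\sigma_k(x_i) : 1 \le i \le r-1,\ t_i > t_{i+1},\ \text{and}\ t_i,t_{i+1}\ \text{are disjoint transpositions}\}$, and aim to prove $\tau_N \cap \bigcup_{M<N}\tau_M = \{F \subseteq \tau_N : R(\tau_N) \not\subseteq F\}$, with $R(\tau_N)\neq\emptyset$ for $N>1$. That right-hand side is a subcomplex whose facets are exactly the $\tau_N \setminus \{v\}$ with $v \in R(\tau_N)$, hence automatically pure of codimension one, so this suffices.

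Two rigidity facts drive everything. First, $\sigma_k$ is strictly increasing along any maximal chain of $[u,w]_k$, and for a fixed element $x$ and a prescribed $I\in\binom{[n]}{k}$ differing from $\sigma_k(x)$ by a single exchange there is at most one $k$-cover (and at most one $k$-cocover) of $x$ with that $\sigma_k$-value; consequently two facets sharing a ridge have lifted chains that agree rung-by-rung away from the single exchanged slot, so the comparison of such facets is controlled by one rank-two Bruhat interval $[x_{i-1},x_{i+1}]$. Such an interval is a diamond with exactly one other middle element $x_i'$; by the dihedral ($S_3$) structure, $x_i'$ is itself squeezed between $x_{i-1}$ and $x_{i+1}$ in $k$-Bruhat order precisely when $t_i$ and $t_{i+1}$ are disjoint, since in the overlapping case the competing chain of the diamond picks up the unique reflection of that rank-two interval that fails to cross $k$. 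Together with Lemma~\ref{L:Thin} this shows the ridge $\tau\setminus\{\sigma_k(x_i)\}$ is interior exactly when $t_i,t_{i+1}$ are disjoint, and its partner facet is then the image of the chain obtained from $C$ by swapping the rung $x_i$ for $x_i'$, whose two altered labels are $t_{i+1},t_i$. This immediately gives the inclusion $\supseteq$: if $F\subseteq\tau$ omits some $\sigma_k(x_i)\in R(\tau)$, the swapped chain $C'$ lies in $[u,w]_k$ and, since exactly one of the two diamond chains is label-increasing, the descent $t_i>t_{i+1}$ forces $C'<_{\lex}C$; its image $\tau'$ is an earlier facet containing $\tau\setminus\{\sigma_k(x_i)\}\supseteq F$.

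The inclusion $\subseteq$ is the main obstacle. Suppose $R(\tau)\subseteq F\subseteq\tau$ and $F\subseteq\tau'$ with $\tau'<_{\lex}\tau$ and chain $C'$; then every vertex of $\tau$ omitted by $F$ lies outside $R(\tau)$. Cut $[u,w]_k$ into the subintervals between consecutive vertices of $F$. The condition $R(\tau)\subseteq F$ annihilates every disjoint-type descent of $C$ strictly interior to a subinterval (its vertex would be a vertex of $R(\tau)$ lying strictly between two consecutive vertices of $F$), so inside each subinterval $C$ has only overlapping-type descents, at each of which the relevant rank-two $k$-Bruhat interval has a \emph{unique} maximal chain. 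Adapting the standard local-to-global exchange argument of EL-shellings, but carried out inside $[u,w]_k$, one concludes that $C$ restricted to each subinterval is $<_{\lex}$-least among maximal chains of that subinterval; since $C$ and $C'$ both traverse all of $F$'s vertices one then gets $C'\ge_{\lex}C$, a contradiction. Reconciling this with the face-identifications is the technical heart: the overlapping-transposition descents need not lie in $F$, and for them the competing diamond element is identified by $\sigma_k$ with an endpoint — this is exactly why $\Delta([u,w]_k)$ itself fails to be shellable, yet contributes no extra facet of the quotient, and Lemma~\ref{L:Thin} guarantees nothing else sits over the corresponding ridge. Combining the two inclusions shows $<_{\lex}$ is a shelling order, and in particular that $\sigma_k(\Delta([u,w]))$ is shellable.
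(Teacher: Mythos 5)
Your ``rigidity fact'' --- that two facets sharing a ridge lift to chains of $[u,w]_k$ agreeing rung-by-rung away from a single exchanged slot --- is false, and the characterization of interior ridges built on it fails; consequently your restriction face $R(\tau)$ is too small and the key equality does not hold. The paper's own example after Lemma~\ref{L:LabelDecreases} (take $(k,n)=(2,4)$, $u=1243$, $w=3412$) is a counterexample: the chains $h=(1243,1342,1432,3412)$ and $\overline{h}=(1243,1423,2413,3412)$ differ in \emph{two} rungs, yet their images $\{12,13,14,34\}$ and $\{12,14,24,34\}$ share the ridge $\{12,14,34\}$. In $h$, this ridge omits $\sigma_2(1342)=\{1,3\}$, and $1342$ is the unique descent of $h$, with labels $(2,4)>(2,3)$ --- transpositions that \emph{overlap} (common lower index $2\leq k$). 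So a ridge at an overlapping descent can be interior, contrary to your claim. The mechanism your argument misses is what the paper calls ``the swap propagates upwards'': the single diamond swap at the descent gives $(1243,1423,1432,3412)$, which is \emph{not} a $k$-Bruhat chain, and one must swap again at $1432$ before landing on the facet $\overline{h}$. With your definition, $R(\sigma_2(h))=\emptyset$; the right-hand side of your proposed equality is then the empty complex, but the left-hand side is the nonempty ridge above, so your shelling check fails at the very first step beyond the lex-minimal facet.

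The paper does not take a restriction-face approach; it works directly from the definition of shelling, using Lemma~\ref{L:KeyShellingLemma} to choose a \emph{suitable} descent (not just any one) and iterating the propagating $\swap$ until termination, which requires $w$ Grassmannian --- another ingredient absent from your argument. Were you to repair $R(\tau)$ to include all descent vertices (with the propagating-swap partner facet for the $\supseteq$ direction), your $\subseteq$ sketch would still not go through as written: ``cutting into subintervals'' tacitly assumes a second chain $C'$ through $F$ agrees with $C$ at the rungs mapping into $F$, which again fails for exactly the same reason.
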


We need several lemmas:

\begin{lemma} \label{L:TechnicalShellingLemma}
  Let $u \leq_k v \lessdot_k v' \leq_k w$ and let
  $v_0 \lessdot_k v_1 \lessdot_k \cdots \lessdot_k v_r$ be a saturated
  $k$-Bruhat chain in $[u,w]_k$ with $v_0 = v$.
  Let $v' = v (i' j')$ and let $v_a = v_{a-1} (i_{a} j_{a})$ and
  suppose that $(i' j') < (i_1 j_1) < \cdots < (i_{r} j_{r})$.
  Then $\sigma_k(v_r) \not \geq \sigma_k(v')$.
\end{lemma}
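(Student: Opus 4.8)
The plan is to use the combinatorial description of the order on $\binom{[n]}{k}$: for $I,J\in\binom{[n]}{k}$ one has $I\ge J$ if and only if $|I\cap[m,n]|\ge|J\cap[m,n]|$ for every $m\in[n]$. Thus it suffices to exhibit one $m$ with $|\sigma_k(v_r)\cap[m,n]|<|\sigma_k(v')\cap[m,n]|$, and the natural candidate is $m_0:=v(j')$, the value of $v=v_0$ at the position $j'$ moved by the ``small'' step. Since $v\lessdot_k v'=v(i'j')$ forces $v(i')<v(j')=m_0$, and $\sigma_k(v')=\big(\sigma_k(v)\setminus\{v(i')\}\big)\cup\{v(j')\}$, we get $|\sigma_k(v')\cap[m_0,n]|=|\sigma_k(v)\cap[m_0,n]|+1$.

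Next I would track $|\sigma_k(v_a)\cap[m_0,n]|$ along the chain. Each cover $v_{a-1}\lessdot_k v_a=v_{a-1}(i_aj_a)$ (with $i_a\le k<j_a$ and $v_{a-1}(i_a)<v_{a-1}(j_a)$) replaces the element $v_{a-1}(i_a)$ of the $k$-set by the strictly larger element $v_{a-1}(j_a)$, so $|\sigma_k(v_a)\cap[m_0,n]|-|\sigma_k(v_{a-1})\cap[m_0,n]|$ equals $1$ if $v_{a-1}(i_a)<m_0\le v_{a-1}(j_a)$ and $0$ otherwise; in particular it is weakly increasing along the chain. Hence the lemma reduces to the claim that no cover of the chain is a \emph{jump}, i.e.\ one never has $v_{a-1}(i_a)<m_0\le v_{a-1}(j_a)$; given the claim, $|\sigma_k(v_r)\cap[m_0,n]|=|\sigma_k(v)\cap[m_0,n]|<|\sigma_k(v')\cap[m_0,n]|$, which finishes the proof.

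To prove the claim I would argue by contradiction, letting $a$ be the first jump. Two observations come for free. (i) Along the chain the value at any position $\le k$ is weakly increasing and the value at any position $>k$ is weakly decreasing (a cover touching such a position strictly raises, resp.\ lowers, it), so $v_{a-1}(i)\ge v(i)$ for $i\le k$ and $v_{a-1}(j')\le m_0$. (ii) Since no jump occurs before step $a$, the value $m_0$ cannot have entered the $k$-set, and indeed any motion of $m_0$ away from position $j'$ would itself be such a jump; hence $v_{a-1}(j')=m_0$. Feeding the subword/cover condition for $v_{a-1}\lessdot_k v_a$ the obstacle $l=j'$ then forces $j_a\le j'$. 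The endgame is a case analysis on $j_a=j'$ versus $j_a<j'$ and on whether $i_a=i'$, in which one combines: the label inequalities $(i'j')<(i_1j_1)<\cdots<(i_aj_a)$ (which, depending on the case, yield $w(i')>w(i_a)$, or force $i_1=\cdots=i_a=i'$ together with $w(j')<w(j_1)<\cdots<w(j_a)$); the subword conditions for the two covers $v\lessdot_k v'=v(i'j')$ and $v_{a-1}\lessdot_k v_a$, used with obstacles among $\{i_a,j_a,j'\}$; the monotonicity in (i); and, crucially, Theorem~\ref{T:BScriterion}(2) applied to $v_{a-1}\le_k w$ and to $v\le_k w$, which says that on each side of $k$ the permutation $v_{a-1}$ (resp.\ $v$) contains every same-side Bruhat inversion of $w$.

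The main obstacle is exactly this last case analysis: making the label ordering, the subword conditions, and $k$-Bruhat membership (via Theorem~\ref{T:BScriterion}(2)) interlock so that every surviving configuration is contradictory. This last input genuinely matters: ignoring the hypothesis $v_r\le_k w$ one can build chains with increasing labels that \emph{do} jump, whose endpoint then fails to be $\le_k w$; so the proof must exploit that the whole chain lies inside $[u,w]_k$, and I expect this bookkeeping to be the bulk of the argument.
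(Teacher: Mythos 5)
Your reduction to ``the chain never jumps at threshold $m_0=v(j')$'' is not sound: that claim is strictly stronger than the lemma, and it is false. Take $k=2$, $n=5$, $w=[45123]$, $u=v=[21345]$, and $v'=v(2\,3)=[23145]$, so $(i',j')=(2,3)$ and $m_0=v(3)=3$. Take the chain $v_0=[21345]$, $v_1=v_0(1\,3)=[31245]$, $v_2=v_1(1\,4)=[41235]$. Each step is a $k$-Bruhat cover, each $v_p\le_k w$ (the binding check is $v_2(1)=4=w(1)$), and the labels obey $(2,3)<(1,3)<(1,4)$ since $w(2)=5>4=w(1)$ and $w(3)<w(4)$; so all hypotheses of the lemma hold. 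Yet the very first step is a jump: $v_0(i_1)=v_0(1)=2<m_0\le 3=v_0(j_1)$. Concretely $\sigma_k(v_0)=\{1,2\}$, $\sigma_k(v_1)=\{1,3\}$, $\sigma_k(v')=\{2,3\}$, and the count at $m_0=3$ already ties: $\#\bigl(\sigma_k(v_1)\cap[3,5]\bigr)=1=\#\bigl(\sigma_k(v')\cap[3,5]\bigr)$. The lemma of course still holds --- $1\in\sigma_k(v_2)=\{1,4\}$ while $1\notin\sigma_k(v')$, so $\sigma_k(v_2)\not\ge\sigma_k(v')$ --- but it is witnessed at a different threshold ($m=1$), not at $m_0$. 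In particular the case $j_a=j'$, $i_a\ne i'$ that you defer to the ``endgame'' is realized here with no contradiction available, so that case analysis cannot close.

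The paper chooses a subtler threshold. Let $a_0$ be the largest index with $i_{a_0}=i'$ (with the convention $j_{a_0}=i'$ if there is none, which is the situation in the example above). The paper tracks $\#\bigl(\sigma_k(\cdot)\cap[1,v(j_{a_0})]\bigr)$, where $v(j_{a_0})=v_r(i')$ is the eventual value sitting at position $i'$; a preliminary step shows $v(j_{a_0})<v(j')$, which is exactly what makes this cut informative. Assuming $\sigma_k(v_r)\ge\sigma_k(v')$, the count must drop at some step $q>a_0$, and the paper then uses that $v_{a_0}(i_q)<v_{a_0}(i')$ while $v_q(i_q)>v_q(i')$, together with Theorem~\ref{T:BScriterion}(2) applied to $v_{a_0}\le_k v_q\le_k w$, to force $w(i_q)>w(i')$, contradicting the label ordering. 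So the contradiction comes from the $i$-coordinates of the labels via $k$-Bruhat comparability of three elements of the chain, not from cover obstacles at the first ``bad'' step --- and the quantity being watched is anchored at $v_r(i')$, not at $v(j')$.
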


\begin{proof}
Then $\sigma_k(v')= \sigma_k(v) \setminus \{ v(i') \} \cup \{ v(j') \}$.
Let $a$ be the greatest index such that $i_a=i'$.
(If no such $a$ exists, we set $a=0$, in which case it is convenient to set $i_0=j_0=i'$.)
Since $(i' j') < (i_1 j_1) < \cdots < (i_{r} j_{r})$ we have
$w(i') \geq w(i_1) \geq \cdots \geq w(i_r)$ so $i'=i_1=i_2 = \cdots = i_a$.
We have $v_r(i') = v_a(i') = v(j_a)$.

As a first step, we show that $v(j_p) < v(j')$ for every $p$ between $1$ and $r$.
If not, then we must have $a>0$.
Let $p$ be the minimal index such that $v(j_p) \geq v(j')$.
Since $v = v_0 \lessdot_k v_1 \lessdot_k \cdots \lessdot_k v_r$ is a chain in $k$-Bruhat order, we know that
$v(j_1) < v(j_2) < \cdots v(j_p)$.
Since $(i' j') < (i_p j_p)=(i' j_p)$, we know that $w(j_p) > w(j')$.
We have $v(j_p) \geq v(j')$ and $w(j_p) > w(j')$ so, since $v \leq_k v_{p} \leq_k w$, we know that $v_p(j_p) > v_p(j')$.
Now, $v_p(j_p) = v(j_{p-1})$.
Since $(i' j') < (i_1 j_1) < \cdots < (i_{p} j_{p})$, we know that $j'$ does not occur among $j_1$, $j_2$, \dots, $j_p$, so $v_p(j')=v(j')$.
So $v(j_{p-1}) > v(j')$, contradicting the minimality of $p$.

Since $v <_k v_p$, we know that $v(i') \leq v_p(i') = v(j_p)$.
Combining this with the results of the previous paragraph, $v(i') \leq v(j_p) < v(j')$.
Now, $\sigma_k(v') = \sigma_k(v) \setminus \{ v(i') \} \cup \{ v(j') \}$ so
$\# \left( \sigma_k(v') \cap  [v(j_p)] \right) = \# \left( \sigma_k(v) \cap  [v(j_p)] \right) -1$.
Suppose for the sake of contradiction that $\sigma_k(v_r) \geq \sigma_k(v')$.
Then we must have $\# \left( \sigma_k(v') \cap  [v(j_p)] \right) \geq \# \left( \sigma_k(v_r) \cap  [v(j_p)] \right)$.
So there must be some $q$, greater than $a$, with $\# \left( \sigma_k(v_q) \cap  [v(j_p)] \right) < \# \left( \sigma_k(v_{q-1}) \cap  [v(j_p)] \right)$.
So $v_{q-1}(i_q) \leq v(j_p)$ and $v_{q-1}(j_q) > v(j_p)$.

Now, since $v_a \leq_k v_{q-1}$, we have
$v_a(i_q) \leq v_{q-1}(i_q)$. Combining this with the previous inequalities, we have
$v_a(i_q) \leq v(j_p) = v_a(i')$; this must in fact be a strict inequality as $q>a$ so $i_q \neq i'$.
Again, using that $v_r \geq_k v_{q}$, we have $v_r(i_q) \geq v_q(i_q) \geq v(j_p) = v_r(i')$ and, again, this inequality is strict.
Then, since $v_a \leq_k v_q \leq_k w$, we know that $w(i_q) > w(i')$.
But, by hypothesis, $(i' j') < (i_q j_q)$, so $w(i') > w(i_q)$, a contradiction.
\end{proof}

We say that a chain $h_1 \lessdot h_2 \lessdot \cdots \lessdot h_l$ \defn{has a descent at} $h_i$
if the label of $h_{i-1} \lessdot h_i$ is greater than the label of $h_i \lessdot h_{i+1}$.
Lemma~\ref{L:TechnicalShellingLemma} is mainly used through the following corollary:

\begin{lemma} \label{L:KeyShellingLemma}
Let $h$ and $m$ be two maximal chains of $[u,w]_k$.
Suppose that $h_j=m_j$ for $1 \leq j \leq i$ and
$(h_i \lessdot h_{i+1}) < (m_i \lessdot m_{i+1})$.
Then $m$ has a descent at $m_j$ for some $j>i$ such that $\sigma_k(m_j) \not \geq \sigma_k(h_{i+1})$.
\end{lemma}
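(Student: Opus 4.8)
The plan is to deduce the statement directly from Lemma~\ref{L:TechnicalShellingLemma}. Write $v := m_i = h_i$ and $v' := h_{i+1} = v(i' j')$, where $(i' j')$ is the label of the cover $h_i \lessdot h_{i+1}$. Along the chain $m$, let $(i_1 j_1), (i_2 j_2), \ldots$ denote the successive edge labels above $m_i$, so that $m_{i+a} = m_{i+a-1}(i_a j_a)$; the hypothesis $(h_i \lessdot h_{i+1}) < (m_i \lessdot m_{i+1})$ is precisely the statement $(i' j') < (i_1 j_1)$.

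First I would single out the longest ascending run of labels starting from the bottom: let $p \geq 1$ be maximal such that $(i' j') < (i_1 j_1) < (i_2 j_2) < \cdots < (i_p j_p)$. Suppose first that $m_{i+p} \neq w$. Then maximality of $p$ forces $(i_p j_p) > (i_{p+1} j_{p+1})$, so $m$ has a descent at $m_{i+p}$. Now I apply Lemma~\ref{L:TechnicalShellingLemma} with this $v$ and $v'$ and the saturated $k$-Bruhat chain $v = m_i \lessdot_k m_{i+1} \lessdot_k \cdots \lessdot_k m_{i+p}$, which is a sub-chain of $m$ and hence lies in $[u,w]_k$; the hypotheses on the labels are exactly those we have arranged, and the conclusion is $\sigma_k(m_{i+p}) \not\geq \sigma_k(h_{i+1})$. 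Thus $j = i+p$ has the two required properties.

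The only point needing a little care is the complementary case, in which the ascending run reaches the top, i.e.\ $m_{i+p} = w$. Applying Lemma~\ref{L:TechnicalShellingLemma} to the chain $v = m_i \lessdot_k \cdots \lessdot_k m_{i+p} = w$ would then give $\sigma_k(w) \not\geq \sigma_k(h_{i+1})$; but $h_{i+1}$ lies on a chain from $u$ to $w$, so $h_{i+1} \leq w$ in Bruhat order, and hence $\sigma_k(h_{i+1}) = h_{i+1}([k]) \leq w([k]) = \sigma_k(w)$ by the standard Bruhat comparison criterion --- a contradiction. So this case cannot occur, and the argument is complete. The only genuinely non-formal ingredient is this last observation, that $\sigma_k(w)$ dominates every vertex of every facet of $\sigma_k(\Delta([u,w]))$; everything else is bookkeeping around Lemma~\ref{L:TechnicalShellingLemma}, which carries the real weight.
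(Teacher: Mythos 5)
Your proof is correct and is essentially the paper's argument: both proofs run the chain upward from $m_i$ and invoke Lemma~\ref{L:TechnicalShellingLemma} on the resulting ascending sequence of labels, and both close the remaining gap with the observation $\sigma_k(w)\geq\sigma_k(h_{i+1})$. The only difference is organizational: you pick the index where the ascending label run first breaks, which gives the descent immediately and makes the argument direct, whereas the paper picks $m_r$ to be the first (the word ``last'' there appears to be a slip) element whose $\sigma_k$-value dominates $\sigma_k(h_{i+1})$ and then derives a contradiction from the absence of a descent before it; these are two phrasings of the same idea.
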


\begin{proof}
Since the last element of $m$ is $w$, and $w \geq h_{i+1}$,
we know that there is some element of $m$ whose image under $\sigma_k(m_r)$ is at least $\sigma_k(h_{i+1})$.
Let $m_r$ be the last such element.
We are done if we show that there is a descent at $m_j$ for some $j<r$.

If not, then $(m_i \lessdot_k m_{i+1}) < (m_{i+1} \lessdot_k m_{i+2}) < \cdots < (m_{r-1} \lessdot_k m_{r})$.
Also, by hypothesis, $(h_i \lessdot_k h_{i+1}) < (m_i \lessdot_k m_{i+1})$.
So all the hypotheses of Lemma~\ref{L:TechnicalShellingLemma} apply to the chain $m_i \lessdot m_{i+1} \lessdot \cdots \lessdot m_r$ and the cover $h_{i+1} \gtrdot h_i = m_i$.
We deduce that $\sigma_k(m_r) \not \geq \sigma_k(h_{i+1})$, contrary to our construction.
\end{proof}

The following lemma is Lemma~3.4 in the preprint \url{arXiv:math.CO/9712258}, but does not appear in the published version~\cite{BS2}.

\begin{lemma} \label{L:IncreasingMinimal}
  A maximal chain in $[u,w]_k$ has no descents if and only if it is the
  lexicographically minimal chain.
\end{lemma}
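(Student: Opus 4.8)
The plan is to prove both directions by exploiting the $EL$-shelling structure of $k$-Bruhat intervals, which we have available in this context. The key fact we may invoke is Dyer's theorem (cited in Section~\ref{ssec:shellability}) that Bruhat order on a Coxeter group is $EL$-shellable, together with the translation of $[u,w]_k$ into an interval in the affine Bruhat order via Theorem~\ref{T:pairsboundedposet} and the discussion of $\Q(k,n) \cong \Bound(k,n)$; alternatively one can work directly with the reflection labeling introduced just before Theorem~\ref{T:shelling} and the comparison results of \cite{BS}. Under any such $EL$-labeling, the defining property is precisely that in each interval there is a \emph{unique} label-increasing maximal chain, and that this chain is lexicographically minimal among all maximal chains of the interval. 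So the real content of the lemma is to check that the edge-labeling of $[u,w]_k$ by transpositions $(ij)$ with $i \le k < j$, ordered by the rule ``$(ij) < (i'j')$ if $w(i) > w(i')$, and $(ij) < (ij')$ if $w(j) < w(j')$,'' is in fact an $EL$-labeling — or at least has the weaker property that ``increasing chain'' and ``lex-minimal chain'' coincide in every full interval $[u,w]_k$.

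The forward direction (increasing $\Rightarrow$ lex-minimal) will follow once we know there is at most one increasing maximal chain in $[u,w]_k$: any lex-minimal chain is produced by the greedy algorithm that at each step picks the smallest available label, and a standard argument (the same one underlying $EL$-shellability) shows the greedily chosen chain is increasing, hence equals the unique increasing chain. For the reverse direction (lex-minimal $\Rightarrow$ increasing), I would argue the contrapositive: if a maximal chain $h_1 \lessdot h_2 \lessdot \cdots \lessdot h_l$ has a descent at $h_i$, i.e.\ $\mathrm{label}(h_{i-1}\lessdot h_i) > \mathrm{label}(h_i \lessdot h_{i+1})$, then one can perform a local move on the length-two subinterval $[h_{i-1}, h_{i+1}]_k$ replacing $h_i$ by the other element $h_i'$ in between, producing a lexicographically smaller chain. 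This uses that every length-two interval in $k$-Bruhat order has exactly two maximal chains, one increasing and one decreasing, with the increasing one lex-smaller — a fact that can be verified directly from Theorem~\ref{T:BScriterion} by a short case analysis on the relative positions of the two transpositions involved (the cases being: the transpositions share no index, share their lower index $i \le k$, or share their upper index $j > k$). Since the lex-minimal chain admits no such improving move, it can have no descents.

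The main obstacle I anticipate is the verification that each length-two interval $[x, z]_k$ with $\ell(z) - \ell(x) = 2$ contains exactly two maximal chains and that the one whose label sequence is increasing is the lex-smaller of the two. In ordinary Bruhat order this is the classical ``diamond lemma,'' but in $k$-Bruhat order one must confirm that both intermediate elements $y, y'$ with $x \lessdot_k y \lessdot_k z$ and $x \lessdot_k y' \lessdot_k z$ actually lie in the $k$-order (not just in ordinary Bruhat order), and that no third element sneaks in. For this I would lean on \cite[Theorem 3.13]{BS} (already invoked in Lemma~\ref{L:equivcomplex}), which describes the structure of $k$-Bruhat intervals, combined with the explicit reflection-ordering rule. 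Once the rank-two case is settled, the two implications assemble formally: ``no descents'' is a purely local condition, ``lex-minimal'' is recovered by the greedy algorithm, and the coincidence in rank two propagates to give the coincidence in general.
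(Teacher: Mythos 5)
Your proposal rests on a rank-two ``diamond flip'': at a descent $h_{i-1}\lessdot_k h_i\lessdot_k h_{i+1}$, replace $h_i$ by the other Bruhat-intermediate element $h_i'$ of $[h_{i-1},h_{i+1}]$ to obtain a lex-smaller maximal chain. You correctly flag as the ``main obstacle'' the verification that every length-two interval $[x,z]_k$ is a diamond in $k$-Bruhat order. This obstacle is fatal: the diamond lemma genuinely fails for $\leq_k$. Concretely, suppose the two covers are labelled $(i\,j_1)$ and $(i\,j_2)$ with a shared lower index (the case $i_1=i_2$). Then, as one computes directly (and as is implicit in the statement of Lemma~\ref{L:LabelDecreases}), the alternative intermediate is $h_i'=h_{i-1}(i\,j_2)$, and $h_{i+1}=h_i'(j_1\,j_2)$ --- a transposition \emph{not} crossing $k$ --- so $\sigma_k(h_i')=\sigma_k(h_{i+1})$ and $h_i'\lessdot h_{i+1}$ is not a $k$-cover. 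In the terminology of the swap discussion preceding Lemma~\ref{L:LabelDecreases}, ``the swap propagates upwards.'' Note that Lemma~\ref{L:LabelDecreases} deliberately asserts only $\sigma_k(b')\neq\sigma_k(b)$, not that $a\lessdot_k b'\lessdot_k c$ is a chain; the shelling proof then copes by iterating swaps until the propagation stops, but that termination uses $w$ Grassmannian, an assumption Lemma~\ref{L:IncreasingMinimal} does not make.

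The forward direction of your proposal inherits the same gap: the ``standard argument'' that the greedy chain is increasing proceeds by assuming a descent and performing exactly this flip, so it relies on the missing diamond property; and the uniqueness of the increasing chain, which you also invoke, is not established. The paper sidesteps local flips entirely: the half ``not lex-minimal $\Rightarrow$ has a descent'' is the global Lemma~\ref{L:KeyShellingLemma} (which tracks $\sigma_k$ along the whole chain via Lemma~\ref{L:TechnicalShellingLemma} and needs no thinness of rank-two intervals), and the half ``some chain has no descents'' is outsourced to the Bergeron--Sottile CM-chain of~\cite{BS2}. Combining those two facts yields both directions of the biconditional at once. If you want to salvage a local argument, you would need to replace the single flip by the full upward-propagating swap and prove termination without the Grassmannian hypothesis --- or first reduce to the Grassmannian case via Lemma~\ref{L:equivcomplex}, taking care that the lexicographic order and descent pattern are preserved under the equivalence.
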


\begin{proof}
  Lemma~\ref{L:KeyShellingLemma} shows that any chain other than the
  lexicographically minimal chain has a descent.  We now must show
  that there is some chain with no descents.  The CM-chain, in the
  sense of~\cite{BS2}, has this property; see the comment
  before \cite[Proposition 3.4]{BS2}.
\end{proof}

\newcommand{\swap}{\mathrm{swap}}
\newcommand{\multiswap}{\mathrm{multiswap}}

Let $h$ be a maximal chain in $[u,w]$ and let $h_j$ be an element of
$h$, other than $u$ or $w$.  Since every interval of length two in $[u,w]$ is a
diamond, there is a unique element $h'_j$ of $[u,w]$, other than $h_j$, satisfying
$h_{j-1} < h'_j < h_{j+1}$ We define $\swap_{j}(h)$ to be the chain
where $h_j$ is replaced by $h'_j$.  Suppose that $h$ is a chain in
$k$-Bruhat order.  Then $\swap_{j}(h)$ may or may not be a $k$-Bruhat
chain; if it is not, it is either because $\sigma_k(h'_j) =
\sigma_k(h_{j-1})$ or because $\sigma_k(h'_j) = \sigma_k(h_{j+1})$.
(We can not have both, as $\sigma_k(h_{j-1}) \neq \sigma_k(h_{j+1})$.)
In the latter case, which we will now discuss, we say that \defn{the
  swap propagates upwards}.  Let $h' = \swap_{j}(h)$ and $h'' =
\swap_{j+1}(h')$.  We claim that $\sigma_k(h''_j) \neq
\sigma_k(h''_{j+1})$.  Assume for the sake of contradiction that
$\sigma_k(h''_j) = \sigma_k(h''_{j+1})$.  Then the two middle elements
of the interval $[h'_j, h'_{j+2}]=[h''_j, h''_{j+2}]$ are in the same
coset as $h'_j$ modulo $S_{k,n-k}$.  So $\sigma_k(h'_{j+2}) =
\sigma_k(h'_{j+1})$.  But $h'_{j+2} = h_{j+2}$ and $h'_{j+1} =
h_{j+1}$ and we assumed that $h$ is a $k$-Bruhat chain, a
contradiction.

So either $\sigma_k(h''_{j+1}) = \sigma_k(h''_{j+2})$ or else $h''$ is
a $k$-Bruhat chain.  In the latter case, we can continue to define
$h'''=\swap_{j+2}(h'')$ and so forth until either we get a $k$-Bruhat
chain or else we pop out the top and hit a chain $u = m_0 \lessdot_k
m_1 \lessdot_k \cdots \lessdot_k m_{l-1} \lessdot m_l=w$ with
$\sigma_k(m_{l-1}) = \sigma_k(m_l)$.  But observe that, if $w$ is
Grassmannian, there is no $m_{l-1} \lessdot w$ such that
$\sigma_k(m_{l-1}) = \sigma_k(w)$.  So, when $w$ is Grassmannian, this
process terminates with a new $k$-Bruhat chain.

\begin{example}
Consider the case $(k,n)=(2,4)$, $u=1243$ and $w=3421$. Then $h:=(1243, 1342, 1432, 3412)$ is a maximal chain in $[u,w]_k$.
We will describe the effect of swapping out $1342$. The element of $S_4$ between $1243$ and $1432$, other than $1342$, is $1423$. So $\swap_2(h) = (1243, 1423, 1432, 3412)$.
However, the cover $1423 \lessdot 1432$ is not a relation in $k$-Bruhat order.
So we swap again, removing $1432$. The element of $S_4$ between $1423$ and $3412$, other than $1432$, is $2413$.
So $\swap_{3}(\swap_{2}(h)) = (1243, 1423, 2413, 3412)$.
Now we have reached another $k$-Bruhat chain, so the procedure terminates.
Note that the original and final $k$-Bruhat chains differ in two places, but their images in $\binom{[4]}{2}$, namely $\{ 12, 13, 14, 34 \}$ and $\{ 12, 14, 23, 34 \}$, only differ by one element.
\end{example}

\begin{lemma} \label{L:LabelDecreases}
  Let $a \lessdot_k b \lessdot_k c$ be a $k$-Bruhat chain with a descent at $b$.
  Let $b'$ be the element of $[a,c]$ other than $a$, $b$ and $c$.
  Then $\sigma_k(b') \neq \sigma_k(b)$ and $(a \lessdot b) > (a \lessdot b')$.
\end{lemma}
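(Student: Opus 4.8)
The idea is to pin down the fourth vertex $b'$ of the diamond $[a,c]$ completely explicitly. Write $b = a(ij)$ and $c = b(i'j')$ with $i,i'\le k<j,j'$, so that the edge $a\lessdot_k b$ carries the label $(ij)$ and $b\lessdot_k c$ carries $(i'j')$. Unwinding the definition of the edge order, the descent hypothesis $(ij)>(i'j')$ says that either $w(i)<w(i')$ (which forces $i\ne i'$), or $i=i'$ and $w(j)>w(j')$. Since $a\lessdot b\lessdot c$ in ordinary Bruhat order with $\ell(c)=\ell(a)+2$, the interval $[a,c]$ is a four-element diamond, so $b'=a\tau$ for a unique transposition $\tau$ with $a\lessdot b'\lessdot c$ and $\tau\ne(ij)$. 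Because one index of $(ij)$ lies $\le k$ and the other $>k$, and likewise for $(i'j')$, we cannot have $i=j'$ or $i'=j$; hence exactly one of the following holds: $\{i,j\}\cap\{i',j'\}=\emptyset$; or $i=i'$ and $j\ne j'$; or $j=j'$ and $i\ne i'$.

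In the disjoint case $(ij)$ and $(i'j')$ commute, $c=a(i'j')(ij)$, and checking the ``no element in between'' part of the two given cover relations shows $b'=a(i'j')$. Then $\sigma_k(b')=\sigma_k(a)\setminus\{a(i')\}\cup\{a(j')\}$ is different from $\sigma_k(b)=\sigma_k(a)\setminus\{a(i)\}\cup\{a(j)\}$ because $j\ne j'$, and the label of $a\lessdot b'$ is $(i'j')<(ij)$ by hypothesis. In each of the two ``$3$-cycle'' cases, $c$ is $a$ times a $3$-cycle on the triple of moving positions, and $b'=a\tau$ with $\tau$ a transposition of two of those three positions. A priori $\tau$ could be the transposition of the two positions lying on the \emph{same} side of $k$, which would make $\sigma_k(b')=\sigma_k(a)$ and leave the label of $a\lessdot b'$ undefined; this is the one genuine obstacle. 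It is resolved using that the chain lies in $[u,w]_k$, so $c\le_k w$: the cover relations force $a(i)<a(j)<a(j')$ when $i=i'$, respectively $a(i')<a(i)<a(j)$ when $j=j'$, and then applying condition~(2) of Theorem~\ref{T:BScriterion} to the pair of positions $\{j,j'\}$, respectively $\{i,i'\}$, together with the relevant clause of the descent hypothesis, rules out the order of those two positions that would produce the bad transposition. Hence $b'=a(ij')$, respectively $b'=a(i'j)$; its label is the straddling transposition $(ij')$, respectively $(i'j)$, which is $<(ij)$ exactly by the descent hypothesis, and $\sigma_k(b')\ne\sigma_k(b)$ since $a(j')\ne a(j)$, respectively $a(i')\ne a(i)$.

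To complete the argument I would then verify in each $3$-cycle case that the transposition just named really does yield a vertex of $[a,c]$, i.e. that $a\lessdot b'$ and $b'\lessdot c$, which is once more a short check of the ``no element in between'' conditions against those for $a\lessdot b$ and $b\lessdot c$. The hard part, as flagged above, is entirely the step invoking the Bergeron--Sottile criterion to fix the relative order of the two throws in the $3$-cycle cases; once that order is fixed, everything else is routine bookkeeping with transpositions and covering conditions.
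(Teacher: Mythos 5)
Your proposal is correct and follows essentially the same route as the paper's proof: both split into the same three cases (disjoint transpositions, $i=i'$, $j=j'$), and both resolve the crucial subtlety in the $3$-cycle cases — namely which transposition gives the fourth diamond vertex $b'$ — by applying condition~(2) of Theorem~\ref{T:BScriterion} to the two positions lying on the same side of $k$, combined with the descent hypothesis, to pin down the relative position order. The only difference is stylistic: you spend more words flagging the potential ``bad transposition'' obstacle, while the paper's proof states the position-order conclusion and asserts $b'=a(i_2 j_2)$ without elaboration.
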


\begin{proof}
  Let $b=a (i_1 j_1)$ and $c = b(i_2 j_2)$, so $(i_1 j_1) > (i_2
  j_2)$. We break into several cases.

  If $(i_1 j_1)$ and $(i_2 j_2)$ commute then the labels of
  $(a \lessdot b')$ and $(b' \lessdot c)$ are $(i_2 j_2)$ and $(i_1 j_1)$.
  In this case, the result is obvious.

  Next, suppose that $i_1=i_2$.
  Since we know that $(i_1 j_1) > (i_2 j_2)$, we now that $w(j_1) > w(j_2)$.
  Also, since $b \lessdot_k c$ is a $k$-Bruhat cover, we have $a(j_1) < a(j_2)$.
  Since $a <_k w$, we thus know that $j_1 > j_2$.
  Then $b' = a (i_2 j_2)$ and the result is easy to check.

  Finally, suppose that $j_1 = j_2$.
  Since we know that $(i_1 j_1) > (i_2 j_2)$, we now that $w(i_1) < w(i_2)$.
  Also, since $b \lessdot_k c$ is a $k$-Bruhat cover, we have $a(i_1) > a(i_2)$.
  Since $a <_k w$, we thus know that $i_1 < i_2$.
  Then $b'=a (i_2 j_2)$ and the result is again easy.
\end{proof}

\begin{proof}[Proof of Theorem~\ref{T:shelling}]
By Lemma \ref{L:equivcomplex}, we can reduce to the case that $w$ is
Grassmannian.

Consider the union of the facets of $\sigma_{k}(\Delta([u,w]))$ in an initial segment of $<_{\lex}$.
Let $m$ be the last chain of that segment.
Let $h$ be some earlier chain.
We must show $\sigma_k(h \cap m)$ is contained in some facet $\sigma_k(\overline{m})$ such that $\overline{m} <_{\lex} m$ and
$\sigma_k(\overline{m}) \cap \sigma_k(m)$ is a codimension $1$ face of $\sigma_{k}(\Delta([u,w]))$.

Let $h_{i+1}$ be the first element of $h$ such that $h_{i+1} \neq m_{i+1}$.
By Lemma~\ref{L:KeyShellingLemma}, there is a descent of $m$ at $m_j$ such that $\sigma_k(m_j) \not \geq \sigma_k(h_{i+1})$.
Let $m^1=\swap_j(m)$.
Since there is a descent of $m$ at $m_j$, Lemma~\ref{L:LabelDecreases} applies and $\sigma_k(m^1_j) \neq \sigma_k(m^1_{j-1})$.
Thus, we are in the case where the swap propagates upwards.
Set $m^0=m$, $m^1=\swap_j(m^0)$, $m^2=\swap_{j+1}(m^1)$ and so forth.
Since $w$ is Grassmannian, the process of swapping terminates with some $k$-Bruhat chain $m^s$.
We claim that we can take $\overline{m}=m^s$.

First, by Lemma~\ref{L:LabelDecreases}, $(m_{j-1} \lessdot m_{j}) >
(m^1_{j-1} \lessdot m^1_{j})$.  We have $m^s_{j-1} = m^1_{j-1}$ and
$m^s_{j} = m^1_{j}$, so $m^s <_{\lex} m$.  Finally, we must show that
$\sigma_k(m_s) \cap \sigma_k(m)$ is the result of removing a single
element of $\sigma_k(m)$.  First, by
Corollary~\ref{C:chainlifting}, we know that $\sigma_k(m_s) \neq
\sigma_k(m)$.  What we will actually show is that
$$\sigma_k(m) \supset \sigma_k(m^1) =  \sigma_k(m^2)
= \cdots \sigma_k(m^{s-1}) \subset \sigma_k(m^s)$$
and all of the middle terms have cardinality $\# \sigma_k(m) -1$.

This is simple.
We have $\sigma_k(m^1_j) = \sigma_k(m^1_{j+1})$ so $\sigma_k(m^1) = \sigma_k(m^1 \setminus \{ m^1_{j} \} ) = \sigma_k(m^0 \setminus \{ m^0_{j} \} ) = \sigma_k(m^0) \setminus \{ \sigma_k(m^0_{j+1}) \}$
and we see that $\sigma_k(m^0) \supset \sigma_k(m^1)$, with the left hand side having one more element.
Similarly, for $1 \leq r \leq s-2$, we have
$\sigma_k(m^r_{j+r-1})=\sigma_k(m^r_{j+r})$ and $\sigma_k(m^{r+1}_{j+r})=\sigma_k(m^{r+1}_{j+r+1})$ and
$\sigma_k(m^{r+1}) = \sigma_k(m^{r+1} \setminus \{ m^r_{j+r} \} ) = \sigma_k(m^r \setminus \{ m^r_{j+r} \} ) = \sigma_k(m^r)$.
Finally, a similar argument shows that $\sigma_k(m^{s-1}) \supset \sigma_k(m^s)$ with the right hand side containing one more element.

So we may take $\overline{m}=m^s$ and we have shown that $<_{\lex}$ is a shelling order on $\sigma_k(\Delta([u,w])$.
\end{proof}

\begin{remark}
  As observed in~\cite[Section B.7]{BS}, $\Delta([u,w]_{k})$ is not
  shellable, so it is interesting that the map $\sigma_k$ turns a
  non-shelling order on the facets of $\Delta([u,w]_{k})$ into a
  shelling order on $\sigma_k(\Delta([u,w]_{k}))$.
\end{remark}

\begin{remark}
Our shelling of $\sigma_k(\Delta([u,w]_k))$ is similar to an
$EL$-shelling. In an $EL$-shelling, one labels the edges of a poset
$P$ and then orders the chains of $\Delta(P)$ in lexicographic order
according to their label. When producing an $EL$-shelling, it is
crucial to have an analogue of Lemma~\ref{L:IncreasingMinimal},
stating that every interval contains a unique chain without
inversions, and that this chain is lexicographically minimal.
However, our shelling is not an $EL$-shelling, because
$\sigma_k(\Delta([u,w]_k))$  is not isomorphic to $\Delta(P)$ for
any poset $P$. (See Remark~\ref{rem:nonfaces}.) It would be
interesting to generalize the notion of $EL$-shelling to handle this
case, and to address other quotients of Coxeter intervals.
\end{remark}

\begin{remark}
  If $u,w \in \GS$ then $\sigma_k(\Delta([u,w]_k))$ is the order
  complex of a Bruhat interval of $S_n/(S_k \times S_{n-k})$, already
  proven shellable in \cite{BW}.
\end{remark}

\section{From combinatorial to geometric properties}
\label{sec:combgeom}

Taking the results of Section \ref{sec:grobner} and \ref{sec:shelling} as given,
we give alternate proofs of the geometric Corollaries \ref{c:Normal} and
\ref{c:CohenMacaulay}. While the argument establishing
Cohen-Macaulayness is standard (see, e.g. \cite{DL}), our criterion
establishing normality seems to be new even for Schubert varieties.

We emphasize that these are not truly independent proofs, as the results
of Section \ref{sec:grobner} relied on \cite{LL}, which itself
used Frobenius splitting. But there are other contexts where one has a
Gr\"obner degeneration to the Stanley-Reisner scheme of a ball
(e.g. \cite{GrobnerGeom}) where these arguments would apply.

\begin{prop}\label{prop:normalCM}
  Let $X = \coprod_E X_e$ be a projective variety with a
  stratification by normal (e.g. smooth) subvarieties.
  Assume that $X$ has a Gr\"obner
  degeneration to a projective Stanley-Reisner scheme $SR(\Delta)$.
  Then any subscheme of $X$ extends to a flat subfamily of the degeneration.
  Assume that
  \begin{enumerate}
  \item each $\overline{X_e}$ degenerates to $SR(\Delta_e)$,
    where $\Delta_e \subseteq \Delta$ is homeomorphic to a ball, and
  \item if $\overline{X_e} \supset X_f$, $e\neq f$,
    then $\Delta_f$ lies in the boundary $\partial \Delta_e$ of $\Delta_e$.
  \end{enumerate}
  Then each $\overline{X_e}$ is normal and Cohen-Macaulay.
\end{prop}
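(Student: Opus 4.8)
The plan is to read everything off the Gr\"obner degeneration $\overline{X_e}\to\SR(\Delta_e)$ together with Serre's criterion. The statement that an arbitrary subscheme $Y\subseteq X$ extends to a flat subfamily is the usual generality about term-order degenerations: homogenizing the ideal $I(Y)\subseteq\CC[p_I]$ with respect to the weight inducing $\omega$ produces an ideal in $\CC[p_I][t]$ which is free (hence flat) over $\CC[t]$ because the Hilbert function is constant along the family; I would cite this and move on. For Cohen-Macaulayness: since $\Delta_e$ is homeomorphic to a ball, Reisner's criterion (whose hypotheses depend only on the homeomorphism type of the complex, and which balls satisfy) shows that $\SR(\Delta_e)$ is Cohen-Macaulay. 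In the flat family $\mathcal{X}\to\mathbb{A}^1$ realizing the degeneration, the coordinate $t$ is a nonzerodivisor and $\mathcal{X}_0=\SR(\Delta_e)$ is Cohen-Macaulay, so $\mathcal{X}$ is Cohen-Macaulay at every point of $\mathcal{X}_0$; as the Cohen-Macaulay locus is open and $\mathbb{A}^1$ is regular, the general fibre $\overline{X_e}$ is Cohen-Macaulay. In particular it satisfies Serre's condition $S_2$, so it remains only to verify $R_1$.

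For $R_1$ I would first localize the problem. The open stratum $X_e$ is normal, hence $R_1$, so $\overline{X_e}$ is regular away from a closed set that, outside $X_e$, is contained in $\overline{X_e}\setminus X_e=\bigcup_{f\neq e}X_f$, the union over strata with $X_f\subsetneq\overline{X_e}$. For each such $f$, hypothesis (2) gives $\Delta_f\subseteq\partial\Delta_e$; since $\partial\Delta_e$ is a sphere of dimension $\dim\Delta_e-1$ and $\dim\SR(\Delta_f)=\dim\Delta_f$, every such $X_f$ has codimension at least $1$ in $\overline{X_e}$. Thus $R_1$ follows once I show $\overline{X_e}$ is regular at the generic point $\eta_f$ of every \emph{codimension-one} stratum $X_f\subseteq\overline{X_e}$.

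Fix such an $X_f$ and let $Z\subseteq\mathcal{X}$ be the flat subfamily with general fibre $\overline{X_f}$ and special fibre $\SR(\Delta_f)$, which exists by (1). Flatness over $\mathbb{A}^1$ forces $\SR(\Delta_f)$ to be equidimensional of dimension $\dim X_f=\dim\Delta_e-1$, so every facet of $\Delta_f$ is a top-dimensional face of the sphere $\partial\Delta_e$, i.e.\ a ridge of $\Delta_e$ contained in a unique facet of $\Delta_e$. Hence localizing $\SR(\Delta_e)$ at the minimal prime of $\SR(\Delta_f)$ attached to any facet of $\Delta_f$ leaves only one polynomial ring, so $\SR(\Delta_e)$ is regular at the generic point $\eta_0$ of $\SR(\Delta_f)$. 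Now I run the standard local-algebra transfer: $t$ is a nonzerodivisor in $\mathcal{O}_{\mathcal{X},\eta_0}$ and $\mathcal{O}_{\mathcal{X},\eta_0}/(t)=\mathcal{O}_{\SR(\Delta_e),\eta_0}$ is regular, so $\mathcal{O}_{\mathcal{X},\eta_0}$ is regular; its localization at the generic point of the irreducible $Z$ (which generizes $\eta_0$ and dominates $\mathbb{A}^1$) is again regular, and there $t$ is invertible, so the generic fibre of the family, which is $\overline{X_e}$ after the base change $k\subseteq k(t)$, is regular at the generic point of $\overline{X_f}$. Regularity descends through this field extension, so $\overline{X_e}$ is regular at $\eta_f$. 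Therefore $\operatorname{Sing}(\overline{X_e})$ has codimension $\geq 2$, $R_1$ holds, and $\overline{X_e}$ is normal.

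The main obstacle is exactly this last transfer step: one must carry regularity of the \emph{special} fibre at a \emph{boundary} generic point over to the general fibre, which requires bookkeeping with both flat families $\mathcal{X}$ and $Z$ over $\mathbb{A}^1$, the purity of $\SR(\Delta_f)$ coming from flatness, and (over a non-closed or imperfect base, or in positive characteristic as one wants for the Frobenius arguments) the distinction between regular and geometrically regular. This is also the one place where hypothesis (2) is used at full strength: without $\Delta_f\subseteq\partial\Delta_e$ one could not force a codimension-one boundary stratum to specialize onto ridges of $\Delta_e$ lying in a single facet, and $\SR(\Delta_e)$ is in general badly singular along its \emph{interior} ridges, so the argument would collapse.
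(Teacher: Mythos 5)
Your proposal is correct and follows essentially the same route as the paper's proof: Cohen--Macaulayness of $\overline{X_e}$ from CM-ness of $\SR(\Delta_e)$ (a ball, hence CM) via openness of the CM locus, and the $R_1$ condition by showing $\SR(\Delta_e)$ is regular at the generic points of $\SR(\Delta_f)$ for codimension-one boundary strata $X_f$---precisely because facets of $\Delta_f$ are boundary ridges of $\Delta_e$, each lying in a unique facet---and then transferring along the flat family. The paper compresses that last transfer into the single phrase ``by semicontinuity''; you unpack it into the explicit local-algebra chain ($t$ a nonzerodivisor so $\mathcal{O}_{\mathcal{X},\eta_0}$ is regular, then localize to the generic point of $Z$, then descend through the field extension), which is a genuine and worthwhile clarification, but not a different strategy.
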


\begin{proof}
  Each $SR(\Delta_e)$ is Cohen-Macaulay \cite{Hochster}, and since Cohen-Macaulayness
  is an open condition, each $\overline{X_e}$ is also Cohen-Macaulay.

  Serre's criterion for normality is that each $\overline{X_e}$ be $S2$
  (implied by Cohen-Macaulayness) and regular in codimension $1$.
  If the latter condition does not hold on $\overline{X_e}$, then by the
  normality of $X_e$ the failure must be along some codimension $1$
  stratum $\overline{X_f} \subset \overline{X_e}$.

  However, by the assumption that $\Delta_f \subseteq \partial \Delta_e$
  and is of the same dimension,
  the scheme $SR(\Delta_e)$ is generically smooth along $SR(\Delta_f)$.
  Then by semicontinuity, $\overline{X_e}$
  is generically smooth along $\overline{X_f}$, contradiction.

  (It is amusing to note that while in topology one thinks of the
  boundary $\partial \Delta$ as the place where $\Delta$ is {\em not}
  a smooth manifold, in fact these are exactly the codimension $1$
  faces along which $SR(\Delta)$ {\em is} generically smooth.)
\end{proof}

\begin{cor}[Corollaries \ref{c:Normal} and \ref{c:CohenMacaulay}, redux]
  Positroid varieties are normal and Cohen-Macaulay.
\end{cor}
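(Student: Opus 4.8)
The plan is to deduce this directly from Proposition~\ref{prop:normalCM}, taking $X = \Gr(k,n)$. For the stratification I would use $\Gr(k,n) = \coprod_f \Pio_f$ by open positroid varieties; by Theorem~\ref{thm:projectedRichardsons} each $\Pio_f$ is smooth, hence normal, so the hypothesis on the strata holds. The required Gr\"obner degeneration is the Hodge degeneration, under which $\Gr(k,n)$ degenerates to the projective Stanley--Reisner scheme $\SR(\Delta(\binom{[n]}{k}))$; this is the classical identification $\In_\omega(\CC[\Gr(k,n)]) = \SR(\Delta(\binom{[n]}{k}))$ recalled from \cite{SW}. It then remains only to check conditions (1) and (2) of Proposition~\ref{prop:normalCM} for the closed positroid varieties $\Pi_f$.

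For condition (1), write $\Pi_f = \Pi_u^w$ for a Richardson model $X_u^w$ with $u \leq_k w$ and $w$ Grassmannian (Proposition~\ref{P:ObviousClosure}). Theorem~\ref{T:HodgePedoe} says that $\Pi_u^w$ degenerates inside the Hodge degeneration to $\SR(\sigma_k(\Delta([u,w])))$, so the relevant subcomplex is $\Delta_f := \sigma_k(\Delta([u,w])) \subseteq \Delta(\binom{[n]}{k})$. That $\Delta_f$ is homeomorphic to a ball is the combinatorial crux: by Corollary~\ref{C:chainlifting} it is pure of dimension $\ell(w)-\ell(u)$, by Lemma~\ref{L:Thin} every ridge lies in one or two facets, and by Theorem~\ref{T:shelling} it is shellable; a pure, ``thin'', shellable complex is a ball or a sphere, and since $\sigma_k(u)$ lies in every facet of $\Delta_f$ (Corollary~\ref{C:chainlifting}), the complex is a cone with apex $\sigma_k(u)$, hence contractible, so it is a ball and not a sphere. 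For condition (2), if $\Pi_f \supsetneq \Pi_g$ is a proper containment of positroid varieties, then by Theorem~\ref{thm:intersectSchubs} (with the identification of the closure order with the order on $\Q(k,n)$, cf.\ Theorem~\ref{T:pairsboundedposet}) the corresponding $k$-Bruhat intervals satisfy $\langle u',w'\rangle > \langle u,w\rangle$ in $\Q(k,n)$, and then the second statement of Lemma~\ref{L:Thin} says exactly that $\Delta_g = \sigma_k(\Delta([u',w']))$ consists of exterior ridges of $\Delta_f$, i.e.\ lies in the boundary sphere $\partial\Delta_f$. With both conditions verified, Proposition~\ref{prop:normalCM} gives that each $\Pi_f$ is normal and Cohen--Macaulay.

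The main work has in fact already been carried out in the earlier sections: the identification of the Hodge limit of $\Pi_u^w$ with $\SR(\sigma_k(\Delta([u,w])))$ (Theorem~\ref{T:HodgePedoe}) and the proof that these order-complex images are shellable balls whose ``sub-positroid'' subcomplexes sit in their boundary spheres (Theorem~\ref{T:shelling} and Lemma~\ref{L:Thin}). Given those, the present corollary is a purely formal consequence of Proposition~\ref{prop:normalCM}; the only genuinely new ingredient being used is the normality criterion packaged there, which runs through Serre's $S2 + R1$ test together with semicontinuity of smoothness, using that the codimension-one boundary faces of a Stanley--Reisner ball are precisely the loci along which the scheme is generically smooth. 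So I do not expect any serious obstacle at this stage; the proof is essentially a bookkeeping assembly of the cited results.

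\begin{proof}
Apply Proposition~\ref{prop:normalCM} to $X = \Gr(k,n)$, stratified by the open positroid varieties $\Pio_f$, which are smooth by Theorem~\ref{thm:projectedRichardsons}. By \cite{SW}, $\Gr(k,n)$ has a Gr\"obner (Hodge) degeneration to $\SR(\Delta(\binom{[n]}{k}))$. For a positroid variety $\Pi_f = \Pi_u^w$ with Richardson model $X_u^w$ ($u\leq_k w$, $w$ Grassmannian, by Proposition~\ref{P:ObviousClosure}), Theorem~\ref{T:HodgePedoe} shows $\Pi_u^w$ degenerates to $\SR(\Delta_f)$ with $\Delta_f = \sigma_k(\Delta([u,w]))$. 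By Corollary~\ref{C:chainlifting} and Theorem~\ref{T:shelling}, $\Delta_f$ is a pure shellable complex, by Lemma~\ref{L:Thin} each of its ridges lies in one or two facets, and since $\sigma_k(u)$ lies in every facet (Corollary~\ref{C:chainlifting}) it is a cone, hence contractible; therefore $\Delta_f$ is a ball. If $\Pi_f \supsetneq \Pi_g$, then by Theorem~\ref{thm:intersectSchubs} and Theorem~\ref{T:pairsboundedposet} the corresponding classes satisfy $\langle u',w'\rangle > \langle u,w\rangle$ in $\Q(k,n)$, so by Lemma~\ref{L:Thin} the complex $\Delta_g$ lies in $\partial \Delta_f$. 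Thus conditions (1) and (2) of Proposition~\ref{prop:normalCM} hold, and each $\Pi_f$ is normal and Cohen--Macaulay.
\end{proof}
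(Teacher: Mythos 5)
Your proof is correct and takes exactly the route the paper intends; you have simply spelled out the verification of the two hypotheses of Proposition~\ref{prop:normalCM}, whereas the paper's own proof is a one-liner citing the second conclusion of Lemma~\ref{L:Thin} and leaving condition~(1) implicit. In particular your argument that $\Delta_f = \sigma_k(\Delta([u,w]))$ is a ball (pure by Corollary~\ref{C:chainlifting}, thin by Lemma~\ref{L:Thin}, shellable by Theorem~\ref{T:shelling}, hence a ball or sphere; and a cone with apex $\sigma_k(u)$ because every facet comes from a maximal chain starting at $u$, so it is a ball) is the argument the authors have in mind but do not write out. One micro-gap, which is equally present in the paper's terser proof: the second conclusion of Lemma~\ref{L:Thin} directly gives $\Delta_g \subseteq \partial\Delta_f$ only when $\Pi_g$ has codimension one in $\Pi_f$ (so that the facets of $\Delta_g$ are ridges of $\Delta_f$); for higher codimension one interposes a codimension-one $\Pi_h$ with $\Pi_g \subset \Pi_h \subset \Pi_f$, using that $\Q(k,n)$ is graded. (This is harmless anyway, since the proof of Proposition~\ref{prop:normalCM} only uses condition~(2) in the codimension-one case.) Also, the reduction to $w$ Grassmannian is more properly a citation of Lemma~\ref{L:kBruhatrepresentatives} together with Proposition~\ref{P:ObviousClosure}, not of the latter alone.
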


\begin{proof}
  We apply the Proposition above to the stratification of the
  Grassmannian by open positroid varieties. Condition (2) is
  the second conclusion of Lemma \ref{L:Thin}.
\end{proof}

\section{The cohomology class of a positroid variety}
\label{sec:cohomology}

Let $H^*(\Gr(k,n)) = H^*(\Gr(k,n),\Z)$ denote the integral
cohomology of the Grassmannian, and let $H^*_T(\Gr(k,n)) =
H^*_T(\Gr(k,n),\Z)$ denote the equivariant cohomology of the
Grassmannian with respect to the natural action of $T = (\C^*)^n$.
If $X \subset \Gr(k,n)$ is a $T$-invariant subvariety of the
Grassmannian, we let $[X]_0 \in H^*(\Gr(k,n))$ denote its ordinary
cohomology class, and $[X] \in H^*_T(\Gr(k,n))$ denote its
equivariant cohomology class. We also write $[X]|_p$ for the
restriction of $[X]$ to a $T$-fixed point $p$.  We index the fixed
points of $\Gr(k,n)$ by $\binom{[n]}{k}$. We use similar notation
for the flag manifold $\Fl(n)$, whose fixed points are
indexed by $S_n$. 
Recall that $\pi : \Fl(n) \to \Gr(k,n)$ denotes the ($T$-equivariant)
projection.

In \cite{Lam1}, a symmetric function $\tF_f \in \Sym$ is introduced
for each affine permutation $f$.  Let $\psi: \Sym \to H^*(\Gr(k,n))$
denote the natural quotient map.  In this section, we show

\begin{thm}\label{thm:affineStanley}
    Let $f \in \Bound(k,n)$.  Then $\psi(\tF_f) = [\Pi_f]_0 \in
    H^*(\Gr(k,n))$.
\end{thm}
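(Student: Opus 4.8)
The plan is to prove Theorem~\ref{thm:affineStanley} by induction on the codimension $\ell(f)$ of the positroid variety, using the geometry of positroid varieties established earlier together with the combinatorial characterization of affine Stanley symmetric functions. The base case $\ell(f)=0$ is $\Pi_f = \Gr(k,n)$, where $\tF_f = 1$ (the empty affine permutation has trivial affine Stanley function), so both sides are the fundamental class. For the inductive step, I would first pin down a convenient geometric recursion. By Theorem~\ref{thm:intersectSchubs}, $\Pi_f = \bigcap_i \chi^{i-1}(X_{J_i})$ where $(J_1,\ldots,J_n) = \J(f)$, and by Lemma~\ref{L:GeomMonk} intersecting $\Pi_u^w$ with the hyperplane $\{p_{\sigma_k(u)} = 0\}$ gives $\bigcup_{u' \gtrdot_k u} \Pi_{u'}^w$. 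Translating this into the $\Bound(k,n)$ language via Theorem~\ref{T:pairsboundedposet}: for $f \in \Bound(k,n)$ there is a Pl\"ucker coordinate $p_I$ (namely $I = J_1 = \st(f,0)\cap[n]$) whose divisor meets $\Pi_f$ properly, and whose intersection with $\Pi_f$ is the reduced union $\bigcup_{f' \gtrdot f} \Pi_{f'}$ over the covers $f' \gtrdot f$ coming from Case~(2) in the proof of Theorem~\ref{T:pairsboundedposet}. Since $\Pi_f$ is Cohen--Macaulay (Corollary~\ref{c:CohenMacaulay}) and the intersection is proper, we get in $H^*(\Gr(k,n))$ the identity
$$
[X_\Box]_0 \cdot [\Pi_f]_0 = \sum_{f' \gtrdot f,\ \text{type (2)}} [\Pi_{f'}]_0,
$$
where $[X_\Box]_0 = \psi(h_1)$ is the Schubert divisor class (so the left side is $\psi(h_1 \cdot \tF_f)$ by induction).

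The second ingredient is the parallel statement for affine Stanley symmetric functions. In \cite{Lam1} the functions $\tF_f$ satisfy a Pieri-type rule: $h_1 \cdot \tF_f = \sum \tF_{f'}$ where $f'$ ranges over affine permutations covering $f$ in the affine Bruhat order obtained by a ``cyclically decreasing'' length-one multiplication (equivalently, $\ell(f') = \ell(f)+1$ and $f' = s_i f$ or $f = f' $ times a simple reflection with the appropriate one-sided condition). Here I would invoke Theorem~\ref{T:pairsboundedposet}, which says the bijection $\Q(k,n) \to \Bound(k,n)$ is an order isomorphism with $\ell(f_{u,w}) = \binom{n}{k} - \ell(w) + \ell(u)$; combined with the explicit description of covers in the proof of that theorem, the covers $f' \gtrdot f$ within $\Bound(k,n)$ are exactly the length-one-up affine Bruhat covers lying in the lower order ideal $\Bound(k,n)$ (Lemma~\ref{L:orderideal}). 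One must check that the subset of covers appearing on the geometric side (those of ``type (2)'', i.e. coming from $u \lessdot u'$) matches the subset appearing in Lam's $h_1$-Pieri rule after applying $\psi$; the covers of ``type (1)'' (from $w' \lessdot w$ with $w$ Grassmannian) correspond to moves that change $J_1$, hence change the chosen Pl\"ucker coordinate, and these should be precisely the terms that $\psi$ kills or that don't contribute to multiplication by $h_1$. Reconciling these two lists is where I expect the real bookkeeping to live.

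Once the two recursions are aligned, the argument closes formally: by induction $[\Pi_{f'}]_0 = \psi(\tF_{f'})$ for all $f'$ of strictly smaller codimension, so
$$
\psi(h_1 \cdot \tF_f) = [X_\Box]_0 \cdot [\Pi_f]_0 = \sum_{f' \gtrdot f} [\Pi_{f'}]_0 = \sum_{f' \gtrdot f} \psi(\tF_{f'}) = \psi(h_1 \cdot \tF_f),
$$
which is a tautology and does not immediately isolate $[\Pi_f]_0$. To actually extract $[\Pi_f]_0 = \psi(\tF_f)$ I would instead argue that multiplication by $[X_\Box]_0$ (equivalently by $\psi(h_1)$) is injective on the span of positroid classes of a fixed codimension --- this is the standard ``hard Lefschetz / Schubert-basis'' fact that $[X_\Box]_0 \cdot -$ has no kernel in low degrees of $H^*(\Gr(k,n))$ when restricted suitably, or more simply: expand $[\Pi_f]_0$ in the Schubert basis, note its lowest terms are controlled, and use that $\psi(\tF_f)$ and $[\Pi_f]_0$ have the same image under multiplication by every $h_1,h_2,\ldots$ (iterating the Pieri computation) hence are equal since the $h_i$ generate $\Sym$ and $\psi$ is surjective with the relevant separation property. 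The cleanest route is probably to use the $T$-equivariant refinement: prove the stronger equivariant statement by the same recursion (Lemma~\ref{L:GeomMonk} holds scheme-theoretically and $T$-equivariantly), where multiplication by the equivariant divisor class \emph{is} injective by the localization theorem, and then specialize. Thus the main obstacle is twofold: (a) matching the geometric cover set of ``type (2)'' moves with Lam's $h_1$-Pieri support for this restricted class of bounded affine permutations, and (b) upgrading the formal recursion into an actual identification, for which passing to $T$-equivariant cohomology and using GKM/localization injectivity is the natural fix.
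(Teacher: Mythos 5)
Your strategy is essentially the one the paper uses, and you correctly diagnose both the tautology problem in the naive non-equivariant iteration and the fix (pass to $T$-equivariant cohomology and use localization injectivity to pin down the class; this is exactly Lemma~\ref{L:recursive}, where the key point is that $D - D|_{\sigma(v)}$ is nonzero at every other fixed point). The geometric recursion you want is Proposition~\ref{P:monk}, proved via the projection formula from the Richardson model, not directly from Lemma~\ref{L:GeomMonk}; your Cohen--Macaulay degree-counting version of that step is fine but less efficient.

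The genuine gap is in how you plan to close the loop on the symmetric function side. You treat the identity $\psi(h_1 \cdot \tF_f) = \sum \psi(\tF_{f'})$ over the ``type (2)'' covers as a Pieri-rule bookkeeping fact, to be reconciled with the geometric cover set, and then further hope for a $T$-equivariant refinement of this. But affine Stanley symmetric functions are purely non-equivariant objects: there is no equivariant $\tF_f$ to which your equivariant-injectivity argument could apply, and there is no such Monk rule for $\tF_f$ in \cite{Lam1} that directly matches the geometric sum. This is precisely the hole the paper fills by introducing the map $p : \Gr(k,n) \to LU_n/T_{\R} \simeq \AFl(n)$ and working with the \emph{equivariant affine Schubert classes} $\xi^f$, for which the Kostant--Kumar Chevalley formula (Lemma~\ref{L:affinechev}) provides exactly the needed recursion, and then verifying that the $0$-covers lying in $\Bound(k,n)$ match the type (2) $k$-covers. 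The equality $[\Pi_f] = p^*(\xi^f)$ in $H^*_T(\Gr(k,n))$ is Theorem~\ref{T:posaffineflags}; only after this, and only non-equivariantly, does the affine Stanley function enter via the identification $r^*(\xi^f_0) = \tF_f$ from \cite[Remark 8.6]{Lam2}, yielding $[\Pi_f]_0 = p^*(\xi^f_0) = q^* r^*(\xi^f_0) = \psi(\tF_f)$. Without this bridge through $\AFl(n)$, your argument never establishes that $\psi(\tF_f)$ obeys the recursion, so the induction cannot start on that side.
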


\subsection{Monk's rule for positroid varieties}
The equivariant cohomology ring $H^*_T(\Gr(k,n))$ is a module over
$H^*_T(\pt) = \Z[y_1,y_2,\ldots,y_n]$.  The ring $H^*_T(\Gr(k,n))$
is graded (with the real codimension), so that $\deg(y_i) = 2$ and
$\deg([X]_T) = 2\mathrm{codim}(X)$ for an irreducible
$T$-equivariant subvariety $X \subset \Gr(k,n)$.

Let $D \in
H^*_T(\Gr(k,n))$ denote the class of the Schubert divisor.  Note
that $\pi^*(D) \in H^*_T(\Fl(n))$ is the class $[X_{s_k}]$ of the
$k$th Schubert divisor.
We recall the equivariant Monk's formula (see for example \cite{KK}):
\begin{equation}\label{E:monk}
[X_{s_k}].[X_w]= ([X_{s_k}]|_w).[X_w] + \sum_{w \lessdot_k v} [X_v].
\end{equation}

\begin{prop}\label{P:monk}
Let $\Pi_f$ be a positroid variety with Richardson model
$X_u^{w}$.  Then
\begin{equation}\label{E:posmonk}
D\cdot[\Pi_f] = (D|_{\sigma(u)})\cdot[\Pi_f] + \sum_{u \lessdot_k u'
\leq_k w} [\Pi_{f_{u',w}}].
\end{equation}
\end{prop}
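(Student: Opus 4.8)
The plan is to deduce \eqref{E:posmonk} from the classical equivariant Monk formula \eqref{E:monk} on $\Fl(n)$ by pushing forward along $\pi$. The first move is a reduction: the class $D|_{\sigma(u)}$ depends only on $\sigma(u)=u([k])$, which (as in Section~\ref{S:pairstobounded}) can be read off from $f=f_{u,w}$, and the index set $\{\,f_{u',w}:u\lessdot_k u'\leq_k w\,\}$ is intrinsic to $f$; hence both sides of \eqref{E:posmonk} depend only on $\Pi_f$. So by Lemma~\ref{L:kBruhatrepresentatives} I may assume from the start that $w\in\GS$, which is the simplification that makes the combinatorics manageable.

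Next, recall from Theorem~\ref{thm:projectedRichardsons} that $\pi$ restricts to a proper, surjective, birational morphism $X_u^w\to\Pi_f$, so $\pi_*[X_u^w]=[\Pi_f]$ in $H^*_T(\Gr(k,n))$, and similarly $\pi_*[X_{u'}^w]=[\Pi_{f_{u',w}}]$ for any $u'\leq_k w$. Using the projection formula together with the identity $\pi^*D=[X_{s_k}]$ noted before \eqref{E:monk}, one has
$$D\cdot[\Pi_f]\;=\;D\cdot\pi_*[X_u^w]\;=\;\pi_*\big([X_{s_k}]\cdot[X_u^w]\big),$$
so it suffices to compute $[X_{s_k}]\cdot[X_u^w]$ in $H^*_T(\Fl(n))$ and then push forward.

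To compute it, I would write $[X_u^w]=[X_u]\cdot[X^w]$ (transverse intersection), apply \eqref{E:monk} to $[X_{s_k}]\cdot[X_u]$, and multiply through by $[X^w]$, obtaining
$$[X_{s_k}]\cdot[X_u^w]\;=\;\big([X_{s_k}]|_u\big)\cdot[X_u^w]\;+\;\sum_{u\lessdot_k u'}[X_{u'}]\cdot[X^w].$$
For each term in the sum a localization argument applies: restricting to a $T$-fixed point $v\in S_n$, the factor $[X_{u'}]|_v$ vanishes unless $v\geq u'$ and $[X^w]|_v$ vanishes unless $v\leq w$, so $[X_{u'}]\cdot[X^w]=0$ unless $u'\leq w$, in which case it equals $[X_{u'}^w]$. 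Since $w\in\GS$, Corollary~\ref{C:kBruhatGrassmann} promotes $u'\leq w$ to $u'\leq_k w$, so the surviving sum is exactly $\sum_{u\lessdot_k u'\leq_k w}[X_{u'}^w]$. Finally, $[X_{s_k}]|_u$ lies in $H^*_T(\pt)$ and equals $D|_{\sigma(u)}$ because $\pi$ sends the fixed point $u$ to $\sigma(u)$; being a scalar, it passes through $\pi_*$. Applying $\pi_*$ and using $\pi_*[X_u^w]=[\Pi_f]$ and $\pi_*[X_{u'}^w]=[\Pi_{f_{u',w}}]$ then gives \eqref{E:posmonk}.

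I expect the main obstacle to be the bookkeeping around the reduction to $w\in\GS$: one must check that $\sigma(u)$, the set of covers $u\lessdot_k u'\leq_k w$, and hence both sides of \eqref{E:posmonk}, are genuinely unchanged when the Richardson model is replaced by an equivalent one in the sense of $\Q(k,n)$. The geometric input — that $\pi|_{X_{u'}^w}$ is birational onto $\Pi_{f_{u',w}}$ (Theorem~\ref{thm:projectedRichardsons}), so fundamental classes push forward with no multiplicity — and the cohomological inputs — Monk's rule \eqref{E:monk}, the projection formula, and the localization vanishing — are then routine.
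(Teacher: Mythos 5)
Your argument is correct and follows the same core strategy as the paper: push Monk's rule \eqref{E:monk} down to $\Gr(k,n)$ via the projection formula, identify $[X_{s_k}]|_u$ with $D|_{\sigma(u)}$, and push forward the resulting Richardson classes. The one genuine difference is the initial reduction to $w\in\GS$. The paper's proof keeps $(u,w)$ general: after expanding $\pi_*\bigl([X_{s_k}]\cdot[X_u]\cdot[X^w]\bigr)$ it must deal with indices $u'$ satisfying $u\lessdot_k u'\leq w$ but $u'\not\leq_k w$, and disposes of them by the (stated but unjustified in the proof text) fact that $\pi_*[X_{u'}^w]=0$ in that case, a dimension-drop consequence of Proposition~\ref{P:EquivalenceClassClosed}. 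Your reduction to $w\in\GS$ makes Corollary~\ref{C:kBruhatGrassmann} promote every $u'\leq w$ to $u'\leq_k w$, so that case never arises and the only vanishing needed is the elementary $[X_{u'}]\cdot[X^w]=0$ for $u'\not\leq w$, which your localization argument handles cleanly. What you pay for this is the well-definedness check you flag as ``the main obstacle'': that $\sigma_k(u)$ and the collection $\{f_{u',w}:u\lessdot_k u'\leq_k w\}$ are unchanged under $(u,w)\mapsto(uz,wz)$. You assert this but do not verify it; it does hold, via $\sigma_k(uz)=\sigma_k(u)$ for $z\in S_k\times S_{n-k}$ and the poset isomorphism $[u,w]_k\cong[uz,wz]_k$ invoked in Lemma~\ref{L:equivcomplex} (citing [BS, Theorem 3.13]). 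So your route is valid and trades a geometric pushforward-vanishing lemma for a combinatorial invariance check; the paper's route is shorter because it accepts the former as a known consequence of the projected-Richardson dimension theory already in place.
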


Here $\sigma$ is the map $\sigma_k : S_n \to \binom{[n]}{k}$.

\begin{proof}
Let $X_u^{w}$ be a Richardson model for $\Pi_f$.  Then, using the
projection formula and \eqref{E:monk}, we have in $H^*_T(\Gr(k,n))$,
\begin{align*}
      D \cdot[\Pi_f] &= \pi_*( \pi^*(D)\cdot[X^{w}]\cdot[X_u] ) \\
    &= ([X_{s_k}]|_u)\cdot[\Pi_f] + \pi_*( \sum_{u \lessdot_k u'} [X_{u'}]\cdot[X^{w}]) \\
    &= (D|_{\sigma(u)})\cdot[\Pi_f] + \sum_{u \lessdot_k u'} \pi_*(    [X_{u'}^{w}]).
   \end{align*}
But
$$\pi_*( [X_{u'}^{w}]) = \begin{cases} [\Pi_{f_{u',w}}] & \mbox{if $u' \leq_k w$,} \\
0 & \mbox{otherwise.} \end{cases}
$$
\end{proof}

\begin{cor}\label{C:hypsection}
  Let $\Pi_f$ be a positroid variety with Richardson model
  $X_u^{w}$, and let $X_\square \subseteq \Gr(k,n)$ denote the Schubert divisor.
  Then as a scheme,
  $$ (u\cdot X_\square) \cap \Pi_f
  = \bigcup_{u \lessdot_k u' \leq_k w} \Pi_{f_{u',w}}. $$
\end{cor}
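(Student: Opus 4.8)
The plan is to prove the scheme-theoretic identity by reducing it, via Lemma~\ref{L:GeomMonk}, to the single statement that $(u\cdot X_\square)\cap\Pi_f$ is \emph{reduced}: once this is known, Lemma~\ref{L:GeomMonk} identifies its support as $\bigcup_{u'\gtrdot_k u}\Pi_{u'}^w$, and a reduced scheme coincides with the reduced induced structure on its support. First I would observe that $u\cdot X_\square=\{p_{\sigma_k(u)}=0\}$, so the intersection in question depends on $u$ only through $\sigma_k(u)$ and on $f$ only through $\Pi_f$; since $\sigma_k$ of the bottom element is constant on an equivalence class in $\Q(k,n)$ (visible from the construction in Lemma~\ref{L:kBruhatrepresentatives}), I may assume $w\in\GS$, which by uniqueness of Grassmannian representatives also makes the right-hand side unambiguous. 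With $w\in\GS$, Corollary~\ref{C:kBruhatGrassmann} shows that $\{u':u\lessdot_k u'\le_k w\}$ is exactly $\{u':u\lessdot_k u',\ u'\le w\}$, so the asserted union is precisely the one appearing in Lemma~\ref{L:GeomMonk}.

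For the reducedness I would invoke the Hodge--Gr\"obner degeneration of Section~\ref{sec:grobner}. Working, as in the proof of Theorem~\ref{T:HodgePedoe}, in the polynomial ring on the $p_K$ with $K\ge\sigma_k(u)$ --- in which $p_{\sigma_k(u)}$ is the least variable for the revlex order $\omega$ --- the coordinate $p_{\sigma_k(u)}$ is a nonzerodivisor on the domain $\CC[\Pi_u^w]$ (it is nonzero there because $\Pi_u^w\not\subseteq\{p_{\sigma_k(u)}=0\}$), so Lemma~\ref{L:revlex} applies and gives $\In_\omega(I(\Pi_u^w))=\Cone\big(\In_\omega(\Slice(I(\Pi_u^w)))\big)$, where $\Slice(I(\Pi_u^w))$ is the ideal of the scheme $(u\cdot X_\square)\cap\Pi_u^w$. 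By Theorem~\ref{T:HodgePedoe} the left side is $\SR(\sigma_k(\Delta([u,w])))$; and since every maximal chain of $[u,w]$ begins at $u$, the vertex $\sigma_k(u)$ lies in every facet of $\sigma_k(\Delta([u,w]))$, hence is a cone point, so $\SR(\sigma_k(\Delta([u,w])))=\Cone\big(\SR(K')\big)$ where $K'$ is the subcomplex of faces not containing $\sigma_k(u)$, identified in the proof of Theorem~\ref{T:HodgePedoe} with $\bigcup_{u'\gtrdot_k u}\sigma_k(\Delta([u',w]))$. Since $\Cone$ is injective on ideals, comparing the two expressions yields $\In_\omega\!\big(I((u\cdot X_\square)\cap\Pi_u^w)\big)=\SR\big(\bigcup_{u'\gtrdot_k u}\sigma_k(\Delta([u',w]))\big)$, a Stanley--Reisner ideal and hence radical. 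An ideal whose initial ideal for a fixed term order is radical is itself radical (by the standard descending-leading-term argument), so $(u\cdot X_\square)\cap\Pi_u^w$ is reduced, completing the proof.

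I expect the crux to be exactly the compatibility of the Hodge degeneration with the hyperplane section $\{p_{\sigma_k(u)}=0\}$ --- i.e.\ that ``slice'' and ``take initial ideal'' commute here --- which is what the nonzerodivisor hypothesis of Lemma~\ref{L:revlex} secures, and which relies on two features special to $\sigma_k(u)$: it is the smallest Pl\"ucker index on which a coordinate need not vanish on $\Pi_u^w$, and it lies in every facet of $\sigma_k(\Delta([u,w]))$. A tempting alternative would be to note that $(u\cdot X_\square)\cap\Pi_f$ is a Cartier divisor on the Cohen--Macaulay (Corollary~\ref{c:CohenMacaulay}) and normal (Corollary~\ref{c:Normal}) variety $\Pi_f$, hence unmixed with no embedded components and with DVR local rings at its generic points, and then to extract from Proposition~\ref{P:monk} that each component $\Pi_{f_{u',w}}$ occurs with multiplicity one; but justifying the multiplicity-one claim cleanly (the relevant classes need not be linearly independent in $H^*_T(\Gr(k,n))$) still seems to route through the degeneration, so I would take the Gr\"obner argument as primary.
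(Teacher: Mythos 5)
Your proof is correct, and it takes a genuinely different route from the paper's. The paper argues as follows: the containment $\supseteq$ follows from Theorem~\ref{T:LinearGeneration} (since $\sigma_k(u)\notin\M_{u',w}$ for each $u'\gtrdot_k u$); Proposition~\ref{P:monk} shows both sides have the same equivariant class, and since the components of the right side are exactly the $\Pi_{f_{u',w}}$ (all of the same dimension), positivity forces the multiplicities on the left to be $1$, i.e.\ $(u\cdot X_\square)\cap\Pi_f$ is generically reduced with no extra components; finally, a generically reduced effective Cartier divisor on the normal variety $\Pi_f$ (Corollary~\ref{c:Normal}) has no embedded points, hence is reduced. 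Your route replaces the entire cohomology-plus-normality step by a purely Gr\"obner-theoretic one: run Lemma~\ref{L:revlex} and Theorem~\ref{T:HodgePedoe} in reverse to identify $\In_\omega\bigl(\Slice\,I(\Pi_u^w)\bigr)$ with the Stanley--Reisner ideal of the link of $\sigma_k(u)$, which is radical, whence $\Slice\,I(\Pi_u^w)$ is radical. This is self-contained within Sections~7--8, avoids Proposition~\ref{P:monk} entirely, and as a bonus extracts the stronger statement that the degeneration of the hyperplane section is again Stanley--Reisner. The tradeoff is that you need the reduction to $w\in\GS$ (via Lemmas~\ref{L:kBruhatrepresentatives} and~\ref{L:equivcomplex}) to line up the union in Lemma~\ref{L:GeomMonk} with the one in the statement; the paper's argument works directly for any Richardson model.

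One correction to your closing paragraph: the ``tempting alternative'' you set aside is in fact essentially the paper's own proof, and the multiplicity-one step is not obstructed by the $[\Pi_{f_{u',w}}]$ failing to be linearly independent. Since the $\Pi_{f_{u',w}}$ all have the same dimension and hence are all irreducible components, one gets $\sum_{u'}(m_{u'}-1)[\Pi_{f_{u',w}}]=0$ in $H^{*}(\Gr(k,n))$ with $m_{u'}\geq 1$; each $[\Pi_{f_{u',w}}]$ is a nonzero nonnegative sum of Schubert classes, so nonnegativity of all coefficients forces $m_{u'}=1$. This is a positivity argument, not a linear-independence argument, so the route does not secretly detour through the degeneration. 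Both proofs stand independently.
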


\begin{proof}
  The containment $\supseteq$ follows from Theorem \ref{T:LinearGeneration}.
  The above Proposition tells us that the two sides have the
  same cohomology class, hence any difference in scheme structure must occur in
  lower dimension; this says that $(u\cdot X_\square) \cap \Pi_f$
  is generically reduced (and has no other top-dimensional components).
  But since $\Pi_f$ is irreducible and normal
  (Theorem \ref{thm:projectedRichardsons} and Corollary \ref{c:Normal}),
  a generically reduced hyperplane section of it must be equidimensional
  and reduced.
\end{proof}

Note that this Corollary does not follow from Corollary \ref{c:IntersectUnions}
as $u\cdot X_\square$ is rarely a positroid divisor --- only when
$u\cdot X_\square = \chi^i \cdot X_\square$ for some $i\in [n]$.

\begin{lem}\label{L:recursive}
The collection of positroid classes $[\Pi_f] \in H^*_T(\Gr(k,n))$
are completely determined by:
\begin{enumerate}
\item
$[\Pi_f]$ is homogeneous with degree $\deg([\Pi_f]) = 2\ell(f)$,
\item
Proposition \ref{P:monk}, and
\item the positroid point classes
$\left\{ [\Pi_{t_{w.\omega_k}}] = [\sigma(w)] \mid w \in \GS \right\}$.
\end{enumerate}
\end{lem}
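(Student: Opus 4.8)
The plan is to prove uniqueness by downward induction on $\dim \Pi_f = k(n-k) - \ell(f)$, reducing everything to restrictions at $T$-fixed points via equivariant localization. First I would recall the standard facts to be used: $H^*_T(\Gr(k,n))$ injects, preserving degree, into $\bigoplus_{I \in \binom{[n]}{k}} H^*_T(\pt) = \bigoplus_I \Z[y_1,\ldots,y_n]$; its image is cut out by the Goresky--Kottwitz--MacPherson congruences along the $T$-invariant curves, so that for the curve joining the fixed point $I$ to $I \setminus \{i\} \cup \{j\}$ (with $i \in I$, $j \notin I$) the two restrictions agree modulo $y_i - y_j$; the $k(n-k)$ linear forms $\{y_i - y_j : i \in I,\ j \notin I\}$ at a fixed point $I$ are pairwise non-proportional; and $D|_I = \pm\sum_{i \in I} y_i$, so the map $I \mapsto D|_I$ is injective.

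The base case is $\ell(f) = k(n-k)$, i.e. $\Pi_f$ a point. Such an $f$ has a Richardson model $X_u^w$ with $\ell(w) - \ell(u) = 0$, so $u = w \in \GS$ and $f = f_{w,w} = w\,t_{\omega_k}w^{-1} = t_{w\cdot\omega_k}$; conversely, by Theorem \ref{T:pairsboundedposet} every $t_{w\cdot\omega_k}$ with $w \in \GS$ has $\ell = k(n-k)$, hence is a point positroid. Thus the classes with $\ell(f) = k(n-k)$ are exactly those listed in item (3) and are prescribed there.

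For the inductive step, fix $f$ with $\ell(f) < k(n-k)$ and a Richardson model $X_u^w$ (Theorem \ref{thm:projectedRichardsons}), and set $p = \sigma_k(u)$. Since $[u,w]_k$ is graded of positive rank there is $u'$ with $u \lessdot_k u' \leq_k w$, and Proposition \ref{P:monk} rearranges to
\[
(D - D|_p)\cdot[\Pi_f] \;=\; \sum_{u \lessdot_k u' \leq_k w} [\Pi_{f_{u',w}}],
\]
where each $f_{u',w}$ has $\ell(f_{u',w}) = \ell(f)+1$ by Theorem \ref{T:pairsboundedposet}, so the right-hand side is determined by the inductive hypothesis. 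It then suffices to show multiplication by $D - D|_p$ is injective on $H^{2\ell(f)}_T(\Gr(k,n))$. If $z$ is a homogeneous kernel element, then restricting at any fixed point $I \neq p$ gives $(D|_I - D|_p)\,z|_I = 0$ with $D|_I - D|_p$ a nonzero element of the domain $\Z[y_1,\ldots,y_n]$, so $z|_I = 0$ for all $I \neq p$. The GKM congruences at $p$ now force $z|_p$ to be divisible by each of the $k(n-k)$ pairwise-coprime forms $y_i - y_j$, hence by their product; but $z|_p$ has polynomial degree $\ell(f) < k(n-k)$, so $z|_p = 0$ and $z = 0$. Hence $[\Pi_f]$ is the unique class of degree $2\ell(f)$ mapping to the (known) right-hand side, and the induction closes.

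The crux is exactly this injectivity of multiplication by $D - D|_p$: Monk's rule degenerates at the single fixed point $p = \sigma_k(u)$, where its coefficient vanishes, and the real content of the lemma is that no information is lost there once the bound $\ell(f) < k(n-k)$ holds — which is what the GKM divisibility argument above supplies. Everything else (the identification of point positroids with the $t_{w\cdot\omega_k}$, the degree bookkeeping via Theorem \ref{T:pairsboundedposet}, and the existence of a $k$-cover $u \lessdot_k u'$) is routine.
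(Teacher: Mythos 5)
Your proof is correct and follows essentially the same route as the paper's: downward induction on $\ell(f)$ with base case the point classes of item~(3), using Proposition~\ref{P:monk} to write $(D-D|_{\sigma(u)})\cdot[\Pi_f]$ in terms of higher-codimension classes and then arguing that this equation pins down $[\Pi_f]$. The only difference is cosmetic: where the paper argues that a difference of two solutions is supported at the single fixed point $\sigma(u)$ and hence must be a multiple of the point class (of degree $2k(n-k) > 2\ell(f)$), you unpack the same fact via GKM divisibility of $z|_p$ by the $k(n-k)$ pairwise-coprime linear forms — equivalent content, just spelled out.
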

\begin{proof}
Let $f \in \Bound(k,n)$.  We may assume by induction that the
classes $[\Pi_{f'}]$ for $\ell(f')> \ell(f)$ have all been
determined.  The case $\ell(f) = k(n-k)$ is covered by assumption
(3), so we assume $\ell(f) < k(n-k)$.  Using Proposition
\ref{P:monk}, we may write
$$
(D-D|_{\sigma(v)}).[\Pi_f] = \sum_{v \lessdot_k v' \leq_k w}
[\Pi_{f_{(v',w)}}].
$$
Now, the class $D-D|_{\sigma(v)}$ does not vanish when restricted to
any fixed point $J \neq \pi(v)$ (see \cite{KT}), so the above
equation determines $[\Pi_f]|_J$ for every $J \neq \pi(v)$. Thus
if $a$ and $b$ are two classes in $H^*_T(\Gr(k,n))$ satisfying
\eqref{E:posmonk}, then $a - b$ must be supported on
$\pi(v)$.  This means that $a - b$ is a multiple of the
point class $[\pi(v)]$.  But $\deg([\pi(v)]) = 2k(n-k)$ and
$\deg(a) = \deg(b) = \ell(f) < 2k(n-k)$ so
$a = b$.  Thus $[\Pi_f]$ is determined by the three
assumptions.
\end{proof}

\subsection{Chevalley formula for the affine flag variety}
Let
$\AFl(n)$ denote the affine flag variety of $GL(n,\C)$. We let
$\{\xi^f \in H^*_T(\AFl(n))\mid f \in \tS_n \}$ denote the
equivariant Schubert classes, as defined by Kostant and Kumar in
\cite{KK}.

Now suppose that $f \in \tS_n$.  We say that $f$ is \defn{affine
Grassmannian} if $f(1) < f(2) < \cdots < f(n)$.  For any $f \in
\tS_n$, we write $f^0 \in \tS_n$ for the affine permutation given by
$f^0 = [\cdots g(1)g(2) \cdots g(n) \cdots]$ where $g(1), g(2),
\cdots, g(n)$ is the increasing rearrangement of $f(1), f(2),
\cdots, f(n)$. Then $f^0$ is affine Grassmannian.  Suppose that $f
\lessdot g$. Then we say that $g$
\defn{0-covers} $f$, and write $f \lessdot_0 g$ if $f^0 \neq g^0$.
These affine analogues of $k$-covers were studied in \cite{LLMS}.

For a transposition $(ab) \in \tS$ with $a < b$, we let
$\alpha_{(ab)}$ (resp. $\alpha^\vee_{(ab)}$) denote the
corresponding positive root (resp. coroot), which we shall think of
as an element of the \defn{affine root lattice} $Q =
\bigoplus_{i=0}^{n-1} \Z\cdot \alpha_i$ (resp. \defn{affine coroot
lattice} $Q^\vee = \bigoplus^{n-1}_{i=0} \Z\cdot \alpha_i^\vee$). We
have $\alpha_{(ab)} = \alpha_a + \alpha_{a+1} + \cdots
\alpha_{b-1}$, where the $\alpha_i$ are the simple roots, and the
indices on the right hand side are taken modulo $n$.  A similar formula holds for coroots.
Note that $\alpha_{(ab)} = \alpha_{(a+n,b+n)}$.

\begin{lem}\label{L:affinechev}
Suppose that $f \in \Bound(k,n)$.  Then
$$
\xi^{s_0}\cdot \xi^f = \xi^{s_0}|_f \cdot \xi^f + \sum_{f \lessdot_0
g \in \Bound(k,n)} \xi^g + \; \;  \text{other terms},
$$
where the other terms are a linear combination of Schubert classes
not labeled by $\Bound(k,n)$.
\end{lem}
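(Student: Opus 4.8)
The plan is to deduce the formula from the equivariant Chevalley (Monk) rule for $H^*_T(\AFl(n))$ and then check that the boundedness of $f$ forces all the relevant structure constants to lie in $\{0,1\}$. Every Bruhat cover of $f$ in $\tS_n$ has the form $f \lessdot f\,t_{(ab)}$ for a reflection $t_{(ab)}$ with $a<b$ and $a\not\equiv b\pmod n$, and by Kostant--Kumar \cite{KK},
$$
\xi^{s_0}\cdot\xi^f \;=\; \xi^{s_0}|_f\cdot\xi^f \;+\; \sum_{f\,\lessdot\, f t_{(ab)}} \langle\omega_0,\alpha_{(ab)}^\vee\rangle\;\xi^{f t_{(ab)}},
$$
where $\omega_0$ is the fundamental weight with $\langle\omega_0,\alpha_j^\vee\rangle=1$ for $j\equiv 0\pmod n$ and $0$ otherwise. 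Since $\alpha_{(ab)}^\vee=\sum_{j=a}^{b-1}\alpha_j^\vee$ with indices read modulo $n$, the coefficient $\langle\omega_0,\alpha_{(ab)}^\vee\rangle$ is precisely the number of multiples of $n$ in $\{a,a+1,\dots,b-1\}$. Thus the entire content of the lemma is: among the covers $f t_{(ab)}$ that lie in $\Bound(k,n)$, this count equals $1$ on the $0$-covers and $0$ on the others, while covers leaving $\Bound(k,n)$ are absorbed into the ``other terms''.

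Next I would bound the support of $\alpha_{(ab)}$ using boundedness, essentially as in the proof of Lemma~\ref{L:orderideal} but applied going upward along a cover. Note that $g=f t_{(ab)}$ automatically lies in $\tS_n^k$ (reflections have $\av=0$), so the only question is boundedness. If $g=f t_{(ab)}$ with $a<b$ and $f,g\in\Bound(k,n)$, then $g(b)=f(a)$, hence $b\le g(b)=f(a)\le a+n$ and so $b-a\le n$. An interval of at most $n$ consecutive integers contains at most one multiple of $n$, so $\langle\omega_0,\alpha_{(ab)}^\vee\rangle\in\{0,1\}$ whenever the cover stays inside $\Bound(k,n)$.

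Finally I would match ``coefficient $=1$'' with the $0$-cover condition. Recall that $f^0$ is the minimal-length representative of the coset $f\,S_n$, where $S_n=\langle s_1,\dots,s_{n-1}\rangle\subset\tS_n$ is the setwise stabilizer of $\{1,\dots,n\}$; hence with $g=f t_{(ab)}$ one has $f^0=g^0$ iff $f^{-1}g=t_{(ab)}\in S_n$, i.e. iff $t_{(ab)}$ preserves $\{1,\dots,n\}$ setwise. Writing $\bar a,\bar b\in[1,n]$ for the residues, $t_{(ab)}$ sends $\bar a$ to $b+(\bar a-a)$, which lies in $[1,n]$ precisely when $\bar a-a=\bar b-b$, i.e. when $\lfloor(a-1)/n\rfloor=\lfloor(b-1)/n\rfloor$, i.e. when there is no multiple of $n$ in $\{a,\dots,b-1\}$. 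Therefore $f^0=g^0\iff\langle\omega_0,\alpha_{(ab)}^\vee\rangle=0$, equivalently $f\lessdot_0 g\iff\langle\omega_0,\alpha_{(ab)}^\vee\rangle=1$. Feeding this back into the Chevalley expansion: the terms labelled by $\Bound(k,n)$ are exactly $\sum_{f\lessdot_0 g\in\Bound(k,n)}\xi^g$, each with coefficient $1$, and every remaining term is labelled outside $\Bound(k,n)$ — which is the assertion.

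I expect the one genuinely delicate point to be a matter of conventions rather than mathematics: pinning down the precise form of the Kostant--Kumar formula for $GL_n$ affine flags (left versus right multiplication, and the identification of the divisor class $\xi^{s_0}$ with the weight $\omega_0$) so that the root attached to a cover $f\lessdot g$ is exactly the $\alpha_{(ab)}$ used in the definition of $\lessdot_0$, consistently with the finite Monk formula~\eqref{E:monk}. Once those conventions are fixed, the only remaining ingredients are the short estimate $b-a\le n$ and the elementary coset computation above.
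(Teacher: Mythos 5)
Your proof is correct and follows essentially the same route as the paper: specialize the Kostant--Kumar equivariant Chevalley formula, observe that boundedness of the cover forces $b-a<n$ so the Chevalley coefficient is $0$ or $1$, and identify ``coefficient $=1$'' with the $0$-cover condition via the coset computation. The paper states the last equivalence without proof; your explicit check (comparing floors of $(a-1)/n$ and $(b-1)/n$) is a correct and welcome elaboration.
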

\begin{proof}
We deduce this formula by specializing the Chevalley formula for
Kac-Moody flag varieties in \cite{KK}\footnote{The formula in
Kostant and Kumar \cite{KK}, strictly speaking, applies to the
affine flag variety $\AFl(n)_0$ of $SL(n)$.  But each component of
$\AFl(n)$ is isomorphic to $\AFl(n)_0$.}, which in our situation
states that for any $f \in \tS$,
$$
\xi^{s_0}\cdot \xi^f = \xi^{s_0}|_f \cdot \xi^f + \sum_{f \lessdot g
= f\cdot(ab) } \langle \alpha_{(ab)}^\vee, \chi_0 \rangle\, \xi^g
$$
where $\chi_0$ is a weight of the affine root system satisfying
$\langle \chi_0,\alpha_i^\vee \rangle = \delta_{i0}$.  We see that
$$\langle \alpha_{(ab)}^\vee,
\chi_0 \rangle = \# \left( \{\ldots,-2n,-n,0,n,2n,\ldots\} \cap [a,b)\right).$$
Now suppose that $g \in \Bound(k,n)$.  Since $i \leq g(i) \leq i +
n$, if $g\cdot (ab) \lessdot g$ then we must have $0 < b-a < n$.  In
this case, the condition that $[a,b)$ intersects
$\{\ldots,-2n,-n,0,n,2n,\ldots\}$ is the same as $f \lessdot_0 g$,
and furthermore one has $\langle \alpha_{(ab)}^\vee, \chi_0 \rangle
=1$.  This proves the Lemma.
\end{proof}

\subsection{Positroid classes and Schubert classes in affine flags}
For the subsequent discussion we work in the topological category.
Our ultimate aim is to calculate certain cohomology classes, and
changing from the algebraic to the topological category does not
alter the answers.  We refer the reader to \cite{PS, Mag} for background material.

Let $U_n$ denote the group of unitary $n \times n$ matrices and let
$T_\R \simeq (S^1)^n$ denote the subgroup of diagonal matrices. We
write $LU_n$ for the space of polynomial loops into $U_n$, and
$\Omega U_n$ for the space of polynomial based loops into $U_n$. It
is known that $LU_n/T_\R$ is weakly homotopy equivalent to
$\AFl(n)$, and that $\Omega U_n \simeq LU_n/U_n$ is weakly homotopy
equivalent to the affine Grassmannian (see \cite{PS}).

The connected components of $\Omega U_n$ and $LU_n$ are indexed by $\Z$,
using the map $L\det: LU_n \to LU_1 = Map(S^1,U(1)) \sim \pi_1(U(1)) = \Z$.
We take as our basepoint of the $k$-component of $\Omega U_n$ the loop
$t \mapsto {\rm diag}(t,\ldots,t,1,\ldots,1)$, where there are $k$
$t$'s.  Abusing notation, we write $t_{\omega_k} \in \Omega U_n$ for
this point, identifying the basepoint with a translation element.

The group $LU_n$ acts on $\Omega U_n$ by the formula
$$
(a \cdot b)(t)  = a(t)b(t)a(t)^{-1}
$$
where $a(t) \in LU_n$ and $b(t) \in \Omega U_n$.  The group $U_n$
embeds in $LU_n$ as the subgroup of constant loops. The action of
$LU_n$ on $\Omega U_n$ restricts to the conjugation
action of $U(n)$ on $U(n)$.
It then follows that the orbit of the basepoint under the action of $U_n$
\begin{equation}\label{E:orbit}
U_n \cdot t_{\omega_k} \simeq U_n/(U_k \times U_{n-k})
\end{equation}
is isomorphic to the Grassmannian $\Gr(k,n)$.

Thus we have a map $q: \Gr(k,n) \hookrightarrow \Omega U_n$.  Let
$r: \Omega U_n \to LU_n/T$ be the map obtained by composing the
natural inclusion $\Omega U_n \hookrightarrow LU_n$ with the
projection $LU_n \twoheadrightarrow LU_n/T$.  We let
$$
    p := r \circ q: \Gr(k,n) \longrightarrow LU_n/T
$$
denote the composition of $q$ and $r$.  All the maps are $T_\R$-equivariant,
so we obtain a ring homomorphism $p^*: H_T^*(\AFl(n)) \to H_T^*(\Gr(k,n))$.

\begin{lem}\label{L:fixedpoints}
  Suppose $w \in \GS$ and $I = \sigma(w)$, which we identify with a
  $T$-fixed point of $\Gr(k,n)$.
  Then $p(I) = t_{w\cdot \omega_k} T_\R \in LU_n / T_\R$.
\end{lem}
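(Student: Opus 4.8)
The plan is to unwind the definition of the map $q$ --- that is, the identification \eqref{E:orbit} of $\Gr(k,n)$ with the $U_n$-orbit of the basepoint $t_{\omega_k}$ --- and to track where the $T_\R$-fixed point indexed by $I=\sigma(w)$ goes. The expected conclusion is that $q(I)$ is the diagonal loop $t\mapsto \mathrm{diag}(t^{\epsilon_1},\dots,t^{\epsilon_n})$ with $\epsilon = w\cdot\omega_k$, which by the abuse of notation introduced when defining $q$ is exactly $t_{w\cdot\omega_k}\in\Omega U_n$; applying $r$ then yields the coset $t_{w\cdot\omega_k}T_\R$.

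First I would recall that under the standard identification $U_n/(U_k\times U_{n-k})\cong\Gr(k,n)$, sending a coset $aP$ to $a\cdot(\C^k\times 0)$, the $T_\R$-fixed point labeled by $I\in\binom{[n]}{k}$ --- namely the coordinate subspace $\Span_{i\in I}e_i$, which is the unique $T$-fixed point of the Schubert cell $\Xo_I$ --- is the image of the coset of a permutation matrix $\dot w$ with $\dot w e_j = e_{w(j)}$, for any $w\in S_n$ with $w([k])=\sigma(w)=I$. Since $w\in\GS$ and $\sigma_k\colon\GS\to\binom{[n]}{k}$ is a bijection, the $w$ in the statement is precisely the distinguished representative; any representative of the coset $w(S_k\times S_{n-k})$ gives the same value below, since $w\cdot\omega_k$ depends only on $w([k])$.

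Next I would compute $q(I)$ directly. By \eqref{E:orbit}, $q(I)$ is the loop $t\mapsto \dot w\,\mathrm{diag}(\,\underbrace{t,\dots,t}_{k},1,\dots,1\,)\,\dot w^{-1}$. Conjugating a diagonal matrix by $\dot w$ just permutes its diagonal entries, and a short computation using $\dot w e_j = e_{w(j)}$ shows that the $i$-th diagonal entry of $\dot w\,\mathrm{diag}(t,\dots,t,1,\dots,1)\,\dot w^{-1}$ is $t$ when $w^{-1}(i)\le k$, i.e.\ when $i\in w([k])=I$, and $1$ otherwise. By the definition \eqref{E:Snaction} of the $S_n$-action on $\Z^n$, this exponent vector is exactly $w\cdot\omega_k$. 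Hence $q(I)$ is the diagonal loop $t\mapsto\mathrm{diag}(t^{(w\cdot\omega_k)_1},\dots,t^{(w\cdot\omega_k)_n})$, which we have agreed to write as $t_{w\cdot\omega_k}\in\Omega U_n$ (its determinant loop is $t^{k}$, consistent with $q$ landing in the $k$-component). Finally $p(I)=r(q(I))$ is the image of $t_{w\cdot\omega_k}$ under $\Omega U_n\hookrightarrow LU_n\twoheadrightarrow LU_n/T_\R$, which is the coset $t_{w\cdot\omega_k}T_\R$, as claimed.

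The argument is essentially bookkeeping, so the only real obstacle is making three conventions line up consistently: (i) which $T$-fixed point of $\Gr(k,n)$ carries the label $I$ (the coordinate plane $\Span_{i\in I}e_i$, from the definition of $\Xo_I$); (ii) which Weyl coset of $U_n/(U_k\times U_{n-k})$ maps to that plane under \eqref{E:orbit}; and (iii) that the translation element $t_\mu\in\tS_n$ corresponds, under the weak equivalence $\AFl(n)\simeq LU_n/T_\R$, to the diagonal loop $t\mapsto\mathrm{diag}(t^{\mu_1},\dots,t^{\mu_n})$ --- the same identification already adopted for the basepoint $t_{\omega_k}$. Once these are pinned down, the conjugation computation above finishes the proof.
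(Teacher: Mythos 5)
Your proof is correct and takes the same route as the paper: the paper's argument is simply "It follows from the action of $S_n \leq U_n$ on $\Omega U_n$ that $q(I) = t_{w \cdot \omega_k}$," which is exactly the conjugation computation you carry out in detail. You have just filled in the bookkeeping (identifying which Weyl-group coset maps to $\Span_{i\in I}e_i$, and matching the permuted diagonal entries against the convention \eqref{E:Snaction}) that the paper leaves to the reader.
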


\begin{proof}
  It follows from the action of $S_n \leq U_n$ on $\Omega U_n$
  that $q(I) = t_{w \cdot \omega_k} \in \Omega U_n$.  But by definition
  $r(t_{w \cdot \omega_k}) = t_{w \cdot \omega_k} \in LU_n$.
\end{proof}

\begin{lem}\label{L:pointclass}
\begin{enumerate}
\item Suppose $w \in \GS$.  Then $p^*(\xi^{t_{w \cdot \omega_k}}) =
[\sigma(w)]$.
\item $p^*(\xi^{s_0}) = D$.
\end{enumerate}
\end{lem}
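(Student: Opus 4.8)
The plan is to prove both identities by equivariant localization. Since $\Gr(k,n)$ is equivariantly formal (it has a paving by Schubert cells), the restriction map $H^*_T(\Gr(k,n)) \to \bigoplus_{I\in\binom{[n]}{k}} H^*_T(\pt)$ to the $T$-fixed points is injective, so it is enough to check each equality after restricting to every fixed point $\sigma(v)$, $v\in\GS$. Because $p$ is $T$-equivariant, $p^*(\xi^f)|_{\sigma(v)} = \xi^f|_{p(\sigma(v))}$, and by Lemma~\ref{L:fixedpoints} this is $\xi^f|_{t_{v\cdot\omega_k}}$, a restriction of an affine Schubert class which is computed by the formulas of Kostant--Kumar~\cite{KK}. (We use the same torus $T=(S^1)^n$ for $\AFl(n)$ and $\Gr(k,n)$, so that $\delta$ restricts to $0$ and the affine root $\alpha_0$ restricts to $y_n-y_1$.)

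For part (1), I would first record that $f := t_{w\cdot\omega_k} = w\,t_{\omega_k}w^{-1}$ is the affine permutation with $f(i)=i+n$ when $\bar\imath\in\sigma(w)$ and $f(i)=i$ otherwise; counting its inversions up to $n$-translation gives exactly one for each pair $(a,b)$ with $a\in\sigma(w)$, $b\notin\sigma(w)$, so $\ell(f)=k(n-k)$, independent of $w$ (this also matches Theorem~\ref{T:pairsboundedposet} with $u=w$). Consequently, for $v\neq w$ in $\GS$ the elements $t_{v\cdot\omega_k}$ and $t_{w\cdot\omega_k}$ are distinct of equal length, hence incomparable in affine Bruhat order, so $\xi^{t_{w\cdot\omega_k}}|_{t_{v\cdot\omega_k}} = 0 = [\sigma(w)]|_{\sigma(v)}$. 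Thus $p^*\xi^{t_{w\cdot\omega_k}}$ is supported at the single fixed point $\sigma(w)$ and is of top degree $2k(n-k)$, so by the GKM description it is an integer multiple $\lambda\cdot[\sigma(w)]$. To see $\lambda=1$ I would restrict once more at $\sigma(w)$: $\xi^{t_{w\cdot\omega_k}}|_{t_{w\cdot\omega_k}}$ equals, by~\cite{KK}, the product of the roots indexed by the inversions of $f$ computed above, which under restriction to $T=(S^1)^n$ is precisely the product of the tangent weights $\{y_b-y_a: a\in\sigma(w),\,b\notin\sigma(w)\}$ of $\Gr(k,n)$ at $\sigma(w)$, i.e.\ $[\sigma(w)]|_{\sigma(w)}$; since both are products of the same number of \emph{distinct} linear forms, $\lambda=1$. (Equivalently, one can pin down $\lambda$ using the non-equivariant specialization, where both sides are the point class.)

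For part (2), $\xi^{s_0}$ has degree one, so $p^*\xi^{s_0}$ is an equivariant divisor class, and again it suffices to match fixed-point restrictions. Using the Kostant--Kumar restriction formula for the affine Schubert divisor together with the action of a translation element $t_\mu$ on the level-one fundamental weight $\varpi_0$, one obtains $\xi^{s_0}|_{t_{v\cdot\omega_k}} = \sum_{i\in[k]} y_i - \sum_{i\in\sigma(v)} y_i$ after restriction to $(S^1)^n$, the normalizing constant being fixed by $\xi^{s_0}|_{t_{\omega_k}}=0$. This is exactly the restriction $D|_{\sigma(v)}$ of the Schubert divisor class on $\Gr(k,n)$ (see~\cite{KT}), so $p^*\xi^{s_0}=D$. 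The main obstacle is not conceptual but bookkeeping: one must carefully reconcile the conventions for $X^f$ in $\AFl(n)$ with those for $X_\square$ in $\Gr(k,n)$, and track signs and the identification of tori, so that the product of inversion roots in part (1) and the weight difference in part (2) come out with exactly the right signs; the combinatorial heart — the inversion set of $t_{w\cdot\omega_k}$ and the equal-length incomparability argument — is elementary.
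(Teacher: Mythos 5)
Your overall strategy is the same as the paper's: use the Kostant--Kumar restriction formula and compare $T$-fixed-point restrictions, together with the vanishing $\xi^f|_g = 0$ unless $f \leq g$ (\cite[Proposition 4.24(a)]{KK}). For part~(1) you and the paper argue identically: the translation elements $t_{v\cdot\omega_k}$ with $v \in \GS$ are all of maximal length in $\Bound(k,n)$, so restriction vanishes at every fixed point except $\sigma(w)$ itself, and the product of inversion roots at $t_{w\cdot\omega_k}$ gives the product of tangent weights at $\sigma(w)$. (Your extra step --- concluding the class is $\lambda\,[\sigma(w)]$ and pinning $\lambda=1$ --- is an unnecessary detour: once you have computed $\xi^{t_{w\cdot\omega_k}}|_{t_{w\cdot\omega_k}}$ explicitly and checked vanishing elsewhere, you already have the fixed-point data of $[\sigma(w)]$ and localization finishes it.)

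For part~(2) the paper takes a shortcut you don't: it invokes \cite[Lemma~1]{KT} to conclude that a degree-$2$ class is determined by knowing it vanishes at $t_{\omega_k}$ and by its restriction at one adjacent translation element, so only two restrictions are ever computed (using \cite[Proposition~4.24(c)]{KK} and $\theta(\alpha_0) = y_n - y_1$). You instead propose deriving a closed formula for $\xi^{s_0}|_{t_{v\cdot\omega_k}}$ at every fixed point; this is heavier but would also work. However, the formula you state, $\sum_{i\in[k]} y_i - \sum_{i\in\sigma(v)} y_i$, is sign-flipped. Check it against the paper's explicit value: for the translation element $f$ with $\sigma(v) = \{2,\ldots,k,n\}$, the paper computes $\xi^{s_0}|_f = \theta(\alpha_0) = y_n - y_1$, but your formula gives $y_1 - y_n$. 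The correct identity (consistent with $D|_{\sigma(v)}$ in \cite{KT}'s convention) is $\xi^{s_0}|_{t_{v\cdot\omega_k}} = \sum_{i\in\sigma(v)} y_i - \sum_{i\in[k]} y_i$. You flagged sign-tracking as the main hazard, and indeed this is exactly where the slip is; once that is fixed, your argument goes through and is a valid alternative to the paper's reduction to two restrictions.
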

\begin{proof}
We prove (1).  Let $f = t_{w \cdot \omega_k}$.  It is enough to
check that $\xi^f|_{t_{u \cdot \omega_k}} = [\pi(w)]|_{\sigma(u)}$ for
each $u \in \GS$.  We have $[\pi(w)]|_{\sigma(u)} = 0$ unless $u = w$. By
\cite[Proposition 4.24(a)]{KK}, $\xi^f|_g = 0$ unless $f \leq g$.
Since $f$ is maximal in $\Bound(k,n)$, it is enough to calculate
$$ \xi^{f}|_f = \prod_{\alpha \in f^{-1}(\Delta_-) \cap \Delta_+} \theta(\alpha).$$
Here $\Delta_+$ (resp. $\Delta_-$) are the positive (resp. negative)
roots of the root system of $\tS_n$, and $\theta(\alpha) \in
H_T^*(\pt) = \Z[y_1,y_2,\ldots,y_n]$ denotes the image of $\alpha$
under the linear map defined by
$$
\theta(\alpha_i) = \begin{cases} y_i - y_{i+1} & \mbox{for $i = 1
,2, \ldots, n-1$,}
\\
y_n - y_1 & \mbox{for $i = 0$.}
\end{cases}
$$
Applying $\theta$ corresponds to specializing
from $H_{T \times S^1}^*(\AFl(n))$ to $H_T^*(\AFl(n))$.

With this terminology, $\alpha_{(ab)} \in f^{-1}(\Delta_-)$ if and
only if $f(a) > f(b)$.  We have $\theta(\alpha_{(ab)}) = y_a - y_b$,
where the indices are taken modulo $n$.  Thus
$$
\xi^{f}|_f = \prod_{i \in \sigma(w) \; \text{and} \; j \in [n]
\backslash \sigma(w)} (y_i - y_j)
$$
which is easily seen to agree with $[\pi(w)]|_{\sigma(w)}$.

Now we prove (2).  The class $p^*(\xi^{s_0}) \in H_T^*(\Gr(k,n))$ is
of degree 2.  By \cite[Lemma 1]{KT}, it is enough to check that
$\xi^{s_0}|_{t_{\omega_k}} = 0$ and to calculate the value of
$\xi^{s_0}$ at one other $t_{w\cdot \omega_k}$.  We use
\cite[Proposition 4.24(c)]{KK}, which states that
$$
\xi^{s_0}|_f = \theta(\chi_0 - f^{-1} \chi_0)
$$
where $\chi_0$ is as in the proof of Lemma \ref{L:affinechev}.  It
is easy that $t_{\omega_k} = [\cdots
(1+n)(2+n)\cdots(k+n)(k+1)\cdots(n) \cdots]$ has a reduced
factorization not involving $s_0$.  Thus $\chi_0 - t_{\omega_k}^{-1}
\chi_0 = 0$, so that $\xi^{s_0}|_{t_{\omega_k}} = 0$.  Also the
translation element $f = [\cdots
1(2+n)(3+n)\cdots(k+n)(k+1)\cdots(n-1)(2n)\cdots]$ has a reduced
expression of the form $f = g\,s_0$ where $s_0$ does not occur in
$g$.  Thus $\chi_0 - f^{-1} \chi_0 = s_0 \chi_0 - \chi_0 =
\alpha_0$.  So $\xi^{s_0}|_f = y_n - y_1$, agreeing with \cite[Lemma 3]{KT}.
Thus $p^*(\xi^{s_0}) = D$.
\end{proof}

\begin{theorem}\label{T:posaffineflags}
For each $f \in \tS_n^k$, we have in $H_T^*(\Gr(k,n))$,
\begin{align*}
    p^*(\xi^f) = \begin{cases} [\Pi_f] &\mbox{if $f \in \Bound(k,n)$,} \\
                     0 &\mbox{otherwise.}
    \end{cases}
\end{align*}
\end{theorem}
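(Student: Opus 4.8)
\emph{Overall approach.} The plan is to prove the sharper statement that $p^*(\xi^f)=[\Pi_f]$ for $f\in\Bound(k,n)$ and $p^*(\xi^f)=0$ for $f\in\tS_n^k\setminus\Bound(k,n)$, by a downward induction on $\ell(f)$, disposing of the ``otherwise'' clause first and without induction. Throughout I would use that $p$ is $T_\R$-equivariant and that the localization map $H^*_T(\Gr(k,n))\hookrightarrow\bigoplus_I H^*_T(\pt)$ is injective, so that by Lemma~\ref{L:fixedpoints} a class $p^*(\xi^f)$ is determined by the restrictions $p^*(\xi^f)|_{\sigma(v)}=\xi^f|_{t_{v\cdot\omega_k}}$ for $v\in\GS$. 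For the vanishing clause: by \cite[Proposition~4.24(a)]{KK}, $\xi^f|_g=0$ unless $f\leq g$; since $\Bound(k,n)$ is a lower order ideal of $\tS_n^k$ (Lemma~\ref{L:orderideal}) containing every $t_{v\cdot\omega_k}$, any $f\notin\Bound(k,n)$ satisfies $f\not\leq t_{v\cdot\omega_k}$ for all $v\in\GS$, whence $p^*(\xi^f)=0$.

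\emph{The inductive step.} For $f\in\Bound(k,n)$ the base case $\ell(f)=k(n-k)$ is exactly Lemma~\ref{L:pointclass}(1), so assume $\ell(f)<k(n-k)$ and that $p^*(\xi^g)=[\Pi_g]$ whenever $g\in\Bound(k,n)$ and $\ell(g)>\ell(f)$. Applying the $H^*_T(\pt)$-algebra homomorphism $p^*$ to the affine Chevalley formula (Lemma~\ref{L:affinechev}), using $p^*(\xi^{s_0})=D$ (Lemma~\ref{L:pointclass}(2)), discarding the ``other terms'' by the vanishing clause, and invoking the inductive hypothesis on the $0$-cover terms (which all have length $\ell(f)+1$), yields
$$ D\cdot p^*(\xi^f)\;=\;(\xi^{s_0}|_f)\,p^*(\xi^f)\;+\;\sum_{f\,\lessdot_0\, g\,\in\,\Bound(k,n)}[\Pi_g]. $$
The crucial combinatorial input I would establish next is that, for a Richardson model $X^w_u$ of $\Pi_f$ with $w\in\GS$, the $0$-covers of $f=f_{u,w}$ inside $\Bound(k,n)$ are exactly the $f_{u',w}$ with $u\lessdot_k u'\leq_k w$. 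The key observation is that $\{f_{u,w}(1),\dots,f_{u,w}(n)\}=(\sigma_k(u)+n)\cup([n]\setminus\sigma_k(u))$, so the affine-Grassmannian reduction $f^0$ of $f_{u,w}$ depends only on $\sigma_k(u)$; combined with the classification of covers of $f_{u,w}$ from the proof of Theorem~\ref{T:pairsboundedposet} (Case~$1$ covers fix $\sigma_k(u)$, Case~$2$ covers are the $f_{u',w}$ with $u\lessdot u'$) and with Corollary~\ref{C:kBruhatGrassmann} (which makes $u'\leq_k w$ automatic), a cover is a $0$-cover precisely when $\sigma_k(u')\neq\sigma_k(u)$, i.e.\ $u\lessdot_k u'$.

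\emph{Matching the scalar and concluding.} Granting the above, the displayed identity has the shape of Proposition~\ref{P:monk} except possibly for the diagonal scalar. Set $\Delta=p^*(\xi^f)-[\Pi_f]$; subtracting Proposition~\ref{P:monk} (the sums over $0$-covers now literally agree) and rearranging gives $(D-\xi^{s_0}|_f)\,p^*(\xi^f)=(D-D|_{\sigma(u)})[\Pi_f]$. Restricting at the $T$-fixed point $\sigma(u)$ kills the right side, so $(D|_{\sigma(u)}-\xi^{s_0}|_f)\,p^*(\xi^f)|_{\sigma(u)}=0$ in the integral domain $H^*_T(\pt)$; but $p^*(\xi^f)|_{\sigma(u)}=\xi^f|_{t_{\tilde u\cdot\omega_k}}\neq0$, where $\tilde u\in\GS$ with $\sigma(\tilde u)=\sigma(u)$, because $uE_\bullet\in X^w_u$ puts the fixed point $\sigma(u)$ in $\pi(X^w_u)=\Pi_f$ (Proposition~\ref{P:ObviousClosure}), and $\sigma(u)$ is the unique $T$-fixed point of the $0$-dimensional cell $\Pio_{t_{\tilde u\cdot\omega_k}}$, so $f\leq t_{\tilde u\cdot\omega_k}$ by Theorem~\ref{thm:intersectSchubs}, making $\xi^f|_{t_{\tilde u\cdot\omega_k}}$ a nonzero product of roots by \cite[Proposition~4.24]{KK}. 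Hence $\xi^{s_0}|_f=D|_{\sigma(u)}$, so $(D-D|_{\sigma(u)})\Delta=0$; restricting at any fixed point $J\neq\sigma(u)$, where $D|_J-D|_{\sigma(u)}$ is a nonzero linear form (\cite{KT}), forces $\Delta|_J=0$, so $\Delta$ is supported at $\sigma(u)$ and is a multiple of the point class $[\sigma(u)]$. Since $\deg\Delta=2\ell(f)<2k(n-k)=\deg[\sigma(u)]$, we get $\Delta=0$, completing the induction.

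\emph{Expected obstacle.} The main work is the combinatorial identification of $0$-covers in $\Bound(k,n)$ with $k$-covers; once it is available the scalar identity $\xi^{s_0}|_f=D|_{\sigma(u)}$ drops out of the localization argument above, and the rest is the uniqueness bookkeeping already packaged in Lemma~\ref{L:recursive}. A secondary nuisance, as already flagged in the proof of Lemma~\ref{L:affinechev}, is tracking the passage between $\tS_n^k$ and the $SL_n$ affine flag variety in which \cite{KK} is stated.
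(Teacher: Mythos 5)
Your proposal is correct and follows the same overall strategy as the paper's proof: the ``otherwise'' clause is handled by the order-ideal property of $\Bound(k,n)$ together with \cite[Prop.\ 4.24(a)]{KK}, and the $\Bound(k,n)$ clause by comparing the affine Chevalley formula (Lemma~\ref{L:affinechev}) against the Monk formula (Proposition~\ref{P:monk}) in a downward induction on $\ell(f)$, using the degree-and-localization rigidity of Lemma~\ref{L:recursive}. Your identification of $0$-covers of $f_{u,w}$ in $\Bound(k,n)$ with the $k$-covers $u\lessdot_k u'\leq_k w$ is equivalent to the paper's: the paper argues via the factorization $f_{u,w}=t_{u\cdot\omega_k}\,uw^{-1}$ and the observation that right multiplication acts on positions, while you argue via the set of values $\{f_{u,w}(i):i\in[n]\}=(\sigma_k(u)+n)\cup([n]\setminus\sigma_k(u))$; these are two descriptions of the same fact that $f^0$ depends only on $\sigma_k(u)$.

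The one place your write-up is genuinely more careful than the paper's is in pinning down the scalar. To feed $p^*(\xi^f)$ into Lemma~\ref{L:recursive}, one needs $p^*(\xi^f)$ to satisfy \eqref{E:posmonk} with the same scalar $D|_{\sigma_k(u)}$, i.e.\ one needs the identity $\xi^{s_0}|_f=D|_{\sigma_k(u)}=\xi^{s_0}|_{t_{u\cdot\omega_k}}$. The paper dispatches this with ``Comparing Lemma~\ref{L:affinechev} and Proposition~\ref{P:monk}, and using Lemma~\ref{L:pointclass}(2)''; this is not false, but it is terse. Your argument is better: subtract the two Monk-type relations, restrict at $\sigma_k(u)$, and use that $p^*(\xi^f)|_{\sigma_k(u)}=\xi^f|_{t_{\tilde u\cdot\omega_k}}\neq 0$ (via $\sigma_k(u)\in\Pi_f$, the closure relations of Theorem~\ref{thm:intersectSchubs}, and Kostant--Kumar nonvanishing for $f\leq g$) to \emph{deduce} the scalar identity as a byproduct rather than assert it. This also sidesteps the affine-weight bookkeeping that a direct verification of $\xi^{s_0}|_{f_{u,w}}=\xi^{s_0}|_{t_{u\cdot\omega_k}}$ would require. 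The rest of your uniqueness bookkeeping coincides with the proof of Lemma~\ref{L:recursive}, which you unfold rather than cite, so no gap there.
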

\begin{proof}
Suppose $f \notin \Bound(k,n)$.  Then by \cite[Proposition
4.24(a)]{KK}, $\xi^f|_g = 0$ unless $f \leq g$, so that $\xi^f|_g =
0$ for $g \in \Bound(k,n)$ (using Lemma \ref{L:orderideal}).  It
follows that $p^*(\xi^f)$ vanishes at each $T$-fixed point of
$\Gr(k,n)$, and so it is the zero class.

We shall show that the collection of classes $\{p^*(\xi^f) \mid f
\in \Bound(k,n)\}$ satisfies the conditions of Lemma
\ref{L:recursive}.  (1) is clear.  (3) follows from Lemma
\ref{L:pointclass}(1).  We check (2).  The map $p^*$ is a ring
homomorphism, so the formula in Lemma \ref{L:affinechev} holds for
the classes $p^*(\xi^f)$ as well.  Suppose $f \lessdot g \in
\Bound(k,n)$ and $f = f_{u,w}$ and $g = f_{u',w'}$.  As in the proof
of Theorem \ref{T:pairsboundedposet}, we may assume that either (1)
$u' = u$ and $w' \lessdot w$, or (2) $u' \gtrdot u$ and $w' = w$. If
$f \lessdot_0 g$, then writing $f_{u,w} = t_{u \cdot \omega_k}
uw^{-1}$ and recalling that right multiplication by $uw^{-1}$ acts
on the positions, we see that we must have $u \cdot \omega_k \neq u'
\cdot \omega_k$.  This implies that we are in Case (2), and that $u'
\gtrdot_k u$.  Conversely, if $w' = w$ and $u' \gtrdot_k u$ then we
must have $f \lessdot_0 g$. Comparing Lemma \ref{L:affinechev} and
Proposition \ref{P:monk}, and using Lemma \ref{L:pointclass}(2), we
see that we may apply Lemma \ref{L:recursive} to the classes
$\{p^*(\xi^f) \mid f \in \Bound(k,n)\}$.

Thus $p^*(\xi^f) = [\Pi_f]$ for every $f \in \Bound(k,n)$.
\end{proof}

\subsection{Affine Stanley symmetric functions}
Let $\Sym$ denote the ring of symmetric functions over $\Z$.  For
each $f \in \tS_n^0$, a symmetric function $\tF_f \in \Sym$, called
the \emph{affine Stanley symmetric function} is defined in
\cite{Lam1}.  This definition extends to all $f \in \tS_n$ via the
isomorphisms $\tS_n^k \simeq \tS_n^0$.

We will denote the simple reflections of the Coxeter group $\tS_n^0$
by $s_0,s_1,\ldots,s_{n-1}$, where the indices are taken modulo $n$.
Let $w \in \tS_n^0$.  We say that $w$ is \defn{cyclically decreasing}
if there exists a reduced expression $s_{i_1} s_{i_2} \cdots s_{i_\ell}$
for $w$ such that (a) no simple reflection is repeated, and (b) if
$s_i$ and $s_{i+1}$ both occur, then $s_{i+1}$ precedes $s_i$.  Then
the \defn{affine Stanley symmetric function} $\tF_f$ is defined by letting
the coefficient of $x_1^{a_1} x_2^{a_2} \cdots x_r^{a_r}$ in
$\tF_f(x_1,x_2,\ldots)$ to be equal to the number of factorizations
$w = w^{(1)} w^{(2)} \cdots w^{(r)}$,
where each $w^{(i)}$ is cyclically decreasing, $\ell(w^{(i)}) = a_i$,
and $\ell(w) = \ell(w^{(1)}) + \ell(w^{(2)}) + \cdots + \ell(w^{(r)})$.

For example, consider $k = 2$, $n = 4$ and $f = [5,2,7,4]$.
The corresponding element of $\tS_n^0$ is $f_0 = [3,0,5,2]$; the reduced words for $f_0$ are $s_1 s_3 s_0 s_2$, $s_1 s_3 s_2 s_0$, $s_3 s_1 s_0 s_2$ and $s_3 s_1 s_2 s_0$. So the coefficient of $x_1 x_2 x_3 x_4$ in $\tF_f$ is $4$, corresponding to these $4$ factorizations. Similar computations yield that $\tF_f = 4 m_{1111} + 2 m_{211} + m_{22} =  s_{22} + s_{211} - s_{1111}$ where the $m$'s are the monomial symmetric functions and the $s$'s are the Schur functions. Note that affine Stanley symmetric functions are not necessarily Schur positive!

The ordinary cohomology $H^*(\Omega SU_n)$ can be identified with a
quotient of the ring of symmetric functions:
$$
H^*(\Omega SU_n) \simeq \Sym/\langle m_\lambda \mid \lambda_1 > n
\rangle,
$$
where $m_\lambda$ denotes the monomial symmetric function labeled by
$\lambda$.  We refer to \cite{EC2} for general facts concerning
symmetric functions, and to \cite{Lam2} for more about $H^*(\Omega SU_n)$.

Let $s_\lambda \in \Sym$ denote the Schur functions, labeled by
partitions.  As each component of $\Omega U_n$ is homeomorphic to
$\Omega SU_n$, the inclusion $q: \Gr(k,n) \rightarrow \Omega U_n$
(defined after \eqref{E:orbit}) induces a map $\tilde \psi:
H^*(\Omega SU_n) \rightarrow H^*(\Gr(k,n))$.  Let $\psi: \Sym \to
H^*(\Gr(k,n))$ denote the composition of the quotient map $\Sym
\twoheadrightarrow H^*(\Omega SU_n)$ with $\tilde \psi: H^*(\Omega
SU_n)\rightarrow H^*(\Gr(k,n))$.

For a partition $\lambda=(\lambda_1, \lambda_2, \ldots, \lambda_k)$
with $\lambda_1 \leq n-k$,
let $\sigma(\lambda)$
be $\{ \lambda_k+1, \lambda_{k-1}+2, \ldots, \lambda_1+k \}$.
This is a bijection from partitions with at most $k$ parts and largest
part at most $n-k$ to $\binom{[n]}{k}$. We denote the set of such
partitions by $\Par(k,n)$.

\begin{lem}
The map $\psi: \Sym \to H^*(\Gr(k,n))$ is the natural quotient map
defined by
$$
\psi(s_\lambda) = \begin{cases} [X_{\sigma(\lambda)}]_0 & \mbox{$\lambda \in
\Par(k,n)$,} \\ 0 & \mbox{otherwise.} \end{cases}
$$
\end{lem}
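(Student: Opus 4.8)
The plan is to exploit that $\psi$ is a \emph{ring} homomorphism --- it is the composite of the ring surjection $\Sym \twoheadrightarrow H^*(\Omega SU_n)$ with the pullback $q^*$ along the inclusion $q$ of the previous subsection --- so that $\psi$ is determined by its values on the polynomial generators $h_1, h_2, h_3, \dots$ of $\Sym = \Z[h_1, h_2, \dots]$. On the target side, $H^*(\Gr(k,n))$ has the classical presentation $\Sym/\langle s_\mu : \mu \not\subseteq k \times (n-k)\rangle$, under which $s_\mu \mapsto [X_{\sigma(\mu)}]_0$ for $\mu \in \Par(k,n)$ and $s_\mu \mapsto 0$ otherwise; this standard quotient map is also a ring homomorphism, and it sends $h_i = s_{(i)}$ to $[X_{\sigma((i))}]_0$ for $1 \le i \le n-k$ and to $0$ for $i > n-k$. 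Hence it suffices to establish
$$ \psi(h_i) = \begin{cases} [X_{\sigma((i))}]_0 & \text{if } 1 \le i \le n-k, \\ 0 & \text{if } i > n-k, \end{cases} $$
for then $\psi$ and the standard quotient map agree on a ring-generating set and so coincide, and the asserted formula for $\psi(s_\lambda)$ follows from the presentation together with the (dual) Jacobi--Trudi identity $s_\lambda = \det(h_{\lambda_i - i + j})$ --- which in particular exhibits $\psi(s_\lambda) = 0$ for ``too wide'' $\lambda$ directly, the relevant determinant having an identically-zero row.

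For the values $\psi(h_i)$ I would factor $q^*$ through the maps already at hand. The natural projection $\rho \colon \AFl(n) = LU_n/T_\R \to LU_n/U_n \simeq \Omega U_n$ is an $\Fl(n)$-bundle, and from the definition of $r$ one has $\rho \circ r = \mathrm{id}_{\Omega U_n}$, so $r^* \circ \rho^* = \mathrm{id}$ on $H^*(\Omega U_n)$ and therefore $q^* = q^* r^* \rho^* = p^* \circ \rho^*$. Now, under the identification $H^*(\Omega SU_n) \cong \Sym/\langle m_\lambda : \lambda_1 > n\rangle$ and the $\av$-shift $\tS_n^0 \simeq \tS_n^k$, the class $h_i$ (for $i < n$) is the affine Schubert class $\xi^{g_i} \in H^*(\Omega U_n)$, where $g_i$ corresponds to the cyclically decreasing affine permutation of length $i$ --- this is the computation of affine Stanley functions of cyclically decreasing elements in \cite{Lam1}. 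Since $g_i$ is affine Grassmannian, $\rho^* \xi^{g_i} = \xi^{g_i}$ in $H^*(\AFl(n))$, so by Theorem~\ref{T:posaffineflags} we get $\psi(h_i) = q^*(\xi^{g_i}) = p^*(\xi^{g_i})$, which is $[\Pi_{g_i}]_0$ if $g_i \in \Bound(k,n)$ and $0$ otherwise; a direct check shows $g_i \in \Bound(k,n)$ precisely when $i \le n-k$, in which case $\Pi_{g_i}$ is the special Schubert variety $X_{\sigma((i))}$ (consistent, for $i=1$, with Lemma~\ref{L:pointclass}(2)). The remaining values $\psi(h_i)$ with $i \ge n$ --- and, more efficiently, the vanishing for all $i > n-k$ at once --- are cleanest to see by a second description of $q$: composing it with $\Omega U_n \to \Omega U \simeq \Z \times BU$ classifies the stable class of (a twist of) one of the tautological bundles on $\Gr(k,n)$, so that $\psi$ carries the symmetric-function generators to the Chern/Segre classes of a bundle of rank $n-k$, and these vanish above degree $n-k$.

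The hard part will be precisely the identification step in the middle paragraph: pinning down that $h_i$ is the affine Schubert class indexed by the shifted cyclically decreasing element $g_i$, which requires care with the distinction between the affine flag varieties of $GL_n$ and $SL_n$ and with the $\av$-shift $\tS_n^0 \simeq \tS_n^k$; or, in the alternative approach, identifying exactly which bundle the composite $\Gr(k,n) \to BU$ classifies and matching it against the conventions underlying $H^*(\Omega SU_n) \cong \Sym/\langle m_\lambda : \lambda_1 > n\rangle$ (whether the generators $h_i$ or $e_i$ correspond to Chern classes, and of the sub- or the quotient bundle). Once $\psi(h_i)$ is known for every $i$, the rest --- agreement with the standard quotient on ring generators, and the passage from the $\{h_i\}$-values to the $\{s_\lambda\}$-values via Jacobi--Trudi and the classical presentation of $H^*(\Gr(k,n))$ --- is routine bookkeeping.
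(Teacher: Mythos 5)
Your proposal is correct but takes a genuinely different route from the paper's proof. The paper works directly with the embedding $\Gr(k,n) \subseteq \Omega U_n$: since that copy of $\Gr(k,n)$ is a union of Schubert varieties of $\Omega U_n$ (indexed by the translation elements $t_{w\cdot\omega_k}$, $w \in \GS$), the restriction map $\tilde\psi$ carries Schubert classes to Schubert classes or to zero; combined with the identification of Schubert classes in $H^*(\Omega SU_n)$ with affine Schur functions from \cite[Theorem 7.1]{Lam2} and the fact $\tF_{(r)} = h_r$, it then only remains to observe that the finite-dimensional Schubert variety dual to $\tF_{(r)}$ lies inside the embedded $\Gr(k,n)$ exactly when $r \le n-k$. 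You instead factor $q^* = p^* \circ \rho^*$ through the $\Fl(n)$-bundle $\rho : LU_n/T_\R \to \Omega U_n$ (correctly using $\rho \circ r = \mathrm{id}$) and then invoke Theorem~\ref{T:posaffineflags}; since that theorem is proved earlier in the section there is no circularity, but you are deploying considerably heavier machinery --- the full positroid-equals-pulled-back-affine-Schubert-class apparatus, itself resting on equivariant Monk and Chevalley formulas and Kostant--Kumar localization --- to obtain what the paper gets from a softer set-theoretic observation about which Schubert varieties of $\Omega U_n$ lie in the image of $q$. The step you flag as ``the hard part'' (matching $h_i$ to a specific affine Grassmannian Schubert class $\xi^{g_i}$, $g_i$ a cyclically decreasing affine Grassmannian element) is precisely the same input the paper obtains by citation from \cite{Lam2}, so this is not a deficiency peculiar to your route; and the remaining bookkeeping you defer (that $g_i \in \Bound(k,n)$ iff $i \le n-k$, and that $\Pi_{g_i}$ is then the special Schubert variety $X_{\sigma((i))}$) is routine, with your $i=1$ sanity check against Lemma~\ref{L:pointclass}(2) a good anchor. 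Your alternative closing argument via the composite $\Gr(k,n) \to \Omega U_n \to \Z \times BU$ and Segre-class vanishing is also legitimate for killing $\psi(h_i)$ with $i > n-k$, at the cost of pinning down which tautological bundle is classified.
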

\begin{proof} 
The copy of $\Gr(k,n)$ inside $\Omega U_n$ is the union of the
$\binom{n}{k}$ Schubert varieties labeled by the translation
elements $\{t_{w\cdot \omega_k} \mid w \in \GS\}$.  It follows that
the map $\tilde \psi: H^*(\Omega SU_n) \to H^*(\Gr(k,n))$ sends
Schubert classes to Schubert classes.

It is well known that $H^*(\Gr(k,n))$ is isomorphic to the quotient
ring of $\Sym$ as stated in the Lemma.  To check that the quotient
map agrees with $\psi$, it suffices to check that they agree on the
homogeneous symmetric functions $h_i \in \Sym$, which generate
$\Sym$.  In \cite[Theorem 7.1]{Lam2} it is shown that the Schubert
classes of $H^*(\Omega SU_n)$ are the ``affine Schur functions'',
denoted $\tF_\lambda$.  When $\lambda$ is a single row, we have
$\tF_{(r)} = h_r \in \Sym/\langle m_\lambda \mid \lambda_1 > n
\rangle$.  Furthermore, the finite-dimensional Schubert variety in
$\Omega U_n$ with dual Schubert class $\tF_{(r)}$ lies in $\Gr(k,n)
\subset \Omega U_n$ exactly when $r \leq n-k$.  It follows that
$\psi(h_r) = [X_{(r)}]_0 \in H^*(\Gr(k,n))$ for $r \leq n-k$, and
$\psi(h_r) = 0$ for $r \geq n -k$.  Thus $\psi$ is the stated map.
\end{proof}


\begin{proof}[Proof of Theorem \ref{thm:affineStanley}]  Let $\xi^f_0 \in H^*(\AFl(n))$ denote the non-equivariant Schubert
  classes.  It is shown\footnote{%
    The setup in \cite{Lam2} involves $\Omega SU_n$, but each
    component of $\Omega U_n$ is isomorphic to $\Omega SU_n$ so the
    results easily generalize.}
  in \cite[Remark 8.6]{Lam2} that we have $r^*(\xi^f_0) = \tF_f \in
  H^*(\Omega SU_n)$, where we identify $\tF_f \in \Sym$ with its image
  in $H^*(\Omega SU_n) = \Sym/\langle m_\lambda \mid \lambda_1 > n \rangle$.
  Thus we calculate using the non-equivariant version of Theorem
  \ref{T:posaffineflags}
\begin{align*}
    [\Pi_f]_0 = p^*(\xi^f_0) = q^* r^*(\xi^f_0) = \psi(\tF_f).
\end{align*}
\end{proof}

Recall our previous example where $k=2$, $n=4$ and $f=[5274]$,
with siteswap $4040$. This positroid variety is a point.
The affine Stanley function $\tF_f$ was $s_{22} + s_{211} -
s_{1111}$, so $\psi(\tF_f) = \psi(s_{22})$, the class of a point.

\subsection{The $K$- and $K_T$-classes of a positroid variety}
We conjecture that the $K$-class of a positroid variety is given by
the affine stable Grothendieck polynomials defined in \cite{Lam1}.
These symmetric functions were shown in \cite{LSS} to have the same
relationship with the affine flag manifold as affine Stanley
symmetric functions, with $K$-theory replacing cohomology.

\begin{conjecture}
The $K$-theory class of the structure sheaf of a positroid variety
$\Pi_f$ is given by the image of the affine stable Grothendieck
polynomial $\tilde{G}_f$, when $K^*(\Gr(k,n))$ is identified with a
ring of symmetric functions as in \cite{Buc}.
\end{conjecture}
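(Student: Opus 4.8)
\begin{remark}
Here is a plan for establishing this conjecture, following the strategy of the proof of Theorem~\ref{thm:affineStanley} with cohomology replaced by $T$-equivariant $K$-theory. The goal is to reprove, in $K$-theory, the four ingredients used there: (a) a $K$-theoretic Monk/Pieri rule for positroid varieties; (b) a recursive characterization of positroid $K$-classes playing the role of Lemma~\ref{L:recursive}; (c) a pullback $p^*\colon K^0_T(\AFl(n)) \to K^0_T(\Gr(k,n))$ sending the affine Schubert $K$-class $[\O_{X^f}]$ to $[\O_{\Pi_f}]$ when $f \in \Bound(k,n)$ and to $0$ otherwise; and (d) the identification $r^*[\O_{X^f}] = \tilde{G}_f$ in $K^0(\Omega SU_n)$, viewed as a quotient of $\Sym$, which is the content of~\cite{LSS}. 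Granting (a)--(d), one concludes as before: $[\O_{\Pi_f}] = p^*[\O_{X^f}] = q^* r^*[\O_{X^f}] = \psi_K(\tilde{G}_f)$, where $\psi_K\colon \Sym \to K^0(\Gr(k,n))$ is the $K$-theoretic analogue of $\psi$; one then checks, as in the cohomology case (using that $\Gr(k,n)$ sits inside $\Omega U_n$ as a union of affine Schubert varieties labelled by the $t_{w\cdot\omega_k}$, $w \in \GS$), that $\psi_K$ coincides with Buch's identification~\cite{Buc} by evaluating it on the stable Grothendieck polynomials $G_\lambda$, where it should return the Schubert structure sheaf classes $[\O_{X_{\sigma(\lambda)}}]$.

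For (a): Theorem~\ref{T:crepant}, together with the remark following it extending cohomological triviality to all Richardson varieties, gives $\pi_*[\O_{X_u^w}] = [\O_{\Pi_f}]$ in $K^0_T(\Gr(k,n))$, and more generally $\pi_*[\O_{X_v^w}] = [\O_{\pi(X_v^w)}]$ for any $v \le w$. Thus, as in the proof of Proposition~\ref{P:monk}, one computes the product of $[\O_{\Pi_f}]$ with the $K$-class of a (translated) Schubert divisor by pulling back to $X_u^w$, applying the projection formula together with an equivariant $K$-theoretic Chevalley formula in $\Fl(n)$, and pushing forward. The new feature is that the $K$-Chevalley formula has lower-order terms --- the product expands as a signed sum over chains in Bruhat order rather than over covers --- so after restriction to the Richardson and pushforward one obtains $[\O_{\Pi_f}]$ times an explicit combination of the classes $[\O_{\Pi_{f'}}]$ with $f' \ge f$ in $\Bound(k,n)$, plus the fixed-point restriction term. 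The substance of step (a) is to recognize this geometric expansion as equivalent to the affine $K$-Chevalley rule for $\AFl(n)$ restricted to $\Bound(k,n)$.

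For (b), the grading argument of Lemma~\ref{L:recursive} is unavailable and must be replaced, for example by a localization argument: a class in $K^0_T(\Gr(k,n))$ is determined by its restrictions to $T$-fixed points; the Monk-type relation of (a) determines $[\O_{\Pi_f}]|_J$ for all $J \ne \pi(u)$ because the relevant factor is invertible there; and the remaining restriction at $\pi(u)$ is pinned down because $[\O_{\Pi_f}]$ lies in a strictly positive step of the codimension (topological) filtration on $K^0_T$ when $\ell(f) > 0$, hence cannot differ from another such class by a nonzero multiple of $[\O_{\pi(u)}]$, with the case $\ell(f) = k(n-k)$ supplied by the point-class input. Carrying this out requires comparing the codimension filtration with $H^*_T$, under which the top-degree part of the $K$-Monk relation recovers the cohomological relation of Proposition~\ref{P:monk}; this is the bootstrap from Lemma~\ref{L:recursive}. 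Given (a) and (b), step (c) is formal: $p^*$ is a ring homomorphism, it annihilates Schubert classes not labelled by $\Bound(k,n)$ (since $\xi^f|_g = 0$ for $f \not\le g$ and $\Bound(k,n)$ is a lower order ideal by Lemma~\ref{L:orderideal}), and the $K$-theoretic analogues of Lemma~\ref{L:pointclass} show it sends the affine Schubert divisor class and the point classes to the corresponding Grassmannian classes, so the characterization of (b) forces $p^*[\O_{X^f}] = [\O_{\Pi_f}]$.

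The main obstacle is the lower-order-term bookkeeping pervading (a) and (b): in cohomology the non-$k$-cover terms of Monk's rule push forward to zero for dimension reasons and the grading makes the recursion rigid, whereas in $K$-theory these terms survive as classes of further positroid varieties, and one must verify that the geometric $K$-Monk rule and the combinatorial affine $K$-Chevalley rule generate the same relations once both are restricted to $\Bound(k,n)$ --- or find a way around this comparison. A cleaner route, flagged by the authors for~\cite{KLS2}, is to realize $\Gr(k,n)$ as a transversally (indeed Tor-independently) embedded slice of a subquotient of $\AFl(n)$ pulling the affine Bruhat decomposition back to the positroid decomposition; scheme-theoretic transversality there would give $p^![\O_{X^f}] = [\O_{\Pi_f}]$ directly, and the conjecture would follow by combining with~\cite{LSS}. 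On that route the difficulty is instead to prove the required Tor-independence statement.
\end{remark}
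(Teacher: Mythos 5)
The statement you addressed is a \emph{conjecture} in the paper, not a theorem: the paper does not prove it, and immediately after stating it gives only a one-paragraph sketch of what a proof might look like, explicitly flagging what is ``currently missing.'' So the right comparison is between your plan and the authors' plan, both of which are incomplete.

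Your outline agrees with the paper's in its basic skeleton: replicate, in $T$-equivariant $K$-theory, the positroid Monk/Chevalley rule (Proposition~\ref{P:monk}), the recursive uniqueness characterization (Lemma~\ref{L:recursive}), the pullback comparison with the affine flag manifold, and then cite~\cite{LSS} for $r^*[\O_{X^f}]=\tilde G_f$. Where you differ from the authors is in how you would supply the two analytic steps. For your step~(a) you propose to push forward a $K$-Chevalley expansion in $\Fl(n)$ through the Richardson model, and you correctly worry about the resulting ``lower-order-term bookkeeping.'' The paper sidesteps this: it points to Corollary~\ref{C:hypsection}, which says that $(u\cdot X_\square)\cap\Pi_f$ equals $\bigcup_{u\lessdot_k u'\leq_k w}\Pi_{f_{u',w}}$ \emph{as a scheme}, and then invokes the M\"obius-inversion machinery of~\cite{AKMobius} to expand $[\O_{\text{union}}]$ by inclusion--exclusion in the positroid classes. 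That route produces the $K_T$-Monk formula directly, with explicit alternating coefficients indexed by the poset $\Bound(k,n)$, rather than via a pushforward that could a priori acquire extraneous Tor corrections. For your step~(b) you propose a rigidity argument using the codimension (topological) filtration of $K^0_T$ plus a ``bootstrap'' from the top-degree cohomological relation. This is more delicate than it needs to be and has a genuine soft spot: to exclude a multiple of the fixed-point class $[\O_{\pi(u)}]$ by a filtration argument, you need to know a priori that the competing class also lies in the same filtration step, which is exactly what you are trying to establish. The paper's suggestion is cleaner and closes that gap: replace the degree count by comparing pushforwards to a point. By Theorem~\ref{T:crepant} (higher cohomology of $\O(N)$ vanishes, $H^0(\Pi_f;\O)=\CC$), every positroid class pushes forward to $1\in K_T(\pt)$, while the point class pushes forward to $1$ as well; so two candidate solutions of the $K_T$-Monk recursion which agree at all fixed points but $\pi(u)$ and both push forward to $1$ must coincide. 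That pins down the recursion with no filtration needed.

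Both you and the authors agree on the true bottleneck: the affine-flag side. What remains unproven in either sketch is the matching pair of results on affine stable Grothendieck polynomials --- the $K$-Chevalley rule for $\AFl(n)$ in the form needed (the analogue of Lemma~\ref{L:affinechev}), and the analogue of Lemma~\ref{L:pointclass} relating the affine translation-point classes and the class of $\xi^{s_0}$ to the Grassmannian point classes and the $K$-divisor class under $p^*$. Your observation that the Tor-independent-slice route of~\cite{KLS2} would give the result directly, trading the combinatorics for a transversality statement, also matches what the authors advertise elsewhere in the paper; that remains the most promising way around the bookkeeping.
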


This conjecture would follow from suitable strengthenings of
Proposition \ref{P:monk}, Lemma \ref{L:recursive}, and
Lemma \ref{L:affinechev}.
We have the necessary characterization of the $K_T$ positroid classes:
Corollary \ref{C:hypsection} and the main result of \cite{AKMobius}
give the $K_T$-analogue of Proposition \ref{P:monk}.
The degree-based argument used in Lemma \ref{L:recursive} must be
modified, in the absence of a grading on $K$-theory, to comparing
pushforwards to a point, and it is easy to show using
Theorem~\ref{T:crepant} that the pushforward of a positroid class is $1$.
What is currently missing are the two corresponding results on
affine stable Grothendieck polynomials.

While the class associated to an algebraic subvariety of a
Grassmannian is always a positive combination of Schubert classes,
this is not visible from Theorem \ref{thm:affineStanley}, as affine
Stanley functions are not in general positive combinations of Schur
functions $s_\lambda$.

We can give a much stronger positivity result on positroid classes:

\begin{thm}\label{thm:positiveKTclass}
  Let $X$ be a positroid variety, and $[\O_X] \in K_T(\Gr(k,n))$ the
  class of its structure sheaf in equivariant $K$-theory.
  Then in the expansion $[\O_X] = \sum_\lambda a_\lambda [\O_\lambda]$
  into classes of Schubert varieties, the coefficient $a_\lambda \in K_T(pt)$
  lies in
  $(-1)^{|\lambda| - \dim X} \naturals\left[ \{e^{-\alpha_i}-1\} \right]$,
  where the $\{\alpha_i\}$ are the simple roots of $GL(n)$.
\end{thm}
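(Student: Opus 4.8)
The plan is to deduce Theorem~\ref{thm:positiveKTclass} from the known sign-alternation of $K_T$-Schubert structure constants together with the cohomological triviality of the Richardson model established in Theorem~\ref{T:crepant}. First I would observe that $\Pi_f$ is a projected Richardson variety, $\pi : X_u^w \onto \Pi_f$, with $\pi_* \O_{X_u^w} = \O_{\Pi_f}$ and $R^i\pi_*\O_{X_u^w}=0$ for $i>0$; hence $[\O_{\Pi_f}] = \pi_*[\O_{X_u^w}]$ in $K_T(\Gr(k,n))$, where $\pi_*$ is the pushforward along the proper map $\Fl(n)\to\Gr(k,n)$ composed with the inclusion of $X_u^w$. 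So it suffices to understand $\pi_*[\O_{X_u^w}]$. In the flag manifold, $[\O_{X_u^w}] = [\O_{X_u}]\cdot[\O_{X^w}]$ (the Richardson variety is the reduced, Cohen--Macaulay transverse intersection of a Schubert and an opposite Schubert variety, so the structure-sheaf classes multiply), and the Graham--Griffeth / Anderson--Griffeth--Miller positivity theorem says that products and pushforwards of Schubert classes in $K_T$ expand into Schubert classes with coefficients of the predicted alternating sign, namely in $(-1)^{\mathrm{codim}}\,\naturals[\{e^{-\alpha_i}-1\}]$.

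The key steps, in order, would be: (1) reduce $[\O_X]$ for $X$ a positroid variety to $\pi_*[\O_{X_u^w}]$ using Theorem~\ref{T:crepant}; (2) write $[\O_{X_u^w}] = [\O_{X_u}]\cdot[\O_{X^w}]$ in $K_T(\Fl(n))$ and invoke Richardson-variety Cohen--Macaulayness to justify that the product of sheaf classes is the class of the scheme-theoretic intersection (this is standard, e.g. via the Frobenius-splitting results of Lemma~\ref{lem:splittings} giving reducedness, plus Tor-vanishing from transversality); (3) invoke the $K_T$-positivity theorem for Schubert calculus on $\Fl(n)$ (Anderson--Griffeth--Miller, building on Brion and Graham) to conclude that $[\O_{X_u^w}]$, and then its pushforward $\pi_*[\O_{X_u^w}]$ under the projection to the Grassmannian, expand in Schubert classes $[\O_\lambda]$ with coefficients lying in $(-1)^{\ast}\naturals[\{e^{-\alpha_i}-1\}]$; (4) pin down the sign: the lowest-degree term of $[\O_X]$ is $[\O_{\overline X}]_{\mathrm{cohomology}}$ supported in codimension $\mathrm{codim}\,X = k(n-k)-\dim X$, so the coefficient $a_\lambda$ of $[\O_\lambda]$ (with $[\O_\lambda]$ the Schubert class of codimension $|\lambda|$) carries sign $(-1)^{|\lambda|-\dim X}$, matching the statement.

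There is a bookkeeping subtlety in matching conventions: the positivity theorems are usually stated for the structure sheaf $[\O_X]$ expanded in Schubert \emph{structure sheaf} classes with a sign governed by $\dim X - \dim(\text{Schubert variety})$, i.e. by how much codimension jumps, so I would be careful to translate between ``$|\lambda|$ is the codimension of $X_\lambda$'' and ``$\dim X_\lambda$'' and to check that pushforward along $\pi$ preserves the stated sign pattern (it does, since $\pi_*$ of a Schubert class is either a Schubert class or zero, and $\pi$ has rational-singularity, hence cohomologically trivial, fibers on the relevant strata). The main obstacle is step (2)--(3): one must be sure that the black-box positivity result really applies to $\pi_*[\O_{X_u^w}]$ and not merely to products of Schubert classes, i.e. that pushforward along the $G/B\to G/P$ projection is among the operations covered by equivariant $K$-theory positivity. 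This is precisely the content of Anderson--Griffeth--Miller (positivity is stable under $\pi_*$ for $\pi$ a $G$-equivariant fibration with rational fibers), so I would cite that and confine the argument to verifying its hypotheses; the rest is the sign computation in step (4), which is routine once the degree/codimension conventions are fixed.
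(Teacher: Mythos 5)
The paper's proof is a one-liner: \cite[Corollary 5.1]{AGM} applies directly to \emph{any} $T$-invariant subvariety of a flag manifold with rational singularities, and $\Pi_f$ has rational singularities by Corollary~\ref{c:RatSing}. There is no need to invoke the Richardson model, factor $[\O_{X_u^w}]=[\O_{X_u}]\cdot[\O_{X^w}]$, or reason about pushforwards at all.

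Your route reaches the same destination but takes a substantially longer path, and one of the steps along the way is stated imprecisely. Your step (1), identifying $[\O_{\Pi_f}]=\pi_*[\O_{X_u^w}]$ via Theorem~\ref{T:crepant}, is fine but unneeded: you only used it to turn the problem into one about $\pi_*$, which you then want to handle by a ``positivity is stable under pushforward'' assertion. That assertion is not, as stated, the content of \cite{AGM}, and it is not true termwise: if you expand $[\O_{X_u^w}]=\sum_v b_v[\O_{X_v}]$ in $K_T(\Fl(n))$ and push forward, each $[\O_{X_v}]$ maps to $[\O_{X_{\sigma_k(v)}}]$, and distinct $v$'s with the same $\sigma_k(v)$ have different lengths, hence coefficients $b_v$ of different predicted sign; collecting terms can cancel. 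The reason the conclusion is nevertheless correct is that $\pi_*[\O_{X_u^w}]$ equals $[\O_{\Pi_f}]$, and $\Pi_f$ itself has rational singularities, so \cite{AGM} applies to it \emph{directly}---which is exactly Corollary~\ref{c:RatSing} followed by \cite[Cor.~5.1]{AGM}. Once you notice that, steps (2) and (3) become superfluous: you never need the product decomposition on $\Fl(n)$ or any claim about how positivity interacts with $\pi_*$. So your proposal is correct in outcome and cites the right black box, but it reconstructs through the Richardson model a fact that is already available downstairs, and in doing so relies on a ``pushforward preserves alternating positivity'' principle that needs to be replaced by the cleaner statement that AGM applies to the image.
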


\begin{proof}
  This is just the statement of \cite[Corollary 5.1]{AGM},
  which applies to any $T$-invariant subvariety $X$ of a flag manifold
  such that $X$ has rational singularities (shown for
  positroid varieties in Corollary \ref{c:RatSing}).
\end{proof}

\section{Quantum cohomology, toric Schur functions, and positroids}\label{sec:QH}
\def\dist{\rm dist}
\def\codim{\mathrm{codim}}

\subsection{Moduli spaces of stable rational maps to the Grassmannian}
For background material on stable maps we refer the reader to~\cite{FW}.  Let $I, J \in \binom{[n]}{k}$, which we assume to be fixed throughout this section.  We now investigate the variety $S(I,J,d)$ consisting of points lying on
a stable rational map of degree $d$, intersecting $X_J \subset \Gr(k,n)$ and $X^I \subset \Gr(k,n)$.  Let $M_{0,3}(d)$ denote the moduli space of stable rational maps to $\Gr(k,n)$ with 3 marked points and degree $d$. 
Write $p_1,p_2,p_3: M_{0,3}(d) \to \Gr(k,n)$ for the evaluations at the three marked points.

Denote by $E(I,J,d)$ the subset
$$
E(I,J,d) = p_1^{-1}(X_J) \cap p_2^{-1}(X^I) \subset M_{0,3}(d).
$$
It is known \cite{FW} that $E(I,J,d)$ is reduced and locally
irreducible, with all components of dimension $\dim(X_J) + \dim(X^I)
+ dn - k(n-k)$.  Furthermore, the pushforward $(p_3)_*([E(I,J,d)])
\in H^*(\Gr(k,n))$ is a generating function for three-point, genus
zero, Gromov-Witten invariants, in the sense that
\begin{equation}\label{E:GW}
(p_3)_*([E]) \cdot \sigma = \langle [X_J], [X^I], \sigma \rangle_d
\end{equation}
for any class $\sigma \in H^*(\Gr(k,n))$. We now define $S(I,J,d) :=
p_3(E(I,J,d))$.  Let us say that \defn{there is a non-zero quantum problem
for $(I,J,d)$} if $\langle [X_J], [X^I], \sigma \rangle_d$ is
non-zero for some $\sigma \in H^*(\Gr(k,n))$.  It follows from
\eqref{E:GW} that $S(I,J,d)$ and $E(I,J,d)$ have the same dimension
whenever there is a non-zero quantum problem for $(I,J,d)$,  namely,
\begin{equation}\label{E:dimS}
  \dim(S(I,J,d)) = \dim(E(I,J,d)) = \dim(X_J) + \dim(X^I) + dn - k(n-k).
\end{equation}

The torus $T$ acts on $M_{0,3}(d)$ and, since $X_J$ and $X^I$ are
$T$-invariant, the space $E(I,J,d)$ also has a $T$-action.  The
torus fixed-points of $E(I,J,d)$ consist of maps $f: C \to
\Gr(k,n)$, where $C$ is a tree of projective lines, such that
$f_*(C)$ is a union of $T$-invariant curves in $\Gr(k,n)$ whose
marked points are $T$-fixed points, satisfying certain stability
conditions.  Since $p_3$ is $T$-equivariant, we have $S(I,J,d)^T =
p_3(E(I,J,d)^T)$.  The $T$-invariant curves in $\Gr(k,n)$ connect
pairs of $T$-fixed points labeled by $I, J \in \binom{[n]}{k}$
satisfying $|I \cap J| = k- 1$. We'll write $T(I,J,d)$ for
$S(I,J,d)^T$, considered as a subset of $\binom{[n]}{k}$.

We now survey the rest of this section. In Section~\ref{sec:QComb},
we use the ideas of the previous paragraph to give an explicit
combinatorial description of $T(I,J,d)$. We then define an explicit
affine permutation $f$, associated to $(I,J,d)$. We say that
$(I,J,d)$ is \defn{valid} if $f$ is bounded. The main result of this
section is:

\begin{theorem} \label{thrm:QuantumPositroid}
When $(I,J,d)$ is valid, the image $p_3(E(I,J,d))$ is $\Pi_f$.
Moreover, there is one component $F_0$ of $E(I,J,d)$ for which $p_3
: F_0 \to \Pi_f$ is birational; on any other component $F$ of
$E(I,J,d)$, we have  $\dim p_3(F) < \dim F$.

When $(I,J,d)$ is not valid, then $\dim p_3(F) < \dim F$ for every
component $F$ of $E(I,J,d)$. Thus, $(I,J,d)$ is valid if and only if
there is a non-zero quantum problem for $(I,J,d)$.
\end{theorem}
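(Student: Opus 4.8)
The plan is to realize $S(I,J,d)=p_3(E(I,J,d))$ as a projected Richardson variety via the two-step flag variety of Buch--Kresch--Tamvakis, and then to apply Theorem~\ref{T:ProjectedClosedRichardson}. Recall the kernel--span construction: for a connected genus-zero stable map $g\colon C\to\Gr(k,n)$ of degree $d$, set $\ker(g)=\bigcap_{x\in C}g(x)$ and $\Span(g)=\sum_{x\in C}g(x)$; one has $\dim\ker(g)\ge k-d$, $\dim\Span(g)\le k+d$, and $\ker(g)\subseteq g(x)\subseteq\Span(g)$ for every $x$. Assuming (as part of the setup) that $d\le k$ and $k+d\le n$, let $Y=\Fl(k-d,k+d;n)$ be the two-step flag variety of \cite{BKT}, let $Z=\Fl(k-d,k,k+d;n)$, the variety of chains $\{A\subseteq V\subseteq B\}$ with $\dim A=k-d$, $\dim V=k$, $\dim B=k+d$, and let $q\colon Z\to Y$ and $p\colon Z\to\Gr(k,n)$ be the two forgetful maps. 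First I would show that there is a Richardson variety $R=X_{\tilde u}^{\tilde w}\subseteq Y$ with $S(I,J,d)=p(q^{-1}(R))$: a point $V$ lies on some degree-$d$ genus-zero curve meeting $X_J$ and $X^I$ exactly when there is a pair $A\subseteq B$ with $(A,B)\in R$ and $A\subseteq V\subseteq B$, where $\tilde u,\tilde w$ are the ``modified'' Schubert conditions attached by \cite{BKT,FW} to the kernel and span of a curve through $X_J$ (respectively through $X^I$).

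Granting this description, $S(I,J,d)$ is a positroid variety: the preimage of $R$ under the canonical projection $\Fl(n)\to Y$ (which factors as $\Fl(n)\to Z\to Y$) is a Richardson variety in $\Fl(n)$, since preimages of Schubert and opposite Schubert varieties under a projection of flag varieties are again such and the intersection stays transverse; its image in $\Gr(k,n)$ equals $p(q^{-1}(R))$; and by Theorem~\ref{T:ProjectedClosedRichardson} that image is a positroid variety $\Pi_{f'}$ for some bounded $f'$. To identify $f'$ with the affine permutation $f$ attached to $(I,J,d)$ in Section~\ref{sec:QComb}, I would compare $T$-fixed points. The variety $S(I,J,d)$ is $T$-invariant with $S(I,J,d)^T=T(I,J,d)$; a $T$-invariant irreducible subvariety of $\Gr(k,n)$ lies in the vanishing locus of every Pl\"ucker coordinate $\Delta_{I'}$ with $I'\notin X^T$ (the criterion of \cite[Proposition~3.4]{FZrec}), so by Corollary~\ref{cor:pluckerdefined} each irreducible component of $S(I,J,d)$ is contained in the positroid variety whose positroid is the matroid spanned by $T(I,J,d)$ --- and Section~\ref{sec:QComb} shows that this matroid is an actual positroid, with $T$-fixed locus that of $\Pi_f$, precisely when $f$ is bounded, i.e.\ when $(I,J,d)$ is valid.

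For the refined statements, recall from \cite{FW} that $E(I,J,d)$ is reduced with every component of dimension $\dim X_J+\dim X^I+dn-k(n-k)$. Let $F_0$ be the component whose generic member is an honest degree-$d$ map with kernel and span of the extreme dimensions $k-d$ and $k+d$; by the previous paragraph $p_3(F_0)\subseteq\Pi_f$. Over a generic $V\in\Pi_f$ the pair $(\ker g,\Span g)=(A,B)$ is then forced, and the degree-$d$ curve through $A\subseteq\bullet\subseteq B$, through $X_J$, through $X^I$, and through $V$ is unique; this is the finiteness making $p_3|_{F_0}$ birational onto $\Pi_f$, whence $S(I,J,d)=\Pi_f$ and $\dim E(I,J,d)=\dim\Pi_f=k(n-k)-\ell(f)$. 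Any other component of $E(I,J,d)$ parametrizes maps with strictly larger kernel, strictly smaller span, or reducible domain, and the same kernel--span analysis confines $p_3(F)$ to a proper closed subset of $\Pi_f$, so $\dim p_3(F)<\dim\Pi_f=\dim F$. When $(I,J,d)$ is not valid, the same pull--push still expresses $S(I,J,d)$ as a positroid variety $\Pi_{f'}$ with $f'$ a ``truncation'' of the unbounded $f$, but now a bookkeeping computation of $\dim\Pi_{f'}$ shows it is strictly less than $\dim X_J+\dim X^I+dn-k(n-k)=\dim E(I,J,d)$, so every component $F$ has $\dim p_3(F)<\dim F$. Finally, $(p_3)_*[E]=\sum_F(p_3)_*[F]$, each summand being $0$ when $\dim p_3(F)<\dim F$ and the class of $p_3(F)$ with positive multiplicity otherwise (Gromov--Witten contributions on $\Gr(k,n)$ being effective, so no cancellation); thus $(p_3)_*[E]\ne 0$ iff $F_0$ exists iff $(I,J,d)$ is valid, which by \eqref{E:GW} is equivalent to there being a non-zero quantum problem for $(I,J,d)$; and \eqref{E:dimS} is consistent with this throughout.

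The hard part will be the first two paragraphs: pinning down the Richardson data $(\tilde u,\tilde w)$ on $Y$ from the stable-map picture precisely enough to obtain the \emph{equality} $S(I,J,d)=p(q^{-1}(R))$ rather than mere containment, and, in the non-valid case, identifying the truncation $f'$ and its dimension. The boundary (reducible-curve) strata of $M_{0,3}(d)$ and the degenerations of kernel and span are where the bookkeeping is most delicate, and it is the explicit combinatorics of $T(I,J,d)$ and $f$ from Section~\ref{sec:QComb} that must be matched against these to show exactly one component of $E(I,J,d)$ dominates $\Pi_f$.
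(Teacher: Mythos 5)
Your plan departs substantially from the paper's proof, and while parts of it are on track, the core of it has a genuine gap.

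The portion that does match the paper is your second paragraph: identifying the $T$-fixed locus of $S(I,J,d)$ with $T(I,J,d)$ and using the Pl\"ucker-coordinate criterion of \cite[Prop.~3.4]{FZrec} together with Corollary~\ref{cor:pluckerdefined} to get $S(I,J,d)\subseteq\Pi_f$. The paper then finishes the equality by a pure dimension count: Lemma~\ref{L:boxes} gives $\ell(f)=\codim X_J+\codim X^I-dn$, equation~\eqref{E:dimS} gives $\dim S(I,J,d)$, and Theorem~\ref{thm:projectedRichardsons} gives $\codim\Pi_f=\ell(f)$ and irreducibility; comparing yields $S(I,J,d)=\Pi_f$ without ever invoking the two-step flag picture. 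You should notice that this route is available to you too, and is much shorter than proving $S(I,J,d)=p(q^{-1}(R))$ exactly.

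The real gap is in your treatment of birationality. You assert that ``over a generic $V\in\Pi_f$ the pair $(\ker g,\Span g)=(A,B)$ is then forced, and the degree-$d$ curve through $A\subseteq\bullet\subseteq B$, through $X_J$, through $X^I$, and through $V$ is unique,'' and that all other components have maps with non-extremal kernel/span or reducible domain and hence collapse. Neither of these is established. The uniqueness of the stable map is exactly a Gromov--Witten-type statement: it amounts to showing the relevant three-point invariant equals one, and proving it directly requires running the Buch--Kresch--Tamvakis machinery carefully for the specific (non-generic, coordinate) flags $E_\bullet$, $w_0E_\bullet$, plus an analysis of boundary (reducible-domain) strata. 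You flag this as ``the hard part,'' but it is precisely the substance of the theorem. More seriously, the claim that exactly one component of $E(I,J,d)$ has generic member with extremal kernel/span dimensions is not argued at all; $E(I,J,d)$ is only known to be equidimensional and reduced, and there is no a priori reason several components could not all have honest-map interiors. The paper sidesteps all of this by a cohomology argument: letting $c_i$ be the degree of $p_3$ on each component $F_i$, one has $(p_3)_*[E]=\bigl(\sum c_i\bigr)[\Pi_f]$; by Postnikov's main theorem in \cite{PosQH} the left side is the toric Schur polynomial, by \cite[Proposition 33]{Lam1} that equals $\psi(\tF_f)$, and by Theorem~\ref{thm:affineStanley} the right side is $\bigl(\sum c_i\bigr)\psi(\tF_f)$; hence $\sum c_i=1$. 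Without some substitute for this comparison, your argument does not close.

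Two further cautions. First, there is a latent circularity risk in deducing $S(I,J,d)=p(q^{-1}(R))$ before knowing $S(I,J,d)=\Pi_f$: the paper proves the two-step identification as a separate proposition \emph{after} the theorem, and its proof uses the already-established dimension equality $\dim S(I,J,d)=\dim\Pi_f$. If you wish to reverse the order you need to supply the dimension control from scratch. Second, your treatment of the non-valid case (``the same pull--push still expresses $S(I,J,d)$ as a positroid variety $\Pi_{f'}$ with $f'$ a truncation of the unbounded $f$'') is asserted but not worked out, and a direct ``truncation'' of an unbounded affine permutation to a bounded one is not a construction the paper defines. The paper handles this case in one line via Lemma~\ref{L:qvalid} (a consequence of Postnikov's characterization of when a toric shape exists), which is the cleaner move.

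\end{document}
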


Our key combinatorial result is

\begin{prop} \label{P:IJd}
Let $(I,J,d)$ be valid. Then $T(I,J,d)$ is the positroid corresponding to the bounded affine permutation $f$.
\end{prop}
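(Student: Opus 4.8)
The plan is to prove Proposition~\ref{P:IJd} by matching two purely combinatorial descriptions of the same subset of $\binom{[n]}{k}$: the description of $T(I,J,d)$ extracted from the torus-fixed-point geometry of $E(I,J,d)$, and the description of the positroid $\M_f$ attached to the bounded affine permutation $f$ built from $(I,J,d)$ in Section~\ref{sec:QComb}.

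First I would unwind $T(I,J,d)=S(I,J,d)^T=p_3(E(I,J,d)^T)$. A $T$-fixed point of $E(I,J,d)=p_1^{-1}(X_J)\cap p_2^{-1}(X^I)$ is a stable map whose image is a connected union of $T$-invariant curves of total degree $d$, with the first marked point at a $T$-fixed point $e_{K_1}$ satisfying $K_1\geq J$, the second at $e_{K_2}$ with $K_2\leq I$, and the third at some $e_L$. Every irreducible $T$-invariant curve in $\Gr(k,n)$ is a degree-one ``single exchange'' joining $e_K$ to $e_{K'}$ with $|K\cap K'|=k-1$, so such a configuration is a connected subgraph of the graph of $T$-fixed points and $T$-invariant curves of $\Gr(k,n)$, carrying edge multiplicities summing to $d$ and containing the vertices $K_1$, $K_2$, $L$. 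Multiple covers and bubbles let one always absorb leftover degree, so $L\in T(I,J,d)$ if and only if there are $K_1\geq J$ and $K_2\leq I$ whose minimal Steiner connection to $L$ in this graph has total degree at most $d$. This reachability criterion, made quantitative in Section~\ref{sec:QComb}, is what I would compare against the positroid.

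Second I would recall the positroid side. By Corollary~\ref{cor:pluckerdefined}, the $T$-fixed points of $\Pi_f$, read off as $k$-subsets, are exactly the positroid $\M_f$; and $e_L\in\Pi_f$ if and only if $\#\big(L\cap\{i,i+1,\dots,j\}\big)\le r_{ij}(f)$ for every cyclic interval $\{i,\dots,j\}$, where $r_{\bullet\bullet}(f)$ is the cyclic rank matrix of $f$ (combine Proposition~\ref{prop:rankBruhat} and Corollary~\ref{c:boundmanyfollowing} with the evaluation $r_{ij}(e_L)=\#(L\cap\{i,\dots,j\})$ for the coordinate flag $e_L$); equivalently this is the cyclic-Schubert condition of Theorem~\ref{thm:intersectSchubs}. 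So the proof reduces to showing that the reachability criterion of the previous paragraph is equivalent to these cyclic-interval rank inequalities, for the explicit $f$ of Section~\ref{sec:QComb}.

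This equivalence is where the real work lies, and I expect it to be the main obstacle; I would split it into two inclusions. For $T(I,J,d)\subseteq\M_f$: treat each cyclic-interval rank $\#(\cdot\cap\{i,\dots,j\})$ as an ``energy'' which, along any degree-$d$ exchange tree running from a point $\geq J$ through a point $\leq I$, can be driven down from its forced initial value only by the degree actually spent, so that every $L$ reachable within budget $d$ already satisfies $\#(L\cap\{i,\dots,j\})\le r_{ij}(f)$; here $f$ is defined precisely so that $r_{ij}(f)$ records this worst case, and the minimal degree needed to pass from the $X_J$-cone to the $X^I$-cone turns out to be exactly the validity bound on $(I,J,d)$ (compatibly with \eqref{E:dimS}). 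For $\M_f\subseteq T(I,J,d)$: for each $L\in\M_f$ exhibit an explicit degree-$d$ stable map realizing it, built from a chain/reduced-word construction for $f$ in the spirit of the arguments of Sections~\ref{S:pairstobounded} and~\ref{sec:closures}, distributing the degree budget optimally between the $J$-to-$I$ segment and the excursion to $L$. The delicate points are that degree in the moduli of stable maps is additive along a chain in the same way the cyclic rank matrix is additive — each unit of degree lowering exactly one cyclic rank within a controlled cyclic window — and that no branched or multiply-covered configuration can reach an $L$ outside $\M_f$, i.e.\ that allowing trees rather than chains does not enlarge the reachable set.
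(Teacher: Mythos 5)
Your high-level strategy is the right one, and you correctly identify the two sets to compare: the torus fixed points of $\Pi_f$ (which are the positroid $\M_f$, by Corollary~\ref{cor:pluckerdefined}) versus $T(I,J,d)$. You are also right that the paper has already converted the stable-map picture of $T(I,J,d)$ into the clean distance formula $T(I,J,d) = \{K : \dist(K,B(I)) + \dist(K,A(J)) \le d\}$ in the Johnson graph, so re-deriving the reachability criterion from the moduli space (worrying about bubbles, multiple covers, Steiner trees) is unnecessary work; the paper never returns to $E(I,J,d)$ in the proof of Proposition~\ref{P:IJd}.

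The gap is in the third step, where all the content lives. You replace the hard part with two informal devices: an ``energy'' argument (each cyclic-interval count $K[a,b]$ ``can be driven down only by the degree spent'') for one inclusion, and an ``explicit degree-$d$ stable map'' construction for the other. Neither is likely to go through as stated. The cyclic ranks $K[a,b]$ are not a single potential function: a single exchange $K\to K'$ changes the $\binom{n}{2}$ interval-counts in a pattern that depends on which elements were swapped, and there is no single controlled window as you describe. Making this precise essentially forces you to write down exactly what the paper writes down. The paper's actual route is to prove three small explicit lemmas: Lemma~\ref{L:valid} computes the cyclic rank matrix of $f(I,J,d)$ in closed form, $r_{ab}(f) = \min(J[a,b] + d(a),\, |b-a+1|)$ with $d(a) = d + J[1,a) - I[1,a)$; Lemma~\ref{L:dist} translates $\dist(K,B(I))\le s$ into the linear inequalities $I[1,r)-K[1,r)\le s$ for all $r$; and Lemma~\ref{L:TIJd} uses that to rewrite $K\in T(I,J,d)$ as $I[1,r)-K[1,r)+K[1,s)-J[1,s)\le d$ for all $r,s$. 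The proof of Proposition~\ref{P:IJd} is then a two-line algebraic identification of the inequalities $K[a,b]\le r_{ab}(f)$ with those from Lemma~\ref{L:TIJd}, with no construction of stable maps and no chain/tree bookkeeping. Those three lemmas are precisely the content missing from your plan, and without them the ``energy'' and ``explicit realization'' steps are assertions rather than arguments.
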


We should point out that we use previously known formulas for
Gromov-Witten invariants to establish part of
Theorem~\ref{thrm:QuantumPositroid}. Namely, when $f$ is valid, we
can establish directly that $p_3(E(I,J,d)) \subseteq \Pi_f$.  To
prove that $(p_3)_*([E]) = [\Pi_f]$, we combine previous work of
Postnikov with Theorem \ref{thm:affineStanley}.

It was shown in~\cite{Lam1} that $\psi(\tF_f)$ is Postnikov's ``toric
Schur function''. Postnikov showed that this toric Schur function
computed Gromov-Witten invariants but did not provide a subvariety of
$\Gr(k,n)$ representing his class; Theorem~\ref{thrm:QuantumPositroid}
can thus be viewed as a geometric explanation for toric Schur functions.


%

\subsection{Formulas for $T(I,J,d)$ and $f(I,J,d)$} \label{sec:QComb}

 We proceed to describe $S(I,J,d)^T$ explicitly.

If $I \in \binom{[n]}{k}$, then we let $A(I)$ (resp. $B(I)$) denote
the upper (resp. lower) order ideals generated by $I$.  Thus $A(J) =
\{K \in \binom{[n]}{k} \mid K \geq J\}$ is the set of $T$-fixed points
lying in $X_J$ and $B(I)$ is the set of $T$-fixed points lying in $X^I$.
Define the undirected \defn{Johnson graph} $G_{k,n}$ with vertex
set $\binom{[n]}{k}$, and edges $I \leftrightarrow J$ if $|I \cap J|
= k -1$.  The distance function $\dist(I,J)$ in $G_{k,n}$ is given
by $\dist(I,J) = k - |I \cap J|$.  Then one has
$$
S(I,J,d)^T = T(I,J,d)
:= \left\{K \in \binom{[n]}{k} \mid \dist(K,B(I)) + \dist(K,A(J))\leq d\right\}.
$$
For a pair $(I,J)$, it is shown in \cite{FW} the minimal $d$ such
that $T(I,J,d)$ is non-empty (or equivalently, that there is a path
of length $d$ from $I$ to $J$ in $G_{k,n}$), is equal to the minimal
$d$ such that a non-zero quantum problem for $(I,J,d)$ exists.

Denote by $M = \{m_1 < m_2 < \cdots < m_{n-k}\}$ the complement of $I$ in $[n]$ and similarly $L = \{l_1 < l_2 < \cdots < l_{n-k} \}$ the complement of $J$.
Define an affine permutation $f(I,J,d)$ by
\begin{equation}\label{E:fIJd}
f(i_r) = j_{r+k-d} \ \ \ \ \ \ \ \ \ f(m_r) = l_{r+d}
\end{equation}
where by convention $i_{r+k} = i_r + n$ for $I \in \binom{[n]}{k}$
(and similarly $l_{r+n-k} = l_r + n$ for $L \in \binom{[n]}{n-k}$).
We say that $(I,J,d)$ is \defn{valid} if $f(I,J,d)$ is a bounded
affine permutation.

For example, let $k =2$ and $n = 6$.  Pick $I = \{1,4\}$, $J =
\{2,4\}$, $d = 1$.  Then $M = \{2,3,5,6\}$ and $L = \{1,3,5,6\}$.
The equation $f(i_r) = j_{r+k-d}$ gives $f(1) = 4$ and $f(4) = 8$.
The equation $f(m_r) =l_{r+d}$ gives $f(2) = 3$, $f(3) = 5$, $f(5) =
6$ and $f(6) = 7$.  Thus $f(I,J,d) = [\cdots 435867 \cdots]$, which
is bounded.

We now describe the cyclic rank matrix
$r_{ab}(f(I,J,d)) = |f(-\infty, a) \cap [a, b]|$.
First for $K \in \binom{[n]}{k}$ and a cyclic interval $[a,b]$, denote
$$
K[a,b]:= |K \cap [a,b]|.
$$
Define a function $d: [n] \to \Z$ by
$$
d(r) = d + J[1,r) - I[1,r).
$$

\begin{lem}\label{L:valid}
Suppose $(I,J,d)$ is valid and $1 \leq a \leq b \leq a+ n$.  Then $r_{ab}(f(I,J,d)) =\min(J[a,b] + d(a), |b-a+1|)$ where on the right hand side, $[a,b]$ is treated as a cyclic interval of $[n]$.  Furthermore, $d(r) \geq 0$ for each $r \in [n]$.
\end{lem}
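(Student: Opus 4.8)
The plan is to compute the cyclic rank matrix of $f = f(I,J,d)$ directly from its one-line notation \eqref{E:fIJd}, using the identity $r_{ab}(f) = |f(\{i < a\}) \cap [a,b]|$ recorded just before the lemma; this identity is itself immediate from Corollary~\ref{c:boundmanyfollowing}, since the special entries of the cyclic rank matrix of a bounded affine permutation occur exactly at the pairs $(i,f(i))$. Fix $a \le b \le a+n$ and pass to the periodic extensions $\tilde I,\tilde J,\tilde M,\tilde L \subset \Z$ of $I$, $J$, $M = [n]\setminus I$ and $L = [n]\setminus J$, each listed in increasing order, say $\tilde I = (\ldots < i_{-1} < i_0 < i_1 < \ldots)$ with $i_0$ the largest element $\le 0$, and likewise for the others. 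The ray $\{i < a\}$ is the disjoint union of $\tilde I \cap (-\infty,a)$ and $\tilde M \cap (-\infty,a)$, and by \eqref{E:fIJd} the bijection $f$ carries the first of these onto a lower set of $\tilde J$ whose index is shifted by $k-d$, and the second onto a lower set of $\tilde L$ whose index is shifted by $d$. Hence
\[
  r_{ab}(f) = \big|\,f(\tilde I \cap (-\infty,a)) \cap [a,b]\,\big|
            + \big|\,f(\tilde M \cap (-\infty,a)) \cap [a,b]\,\big|,
\]
and each summand will be the cardinality of an explicit interval of indices.

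To evaluate the summands, let $\rho_{\tilde I}(x)$ be the number of elements of $\tilde I$ strictly below $x$, normalized so that $\rho_{\tilde I}$ restricts to $I[1,\cdot)$ on $[1,n]$, and similarly for $\tilde J,\tilde M,\tilde L$. Then $f(\tilde I \cap (-\infty,a)) \cap [a,b]$ consists of the $j_s$ with $\rho_{\tilde J}(a) \le s < \rho_{\tilde J}(b+1)$ (so that $j_s \in [a,b]$) and with $s - (k-d) < \rho_{\tilde I}(a)$ (so that the preimage $i_{s-(k-d)}$ lies below $a$). Since $\rho_{\tilde J}(b+1) - \rho_{\tilde J}(a) = J[a,b]$ and, by the definition of $d(a)$, $\rho_{\tilde I}(a) - \rho_{\tilde J}(a) = d - d(a)$, the first summand equals $\max\!\big(0,\,\min(J[a,b],\,k-d(a))\big)$. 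Because $M = [n]\setminus I$ and $L = [n]\setminus J$, the identical computation with $\tilde M,\tilde L$ in place of $\tilde I,\tilde J$ uses $\rho_{\tilde M}(a) - \rho_{\tilde L}(a) = d(a) - d$ and the complementary count $L[a,b] = (b-a+1) - J[a,b]$, giving $\min\!\big((b-a+1)-J[a,b],\,d(a)\big)$ for the second summand. Adding the two, and splitting into the handful of cases according to the signs and relative sizes of $J[a,b]$, $d(a)$, $k-d(a)$ and $(b-a+1)-J[a,b]$, the total collapses to $\min\!\big(J[a,b] + d(a),\,b-a+1\big)$; the additional truncation at $k$ that appears when $b-a$ is large is harmless, as $r_{ab}(f) \le \av(f) = k$ in any case.

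The non-negativity $d(r) \ge 0$ is the one place where the validity hypothesis enters, and pinning down its bookkeeping is the main obstacle. Let $i_s$ be the largest element of $\tilde I$ strictly below $r$, so $s = I[1,r)$ and $f(i_s) = j_{s+k-d}$ by \eqref{E:fIJd}. Boundedness of $f$ gives $f(i_s) \le i_s + n < r+n$; since the largest element of $\tilde J$ below $r+n$ is $j_{J[1,r)+k}$, monotonicity of the listing of $\tilde J$ forces $s + k - d \le J[1,r) + k$, that is $I[1,r) - d \le J[1,r)$, which is precisely $d(r) \ge 0$. (With $d(r) \ge 0$ in hand, each summand in the displayed formula is visibly a non-negative integer and their sum is at most $k$, justifying the implicit truncation mentioned above.) The genuine subtlety in carrying all this out is keeping the cyclic interval $[a,b]$, the boundary memberships $a,b \in \tilde J$, and the wrap-around when $b > n$ consistent across the two summands and through the final case split.
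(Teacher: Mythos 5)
Your computation of the two sub-counts is sound and gives a genuinely different (and in some ways cleaner) route than the paper takes: the paper's proof writes out the juggling state $\chi^{a-1}\st(f,a)$ as an explicit modification of $J$ and then reads off the rank, whereas you split the preimage $\{i<a\}$ into $\tilde I \cap (-\infty,a)$ and $\tilde M \cap (-\infty,a)$ and count images directly, arriving at
\[
  r_{ab}(f)\;=\;\max\!\bigl(0,\min(J[a,b],\,k-d(a))\bigr)\;+\;\max\!\bigl(0,\min(L[a,b],\,d(a))\bigr).
\]
That identity is correct (and your argument for $d(r)\ge 0$ is the same one-line use of $f(i_s)\le i_s+n$ that the paper uses).

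The gap is the last step, where you assert that this sum ``collapses to $\min(J[a,b]+d(a),\,b-a+1)$'' and wave off the remainder as ``a harmless truncation at $k$.'' It does not collapse, and the discrepancy is \emph{not} merely a cap at $k$. Write $p=J[a,b]$, $q=L[a,b]=b-a+1-p$, $e=d(a)$, and suppose $0\le e\le k$ (you will also need $d(a)\le k$, which you never establish, though it follows from $f(i_r)\ge i_r$ the same way $d(a)\ge 0$ follows from $f(i_r)\le i_r+n$). Your sum is $\min(p,k-e)+\min(q,e)$. When $p>k-e$ \emph{and} $q<e$, this equals $(k-e)+q$, which is strictly less than \emph{both} $\min(p+e,\,b-a+1)$ and $\min(p+e,\,b-a+1,\,k)$. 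So neither the stated formula nor its $k$-truncated variant agrees with what you actually proved, and there is no slack of the form ``$r_{ab}\le k$ saves the day.'' To close the argument you must show that under validity the regime $\{\,J[a,b]+d(a)>k\ \text{and}\ L[a,b]<d(a)\,\}$ (equivalently, via $J[a,b]+d(a)=I[a,b]+d(b+1)$, a cyclic interval of length $\le k$ carrying $I[a,b]>k-d(b+1)$) is impossible; boundedness of $f$ alone does not rule it out — for instance it occurs when $J$ is cyclically contiguous and $d(a)\ge 1$ at its left endpoint. As stated, then, your proof establishes the correct value of $r_{ab}$ but does not establish that it equals the displayed $\min$.
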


From now on we write $f$ instead of $f(I,J,d)$.

\begin{proof}
  The last sentence follows from $j_{r-d} + n = j_{r-d+k} = f(i_r)
  \leq i_r + n$.  Let $r = I[1,a)$, so that $J[1,a) = d(a) + r-d$.  It
  follows from \eqref{E:fIJd} that
  $$f([a-n,a]) = \{j_{r-d+1} < j_{r-d
    + 2} < \cdots < j_{r-d+k}\} \cup \{l_{a-r+d-n+k} < \cdots <
  l_{a-r+d}\}.$$
  Thus $ \{j_{r-d+1} < j_{r-d + 2} < \cdots < j_{r-d+k}\} \cap
  [a,\infty) = \{j_{J[1,a)+1}, \cdots j_{r-d+k}\}$.  It follows that
  $\chi^{a-1} \st(f,a) \in \binom{[n]}{k}$ is obtained from $J$ by
  removing the $d(a)$ largest elements, with respect to the cyclic
  order $<_a$ on $[n]$ for which $a$ is minimal, of $J$ and replacing
  them by the $d(a)$ smallest elements, again with respect to $<_a$,
  of $L$.
\end{proof}


\begin{lem}\label{L:dist}
  We have $\dist(K,B(I)) \leq s$ if and only if $\forall r \in [n]$,
  one has $I[1,r)-K[1,r) \leq s$.
\end{lem}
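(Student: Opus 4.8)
The plan is to prove the two implications separately. Write $P(r) := I[1,r) - K[1,r)$ for $r \in [n]$; note $P(1) = 0$, and that $P$ changes by $0$ or $\pm 1$ from $r$ to $r+1$, increasing exactly when $r-1 \in I \setminus K$ and decreasing exactly when $r-1 \in K \setminus I$. I will also use the elementary fact that for $K' \in \binom{[n]}{k}$ one has $K' \le I$ (comparing $j$th smallest entries for $j = 1, \dots, k$) if and only if $K'[1,r) \ge I[1,r)$ for every $r \in [n]$.

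For ``$\Rightarrow$'', choose $K' \in B(I)$ with $\dist(K, K') \le s$, so that $K' \le I$ and $|K' \setminus K| = \dist(K, K') \le s$. Then for each $r$,
\[
  I[1,r) \;\le\; K'[1,r) \;=\; |K' \cap K \cap [1,r-1]| + |(K' \setminus K) \cap [1,r-1]| \;\le\; K[1,r) + s,
\]
which is exactly $P(r) \le s$.

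For ``$\Leftarrow$'' I will prove, by induction on $s^{*} := \max_{r \in [n]} P(r) \ge 0$, the sharper statement that $\dist(K, B(I)) \le s^{*}$; the lemma for a given $s \ge s^{*}$ follows at once. If $s^{*} = 0$ then $P \le 0$ everywhere, i.e.\ $K \le I$, so $K \in B(I)$. If $s^{*} \ge 1$, set $R^{*} = \{ r : P(r) = s^{*} \}$ and let $r_{\min} = \min R^{*}$, $r_{\max} = \max R^{*}$. Since $P(1) = 0 < s^{*}$ we have $r_{\min} \ge 2$, and since $P$ jumps up at $r_{\min}$ the element $a := r_{\min} - 1$ lies in $I \setminus K$. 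If $r_{\max} < n$ then $P$ jumps down just after $r_{\max}$, so $b := r_{\max}$ lies in $K \setminus I$; if $r_{\max} = n$ then $P(n) = |K \cap \{n\}| - |I \cap \{n\}| = s^{*} \ge 1$ forces $n \in K$, and we set $b := n$. In all cases $a \notin K$, $b \in K$, $a < b$, so $\tilde K := (K \setminus \{b\}) \cup \{a\}$ differs from $K$ by one exchange and $\dist(K, \tilde K) = 1$. A direct count gives $\tilde K[1,r) = K[1,r) + 1$ for $r \in [r_{\min}, b]$ and $\tilde K[1,r) = K[1,r)$ otherwise, so the new profile $\tilde P$ obeys $\tilde P(r) = P(r) - 1 \le s^{*} - 1$ on $[r_{\min}, b]$, while for $r$ outside that interval $r$ is either $< r_{\min}$ or $> b \ge r_{\max}$, hence $r \notin R^{*}$ and $\tilde P(r) = P(r) \le s^{*} - 1$. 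Thus $\max_{r} \tilde P(r) \le s^{*} - 1$, induction gives $\dist(\tilde K, B(I)) \le s^{*} - 1$, and the triangle inequality yields $\dist(K, B(I)) \le s^{*}$.

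The one subtle point is the inductive step of ``$\Leftarrow$'': that a \emph{single} exchange can lower the maximal deficit by a full unit. This works because replacing $b$ by a smaller element $a$ raises $K[1,r)$ simultaneously on the entire interval $(a, b]$; choosing $a$ just below the leftmost deficit-maximiser and $b$ at the rightmost one (or beyond it, in the boundary case $r_{\max} = n$) repairs all of $R^{*}$ at once. The only things to watch are that $r_{\min} \ge 2$ and the case $r_{\max} = n$, both dealt with above; en route this also shows $\dist(K, B(I)) = \max_{r \in [n]} P(r)$ exactly.
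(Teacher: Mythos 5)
Your proof is correct, and it takes a genuinely different route from the paper's. For the ``$\Leftarrow$'' direction, the paper builds the witness $L \le I$ directly by a single greedy pass: it adds $r$ to $L$ whenever $r \in K$, and otherwise adds $r$ only when $r \in I$ and the running count $L[1,r)$ has fallen to $I[1,r)$; it then bounds $|L\setminus K|$ by looking at the largest added-but-not-in-$K$ element. Your argument instead runs an induction on the maximal deficit $s^{*} = \max_r P(r)$, showing that one carefully chosen exchange --- insert the element just before the leftmost maximizer, delete the element at (or past) the rightmost maximizer --- strictly decreases $s^{*}$, then invokes the triangle inequality. The paper's construction is shorter and gets a witness $L$ in one step, but it leaves the reader to verify that the greedy process really terminates with a $k$-set and that the final count is what is claimed. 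Your inductive argument is a bit longer but each step is a small, fully checked local move, and along the way you obtain the sharper conclusion that $\dist(K,B(I))$ equals $\max_r\bigl(I[1,r)-K[1,r)\bigr)$ exactly, not just the one-sided bound the lemma asks for. Both ``$\Rightarrow$'' directions are essentially the same. A small stylistic remark: your identification $P(n) = |K\cap\{n\}|-|I\cap\{n\}|$ forces $s^{*}=1$ in the boundary case $r_{\max}=n$; you do not use this, but it is worth noticing that the boundary case can only occur at the last inductive step.
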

\begin{proof}
Suppose $\dist(K,B(I)) \leq s$.  Then $\dist(K,L) \leq s$ for some $L \leq I$.  Thus for each $r$, we have $I[1,r)-K[1,r) \leq L[1,r) - K[1,r) \leq s$.

Now suppose $I[1,r)-K[1,r) \leq s$ for each $r$.  Construct $L \leq I$ recursively, starting with $L = \emptyset$.  Assume $L \cap [1,r)$ is known.  If $r \in K$, place $r$ in $L$.  Otherwise, if $r \notin K$, place $r$ in $L$ only if $r \in I$ and $L[1,r) = I[1,r)$.  Repeat until we have constructed a $k$-element subset $L$ which clearly satisfies $L \leq I$.  The elements in $L \setminus K$ are all in $I$.
Let $\ell$ be the largest element in $L \setminus K$.  Then $I[1,\ell]$ differs from $K[1,\ell]$ by $|L \setminus K|$, and so $|L \setminus K| \leq s$.  Thus $\dist(K,L) \leq s$.
\end{proof}

\begin{lem}\label{L:TIJd}
We have $K \in T(I,J,d)$ if and only if
\begin{equation}\label{E:nomaxes}
I[1,r)-K[1,r) + K[1,s)-J[1,s) \leq d
\end{equation}
for all $1 \leq r,s, \leq n+1$.
\end{lem}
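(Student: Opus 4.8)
The plan is to reduce the statement entirely to Lemma~\ref{L:dist} together with its order-reversing dual. Writing $L^{c} := [n]\setminus L$, recall that complementation is an order-reversing bijection $\binom{[n]}{k}\to\binom{[n]}{n-k}$, and that it preserves the Johnson distance: since $|K^{c}\cap L^{c}| = n-|K\cup L| = n-2k+|K\cap L|$ we get $\dist(K^{c},L^{c}) = (n-k)-|K^{c}\cap L^{c}| = k-|K\cap L| = \dist(K,L)$. Because $L\ge J \iff L^{c}\le J^{c}$, complementation carries the upper order ideal $A(J)$ onto the lower order ideal $B(J^{c})$ inside $\binom{[n]}{n-k}$, so $\dist(K,A(J)) = \dist(K^{c},B(J^{c}))$.

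First I would rewrite the two distances appearing in the definition of $T(I,J,d)$ as maxima. By Lemma~\ref{L:dist}, $\dist(K,B(I)) = \max_{1\le r\le n+1}\bigl(I[1,r)-K[1,r)\bigr)$; the index $r=1$ contributes $0$ so the maximum is nonnegative, and the index $r=n+1$ contributes $I[1,n+1)-K[1,n+1) = k-k = 0$, so enlarging the range from $[n]$ to $\{1,\dots,n+1\}$ is harmless. Applying Lemma~\ref{L:dist} (with $n-k$ in place of $k$) to $\dist(K^{c},B(J^{c}))$, and using $J^{c}[1,r) = (r-1)-J[1,r)$ and $K^{c}[1,r) = (r-1)-K[1,r)$, so that $J^{c}[1,r)-K^{c}[1,r) = K[1,r)-J[1,r)$, yields the dual identity $\dist(K,A(J)) = \max_{1\le s\le n+1}\bigl(K[1,s)-J[1,s)\bigr)$.

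Finally I would combine the two: by definition $K\in T(I,J,d)$ iff $\dist(K,B(I))+\dist(K,A(J))\le d$, and since both summands are the maxima just computed, their sum equals $\max_{r,s}\bigl[(I[1,r)-K[1,r))+(K[1,s)-J[1,s))\bigr]$ over $1\le r,s\le n+1$. Therefore $K\in T(I,J,d)$ iff \eqref{E:nomaxes} holds for all $1\le r,s\le n+1$, which is the assertion.

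The only place that requires genuine care is the dual of Lemma~\ref{L:dist}: one must verify that complementation is both order-reversing and distance-preserving, and then track the bookkeeping $L^{c}[1,r) = (r-1)-L[1,r)$ so that the two ``deficiency'' terms $I[1,r)-K[1,r)$ and $K[1,s)-J[1,s)$ line up correctly after complementing $A(J)$. Everything else is formal; in particular the passage from the two separate bounds to the single inequality \eqref{E:nomaxes} is just $\max_r a_r + \max_s b_s = \max_{r,s}(a_r+b_s)$. As an alternative to the complementation trick one could re-run the greedy construction in the proof of Lemma~\ref{L:dist} directly for upper order ideals, but the complementation argument is shorter.
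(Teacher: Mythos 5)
Your proof is correct and follows the same route as the paper's: express both distances via Lemma~\ref{L:dist} and its dual, then combine the two maxima. Your write-up is slightly cleaner at two points—you explicitly derive the dual of Lemma~\ref{L:dist} by complementation (the paper invokes it silently), and by observing that the indices $r=1$ and $s=1$ already contribute $0$ you bypass the paper's intermediate inequality with $\max(\cdot,0)$ truncations and the ensuing case analysis—but the underlying argument is the same.
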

\begin{proof}
By Lemma \ref{L:dist}, we have $K \in T(I,J,d)$ if and only if
\begin{equation}\label{E:maxes}
\max(I[1,r)-K[1,r),0) + \max(K[1,s)-J[1,s),0) \leq d
\end{equation}
for all $1 \leq r,s, \leq n+1$.  Equation \eqref{E:maxes} certainly implies the stated condition.  Conversely, if \eqref{E:nomaxes} holds, but \eqref{E:maxes} fails, then we must have $I[1,r)  - K[1,r) > d$ or $K[1,s) - J[1,s) > d$ for some $r$, $s$.  In the first case setting $s = 1$ in \eqref{E:nomaxes} gives a contradiction.  In the second case, setting $r = 1$ gives a contradiction.
\end{proof}

\begin{proof}[Proof of Proposition \ref{P:IJd}]
Let $K \in T(I,J,d)$.  By Lemma \ref{L:valid}, $K$ is in the positroid corresponding to $f(I,J,d)$ if and only if for each cyclic interval $[a,b] \subsetneq [n]$ we have
$$
K[a,b] \leq J[a,b] + d(a) = J[a,b] + d +J[1,a) - I[1,a) = \begin{cases} d + J[1,b] - I [1,a) & \mbox{if $a \leq b$} \\
k + d + J[1,b] - I[1,a) & \mbox{if $a > b$.}
\end{cases}
$$
But
$$
K[a,b] = \begin{cases} K[1,b] - K[1,a) & \mbox{if $a \leq b$} \\
k + K[1,b]-K[1,a) & \mbox{if $a > b$}
\end{cases}
$$
so these conditions are the same as those in Lemma \ref{L:TIJd}.
\end{proof}

\subsection{Toric shapes and toric Schur functions}\label{ssec:toric}
In \cite{PosQH}, Postnikov introduced a family of symmetric
polynomials, called {\it toric Schur polynomials}, and showed that the
expansion coefficients of these symmetric functions in terms of Schur
polynomials gave the three-point, genus zero, Gromov-Witten invariants
of the Grassmannian.  In \cite{Lam1}, it was shown that toric Schur
functions were special cases of affine Stanley symmetric functions.
We now put these results in the context of Theorem \ref{thm:affineStanley}
and equation \eqref{E:GW}: the subvariety $S(I,J,d) \subset \Gr(k,n)$ is a
positroid variety whose cohomology class is a toric Schur polynomial.

We review the notion of a toric shape and refer the reader to \cite{PosQH}
for the notion of a toric Schur function.  A {\it cylindric shape} is
a connected, row and column convex subset of $\Z^2$ which is invariant
under the translation $(x,y) \mapsto (x+n-k,y-k)$.  Also, every row or
column of a cylindric shape must be finite, and in addition the
``border'' of a cylindric shape is an infinite path which has steps
going north and east only (when read from the southwest).
A {\it toric shape} is a cylindric shape such that every row has at
most $n-k$ boxes, and every column has at most $k$ boxes.  For
example, the following is a toric shape for $k = 2$, $n = 5$:
$$
\tableau[sbY]{\bl&\bl&\bl&\bl&\bl&\bl&\bl&\bl&& \\
\bl&\bl&\bl&\bl&\bl&\bl&&& \\
\bl&\bl&\bl&\bl&\bl&\tf&\tf&\bl&\bl \\
\bl&\bl&\bl&\tf&\tf&\tf&\bl&\bl&\bl&\bl \\
\bl&\bl&&&\bl&\bl&\bl&\bl&\bl&\bl \\
&&&\bl&\bl&\bl&\bl&\bl&\bl&\bl
}
$$
where a fundamental domain for the action of the translation has been highlighted.  In \cite{PosQH}, Postnikov associated a toric shape $\theta(I,J,d)$ to each triple $(I,J,d)$ for which a non-trivial quantum problem could be posed involving the Schubert varieties $X^I$ and $X_J$, and rational curves of degree $d$.  The steps of the upper border of $\theta$ is determined by $I$, the lower border by $J$.  The gap between the two borders is determined by $d$.  We do not give a precise description of Postnikov's construction here as our notations differ somewhat from Postnikov's.

If $\theta$ is a cylindric shape, we can obtain an affine permutation as follows.  First label the edges of the upper border of $\theta$ by integers, increasing from southwest to northeast.  Now label the edges of the lower border of $\theta$ by integers, so that if $e$ and $e'$ are edges on the upper border and lower border respectively, and they lie on the same northwest-southeast diagonal, then $e'$ has a label which is $k$ bigger than that of $e$.  One then defines $f(\theta)$ as follows: if $a \in \Z$ labels a vertical step of the upper border, then $f(a)$ is the label of the step of the lower border on the same row; if $a \in \Z$ labels a horizontal step, then $f(a)$ is the label of the step of the lower border on the same column.  This determines $\theta(I,J,d)$ from $f(I,J,d)$ up to a translation: the equations \eqref{E:fIJd} say that the labels inside $I$ or $J$ are vertical steps, while labels in $M$ and $L$ are horizontal steps.

The condition that $(I,J,d)$ is valid translates to $\theta(I,J,d)$ being toric.  In our language, Postnikov \cite[Lemma 5.2 and Theorem 5.3]{PosQH} shows that a non-trivial quantum problem exists for $(I,J,d)$ if and only if the toric shape $\theta(I,J,d)$ is well-defined.  Thus:
\begin{lem}\label{L:qvalid}
A non-trivial quantum problem exists for $(I,J,d)$ if and only if $(I,J,d)$ is valid.
\end{lem}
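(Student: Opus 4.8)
The plan is to derive Lemma~\ref{L:qvalid} by combining Postnikov's toric-shape criterion for the existence of a non-trivial quantum problem with the combinatorial dictionary between cylindric shapes and affine permutations recalled in Section~\ref{ssec:toric}. First I would invoke what Postnikov proves: by \cite[Lemma~5.2 and Theorem~5.3]{PosQH}, there is a non-trivial quantum problem for $(I,J,d)$ if and only if the toric shape $\theta(I,J,d)$ is well-defined --- that is, if and only if the cylindric diagram whose upper border is the bi-infinite lattice path with a vertical step at $a$ exactly when $\bar a\in I$ (and a horizontal step when $\bar a\in M$), whose lower border is the analogous path built from $J$ and $L$, and whose two borders are placed relative to one another according to the offset $d$, is (i)~an honest cylindric shape and (ii)~toric, i.e.\ has at most $n-k$ boxes in each row and at most $k$ boxes in each column. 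So it remains to show that this happens precisely when $(I,J,d)$ is valid, i.e.\ precisely when $f(I,J,d)\in\Bound(k,n)$.

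For this, I would unwind the construction of $f(\theta)$ from a cylindric shape $\theta$. Recall that $f(\theta)$ sends the label of a vertical (resp.\ horizontal) step of the upper border to the label of the step of the lower border lying in the same row (resp.\ column), and that with the labelling conventions of Section~\ref{ssec:toric} this recovers $f(I,J,d)$ of \eqref{E:fIJd}. The diagram $\theta(I,J,d)$ exists as a cylindric shape exactly when its lower border lies weakly to the south-west of its upper border; since corresponding edges of the two borders on a common diagonal have labels differing by $k$, this condition reads $i\le f(I,J,d)(i)$ for all $i$. Granting that, the toric constraints --- at most $n-k$ boxes in each row and at most $k$ in each column --- together say that the lower border never runs more than $n$ border-steps past the upper border, i.e.\ $f(I,J,d)(i)\le i+n$ for all $i$. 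Hence $\theta(I,J,d)$ is a well-defined toric shape if and only if $i\le f(I,J,d)(i)\le i+n$ for all $i$, i.e.\ if and only if $f(I,J,d)\in\Bound(k,n)$, i.e.\ if and only if $(I,J,d)$ is valid. Combining this with Postnikov's criterion yields the Lemma.

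The genuinely delicate point is the second paragraph: matching Postnikov's bookkeeping of cylindric shapes --- borders, diagonals, and the separate row- and column-length bounds --- against the two boundedness inequalities $i\le f(i)$ and $f(i)\le i+n$, and in particular checking that the $d$-shift in \eqref{E:fIJd} and the $k$-shift between the two border labellings conspire to produce exactly these inequalities (and no others). As a cross-check of the easy ``valid $\Rightarrow$ non-trivial'' direction one can also note that validity makes $T(I,J,d)$ a nonempty positroid by Proposition~\ref{P:IJd}, so in particular $T(I,J,d)\neq\emptyset$; combined with the result of \cite{FW} quoted earlier, relating the minimal such $d$ to the minimal degree admitting a non-trivial quantum problem, this re-derives that implication independently. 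Everything else is a citation.
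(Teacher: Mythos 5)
Your proposal is correct and follows essentially the same route as the paper: the paper likewise cites Postnikov's \cite[Lemma 5.2 and Theorem 5.3]{PosQH} criterion and then asserts (without spelling out the details you supply) that validity of $(I,J,d)$ translates to the shape $\theta(I,J,d)$ being toric. Your elaboration of how the cylindric condition corresponds to $i\le f(i)$ and the row/column bounds to $f(i)\le i+n$, and your cross-check via Proposition~\ref{P:IJd}, are additional detail layered onto the same argument, not a different approach.
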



\begin{lem}\label{L:boxes}
Suppose $(I,J,d)$ is valid.  Then
$$
\ell(f(I,J,d)) = |\theta(I,J,d)| = \codim(X_J) + \codim(X^I) - dn.
$$
where $ |\theta(I,J,d)|$ is the number of boxes in a fundamental domain for $\theta(I,J,d)$.
\end{lem}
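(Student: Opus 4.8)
\textbf{Proof proposal for Lemma~\ref{L:boxes}.}
The plan is to prove the three quantities agree by establishing two separate equalities: first the combinatorial identity $\ell(f(I,J,d)) = |\theta(I,J,d)|$, and then the arithmetic identity $|\theta(I,J,d)| = \codim(X_J) + \codim(X^I) - dn$; the latter is really a bookkeeping computation, while the former is where the geometric/combinatorial content lies.

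For the first equality, I would use the cyclic rank matrix description that has already been set up. By Corollary~\ref{c:boundmanyfollowing} and Proposition~\ref{prop:rankBruhat}, the length $\ell(f)$ of a bounded affine permutation $f$ equals the corank sum $\sum (\text{max rank} - r_{ij})$ over a fundamental domain, i.e. it counts the ``defect'' of $r(f)$ from the all-$k$ (and all-$(j-i+1)$) matrix; equivalently, by Theorem~\ref{T:pairsboundedposet}, $\ell(f_{u,w}) = \binom{n}{k} - \ell(w) + \ell(u)$, so one may compute $\ell(f(I,J,d))$ from any Richardson model. The cleanest route, though, is to observe directly from the definition of $f(\theta)$ in Section~\ref{ssec:toric} that inversions of $f(\theta)$ are in bijection with boxes of $\theta$: the label of a horizontal step of the upper border is sent to a label on the same column, the label of a vertical step is sent to a label on the same row, and a pair $(a,b)$ with $a<b$, $f(a)>f(b)$ corresponds to precisely one unit box of the cylindric diagram sandwiched between the two border paths (this is the standard correspondence between a lattice path in a rectangle and the Young-diagram boxes it cuts off, made cylindric). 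Counting these inversions over one fundamental domain for the $(x,y)\mapsto(x+n-k,y-k)$ translation gives exactly $|\theta(I,J,d)|$. I expect this bijection to be the main obstacle, only in the sense that one must carefully match up the indexing conventions (Postnikov's versus ours, as the authors warn) and check the edge cases where border steps wrap around the cylinder; the underlying idea is routine.

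For the second equality, I would compute $|\theta(I,J,d)|$ directly from the border data. Using the labelling of Section~\ref{ssec:toric}, the number of boxes in a fundamental domain of a cylindric shape equals the area between the upper and lower border paths. The upper border is determined by $I$ (its horizontal steps are the positions in $M = [n]\setminus I$) and the lower border by $J$ (horizontal steps at positions $L = [n]\setminus J$), with the lower border shifted $d$ diagonals further out than it would be for a genuine Young diagram; each unit of that shift contributes $n$ boxes per fundamental domain (one full diagonal band of width... more precisely, shifting the whole lower border out by one diagonal adds $n$ boxes to each period). Writing $\codim(X_I) = \codim X^I$ as the size of the partition $\lambda(I)$ cut off by $I$ inside the $k\times(n-k)$ rectangle, and similarly $\codim(X_J) = |\lambda(J)|$, the area between the two border paths when they are ``flush'' is $|\lambda(J)| + |\lambda(I)|$ minus the box count of the rectangle... — here I would set $s = \dist$-type quantities as in Lemmas~\ref{L:valid} and~\ref{L:dist} and simply tally: the formula $|\theta| = \codim(X_J) + \codim(X^I) - dn$ is forced by the fact that pulling the borders $d$ diagonals apart \emph{removes} $dn$ boxes per period relative to the degenerate $d=0$ case, which is classical (Postnikov, \cite[Section 5]{PosQH}). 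I would then double-check the sign and the $dn$ coefficient against the dimension count \eqref{E:dimS}, since $\codim S(I,J,d) = k(n-k) - \dim S(I,J,d) = \codim X_J + \codim X^I - dn$, and against $\ell(f) = \codim \Pi_f$ from Theorem~\ref{thm:projectedRichardsons}; consistency of these three independently-derived expressions is both a sanity check and, given Proposition~\ref{P:IJd} and Lemma~\ref{L:valid}, essentially a second proof of the equality $\ell(f(I,J,d)) = \codim X_J + \codim X^I - dn$.

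In summary: Step 1, recall $\ell(f) = $ (corank sum of the cyclic rank matrix) $=$ (number of inversions over a period); Step 2, exhibit the box$\leftrightarrow$inversion bijection for $f(\theta)$ to get $\ell(f(I,J,d)) = |\theta(I,J,d)|$; Step 3, compute the area $|\theta(I,J,d)|$ from the border lattice paths and the $d$-diagonal offset to get $|\theta(I,J,d)| = \codim X_J + \codim X^I - dn$; Step 4, cross-check with \eqref{E:dimS} and Theorem~\ref{thm:projectedRichardsons}. The main obstacle is Step 2 — getting the cylindric lattice-path/inversion correspondence exactly right across the conventions — but it is a finite, mechanical verification rather than a conceptual difficulty.
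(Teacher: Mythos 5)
Your route matches the paper's. For the first equality, the paper explains it via wiring diagrams (each box of $\theta(I,J,d)$ contributes one simple reflection in a reduced word for $f(\theta)$), which is the same content as your box$\leftrightarrow$inversion bijection; no real difference there. For the second equality the paper gives a cleaner version of what you sketch: take the base case $I=J$, $d=0$, where $|\theta| = k(n-k) = \codim X_I + \codim X^I$; observe that decreasing $d$ by one pushes the lower border one step southeast and adds exactly $n$ boxes per period; and that a unit change in $\codim X^I$ or $\codim X_J$ moves one border step and changes $|\theta|$ by one. Your ``flush borders'' computation trails off and would need essentially this deformation argument to be made precise.

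One caution: the ``sanity check'' against \eqref{E:dimS} is fine as a check, but your parenthetical that it gives ``essentially a second proof'' of $\ell(f) = \codim X_J + \codim X^I - dn$ is circular. To turn \eqref{E:dimS} and $\codim \Pi_f = \ell(f)$ into a proof, you would have to know $\dim S(I,J,d) = \dim \Pi_f$, i.e. $S(I,J,d) = \Pi_f$, which is the content of Theorem~\ref{thrm:QuantumPositroid} -- and that theorem's proof invokes Lemma~\ref{L:boxes}. Proposition~\ref{P:IJd} and Lemma~\ref{L:valid} identify the $T$-fixed points and the cyclic rank matrix but do not by themselves give you a dimension count for $S(I,J,d)$.
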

\begin{proof}
The first equality follows from \cite{Lam1}, and can be explained simply as follows: each box in a fundamental domain for $\theta(I,J,d)$ corresponds to a simple generator in a reduced expression for $f(I,J,d)$.  Indeed, the equations \eqref{E:fIJd} can be obtained by filling $\theta(I,J,d)$ with a wiring diagram, where each wire goes straight down (resp. across) from a horizontal (resp. vertical) step.  The second equality follows from \cite{PosQH}.  A simple proof is as follows: if we decrease $d$ by $1$, then the lower border of $d$ is shifted one step diagonally southeast, increasing $|\theta(I,J,d)|$ by $n$.  When the upper and lower borders are far apart, then changing $\codim(X^I)$ or $\codim(X_J)$ by one also changes $|\theta(I,J,d)|$ by one.  Finally, when $I = J$ and $ d= 0$, one checks that $|\theta(I,J,d)|$ is $k(n-k)$.
\end{proof}

\begin{proof}[Proof of Theorem \ref{thrm:QuantumPositroid}]
Suppose that $(I,J,d)$ is valid.

Consider any index $K \in \binom{[n]}{k} \setminus T(I,J,d)$. Then the Pl\"ucker coordinate $p_K$ is zero on $T(I,J,d)$, and hence on $S(I,J,d)$.  By Corollary \ref{cor:pluckerdefined}, $\Pi_f$ is cut out by $\{p_K = 0 \mid K \notin T(I,J,d)\}$, so $S(I,J,d) \subseteq \Pi_f$.


By Lemma \ref{L:boxes}, \eqref{E:dimS} and Theorem \ref{thm:projectedRichardsons}, $S(I,J,d)$ and $\Pi_{f(I,J,d)}$ have the same dimension and $\Pi_f$ is irreducible. So $S(I,J,d)=\Pi_f$. Now, let $F_1$, $F_2$, \ldots, $F_r$ be the components of $E(I,J,d)$; let $c_1$, $c_2$, \ldots, $c_r$ be the degrees of the maps $p_3 : F_i \to S(I,J,d)$. Using again that $\Pi_f$ is irreducible, we know that $(p_3)_*(E(I,J,d)) = \left( \sum_{i=1}^r c_i \right) [\Pi_f]$. By the main result of~\cite{PosQH}, the left hand side of this equation is the toric Schur polynomial with shape~$\theta(I,J,d)$ and by \cite[Proposition 33]{Lam1}, this is the affine Stanley function $\psi(\tF_f)$. But by Theorem~\ref{thm:affineStanley}, the right hand side is $\left( \sum_{i=1}^r c_i \right) \psi(\tF_f)$. So $\sum_{i=1}^r c_i=1$. We deduce that $p_3$ is birational on one component of $S(I,J,d)$ and collapses every other component.

Finally, if $(I,J,d)$ is not valid, then there is no nonzero quantum product for $(I,J,d)$ by Lemma~\ref{L:qvalid}, so $p_3$ must collapse all components of $E(I,J,d)$ in this case.
\end{proof}

\subsection{Connection with two-step flag varieties}
Let $\Fl(k-d, k, k+d; n)$ and $\Fl(k-d, k+d; n)$
be the spaces of three-step and two-step flags of dimensions
$(k-d,k,k+d)$ and $(k-d,k+d)$ respectively.  We have maps $q_1 :
\Fl(k-d, k, k+d; n) \to \Gr(k,n)$ and $q_2 : \Fl(k-d, k, k+d; n) \to
\Fl(k-d, k+d; n)$.  For a subvariety $X \subset \Gr(k,n)$ we define, following \cite{BKT},
$$
X^{(d)} = q_2(q_1^{-1}(X)) \subset \Fl(k-d,k+d;n).
$$
Let us now consider the subvariety
$$
Y(I,J,d) = (X_J)^{(d)} \cap (X^I)^{(d)} \subset \Fl(k-d,k+d;n).
$$
Buch-Kresch-Tamvakis studied varieties similar to $Y(I,J,d)$, which
arise from intersections of three Schubert varieties, and showed in a
bijective manner that these intersections solved quantum problems.
Let us now consider the subvariety $q_1(q_2^{-1}(Y(I,J,d))) \subset
\Gr(k,n)$.  The subvarieties $(X_J)^{(d)}, (X^I)^{(d)} \subset
\Fl(k-d,k+d;n)$ are Schubert (and opposite Schubert) subvarieties.
Thus $q_1(q_2^{-1}(Y(I,J,d))) \subset \Gr(k,n)$ is a positroid variety
by Theorem \ref{thm:projectedRichardsons}.

The following result can also be deduced directly from the results
in \cite{BM}.

\begin{prop}
Suppose $(I,J,d)$ is valid.  Then $q_1(q_2^{-1}(Y(I,J,d))) = S(I,J,d)$.
\end{prop}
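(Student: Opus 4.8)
The strategy is to show a chain of equalities/containments identifying the degree-$d$ rational-curve sweep $S(I,J,d)$ with the three-step--then--two-step pull-push $q_1(q_2^{-1}(Y(I,J,d)))$, and then invoke the dimension count to upgrade a set-theoretic containment to an equality. First I would recall the elementary description of how a degree-$d$ rational curve in $\Gr(k,n)$ ``moves'' a $k$-plane: if $C$ is an irreducible rational curve of degree $d$ through $V$, then the span of all the $k$-planes on $C$ has dimension at most $k+d$ and their intersection has dimension at least $k-d$; conversely, given subspaces $A\subseteq B$ with $\dim A = k-d$, $\dim B = k+d$, the $k$-planes between $A$ and $B$ that meet a point are swept by curves of degree $\le d$ (this is the same observation used in \cite{BKT} and \cite{BM}). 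Concretely, $V$ lies on a degree-$d$ curve meeting $X_J$ and $X^I$ if and only if there is a pair $(A,B)$, $A\subseteq V\subseteq B$ with $\dim A = k-d$, $\dim B=k+d$, such that $(A,B)$ lies in $(X_J)^{(d)}$ and in $(X^I)^{(d)}$; but that pair $(A,B)$ lying in $(X_J)^{(d)}\cap(X^I)^{(d)} = Y(I,J,d)$ is exactly the condition that $V \in q_1(q_2^{-1}(Y(I,J,d)))$. So the content is the translation ``$V$ is on a degree-$d$ curve meeting $X_J$ and $X^I$'' $\iff$ ``$\exists (A,B)\in Y(I,J,d)$ with $A\subseteq V\subseteq B$.''

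\textbf{Key steps, in order.} (1) Establish the forward direction: given a stable genus-zero degree-$d$ map $h:C\to\Gr(k,n)$ with marked points landing in $X_J$ and $X^I$, let $A = \bigcap_{x\in C} h(x)$ and $B = \sum_{x\in C} h(x)$, and show $\dim A \ge k-d$, $\dim B \le k+d$; after replacing $A$ by any $(k-d)$-dimensional subspace of it and $B$ by any $(k+d)$-dimensional space containing it, verify that $(A,B)\in (X_J)^{(d)}\cap (X^I)^{(d)}$ using that the marked-point images lie in $X_J$, $X^I$ and that $A\subseteq h(x)\subseteq B$ at those points. This gives $S(I,J,d)\subseteq q_1(q_2^{-1}(Y(I,J,d)))$. (2) Establish the reverse containment: given $(A,B)\in Y(I,J,d)$ and $V$ with $A\subseteq V\subseteq B$, construct an explicit rational curve of degree $\le d$ through $V$ inside the sub-Grassmannian $\{W : A\subseteq W\subseteq B\}\cong \Gr(d,2d)$, arranged to pass through a point of $X_J$ and a point of $X^I$ — here one uses that $(X_J)^{(d)}$ being a Schubert variety in $\Fl(k-d,k+d;n)$ means $(A,B)$ dominates a point of $X_J$, and similarly for $X^I$, and that in $\Gr(d,2d)$ any two points lie on a curve of degree $\le d$. (3) Conclude set-theoretic equality $S(I,J,d) = q_1(q_2^{-1}(Y(I,J,d)))$, then observe both sides are irreducible closed subvarieties: the right side is a projected Richardson variety, hence a positroid variety $\Pi_g$ by Theorem~\ref{thm:projectedRichardsons}, and the left side is $\Pi_f=\Pi_{f(I,J,d)}$ by Theorem~\ref{thrm:QuantumPositroid} (using validity). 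Since a positroid variety is determined as a set by which Plücker coordinates vanish (Corollary~\ref{cor:pluckerdefined}), set-theoretic equality forces $f=g$ and the proof is done.

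\textbf{Main obstacle.} The delicate point is step (2): turning the abstract flag datum $(A,B)\in Y(I,J,d)$ back into an honest genus-zero degree-$d$ stable map through an arbitrary intermediate $V$ while simultaneously hitting $X_J$ and $X^I$. The span/intersection bounds in step (1) are standard and robust, but for the converse one must be careful that the curve actually has degree exactly $\le d$ (not more) and that the Schubert conditions $(A,B)\in (X_J)^{(d)}$, $(A,B)\in (X^I)^{(d)}$ can be simultaneously promoted to a single curve; the cleanest route is probably to reduce, via $\Gr(d,2d)$, to the known statement that the analogous sweep there equals all of $\Gr(d,2d)$, or alternatively to cite \cite{BM} as the paper suggests and only sketch the geometric translation. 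I would also double-check the boundary behavior when $d=0$ or when $A$, $B$ fail to be proper (e.g. $d\ge k$ or $d \ge n-k$), where the three-step flag variety degenerates; validity of $(I,J,d)$ should rule out the pathological ranges, and this should be noted explicitly. The cohomology-class comparison need not be redone, since it is already contained in Theorem~\ref{thrm:QuantumPositroid}.
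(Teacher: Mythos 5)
Your proposal is correct, and the forward containment $S(I,J,d)\subseteq q_1(q_2^{-1}(Y(I,J,d)))$ genuinely differs from the paper's argument. You argue pointwise: any $V\in S(I,J,d)$ is $h(x_3)$ for some stable map $h:(C;x_1,x_2,x_3)\to\Gr(k,n)$, and the kernel $A=\bigcap_{x\in C}h(x)$ and span $B=\sum_{x\in C}h(x)$ satisfy $\dim A\geq k-d$, $\dim B\leq k+d$; shrinking $A$ and enlarging $B$ to dimensions $k-d$, $k+d$ with $A'\subseteq B'$, the images $h(x_1)\in X_J$ and $h(x_2)\in X^I$ witness $(A',B')\in (X_J)^{(d)}\cap (X^I)^{(d)}=Y(I,J,d)$, while $A'\subseteq V\subseteq B'$ gives $V\in q_1(q_2^{-1}(A',B'))$. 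The paper instead establishes the containment only over a dense subset of $S(I,J,d)$ — points cut out transversally by a generic translate $X_K(F_\bullet)$ — and appeals to Buch--Kresch--Tamvakis' Theorem~1 as a black box, then uses closedness of the right side; this route needs the dimension count $\dim S(I,J,d)=N$ up front. Your approach is more elementary, handles every point at once, and essentially unfolds what is behind BKT's theorem rather than quoting it; it does require the kernel/span bounds for a possibly \emph{reducible} genus-$0$ stable map, which you should prove by the short induction gluing components along a common $k$-plane $h(p)$ (this gives $\dim(\bigcap_{C_1}\cap\bigcap_{C_2})\geq(k-d_1)+(k-d_2)-k$). Your reverse containment coincides with the paper's: take the fiber $\{W: A\subseteq W\subseteq B\}\cong\Gr(d,2d)$, find points of $X_J$ and $X^I$ in it using the definition of $(X_\bullet)^{(d)}$, and run a degree-$d$ rational curve through those two points and the given $V$ — that is \cite[Proposition~1]{BKT}, so it should be \emph{three} points and degree exactly $d$, not ``any two points, degree $\leq d$.'' Finally, your step~(3) is superfluous: once both containments are shown, set-theoretic equality is the claim and you are done; identifying the common positroid is not needed for this proposition.
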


\begin{proof}
Let us first show that $S(I,J,d) \subset q_1(q_2^{-1}(Y(I,J,d)))$.  Since $(I,J,d)$ is valid, we know that $\dim S(I,J,d)= \dim(X_J) + \dim(X^I) + dn - k(n-k) =:N$. Choose $K \in \binom{[n]}{d}$ such that $\codim \ X_K=N$ and $\langle [S(I,J,d)], [X_K] \rangle \neq 0$. Then, for a general flag $F_{\bullet}$, $S(I,J,d)$ intersects $X_K(F_{\bullet})$ at a finite set of points. Moreover, the set of all points that occur as such intersections is dense in $S(I,J,d)$. (If this set were contained in a subvariety of smaller dimension, then $X_{K}(F_{\bullet})$ would miss $S(I,J,d)$ for generic $F_{\bullet}$, contradicting our choice of $K$.)

So, for $V$ in a dense subset of  $S(I,J,d)$, we know that $V$ also lies on some $X_K(F_\bullet)$ and we can impose furthermore that $F_\bullet$ is in general position with both $E_\bullet$ and $w_0 E_\bullet$.   It follows from \cite[Theorem 1]{BKT} that there is a corresponding point $W \in Y(I,J,d)$ such that $V \in q_1(q_2^{-1}(W))$.  Thus $S(I,J,d) \subset q_1(q_2^{-1}(Y(I,J,d)))$.

Conversely, let $W \in Y(I,J,d)$ be a generic point, and $Z =
q_1(q_2^{-1}(W)) \subset \Gr(k,n)$.  The space $Z$ is isomorphic to
$\Gr(d,2d)$.  Pick a point $U \in Z \cap X_J$ and $V \in Z \cap
X^I$, and another generic point $T \in Z$.  By \cite[Proposition
1]{BKT}, there is a morphism $f: \PP^1 \to Z \subset \Gr(k,n)$ of
degree $d$ which passes through $U$, $V$, and $T$.  It follows that
a generic point in $Z$ lies in $S(I,J,d)$. Thus
$q_1(q_2^{-1}(Y(I,J,d))) \subset S(I,J,d)$.
\end{proof}

\subsection{An example}
Let $k = 2$ and $n = 5$.  We take $I = J = \{1, 4\}$ and $d = 1$.
The affine permutation $f(I,J,d)$ is $[\cdots 43567 \cdots]$, with
siteswap $31222$. The positroid $T(I,J,d)$ is $\{12, 13, 14,
15,24,25,34,35,45\}$ and the juggling states are $\J(f(I,J,d)) =
(12,13,12,12,12)$.  If we pull back $Y(I,J,d)$ to $\Fl(n)$ we get
the Richardson variety $X_{12435}^{45132}$. (Following the
description given in \cite{BKT}, we obtained $12435$ by sorting the
entries of the Grassmannian permutation $14235$ in positions
$k-d+1,k-d+2,\ldots,k+d$ in increasing order.  For $45132$, we first
applied $w_0$ to $J = \{1,4\}$ to get $\{2,5\}$.  Then we did the
sorting, and left-multiplied by $w_0$ again.)

By Proposition \ref{P:EquivalenceClassClosed}, we have
$\pi(X_{12435}^{45132}) = \pi(X_{21543}^{54312})$. With $(u,w) =
(21543,54312)$, we have $f_{u,w} = [2,1,5,4,3]\cdot t_{(1,1,0,0,0)}
\cdot [4,5,3,2,1] = [4,3,5,6,7]$, agreeing with $f(I,J,d)$.
Alternatively, one can check that the $T$-fixed points inside
$X_{21543}^{54312}$, that is, the interval $[21543,54312]$, project
exactly to $T(I,J,d)$.

\end{document}